\newcommand{\add}{\mathsf{add}}
\newcommand{\Filt}{\mathsf{Filt}}
\newcommand{\Gen}{\mathsf{Gen}}
\newcommand{\Cogen}{\mathsf{Cogen}}
\newcommand{\ind}{\mathsf{ind}}
\newcommand{\wide}{\mathsf{wide}}
\newcommand{\tors}{\mathsf{tors}}
\newcommand{\ftors}{\mathsf{f}\text{-}\mathsf{tors}}
\newcommand{\torf}{\mathsf{torf}}
\newcommand{\str}{\mathsf{s}\tau\text{-}\mathsf{rigid}}
\newcommand{\stt}{\mathsf{s}\tau\text{-}\mathsf{tilt}}
\newcommand{\taut}{\tau\text{-}\mathsf{tilt}}
\newcommand{\mods}{\mathsf{mod}}
\newcommand{\proj}{\mathsf{proj}}
\newcommand{\cone}{\mathsf{Cone}}
\newcommand{\Hasse}{\mathsf{Hasse}}
\newcommand{\A}{\mathbb{A}}
\renewcommand{\L}{\Lambda}
\newcommand{\bcomp}{\mathrm{B}}
\newcommand{\ccomp}{\mathrm{C}}
\newcommand{\eproj}{\mathrm{P}}
\newcommand{\bcomplement}[1]{\overline{\mathrm{B}}(#1)}
\newcommand{\ccomplement}[1]{\overline{\mathrm{C}}(#1)}
\newcommand{\Ccal}{\mathcal{C}}
\newcommand{\Dcal}{\mathcal{D}}
\newcommand{\Tcal}{\mathcal{T}}
\newcommand{\Wcal}{\mathcal{W}}
\newcommand{\Xcal}{\mathcal{X}}
\DeclareMathOperator{\Hom}{\mathrm{Hom}}
\DeclareMathOperator{\Ext}{\mathrm{Ext}}
\DeclareMathOperator{\im}{\mathrm{im}}
\DeclareMathOperator{\coker}{\mathrm{coker}}
\DeclareMathOperator{\End}{\mathrm{End}}
\DeclareMathOperator{\soc}{\mathrm{soc}}
\DeclareMathOperator{\topp}{\mathrm{top}}
\DeclareMathOperator{\rad}{\mathrm{rad}}
\newcommand{\comp}[2]{\str_{#1}#2}
\newtheorem{theorem}{Theorem}[section]
\newtheorem{corollary}[theorem]{Corollary}
\newtheorem{lemma}[theorem]{Lemma}
\newtheorem{proposition}[theorem]{Proposition}
\theoremstyle{definition}
\newtheorem{definition}[theorem]{Definition}
\newtheorem{definitionproposition}[theorem]{Definition-Proposition}
\newtheorem{example}[theorem]{Example}
\newtheorem{remark}[theorem]{Remark}
\title{Mutating ordered $\tau$-rigid modules with applications to Nakayama algebras}
\date{}
\subjclass{16G20, 16G70, 18E40}
\author{Aslak B. Buan}
\address{Department of Mathematical Sciences, Norwegian University of Science and Technology (NTNU), 7491 Trondheim, NORWAY}
\email{aslak.buan@ntnu.no}
\author{Maximilian Kaipel}
\address{Abteilung Mathematik, Department Mathematik/Informatik der Universität
zu Köln, Weyertal 86-90, 50931 Cologne, GERMANY}
\email{mkaipel@uni-koeln.de}
\author{Håvard U. Terland}
\address{Department of Mathematical Sciences, Norwegian University of Science and Technology (NTNU), 7491 Trondheim, NORWAY}
\email{havard.u.terland@ntnu.no}
\begin{document}

\begin{abstract}
A mutation operation for $\tau$-exceptional sequences of \sloppy modules over any finite-dimensional algebra was recently introduced, generalising the mutation for exceptional sequences of modules over hereditary algebras. We interpret this mutation in terms of TF-ordered $\tau$-rigid modules, which are in bijection with $\tau$-exceptional sequences. As an application we show that the mutation is transitive for Nakayama algebras, by providing an explicit combinatorial description of mutation over this class of algebras.
\end{abstract}

\thanks{The main ideas of this project were conceived during a research visit of the second named author to NTNU Trondheim. MK would like to thank ABB and HUT for their hospitality and a memorable week. The authors were supported by grant number FRINAT 301375 from the Norwegian Research Council. MK was also supported by the Deutsche Forschungsgemeinschaft (DFG, German Research Foundation) -- Project-ID 281071066 -- TRR 191.}

\maketitle

\section{Introduction}

More than thirty years ago the study of exceptional sequences in both triangulated \cite{Bondal1989, Gorodentsev1988, GorodentsevRudakov, Rudakov90} and abelian categories \cite{cbw92,ringel_exceptional} was initiated. Fundamentally, an exceptional sequence consists of exceptional objects, which are objects whose endomorphism rings are division rings and which have no self-extensions of any degree. Moreover, the objects in an exceptional sequence satisfy a 
right-to-left vanishing property of $\Hom$- and $\Ext$-groups.

In a triangulated category, an exceptional sequence is called \textit{complete} if the smallest triangulated subcategory generated by its exceptional objects is the whole category. In this case, there exists a Braid group action by mutation on the set of such sequences \cite{Bondal1989, Gorodentsev1989} and it was conjectured in \cite{BondalPolishchuk} that this action is always transitive. This had been the focus of much attention \cite{IshiiOkawaUehara2021, KussinMeltzer2002,Meltzer95, ChangSchroll2023} until a counterexample was found in \cite{ChangHaidenSchroll}. 

In an abelian category an exceptional sequence is called \textit{complete} if the objects generate the derived category as above. Generally, the existence of complete exceptional sequences is not guaranteed, but has important consequences, see e.g. \cite{ChangSchroll2023,HillePloog2019,Krause2017}. We are particularly interested in categories of modules of finite-dimensional algebras. It is a classical result that complete exceptional sequences exist for hereditary algebras and that the action of the Braid group is transitive \cite{cbw92,ringel_exceptional}.

However, the module category of an arbitrary finite-dimensional algebra generally does not admit complete exceptional sequences. It is therefore natural to consider generalisations of exceptional sequences \cite{Araya99,tauexceptional_buanmarsh,Sen2021}. In particular, the generalisation proposed in \cite{tauexceptional_buanmarsh} utilises $\tau$-tilting theory to ensure the existence of complete $\tau$-exceptional sequences for all finite-dimensional algebras. The $\Ext$-vanishing condition in this generalisation is modified using a key idea of $\tau$-tilting theory which replaces the condition $\Ext^1(N,M) = 0$ by $\Hom(M, \tau N) = 0$, since the two are equivalent for hereditary algebras but generally the second is by \cite{auslandersmalo81} equivalent to the condition $\Ext^1(M, N')=0$ for all modules $N'$ \textit{generated} by $N$, meaning there is an epimorphism $N^r \to N'$ for some integer $r$. We collect all modules generated by $N$ in the subcategory \[\Gen N = \{X \mid \text{there is an epimorphism } N^r \to X \text{ for some integer } r\}.\]

It is natural to ask if there exists a well-defined action of mutation on complete $\tau$-exceptional sequences and whether such an action is transitive and satisfies Braid group relations \cite{Buan_Marsh_2023}. To answer these questions it is useful to consider the more general \textit{signed} $\tau$-exceptional sequences whose additional structure is a positive or negative sign on certain relatively projective modules in a $\tau$-exceptional sequence. Whereas the unsigned $\tau$-exceptional sequences label chains in the poset of wide subcategories \cite{BarnardHanson2022exc}, the signed $\tau$-exceptional sequences give rise to an even richer structure. In fact, they correspond to factorisations of morphism into irreducible ones in the $\tau$-cluster morphism category \cite{IgusaTodorov2017,BuanMarsh2018, buan2023perpendicular}, see also \cite{børve2023,borve2024silt} for more general settings, whose classifying space has been shown to be a $K(\pi,1)$ space in many cases \cite{BarnardHanson2022,HansonIgusaPW2SMC,HansonIgusa2021,IgusaTodorov2017,IgusaTodorov22,Kaipelcatpartfan,KaipelTors}.

An important alternative viewpoint of signed $\tau$-exceptional sequences establishes a close connection with the $\tau$-tilting pairs central to $\tau$-tilting theory: There exists a bijection between signed $\tau$-exceptional sequences of length $r$ and ordered $\tau$-rigid pairs with $r$ non-isomorphic indecomposable direct summands \cite{tauexceptional_buanmarsh}. Alongside establishing a close connection between $\tau$-exceptional sequences and stratifying systems, the authors of \cite{mendozatreffinger_stratifyingsystems} characterise those ordered $\tau$-rigid pairs which correspond to the unsigned $\tau$-exceptional sequences. We will refer to those as \textit{TF-ordered} $\tau$-rigid modules.

Recently, a positive answer to the existence of a mutation of $\tau$-exceptional sequences was provided in \cite{BHM2024}. Similar to the mutation of exceptional sequences of hereditary algebras, which it generalises, the mutation is defined on $\tau$-exceptional pairs. This provides the starting point for the present paper, in which we begin by translating the mutation rules defined in \cite{BHM2024} to a corresponding rule for mutating TF-ordered $\tau$-rigid modules. A key role in this description is played by what we refer to as the $V$-map, see \cref{def:Vmap} and the $E$-map, see \cref{defn:Emap}.

The collection of all $\tau$-tilting pairs has a natural structure of a partially-ordered set via a process of mutation \cite{tau}. Roughly speaking, the $V$-map with respect to a $\tau$-rigid pair $(M,P)$ is a bijection $V_{(M,P)}$ from the indecomposable direct summands of the minimal $\tau$-tilting pair to those of the maximal $\tau$-tilting pair having $(M,P)$ as a direct summand in the partially-ordered set. It was used implicitly in \cite{tauexceptional_buanmarsh} to define the $E$-map with respect to a $\tau$-rigid pair $(M,P)$, which is a map $E_{(M,P)}$ from the $\tau$-rigid pairs whose direct sum with $(M,P)$ forms a $\tau$-rigid pair to the 
relative $\tau$-rigid pairs of a wide subcategory known as the $\tau$-tilting reduction $J(M,P)$, as defined in \cite{jassoreduction, dirrt}. Our first main result is the following formulation of the mutation of TF-ordered $\tau$-rigid modules with two indecomposable direct summands, which coincides with that for the mutation of $\tau$-exceptional pairs of \cite{BHM2024}.

\begin{theorem}(\cref{prop:TFadmissiblebehaviour})\label{thm:introthm1}
    Let $B \oplus C$ be a TF-ordered $\tau$-rigid module. If the corresponding $\tau$-exceptional pair is left regular, see \cite[Def. 3.11]{BHM2024} then the left mutation described in \cite[Sec. 4]{BHM2024} corresponds to the following mutation of TF-ordered $\tau$-rigid modules:
     \[\overline{\varphi}(B \oplus C) = \begin{cases}
    V_{E_C(B)}(C[1]) \oplus E_C(B) &\text{if $C$ is projective,} \\
    V_B(C) \oplus B & \text{if $C$ is generated by $B$,} \\
C \oplus B & \text{otherwise,}\end{cases}\]
where we write $C[1]$ to mean the $\tau$-rigid pair $(0,C)$.
			\end{theorem}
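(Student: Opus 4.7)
The plan is to work through the bijection between TF-ordered $\tau$-rigid modules and $\tau$-exceptional sequences from \cite{tauexceptional_buanmarsh, mendozatreffinger_stratifyingsystems} and then translate the three cases of mutation in \cite[Sec.~4]{BHM2024} back across it. Under this bijection, a TF-ordered $\tau$-rigid module $B \oplus C$ corresponds to the $\tau$-exceptional pair $(E_C(B), C)$, where $E_C(B)$ lies in the wide subcategory $J(C)$; the TF-ordered hypothesis is precisely what guarantees that the $E$-map can be applied to $B$ so that $E_C(B)$ is a relatively $\tau$-rigid object of $J(C)$. Conversely, the inverse bijection at the level of the first coordinate is governed by the $V$-map, which lifts an indecomposable $\tau$-rigid object of a wide subcategory $J(N)$ to an indecomposable summand of the corresponding maximal $\tau$-tilting pair above $N$ in the mutation poset.

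Once this dictionary is fixed, I would match the three cases of \cite[Sec.~4]{BHM2024} with the three cases in the statement and then apply the inverse bijection. In the case where $C$ is projective, $C$ plays the role of the shifted projective $C[1]$ in the $\tau$-exceptional pair, and the mutation in \cite{BHM2024} outputs a pair whose second coordinate is $E_C(B)$ and whose first coordinate is $C[1]$, viewed inside $J(E_C(B))$; lifting the first coordinate via $V_{E_C(B)}$ gives the claimed $V_{E_C(B)}(C[1]) \oplus E_C(B)$. In the case where $C$ is generated by $B$, this is exactly the ``generated'' case of the \cite{BHM2024}-mutation; after mutating in the relevant wide subcategory and applying $V_B$ to lift the new first coordinate, one obtains $V_B(C) \oplus B$. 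Finally, the default case of \cite[Sec.~4]{BHM2024} is the plain swap of coordinates, which on the TF-ordered side reads $C \oplus B$; here one also needs to confirm that the swapped module is again TF-ordered, which is automatic from the negation of the two preceding cases (neither $C$ projective nor $C$ generated by $B$) combined with left regularity.

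The main obstacle will be showing that the inverse of the $E$-map / $V$-map dictionary commutes with the mutation operations on both sides in exactly the form stated, i.e.\ that no extra $E$- or $V$-corrections appear. Concretely, the delicate step is the verification, in the projective and generated cases, that the lift of the mutated pair in the wide subcategory coincides with the single $V$-map expression given in the formula, rather than a more complicated composition of $V$-maps through a chain of $\tau$-tilting reductions. The plan is to handle this by invoking the functoriality of the $\tau$-tilting reduction $J(M,P)$ from \cite{jassoreduction, dirrt} together with the explicit construction of the bijection in \cite{tauexceptional_buanmarsh}: these together imply that the $V$-map intertwines mutation in $J(C)$ with mutation in $\mods \L$ for the summands being moved, so the boxed formulas fall out without additional correction terms. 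Left regularity of the pair will be used precisely to rule out the exceptional cases of \cite{BHM2024} that would otherwise produce extra terms.
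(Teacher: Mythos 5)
Your proposal is correct and takes essentially the same route as the paper: transport the mutation of \cite[Sec.~4]{BHM2024} across the bijection $\Psi$, case by case, and identify the $E^{-1}$-lift of the mutated first coordinate with a single $V$-map value. The one step you defer to ``functoriality'' is exactly where the paper does its real work --- the identities $\overline{s}_2(B\oplus C)=E_C(B)\oplus C[1]$ and $\overline{s}_1(X\oplus Y)=V_Y^{\pm 1}(X)\oplus Y$ of \cref{lem:s2-on-tf-ordered}, \cref{lem:v-map-on-ordered-tau-rigid-rank-2} and \cref{lem:v-map-on-tf-ordered}, which rest on \cref{prop:co-bongartz-summand-characterization} and the explicit case analysis of \cref{defn:Emap} --- so your outline is sound but that verification still has to be carried out.
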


While the mutation of $\tau$-exceptional sequences and thus of TF-ordered $\tau$-tilting modules does not usually satisfy the Braid group relations, see \cite[Exmp. 8.3]{BHM2024} and \cref{exmp:nonbraid}, a partial answer was given to the question of transitivity of this mutation in the case where the algebras has only two isomorphism classes of simple modules \cite[Thm. 0.6]{BHM2024}. In this case, the mutation graph of the mutation of complete $\tau$-exceptional sequences can be obtained from the mutation graph of $\tau$-tilting modules. We apply \cref{thm:introthm1} to prove transitivity of the mutation of complete $\tau$-exceptional sequences for the class of uniserial algebras, also known as Nakayama algebras. 

\begin{theorem}(\cref{thm:transitivemut})\label{thm:introthm2}
    Mutation of TF-ordered $\tau$-tilting modules, and thus of complete $\tau$-exceptional sequences, is transitive for Nakayama algebras.
\end{theorem}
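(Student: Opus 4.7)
The plan is to combine \cref{thm:introthm1} with an explicit combinatorial description of the $V$-map and $E$-map for Nakayama algebras, and then to lift the resulting two-summand mutation formula to TF-ordered $\tau$-tilting modules of arbitrary rank by induction. Since TF-ordered $\tau$-tilting modules correspond bijectively to complete $\tau$-exceptional sequences, transitivity of the mutation on the latter will follow at once from transitivity on the former.

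The first task is to parameterise the indecomposable $\tau$-rigid $\L$-modules by their top and Loewy length (with an auxiliary label for the shifted projectives $P[1]$). In these coordinates, the constructions that feed into \cref{thm:introthm1} — the subcategory $\Gen B$, the $\tau$-tilting reduction $J(M,P)$, and the minimal and maximal $\tau$-tilting pairs above a fixed $\tau$-rigid pair in the mutation poset — all admit explicit combinatorial descriptions that exploit the uniserial structure. Substituting these descriptions into the three branches of the formula of \cref{thm:introthm1} produces a closed combinatorial algorithm for the left mutation $\overline{\varphi}$ on two-summand TF-ordered $\tau$-rigid modules, in which the new tops and lengths are read off directly from the inputs without further homological computation.

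The second task is to extend this two-summand picture to TF-ordered $\tau$-tilting modules of arbitrary rank $n$. The plan is to fix a canonical TF-ordered $\tau$-tilting module $T^{*}$, for instance $\L$ itself with its indecomposable projective summands listed in a chosen reference order, and to show by induction on $n$ that any TF-ordered $\tau$-tilting module $T$ can be transported to $T^{*}$ by a finite sequence of mutations at adjacent positions. The base case $n=2$ is accessible through the two-simple analysis of \cite[Thm. 0.6]{BHM2024}. For the inductive step, the explicit formula obtained in the first task lets us exhibit a sorting procedure which first drives a prescribed summand of $T$ into its canonical position in $T^{*}$ and then invokes the inductive hypothesis on the remaining summands, which can be organised as a TF-ordered $\tau$-tilting module over the appropriate $\tau$-tilting reduction.

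The main obstacle will be to control the three branches of \cref{thm:introthm1} uniformly across all combinatorial configurations, and in particular to choose a monovariant — most naturally a lexicographic pairing of the sum of Loewy lengths of the non-projective summands with a position-based secondary measure — that strictly decreases along each step of the sorting procedure while preserving TF-admissibility throughout. The cyclic Nakayama case is expected to need extra care, since its periodic structure permits generation relations between uniserials that are absent in the linear case and which can push the mutation into the second or third branch of \cref{thm:introthm1} in configurations where the linear case would remain in the first; verifying that the monovariant still decreases in these situations is likely to be the technical heart of the argument.
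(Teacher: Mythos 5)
Your first task---an explicit combinatorial description of $\overline{\varphi}$ on two-summand TF-ordered modules via the Bongartz and co-Bongartz completions---is exactly what the paper does in Section~4 (\cref{prop:Bongartzcompletion}, \cref{lem:cocompletion}, \cref{thm:introNakmutation}). But your transitivity argument takes a genuinely different route, and as proposed it has a gap. The paper does not sort toward a canonical $T^*$ with a decreasing monovariant; instead it decomposes the problem into two bridging statements---any two TF-orders of the \emph{same} $\tau$-tilting module are connected by mutations (\cref{lem:TFdecsconnected}), and any single AIR-mutation $M \to \mu_X M$ of $\tau$-tilting modules is realised by one TF-order mutation after a suitable reordering (\cref{prop:mutationmutation})---and then imports connectivity of $\Hasse(\stt\L)$ for free from $\tau$-tilting finiteness. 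Your route forgoes that structural input and must therefore supply the termination argument itself, which is precisely where the proposal is incomplete.

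Concretely, the monovariant you suggest does not exist in the form described. In Case TF-2a one has $\overline{\varphi}(B \oplus B/\rad^k B) = \rad^k B \oplus B$, so the sum of Loewy lengths of non-projective summands changes from $\ell(B)+k$ to $2\ell(B)-k$; taking $\ell(B)=5$ and $k=1$ this \emph{increases} from $6$ to $9$, while $k=4$ gives a decrease, so the primary component of your lexicographic measure is not monotone and the secondary position-based measure cannot rescue it. (Case TF-4, $B\oplus\rad^iB \mapsto B\oplus B/\rad^iB$, exhibits the same failure.) Two further issues are unaddressed: $\overline{\varphi}$ is not an involution, so "undoing" a step needed by the sorting procedure requires realising $\overline{\varphi}^{-1}$ as a composition of forward mutations---the paper gets this from periodicity of $\overline{\varphi}$-orbits in the $\tau$-tilting finite setting (see the proof of \cref{lem:NakTFswapmut})---and the claim that one can always steer a prescribed summand of $T^*$ into the last position is exactly the hard content that the paper's \cref{lem:exchseqmiddleterm}--\cref{prop:mutationmutation} are designed to establish; it cannot be read off the two-summand formula alone, since mutations at position $n-1$ replace summands rather than merely permuting them.
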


Our proof of this result relies first on a concrete description of both of the minimal and maximal completions of an indecomposable $\tau$-rigid module $M$ to a $\tau$-tilting pair, the former is called the \textit{co-Bongartz completion} $\ccomp(M)$ and is described in \cref{lem:cocompletion} whereas the latter is called the \textit{Bongartz completion}  $\bcomp(M)$ and is described in \cref{prop:Bongartzcompletion}. This allows us to explicitly compute the $V$-map arising in \cref{thm:introthm1} and thus obtain a precise formula for the mutation of TF-ordered $\tau$-tilting modules for Nakayama algebras. To describe the mutation of a TF-ordered $\tau$-rigid module $B \oplus C$ we distinguish between the following cases:
\begin{multicols}{2} 
\begin{enumerate}[leftmargin=2cm]
    \item[(TF-1)] $C \in \proj \L$;
    \begin{enumerate}
        \item[(TF-1a)] and $\Hom(C,B) = 0$;
        \item[(TF-1b)] and $\Hom(C,B) \neq 0$;
    \end{enumerate}
    \bigskip
    \item[(TF-3)] $C \not \in \add (\bcomp(B) \oplus \ccomp(B))$;
\columnbreak
    \item[(TF-2)] $C \in \add\ccomp(B)$;
    \begin{enumerate}
        \item[(TF-2a)] and $B \not \in \proj \L$;
        \item[(TF-2b)] and $B \in \proj \L$;
    \end{enumerate}
    \bigskip
    \item[(TF-4)] $C \in \add \bcomp(B) \setminus \proj \L$.
\end{enumerate}
\end{multicols}
Note that $C \in \add \ccomp(B)
$ if and only if $C \in \Gen B$ (see \cite[Lem. 4.4]{tauexceptional_buanmarsh}). Using this, in combination with \cite[Lem. 1.7]{BHM2024}, it follows that the cases above are mutually 
exclusive.

We emphasise that Case TF-1 corresponds to the case where $C$ is projective in \cref{thm:introthm1}, Case TF-2 corresponds to the case were $C$ is generated by $M$ in \cref{thm:introthm1}, and Case TF-3 corresponds to the remaining case in \cref{thm:introthm1}. Case TF-4 corresponds to the left mutable but left irregular case. With these distinctions we are able to give the following explicit formulas for the mutation of TF-ordered modules for Nakayama algebras.

\begin{theorem}\label{thm:introNakmutation}
    Let $\L$ be a Nakayama algebra and let $B \oplus C$ be a TF-ordered $\tau$-rigid module. Then the left mutation of TF-ordered modules is given by
    \[ \overline{\varphi}(B \oplus C) = \begin{cases}
        C \oplus B & \text{ in Case TF-1a and Case TF-3,}\\
        B \oplus f_C B & \text{ in Case TF-1b and Case TF-4,}\\
        \rad^k B \oplus B & \text{ in Case TF-2a,} \\ 
        P(\rad^k B) \oplus B & \text{ in Case TF-2b,}\\
    \end{cases}\]
    where $C \cong B/\rad^k B$ for some $k \in \{1, \dots, \ell(B)\}$ in Case TF-2, where $P(\rad^k B) \to \rad^k B$ is the projective cover of $\rad^k B$, and where $f_C$ is the torsion-free functor of the torsion pair $(\Gen C, C^\perp)$. 
\end{theorem}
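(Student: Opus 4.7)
The plan is to apply Theorem~\ref{thm:introthm1} case-by-case, using the explicit combinatorial descriptions of the Bongartz completion $\bcomp(B)$ from Proposition~\ref{prop:Bongartzcompletion} and the co-Bongartz completion $\ccomp(B)$ from Lemma~\ref{lem:cocompletion} to evaluate the $V$- and $E$-maps by hand. The cases match up as follows: TF-1 corresponds to ``$C$ projective''; TF-2 to ``$C$ generated by $B$'' via the equivalence $C \in \add\ccomp(B) \iff C \in \Gen B$ from \cite[Lem. 4.4]{tauexceptional_buanmarsh}; TF-3 to the ``otherwise'' branch of Theorem~\ref{thm:introthm1}; and TF-4 is the left-irregular case, whose formula is provided directly by the corresponding rule in \cite{BHM2024}.

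The easy cases are handled first. Case TF-3 is immediate from Theorem~\ref{thm:introthm1}. In Case TF-1a, the hypothesis $\Hom(C,B) = 0$ places $B$ in the torsion-free class $C^\perp$, so $f_C B = B$ and therefore $E_C(B) = B$; moreover, writing $C = P_S$, the simple $S$ does not appear as a composition factor of $B$, so $C[1] = (0,C)$ occurs as a shifted-projective summand of $\ccomp(B)$, and the $V$-map converts it into the actual projective $V_B(C[1]) = C$. This yields the output $C \oplus B$. In Case TF-1b, one has $E_C(B) = f_C B$ by definition of the $E$-map, and the same kind of analysis applied to $\ccomp(f_C B)$ and $\bcomp(f_C B)$ gives $V_{f_C B}(C[1]) = B$, producing $B \oplus f_C B$. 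Case TF-4 is identified with the irregular prescription of \cite{BHM2024}, which again outputs $B \oplus f_C B$.

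The principal step is Case TF-2, where Theorem~\ref{thm:introthm1} gives $V_B(C) \oplus B$. Using uniseriality of the indecomposable modules of a Nakayama algebra, any indecomposable $C \in \Gen B$ with $B \oplus C$ $\tau$-rigid must be of the form $C \cong B/\rad^k B$ for a unique $k \in \{1,\dots,\ell(B)\}$. One then identifies this quotient as a specific summand of $\ccomp(B)$ via Lemma~\ref{lem:cocompletion} and traces the bijection underlying the $V$-map to the corresponding summand of $\bcomp(B)$ (see Proposition~\ref{prop:Bongartzcompletion}). In Case TF-2a, where $B$ is not projective, the summands of both completions are indexed by radical layers of $B$ together with those of the projective cover containing it, and matching produces $V_B(B/\rad^k B) = \rad^k B$. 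In Case TF-2b, where $B$ is projective, the summand $S[1]$ of $\ccomp(B)$ (with $S = \topp \rad^{k-1} B$) is a shifted simple projective, and the $V$-map replaces it by the projective cover $P(\rad^k B)$, accounting for the shift between the two subcases.

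The main obstacle I anticipate is the bookkeeping in Case TF-2: one needs to parameterise the indecomposable summands of $\bcomp(B)$ and $\ccomp(B)$ consistently by radical layers and verify that the bijection underlying the $V$-map respects this indexing. The delicate point is the appearance of the projective cover in TF-2b; this reflects that ``negative'' (shifted simple) summands of the co-Bongartz completion correspond to ``positive'' (projective) summands of the Bongartz completion, a phenomenon made precise by the explicit formulas of Proposition~\ref{prop:Bongartzcompletion} and Lemma~\ref{lem:cocompletion}. Once this correspondence is pinned down, the four formulas in the statement follow uniformly from Theorem~\ref{thm:introthm1}.
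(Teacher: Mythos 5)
Your case division and overall architecture coincide with the paper's: TF-3 is immediate from \cref{thm:introthm1}; TF-1a follows from $C[1]\in\add\ccomp(B)$, $C\in\add\bcomp(B)$ and \cref{lem:gen_mut}(c); and TF-2a follows from the short exact sequence $0\to\rad^k B\to B\to C\to 0$ together with \cref{lem:gen_mut}(b). But two of your steps have genuine gaps. The most serious is Case TF-4, where you assert that the left-irregular rule of \cite{BHM2024} ``outputs $B\oplus f_C B$'' directly. It does not: irregular mutation is defined via the split and non-split $\Ext$-projectives of $\Filt(\Gen(J(B\oplus C)))$, the smallest torsion class containing $J(B\oplus C)$, and extracting $B\oplus B/C$ from that recipe is the hardest computation in the paper (\cref{thm:Nakayamacase4}). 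One must exhibit an explicit $\tau$-rigid module $\widehat B$ and prove it is $\tau$-rigid (\cref{prop:Bhatrigid}), show $\Gen\widehat B=\Filt(\Gen J(B\oplus C))$ by a two-sided inclusion (which needs \cref{lem:case4extensions} and \cref{lem:case4coverinJ} to get the projectives $P(t+j)_n$ into the filtration closure), and only then read off $B$ and $B/C$ as the non-split $\Ext$-projectives. None of this appears in your proposal.

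The second gap concerns the subcases where $B$ is projective. In TF-2b the input to the $V$-map is still the unsigned module $C=B/\rad^k B$, a summand of the module part of $\ccomp(B)$ --- there is no ``shifted simple projective $S[1]$'' being mutated. What changes is that $\rad^k B\notin\add\bcomp(B)=\add\L$, so \cref{lem:gen_mut}(b) is unavailable; one must instead produce the right exact sequence $P(\rad^k B)\to B\to C\to 0$ and verify that the first map is a minimal left $\add B$-approximation before invoking \cref{lem:gen_mut}(a). Similarly, TF-1b is not ``the same kind of analysis'' as TF-1a: there $C\notin\add\bcomp(f_C B)$, so the summand-matching of \cref{lem:gen_mut}(c) does not apply. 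The paper first shows via \cref{lem:case1homs} that $B$ is forced to be projective, and then computes $V_{f_C B}(C[1])=B$ from the sequence $C\to B\to f_C B\to 0$ by checking that $C\to B$ is a minimal left $\add(\bcomp(f_C B))$-approximation, a factorization argument through $P(t)$ that uses the explicit index range of the projective summands of $\bcomp(f_C B)$. Your final formulas are all correct, but these approximation arguments and the entire irregular case still need to be supplied.
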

\begin{proof}
    Case TF-1 is \cref{prop:Nakayamacase1}, Case TF-2 is \cref{lem:Nakayamacase2} and Case TF-4 is \cref{thm:Nakayamacase4}, while TF-3 is directly from \cref{thm:introthm1}.
\end{proof}

With the explicit description of \cref{thm:introNakmutation} we first prove that any two TF-orders of the same $\tau$-tilting module are related by a sequence of mutations of TF-ordered modules as in \cref{thm:introNakmutation}, see \cref{lem:TFdecsconnected}. Then we use \cref{thm:introNakmutation} to show that whenever two $\tau$-tilting modules are related by a mutation of $\tau$-tilting modules as defined in \cite{tau}, then some TF-orders of these two modules are related by a sequence of mutations of TF-ordered modules, see \cref{prop:mutationmutation}. Since Nakayama algebras are $\tau$-tilting finite, their mutation graphs of the mutation of $\tau$-tilting modules are connected. Hence by alternating these two processes of obtaining sequences of mutations of TF-ordered $\tau$-tilting modules, we are able to connect any two TF-ordered $\tau$-tilting modules, thus showing that the mutation is transitive.

The structure of the paper is as follows: In Section 2 we provide all necessary background on $\tau$-tilting theory, $\tau$-exceptional sequences and TF-ordered modules. In Section 3 we investigate how the mutation of $\tau$-exceptional sequences may be described using the alternate viewpoint of TF-ordered modules. In Section 4 we describe the Bongartz completion and co-Bongartz completion for Nakayama algebras and prove \cref{thm:introNakmutation}. In Section 5 we demonstrate how this result can be used to prove the transitivity of the mutation for Nakayama algebas and thus \cref{thm:introthm2}. In Section 6 we present many examples of the theory developed in this paper. Moreover, we use the geometric model for $\tau$-tilting modules of \cite{Adachi2016} to demonstrate examples involving Nakayama algebras.

\section{Background}

All algebras in this paper are assumed to be finite-dimensional over an arbitrary fixed field $K$. Let $\Lambda$ be a finite-dimensional algebra, and let $\mods \L$ we denote the category of finite-dimensional left $\Lambda$-modules. We will always assume modules to be basic, and define $|M|$ to be the number of indecomposable direct summands of a module $M$. Throughout this document, let $n$ denote the rank of $\Lambda$, that is, the number of simple $\Lambda$-modules,
up to isomorphism.
Let $\mathcal{X} \subseteq \mods \Lambda$ be a full subcategory. A morphism $M \to N$ is called a \textit{left $\Xcal$-approximation} of $M$ if the induced morphism $\Hom(N,X) \to \Hom(M,X)$ is an epimorphism for all $X \in \Xcal$. Right approximations are defined dually. A map $f: M \to N$ is called \textit{left minimal} if every endomorphism $g \in \End(N)$ satisfying $g \circ f = f$ is an isomorphism. A subcategory $\mathcal{C} \subseteq \mods \Lambda$ is called \textit{functorially finite} if every object in $\mods \Lambda$ has a left $\mathcal{C}$-approximation and a right $\mathcal{C}$-approximation.
We let $\add M$ denote the additive closure of a module $M$. Note that $\add M$ is automatically functorially finite for any module $M$. 

Given a subcategory $\Ccal \subseteq \mods \Lambda$, let us define the $\Hom$-orthogonal categories as follows:
\begin{align*}
    \mathcal{C}^{\perp} &= \{M \in \mods \Lambda \mid \forall X \in \mathcal{C}, \Hom_\Lambda(X,M) = 0\}, \\
    {}^{\perp}\mathcal{C} &= \{M \in \mods \Lambda \mid \forall X \in \mathcal{C}, \Hom_\Lambda(M,X) = 0\}.
\end{align*}

A particularly important class of subcategories of $\mods \Lambda$ are the \textit{torsion classes}, as introduced in \cite{Dickson66}. A torsion class $\Tcal$ is a full subcategory of $\mods \Lambda$ which is closed under quotients and extensions. Its right $\Hom$-orthogonal category $\Tcal^\perp$ is a \textit{torsion-free class}, which by definition is a full subcategory of $\mods \Lambda$ closed under submodules and extensions. Given a module $M \in \mods \Lambda$, such a \textit{torsion pair} $(\Tcal, \Tcal^\perp)$ induces by \cite{Dickson66} a short exact sequence
\[ 0 \to t_{\Tcal}M \to M \to f_{\Tcal}M \to 0,\]
uniquely defined by the requirements $t_{\Tcal}M \in \Tcal$ and $f_{\Tcal}M \in \Tcal^\perp$ . Another central class of subcategories of $\mods \Lambda$ are the \textit{wide subcategories} first considered in the context of abelian categories in \cite{Hovey2001}. A full subcategory $\Wcal \subseteq \mods \Lambda$ is called wide if it is closed under kernels, cokernels and extensions. Torsion classes, torsion-free classes and wide subcategories all form partially ordered sets under inclusion, which we denote by $\tors \Lambda$, $\torf \Lambda$ and $\wide \Lambda$ respectively.

\subsection{$\tau$-tilting theory}
Following \cite{tau}, we give in this section an introduction to $\tau$-tilting theory where central examples of wide subcategories and torsion pairs appear. Let $\tau$ denote the Auslander-Reiten translation of $\mods \Lambda$. Denote by $\proj \Lambda$ the full subcategory of $\mods \Lambda$ consisting of projective $\Lambda$-modules. 

\begin{definition}\cite[Def. 0.1]{tau}
     A module $M \in \mods \Lambda$ is called \textit{$\tau$-rigid} if $\Hom(M, \tau M) = 0$ and \textit{$\tau$-tilting} if additionally $|M| = n$.
\end{definition}

We also need the notion of $\tau$-rigid and $\tau$-tilting \textit{pairs}. These are pairs of modules, and we begin by establishing some conventions on such pairs more generally, which play a central role throughout our paper.

\begin{definition}
    A \textit{pair of modules over $\Lambda$} is a pair $(M,N) \in \mods \Lambda \times \mods \Lambda$. We consider a pair $(M',N')$ to be a direct summand of a pair $(M,N)$ if $M'$ is a direct summand of $M$ and $N'$ is a direct summand of $N$.  Furthermore, pairs of modules of the form $(M,0)$ or $(0,M)$ play a special role in this paper, and we call these pairs of modules \textit{formally signed modules}. 
    The pair $(M,N)$ is called \textit{basic}, if
    both $M$ and $N$ are basic. We will always only work with basic pairs of modules.
    
    A formally signed module of the form $(M,0)$ is said to have sign $0$ and a formally signed module of the form $(0,M)$ is said to have sign $1$. A formally signed module with sign $0$ is sometimes called an unsigned module or simply a module. By abuse of notation, we borrow the shift-functor notation and often denote a formally signed module $M$ with sign $i$ by $M[i]$. Extending this analogy, we define $(M[i])[j] = M[i+j]$ for integers $i,j$ such that $i+j \in \{0,1\}$. A \textit{signed projective} is a formally signed module $P[1]$ or equivalently a pair $(0,P)$ where $P$ is a projective module.
\end{definition}

The central objects of $\tau$-tilting theory may now be defined.

\begin{definition}\cite[Def. 0.3]{tau}
    A pair $(M,P) \in \mods \Lambda \times \proj \Lambda$ is called $\tau$-rigid if $M$ is $\tau$-rigid and $\Hom(P,M) =0$. It is called $\tau$-tilting if additionally $|M|+|P|=n$.
\end{definition}

\begin{remark}
    Note that in the definition of $\tau$-rigid pairs, the second coordinate of the pair must be a projective module. We will later work with other pairs of modules where the second coordinate need not be projective.
\end{remark}

We denote by $\stt \Lambda$ the collection of $\tau$-tilting pairs of $\mods \Lambda$, and by $\str \Lambda$ the collection of $\tau$-rigid pairs. For a $\tau$-rigid pair $T$ in $\str \Lambda$, we denote by $\comp{T}{\Lambda}$ the set of $\tau$-rigid pairs over $\Lambda$ whose direct sum with $T$ is basic and a $\tau$-rigid pair. If $M$ is $\tau$-rigid, then $(\Gen M,M^\perp)$ is a torsion pair of $\mods \Lambda$ by \cite[Thm. 5.10]{auslandersmalo81}, and we denote by $f_M$ the torsion-free functor associated to this torsion pair. Torsion classes arising in this way are functorially finite and conversely each functorially finite torsion class arises as $\Gen M$ of some $\tau$-rigid module $M \in \mods \Lambda$ by \cite[Prop. 1.1, Prop. 1.2]{tau}. We denote the subset of functorially finite torsion classes by $\ftors \Lambda$. Restricting to $\tau$-tilting pairs, we obtain by \cite[Thm. 2.7]{tau} a bijective map $\Gen\colon \stt \Lambda \rightarrow \ftors \Lambda$ given by 
\[(M,P) \longmapsto \Gen M.\] 

Given a full subcategory $\Xcal \subseteq \mods \Lambda$, a module $X \in \Xcal$ is called \textit{$\Ext$-projective in $\Xcal$} if \sloppy $\Ext_\Lambda^1(X, \Xcal) = 0$.  Moreover, we call an $\Ext$-projective module $X \in \Xcal$ \textit{split in $\Xcal$} if every epimorphism in $\Xcal$ with target $X$ splits and call an $\Ext$-projective module $X \in \Xcal$ \textit{non-split in $\Xcal$} if it is not split in $\Xcal$. 

When $\Xcal$ has a finite number of $\Ext$-projectives, we denote by $\eproj(\Xcal)$ the direct sum of one copy of each of these indecomposable $\Ext$-projective modules up to isomorphism. In this case we also denote by $\eproj_s(\Xcal)$ (resp. $\eproj_{ns}(\Xcal)$) the direct sum of one copy of each indecomposable split (resp. non-split) $\Ext$-projective module in $\Xcal$ up to isomorphism. We can then express the inverse of $\Gen\colon \stt \Lambda \rightarrow \ftors \Lambda$ as defined in the previous paragraph to be the map $\eproj\colon \ftors \Lambda \xrightarrow[]{} \stt \Lambda$ given by \[\Tcal \longmapsto  (\eproj(\Tcal),P)\] where $P$ is the unique projective $\Lambda$-module such that $(\eproj(T),P)$ is a $\tau$-tilting pair. This bijection induces the structure of a partially-ordered set on $\stt \Lambda$. Under this structure, $(\Lambda,0)$ is the greatest element and $(0,\Lambda)$ the least element. The poset-structure on $\stt \Lambda$ gives a poset structure also on $\stt_T \Lambda$ for any $\tau$-rigid pair $T$. It turns out that $\stt_T \Lambda$ also has a greatest and least element, the Bongartz completion of $T$ and the co-Bongartz completion of $T$, respectively\cite[Thm. 4.4]{dirrt}\cite[2.10]{tau}. We will now discuss these completions in more detail.

\begin{theorem}\cite[Thm. 2.10]{tau}
    Let $M$ be a $\tau$-rigid module over an algebra $\Lambda$. Then ${}^\perp \tau M$ is a functorially finite torsion class and $\bcomp(M) = \eproj({}^\perp \tau M)$ is a $\tau$-tilting module such that $M \in (\add \bcomp(M))$ and $\Gen(\bcomp(M)) = {}^\perp \tau M$. 
\end{theorem}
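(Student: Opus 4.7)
The plan is to establish the three claims in sequence. The first step is to verify that $\Tcal := {}^\perp \tau M$ is a torsion class containing $M$. Closure under quotients is immediate: if $X \twoheadrightarrow Y$ with $X \in \Tcal$ and $f \colon Y \to \tau M$ is nonzero, then the composite $X \twoheadrightarrow Y \xrightarrow{f} \tau M$ would be nonzero, contradicting $X \in \Tcal$. Closure under extensions follows from left-exactness of $\Hom(-, \tau M)$: for $0 \to X \to Y \to Z \to 0$, the induced sequence $0 \to \Hom(Z, \tau M) \to \Hom(Y, \tau M) \to \Hom(X, \tau M)$ forces $\Hom(Y, \tau M) = 0$ whenever the outer terms vanish. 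The containment $M \in \Tcal$ is exactly the $\tau$-rigidity hypothesis.

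The second and technical step is to establish functorial finiteness of $\Tcal$ by constructing a candidate Bongartz complement explicitly. The classical approach is to form a universal extension realising a generating set of $\Ext^1_\Lambda(M, \Lambda)$, producing a short exact sequence
\[ 0 \to \Lambda \to B \to M^r \to 0 \]
for an appropriate integer $r$. One then verifies that $B \in \Tcal$, that $M \oplus B$ is $\tau$-rigid of the correct total rank, and that $M \oplus B$ simultaneously yields left and right $\Tcal$-approximations for any module in $\mods \Lambda$, exploiting that $\Lambda$ maps canonically into $B$. The key technical input is the Auslander-Reiten formula relating $\Hom(-, \tau M)$ to $\Ext^1(M, -)$ modulo maps factoring through projectives, together with the Auslander-Smalø equivalence $\Hom(M, \tau N) = 0 \Leftrightarrow \Ext^1(M, N') = 0$ for all $N' \in \Gen N$ that was recalled in the introduction.

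The final step is to identify $\bcomp(M) = \eproj(\Tcal)$ with the Bongartz completion and verify the remaining assertions. Once functorial finiteness is in hand, the bijection $\eproj \colon \ftors \Lambda \to \stt \Lambda$ recalled earlier yields a $\tau$-tilting pair $(\eproj(\Tcal), P)$ whose module part is by definition $\bcomp(M)$. The inclusion $M \in \add \bcomp(M)$ reduces to showing that $M$ is Ext-projective in $\Tcal$, which follows from applying the Auslander-Reiten formula to any $X \in \Tcal = {}^\perp \tau M$. The equality $\Gen(\bcomp(M)) = \Tcal$ is the inverse direction of the $\eproj$-correspondence, which is the general bijection between functorially finite torsion classes and the module parts of $\tau$-tilting pairs.

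The main obstacle is the second step: the universal-extension construction of $B$ must be carried out with care, $B$ must be shown to lie in $\Tcal$ (which is not immediate from the defining short exact sequence), and one must verify that the maps $B \oplus M$ provides are genuine $\Tcal$-approximations on both sides. The first and third steps are essentially formal once this groundwork is laid; all of the substantive $\tau$-tilting-theoretic content is concentrated in the approximation argument for $\Tcal$.
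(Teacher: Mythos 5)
First, be aware that the paper contains no proof of this statement: it is imported verbatim as background from \cite[Thm.~2.10]{tau}, so there is no internal argument to measure yours against. Your Steps 1 and 3 are correct and essentially routine: the torsion-class verification is exactly right; $M$ is $\Ext$-projective in $\Tcal={}^{\perp}\tau M$ because $\Ext^1_\Lambda(M,X)\cong D\overline{\Hom}_\Lambda(X,\tau M)$ vanishes for $X\in\Tcal$ (note that the Auslander--Reiten formula in this direction is taken modulo maps factoring through \emph{injectives}, not projectives as you write); and once functorial finiteness is known, the remaining claims follow from the bijection $\Gen\colon\stt\Lambda\to\ftors\Lambda$ quoted just before the theorem.

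The gap is in Step 2, precisely where you place the weight of the proof. The universal extension $0\to\Lambda\to B\to M^r\to 0$ realising a generating set of $\Ext^1_\Lambda(M,\Lambda)$ is Bongartz's construction for classical partial tilting modules, and the verification that $B\in{}^{\perp}\tau M$ genuinely uses $\mathrm{pd}_\Lambda M\le 1$: from $\Ext^1(M,B)=0$ one must deduce $\Ext^1(M,\Gen B)=0$ (which is what $\Hom(B,\tau M)=0$ amounts to via \cite[Prop.~5.8]{auslandersmalo81}), and that implication requires right-exactness of $\Ext^1(M,-)$. For a general $\tau$-rigid module the construction produces the wrong object. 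A minimal counterexample lives over a Nakayama algebra of the kind this paper studies: let $\Lambda=K\overrightarrow{\A}_3/\rad^2$ and let $M$ be the simple at the source. Then $\mathrm{pd}_\Lambda M=2$ and $\Ext^1_\Lambda(M,\Lambda)=0$, so the universal extension degenerates to $B=\Lambda$; but $\Hom(\Lambda,\tau M)\neq 0$, so $M\oplus B$ is not even $\tau$-rigid, while the true Bongartz completion (computable from \cref{prop:Bongartzcompletion}) has total dimension strictly smaller than $\dim_K\Lambda$ and hence cannot arise as an extension of $\Lambda$ by copies of $M$ at all. One clean repair is to avoid constructing the complement altogether: the dual of \cite[Thm.~5.10]{auslandersmalo81} applied to $N=\tau M$ shows that $({}^{\perp}\tau M,\Cogen \tau M)$ is a functorially finite torsion pair, the required hypothesis $\Hom(\tau^{-1}\tau M,\tau M)=0$ being immediate from $\tau$-rigidity of $M$; alternatively, perform the completion at the level of the two-term complex $\mathbb{P}_M$ in $K^b(\proj\Lambda)$, where the cone of a minimal right $\add(\mathbb{P}_M)$-approximation of $\Lambda[1]$ does the job because no higher extension obstructions exist there.
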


We call $\bcomp(M)$ the \textit{Bongartz completion of $M$}. In \cite[Sec. 4]{dirrt} the authors introduce Bongartz and co-Bongartz completions for $\tau$-rigid pairs. Following them, note that for a $\tau$-rigid pair $(M,P)$ over $\Lambda$ where $P \cong \Lambda e$ for some idempotent $e \in \Lambda$, we have that $M$ is a $\tau$-rigid module over $\Lambda/\langle e\rangle$. Conversely, $\tau$-rigid modules $N$ over $\Lambda/\langle e\rangle$ give $\tau$-rigid pairs $(N,P)$ over $\Lambda$. We then define the Bongartz completion of $(M,P)$ to be the $\tau$-tilting pair \[\bcomp(M,P) = (\bcomp_{\Lambda/\langle e\rangle}(M),P)\]
where $\bcomp_{\Lambda/\langle e\rangle}(M)$
denotes the Bongartz completion of $M$ in $\mods 
\Lambda/\langle e\rangle$.

Dually, we define the \textit{co-Bongartz completion} of a $\tau$-rigid pair $(M,P)$ to be the unique $\tau$-tilting pair whose unsigned module component generates $\Gen M$: \[\ccomp(M,P) =(\eproj(\Gen M), Q),\] where $Q$ is the unique basic projective module making it into a $\tau$-tilting pair.

For a $\tau$-rigid pair $T$, the (co-)Bongartz complement of $T$ is the unique $\tau$-rigid pair $\bcomplement{T}$ (respectively $\ccomplement{T}$) such that $\bcomplement{T} \oplus T = \bcomp(T)$ (respectively $\ccomplement{T} \oplus T = \ccomp(T)$). Note that any Bongartz complement will be unsigned.

\begin{proposition}\label{prop:co-bongartz-summand-characterization}
    Let $T = (M,P)$ be a $\tau$-rigid pair. A signed module $X \in \stt_T \Lambda$ is a summand of the co-Bongartz complement of $T$ if and only if $X$ is signed projective or a module in $\Gen M$.
\end{proposition}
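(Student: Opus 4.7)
The plan is to use the explicit formula $\ccomp(M,P) = (M',Q)$ with $M' = \eproj(\Gen M)$. The forward direction is immediate: any summand $X$ of $\ccomplement{T}$ is a summand of $\ccomp(T) = (M',Q)$, hence either a summand of $M'$ (and so a module in $\Gen M$, as $M' \in \Gen M$) or a summand of $Q$ (and so a signed projective).

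For the converse, suppose $X \in \stt_T \Lambda$ is signed projective or lies in $\Gen M$. Since $X \oplus T$ is basic, $X$ shares no summand with $T$, so it is enough to show $X$ is a summand of $\ccomp(T)$. In the unsigned case, let $N$ be the underlying module of $X$; then $M \oplus N$ is $\tau$-rigid and $\Gen(M \oplus N) = \Gen M$, so the Auslander--Smal\o\ equivalence of \cite{auslandersmalo81} turns $\Hom(M \oplus N, \tau M) = 0$ into $\Ext^1(M \oplus N, \Gen M) = 0$; restricting to the summand $N$ shows that $N$ is $\Ext$-projective in $\Gen M$, hence a summand of $M' = \eproj(\Gen M)$.

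In the signed projective case $X = R[1]$, the $\tau$-rigidity of $(M, R \oplus P)$ gives $\Hom(R, M) = 0$, and lifting any map $R \to M'$ through an epimorphism $M^k \to M'$ (using projectivity of $R$ together with $M' \in \Gen M$) upgrades this to $\Hom(R, M') = 0$. Then $(M', Q \oplus R)$ is itself a $\tau$-rigid pair; but $(M',Q)$ is already $\tau$-tilting with $|M'| + |Q| = n$ and $\tau$-rigid pairs have at most $n$ basic summands, so $R$ must already be a summand of $Q$. The only real subtlety is this final maximality step for $Q$; once that is in place the whole argument is a direct unwinding of the definition of $\ccomp(T)$ together with the Auslander--Smal\o\ criterion, so I do not anticipate a serious obstacle.
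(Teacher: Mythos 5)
Your proof is correct; the forward implication coincides with the paper's, but your argument for the converse takes a genuinely different route. The paper sets $T' = T \oplus X$, notes that the module parts of $T$ and $T'$ generate the same torsion class, and concludes $\ccomp(T) = \ccomp(T')$ from the bijection $\Gen\colon \stt \Lambda \to \ftors \Lambda$; since $X$ is a summand of $T'$ and hence of $\ccomp(T')$, basicness of $T \oplus X$ then places it in the complement. You instead verify membership in $\ccomp(T) = (\eproj(\Gen M), Q)$ directly: for unsigned $X = (N,0)$ you deduce $\Ext$-projectivity of $N$ in $\Gen M$ from the Auslander--Smal\o{} criterion applied to the $\tau$-rigid module $M \oplus N$ (the hypothesis you actually need is $\Hom(M \oplus N, \tau N) = 0$ rather than $\Hom(M \oplus N, \tau M) = 0$ as written --- a harmless slip, since full $\tau$-rigidity of $M \oplus N$ supplies it), and for $X = R[1]$ you upgrade $\Hom(R,M) = 0$ to $\Hom(R, \eproj(\Gen M)) = 0$ using projectivity of $R$ and then invoke the bound of at most $n$ indecomposable summands for a basic $\tau$-rigid pair; that bound is the standard fact \cite[Prop.~1.3, Thm.~2.12]{tau}, so the ``subtlety'' you flag is not a gap. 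The paper's argument is shorter and uniform over the two cases; yours is more explicit about \emph{why} $X$ appears in $\eproj(\Gen M)$ or in $Q$, at the cost of a case split and of quoting two further standard facts in place of the single $\Gen$-bijection.
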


\begin{proof}
    Assume first that $X$ is signed projective or in $\Gen M$. 
    Let $T' = T \oplus X = (M',P')$. Let $C = (N,Q)$ be the co-Bongartz completion of $T$ and let $C' = (N',Q')$ be the co-Bongartz completion of $T'$. By our assumption on $X$, we have $\Gen M = \Gen M'$. Since $C$ and $C'$ are co-Bongartz completions, we thus have \[\Gen C = \Gen M = \Gen M' = \Gen C'.\] But since $C$ and $C'$ are $\tau$-tilting pairs corresponding to the same torsion class, we must have $C = C'$.

    Conversely, let $X$ be a direct summand of the co-Bongartz completion $C = (M',P')$ of $T = (M,P)$. Assume that $X$ is not a shifted projective. Then $X \in \add M'$ so $X \in \Gen M' = \Gen M$ as wanted. This finishes the proof.
\end{proof}

The Bongartz completion and co-Bongartz completion of a $\tau$-rigid pair $T$ are distinct pairs unless $T$ is $\tau$-tilting. If $T$ is almost complete, that is $|T| = n-1$, then there are exactly two $\tau$-tilting pairs $T^+$ and $T^-$ having $T$ as a direct summand, namely the Bongartz completion and co-Bongartz completion of $T$. Further, these two completions are related via $\tau$-tilting mutation \cite[Def.-Prop. 2.28]{tau}.

\begin{definitionproposition}\label{prop:tau_mutation_air}\cite[Def.-Prop. 2.28]{tau} Let ${T =}(M \oplus X,P)$ be a $\tau$-tilting pair with $X$ indecomposable. If $X \notin \Gen M$ then we may \textit{left mutate} $T$ at $X$. Let $f\colon X \to M'$ be a minimal left $\add M$-approximation of $X$. Let $Y = \coker f$. Then the left mutation of $(M \oplus X,P)$ at $X$ is defined as 

     \[\mu_X(M \oplus X,P) = \begin{cases}
         (M \oplus Y,P) & \text{if $Y \neq 0$,} \\ 
         (M,P \oplus Q) & \text{otherwise,}
     \end{cases}\] 
     where in the last case, $Q$ is the unique projective module making $(M,P \oplus Q)$ a $\tau$-tilting pair.
\end{definitionproposition}

Another viewpoint commonly used to study $\tau$-tilting theory is that of (2-term) silting theory as introduced in \cite{KellerVossieck}, where we instead of working with $\tau$-rigid pairs work with 2-term pre-silting objects in the bounded homotopy category of finitely generated projective modules $K^b(\proj \Lambda)$. This is also related to the approach in \cite{DerksenFei2015}. We refer to \cite[Sec. 3]{tau}
for definitions and details on (2-term) silting theory.

\begin{definition}
    Given a $\tau$-rigid pair $T = (M,P)$ over $\Lambda$, we define by $\mathbb{P}_T$ the two-term complex concentrated in degrees $-1$ and $0$ given by \[\mathbb{P}_T\colon \quad  P_{-1} \oplus P \xrightarrow{\scriptscriptstyle \begin{pmatrix} \alpha & 0 \end{pmatrix}} P_0 \quad \in K^b(\proj \Lambda), \]
    where $P_{-1} \xrightarrow{\alpha} P_0 \to M$ is a minimal projective presentation of $M$.

\end{definition}

By \cite[Thm. 3.1]{tau}, $\mathbb{P}_{(-)}$ gives by a bijection between $\str \Lambda$ and the $2$-term presilting objects living in degrees $-1$ and $0$ of $K^b(\proj \Lambda)$, whose inverse is given by associating to a $2$-term pre-silting complex $\mathbb{P} \cong \mathbb{Q} \oplus P[1]$  the $\tau$-rigid pair $(H^0(\mathbb{Q}),P)$. The connection with silting  justifies calling a summand $(0,Q)$ of a $\tau$-rigid pair a shifted projective, since it can be seen as the stalk complex $Q[1] \in K^b(\proj \Lambda)$.

Using the Bongartz completion of a module, Jasso defined in \cite{jassoreduction} what is known as $\tau$-tilting reduction or Jasso reduction. The work of Jasso has been extended for example by \cite{dirrt}, \cite{børve2023} and \cite{buan2023perpendicular}. Let $(M,P)$ be a $\tau$-rigid pair over an algebra $\L$ with rank $n$. The \textit{$\tau$-perpendicular category} $J(M,P)$, first studied by Jasso for the case $P = 0$ and then generalised by \cite{dirrt}, is defined as 
\[J(M,P) = M^\perp \cap {}^\perp \tau M \cap P^\perp.\]
This subcategory of $\mods \Lambda$ is wide by \cite[Thm. 4.12]{dirrt}, \cite[Cor. 3.25]{Br_stle_2019} and \cite[Thm. 1.4]{jassoreduction}, and is further equivalent to a module category of an algebra of rank $n - |M| - |P|$, say $J(M,P) \cong \mods \Gamma$. The $\tau$-tilting theory of $\Gamma$ may then be studied using the $\tau$-tilting theory of $\Lambda$: There is a mutation-preserving bijection between the $\tau$-tilting pairs over $\Gamma$ and the $\tau$-tilting pairs of $\Lambda$ having $(M,P)$ as a summand by \cite[Thm. 1.1]{jassoreduction} and \cite[Thm. 4.12]{dirrt}.

\subsection{$\tau$-exceptional sequences and TF-ordered $\tau$-rigid modules}
Introduced as a generalisation of signed exceptional sequences \cite{IgusaTodorov2017}, signed $\tau$-exceptional sequences were defined and studied in \cite{tauexceptional_buanmarsh}.

\begin{definition}\cite[Def. 1.3]{tauexceptional_buanmarsh}\label{Def:signedtauexcep}
    A sequence of indecomposable formally signed modules \[(M_1[s_1],M_2[s_2],\dots,M_t[s_t])\] is called a \textit{signed $\tau$-exceptional sequence} of length $t$ in $\mods \Lambda$ if:
    \begin{enumerate}
        \item $M_t[s_t] \in \str \Lambda$, and
        \item if $t > 1$, then $(M_1[s_1],\dots,M_{t-1}[s_{t-1}])$ is a signed $\tau$-exceptional sequence of length $t-1$ in $J(M_t) (\simeq \mods \Lambda')$.
    \end{enumerate}
\end{definition}

Note the recursive nature of the above definition, where keeping track of the relevant ambient module category is key: A module $M$ may be $\tau$-rigid (or projective) in a wide subcategory $\mods \Lambda' \simeq \Xcal \subset \mods \Lambda$ while not being $\tau$-rigid (or projective) in $\mods \Lambda$.

A signed $\tau$-exceptional sequence where all elements have sign $0$ is called an \textit{unsigned $\tau$-exceptional sequence} or simply a $\tau$-exceptional sequence. By a \textit{complete} (signed) $\tau$-exceptional sequence we mean a sequence of length equal to the rank of the algebra in question.

In \cite{tauexceptional_buanmarsh}, an explicit bijection between the set of signed $\tau$-exceptional sequences of length $t$ in $\mods \Lambda$ and the set of \textit{ordered} basic $\tau$-rigid pairs over $\Lambda$ with $t$ indecomposable direct summands is given.

\begin{definition}

An {\em ordered module} is a pair $(M, \sigma)$, where $M$ is a module and $\sigma$ a bijection $\{1,2,\dots,|M|\} \to \ind(\add M)$ where $\ind(\mathcal{C})$ is the set consisting of the indecomposable objects in a category $\mathcal{C}$ up to isomorphism. Similarly, an ordered pair of modules $(M,N)$ is given by a pair $((M,N),\sigma)$ where $\sigma$ is a bijection $\{1,2,\dots,|M| + |N|\} \to \ind(\add M) \cup \{X[1] \mid X \in \ind(\add N)\}$. We will now define some notions for ordered modules and remark that the corresponding notions can easily be translated to the context of ordered pairs of modules.

Given an ordered module $(M,\sigma)$ and a permutation $\phi\colon \{1,2,\dots,|M|\} \to \{1,2,\dots,|M|\}$, we define $\phi(M,\sigma) = (M,\sigma\circ \phi^{-1})$. Letting $\mathfrak{S}_n$ denote the symmetric group of permutations on $\{1,2,3,\dots,n\}$, the above gives a left $\mathfrak{S}_{n}$-action on ordered modules with $n$ summands, permuting their summands.

For an ordered module $(M,\sigma)$ and an integer $1 \leq i \leq |M|$, we let $(M,\sigma)_i = \sigma(i)$. Also, we define for integers $1 \leq i \leq |M|$ the ordered modules $(M,\sigma)_{<i}$ and $(M,\sigma)_{>i}$ as $(\bigoplus_{j = 1}^{i-1}\sigma(j),\sigma_{<i})$ and $(\bigoplus_{j = i+1}^{|M|} \sigma(j),\sigma_{>i})$ where $\sigma_{<i}$ is the restriction of $\sigma$ to $\{1,2,3,\dots,i-1\}$ and $\sigma_{>i}$ is defined by setting $\sigma_{>i}(x) = \sigma(x+i)$ for $1 \leq x \leq |M|-i$. We define $(M,\sigma)_{\geq i}$ and $(M,\sigma)_{\leq i}$ analogously, so for example $(M,\sigma)_{\leq 1} = M_1$ and $(M,\sigma)_{<1} = 0$.

By abuse of notation, we will usually denote an ordered module $(M,\sigma)$ simply by $M$ or by the sum $\sigma(1) \oplus \sigma(2) \oplus \dots \oplus \sigma(|M|)$. Note also that for a given decomposition of a module $M$ into indecomposable direct summands $M \cong M_1 \oplus M_2 \oplus \dots \oplus M_t$ there is a canonical choice of ordered module $(M,\sigma)$, namely given by defining $\sigma(i) = M_i$.

\end{definition}

\begin{example}
    Let $M = M_1 \oplus M_2 \oplus M_3$ denote an ordered module, which we formally may interpret as the object $M = (\bigoplus_{i = 1}^3 M_i,i \mapsto M_i)$. Let $\theta$ be the cycle $(1 \quad 2 \quad  3)$. Then \[\theta(M_1 \oplus M_2 \oplus M_3) = (\bigoplus_{i =1}^3 M_i,i \mapsto M_{\theta^{-1}(i)}) = M_3 \oplus M_1 \oplus M_2.\]
\end{example}

We will often utilise the bijection between ordered $\tau$-rigid pairs and signed $\tau$-exceptional sequences and now recall some notation from \cite{tauexceptional_buanmarsh}.

\begin{proposition}\cite[Thm. 5.4]{tauexceptional_buanmarsh}\label{prop:reduction}
    Let $\mathcal{C}$ be a module category. There is a bijection
    \begin{center}

        $\{\text{Ordered $\tau$-rigid pairs in $\mathcal{C}$ with $t$ indecomposable direct summands}\}$ \\
      $ \Psi^\mathcal{C}_t \Big\downarrow $ \\

      $  \{\text{Signed $\tau$-exceptional sequences in $\mathcal{C}$ of length $t$}\}$
     \end{center} and we denote by $\Psi^\mathcal{C}$ the bijection between all ordered $\tau$-rigid pairs in $\mathcal{C}$ and all signed $\tau$-exceptional sequences in $\mathcal{C}$. Formally, we let $\Psi^\mathcal{C}(T) = \Psi^\mathcal{C}_{|T|}(T)$. In contexts where there is no ambiguity about which category we are considering, we may drop the superscript.
\end{proposition}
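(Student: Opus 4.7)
The plan is to construct $\Psi^{\mathcal{C}}_t$ by induction on $t$, mirroring the recursive structure of \cref{Def:signedtauexcep}. The base case $t = 1$ is immediate: an ordered $\tau$-rigid pair in $\mathcal{C}$ with a single indecomposable summand is precisely an indecomposable signed $\tau$-rigid pair, which is by definition a signed $\tau$-exceptional sequence of length $1$, so $\Psi^{\mathcal{C}}_1$ is the identity.

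For the inductive step, suppose the bijection is established for all module categories and all lengths less than $t$. Given an ordered $\tau$-rigid pair $(T_1, T_2, \ldots, T_t)$ in $\mathcal{C}$, I would leave the final entry $T_t$ fixed and use the $E$-map of the introduction to transport the remaining summands into the $\tau$-perpendicular category $J(T_t)$. Concretely, since $T_1 \oplus \cdots \oplus T_{t-1}$ together with $T_t$ is $\tau$-rigid, each $T_i$ for $i<t$ lies in the domain of $E_{T_t}$, and the images form an ordered $\tau$-rigid pair $(E_{T_t}(T_1), \ldots, E_{T_t}(T_{t-1}))$ in $J(T_t) \simeq \mods \Gamma$ for some algebra $\Gamma$ of rank strictly less than the rank of $\mathcal{C}$. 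Applying the inductive hypothesis in $\mods \Gamma$ yields a signed $\tau$-exceptional sequence of length $t-1$ in $J(T_t)$, and appending $T_t$ on the right produces, by \cref{Def:signedtauexcep}, a signed $\tau$-exceptional sequence of length $t$ in $\mathcal{C}$.

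The inverse map is constructed dually via the $V$-map: given a signed $\tau$-exceptional sequence $(M_1[s_1], \ldots, M_t[s_t])$, one applies the inductive hypothesis in $J(M_t)$ to the prefix to obtain an ordered $\tau$-rigid pair in $J(M_t)$, then lifts its summands back to $\mathcal{C}$ via $V_{M_t}$ to produce $(T_1, \ldots, T_{t-1}, M_t[s_t])$. The claim that the constructed maps are mutually inverse reduces to showing that $E_{T_t}$ and $V_{T_t}$ restrict to inverse bijections between the set of (indecomposable) $\tau$-rigid pairs in $\mathcal{C}$ whose direct sum with $T_t$ is $\tau$-rigid and the set of (indecomposable) $\tau$-rigid pairs in $J(T_t)$.

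The main obstacle is precisely this last compatibility: verifying that $E_{T_t}$ preserves $\tau$-rigidity and that its inverse $V_{T_t}$ does as well, so that the ordered structure and the $\tau$-rigid structure are both respected at each recursive step. This is essentially a reformulation of the mutation-preserving bijection between $\tau$-tilting pairs over $\Gamma$ and $\tau$-tilting pairs over $\mathcal{C}$ having $T_t$ as summand, coming from \cite[Thm. 1.1]{jassoreduction} and \cite[Thm. 4.12]{dirrt}, passed to the level of summands. Once this compatibility is established, the induction goes through and the well-definedness of $\Psi^{\mathcal{C}}_t$ on sequences of arbitrary length follows.
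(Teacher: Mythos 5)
Your overall architecture is the right one and matches how the paper (following \cite{tauexceptional_buanmarsh}) actually realises $\Psi^{\mathcal{C}}_t$: a recursion on $t$ that fixes the last summand $T_t$, pushes the prefix into $J(T_t)$ via $E_{T_t}$, and invokes the inductive hypothesis there — this is precisely the formula recorded in \cref{def:def-of-psi}. The crux you identify, namely that $E_{T_t}$ restricts to a bijection $\str_{T_t}\mathcal{C} \to \str J(T_t)$ taking indecomposables to indecomposables (so that it acts on ordered pairs and on direct sums), is indeed the content one must import from \cite[Prop.~5.6, Prop.~5.11]{tauexceptional_buanmarsh}; the paper states this bijectivity immediately after \cref{defn:Emap}.

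There is, however, a concrete error in your description of the inverse. The $V$-map is \emph{not} the inverse of the $E$-map, and cannot be: by \cref{def:Vmap}, $V_{T_t}$ is defined only on $\add(\ccomplement{T_t})$, the summands of the co-Bongartz complement of $T_t$ \emph{in} $\mathcal{C}$, and lands in $\add(\bcomplement{T_t})$ — it does not accept arbitrary objects of $\str J(T_t)$ as input, so "lifting the summands back to $\mathcal{C}$ via $V_{M_t}$" does not typecheck. The $V$-map is an ingredient \emph{inside} the definition of $E_{T_t}$ (it appears in the case $E_T(X) = f_T(V_T(X))[1]$ for $X$ a summand of the co-Bongartz complement), not its inverse. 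The correct statement is simply that $E_{T_t}^{-1}$ exists as a map $\str J(T_t) \to \str_{T_t}\mathcal{C}$, which is exactly what the cited bijectivity of $E$ provides; constructing it explicitly requires unwinding all four cases of \cref{defn:Emap}, not a single application of $V_{T_t}$. With that correction — and noting that one also needs the direct sum $E_{T_t}(T_1)\oplus\cdots\oplus E_{T_t}(T_{t-1})$ to be $\tau$-rigid in $J(T_t)$ as a whole, not merely summand-by-summand, which again is part of the statement that $E_{T_t}$ is a bijection on all of $\str_{T_t}\mathcal{C}$ — your induction goes through and reproduces the paper's construction.
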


We say that $T \in \str \mathcal{C}$ \textit{induces} the $\tau$-exceptional sequence $\Psi^\mathcal{C}(T)$. In \cite{mendozatreffinger_stratifyingsystems}, the authors describe explicitly the ordered $\tau$-rigid pairs that induce unsigned $\tau$-exceptional sequences, and show that these are exactly the so-called \textit{TF-ordered} $\tau$-rigid modules.

\begin{definition}\cite[Def. 3.1]{mendozatreffinger_stratifyingsystems}
    An ordered $\tau$-rigid $\Lambda$-module $M_1 \oplus M_2 \oplus \dots \oplus M_t$ is called TF-ordered if $M_i \notin \Gen (M_{>i})$ for all $1 \leq i \leq t-1$. 
\end{definition}

Actually, for any $\tau$-rigid module $M$, there is always
at least one ordering of its summands such that $M$ with this ordering
is TF-ordered, by \cite[Thm. 1.2(a)]{mendozatreffinger_stratifyingsystems}. We will call such orderings the {\em TF-orders of $M$.}
Recall that $f_N$ denotes the torsion-free functor associated to the torsion pair $(\Gen N, N^\perp)$ for a $\tau$-rigid module $N$.

Using the above notion, Mendoza and Treffinger give the following characterisation of (unsigned) $\tau$-exceptional sequences:

\begin{theorem}\cite[Thm. 5.1]{mendozatreffinger_stratifyingsystems}\label{thm:mt}
    Let $M = M_1 \oplus M_2 \oplus \dots \oplus M_t$ be a TF-ordered $\tau$-rigid module over an algebra $\Lambda$. Then 
    \[(f_{M_{>1}}(M_1),f_{M_{>2}}(M_2),\dots,M_t) = \Psi(M)\]
    and all $\tau$-exceptional sequences over $\Lambda$ are uniquely given by $\Psi(T)$ for some TF-ordered $\tau$-rigid module $T$. Thus $\Psi$ restricts to a bijection between TF-ordered $\tau$-rigid modules and  unsigned $\tau$-exceptional sequences over $\Lambda$.
\end{theorem}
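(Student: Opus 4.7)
I would proceed by induction on $t$, using the recursive structure built into both Definition 2.17 (signed $\tau$-exceptional sequences) and the bijection $\Psi$ of Proposition 2.18. The base case $t=1$ is immediate, since an indecomposable $\tau$-rigid module $M_1$ is itself an unsigned $\tau$-exceptional sequence, and the claimed formula collapses to $\Psi(M_1)=(M_1)=(f_0(M_1))$; also, there is no TF-condition to check.

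For the inductive step, I would first invoke the defining recursion of $\Psi$: given an ordered $\tau$-rigid pair $M_1 \oplus \cdots \oplus M_t$, the last entry of $\Psi(M)$ is $M_t$, while the preceding entries arise as $\Psi^{J(M_t)}$ applied to the image under the $E$-map of $M_1 \oplus \cdots \oplus M_{t-1}$. The first technical lemma to establish (or to import from \cite{tauexceptional_buanmarsh}) is the identification $E_{M_t}(N) = f_{M_t}(N)$ whenever $N \notin \Gen M_t$, with $E_{M_t}(N)$ being a shifted projective otherwise. Because $M$ is TF-ordered we have $M_i \notin \Gen M_{>i} \supseteq \Gen M_t$ for $i<t$, so each $E_{M_t}(M_i) = f_{M_t}(M_i)$ is unsigned, confirming that $\Psi(M)$ is unsigned once the rest of the induction goes through.

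Next I would verify that the tuple $(f_{M_t}(M_1), \ldots, f_{M_t}(M_{t-1}))$ is itself a TF-ordered relative $\tau$-rigid module in $J(M_t)$. Compatibility with $\tau$-rigidity follows from $\tau$-tilting reduction, while the TF-condition inside $J(M_t)$, namely $f_{M_t}(M_i) \notin \Gen_{J(M_t)}(f_{M_t}(M_{>i}))$, should be reduced via the torsion pair $(\Gen M_t, M_t^\perp)$ to the original condition $M_i \notin \Gen M_{>i}$. Applying the inductive hypothesis inside $J(M_t)$ then yields
\[
\Psi^{J(M_t)}\bigl(f_{M_t}(M_1) \oplus \cdots \oplus f_{M_t}(M_{t-1})\bigr) = \bigl(f^{J(M_t)}_{N_{>1}}(N_1), \ldots, N_{t-1}\bigr),
\]
with $N_i := f_{M_t}(M_i)$. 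The concluding step is to identify $f^{J(M_t)}_{N_{>i}}(N_i)$ with $f_{M_{>i}}(M_i)$, which amounts to checking that the torsion-free functor associated with $\Gen(M_{i+1} \oplus \cdots \oplus M_{t-1})$ in $J(M_t)$ composed with $f_{M_t}$ coincides with $f_{M_{>i}}$ on $M_i$; this is a general statement about compatibility of torsion pairs with wide subcategories from $\tau$-tilting reduction.

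For the second assertion, bijectivity of $\Psi$ already tells us that every signed $\tau$-exceptional sequence comes from a unique ordered $\tau$-rigid pair. The converse direction then just checks that an ordered $\tau$-rigid pair $T$ is TF-ordered (and in particular unsigned) precisely when $\Psi(T)$ is unsigned, which by the formula boils down to $f_{M_{>i}}(M_i) \neq 0 \iff M_i \notin \Gen M_{>i}$. The main obstacle I anticipate is the final identification $f^{J(M_t)}_{N_{>i}}(N_i) = f_{M_{>i}}(M_i)$: one must navigate carefully between torsion-free functors defined in different ambient categories and ensure that $\Gen M_{>i}$ (in $\mods \Lambda$) restricted along the embedding $J(M_t) \hookrightarrow \mods \Lambda$ corresponds to $\Gen_{J(M_t)}(f_{M_t}(M_{>i}))$. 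Once this compatibility is pinned down, the induction closes cleanly.
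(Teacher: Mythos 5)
Your proposal is sound, though note the paper itself does not prove this statement — it is imported from Mendoza--Treffinger — and the argument you outline is essentially the one the paper gives immediately afterward for the generalisation $\Psi(T) = (E_{T_{>1}}(T_1),\dots,T_t)$: induction on $t$ via the recursive definition of $\Psi$, with your two anticipated obstacles (TF-order surviving reduction, and $f^{J(M_t)}_{N_{>i}}(N_i) = f_{M_{>i}}(M_i)$) handled exactly by \cref{prop:tf-reduced-still-tf} and the associativity of the $E$-map (\cref{thm:E-associative}), combined with the fact that $E_N(X)=f_N X$ whenever $X\notin\Gen N$. The only minor imprecision is in your last paragraph: the right criterion is that $E_{M_{>i}}(M_i)$ is unsigned iff $M_i$ is a module with $M_i\notin\Gen M_{>i}$ (when $M_i\in\Gen M_{>i}$ the $E$-map returns the shifted projective $f_{M_{>i}}(V_{M_{>i}}(M_i))[1]$, not the zero module $f_{M_{>i}}(M_i)$), but this does not affect the conclusion.
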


\section{Mutation of $\tau$-exceptional sequences and corresponding TF-ordered modules}

In this section we will utilise the correspondence between $\tau$-exceptional sequences and TF-ordered $\tau$-rigid modules to study how the mutation of $\tau$-exceptional sequences as introduced in \cite{BHM2024} corresponds to a mutation on TF-ordered $\tau$-rigid
modules. 

\subsection{The $V$ and $E$-maps}
We have seen, in \cref{thm:mt}, how to compute $\Psi(M)$ for a TF-ordered $\tau$-rigid module $M$. To compute $\Psi$ generally, i.e to describe how ordered $\tau$-rigid pairs correspond to signed $\tau$-exceptional sequences, we need the $E$-map as introduced in \cite{tauexceptional_buanmarsh}. To define this map, an explicit bijection between the summands of the Bongartz complement of a $\tau$-rigid pair $T$ and the summands of the co-Bongartz complement of the same pair is introduced in \cite{tauexceptional_buanmarsh}. As this map plays a central role in this paper, we begin by recalling their work and denote this map by $V$.

\begin{definition}\label{def:Vmap}
    Let $T$ be a $\tau$-rigid pair. Recall that $\ccomplement{T}$ and $\bcomplement{T}$ are the co-Bongartz complement and Bongartz complement of $T$ respectively. Define $V_T\colon \add (\ccomplement{T}) \to \add(\bcomplement{T})$ as follows: Let $X \in \ind( \add(\ccomplement{T}))$ and let $\mathbb{P}_X$ and $\mathbb{P}_T$ be the two-term presilting objects associated to $X$ and $T$ respectively. Let now $\alpha\colon \mathbb{P}_T^+ \to \mathbb{P}_X$ be a minimal right $\add(\mathbb{P}_T)$-approximation of $\mathbb{P}_X$ and set $\mathbb{Y} = \cone(\alpha)[-1]$. Then $V_T(X)$ is defined to be $H^0(\mathbb{Y})$. 
\end{definition}

Sometimes we can simplify the computation of the $V$-map. A few special cases are discussed below.			

\begin{remark}
    If $T = (0,0)$ in \cref{def:Vmap}, the $V$-map as defined may be computed and will send a shifted projective $P[1]$ to the module $P$. 
\end{remark}

\begin{lemma}\label{lem:v-map-on-shifted-projectives}\cite[Sec. 4]{tauexceptional_buanmarsh}
    Let $P[1] \oplus Q[1]$ be a basic shifted projective. Then $V_{Q[1]}(P[1]) = f_Q P$.
\end{lemma}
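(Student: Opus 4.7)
My plan is to unpack the definition of $V_{Q[1]}(P[1])$ directly: in this shifted-projective setting every ingredient of the construction reduces to elementary module-theoretic data, and the desired identification will fall out of the universal property of right approximations.

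First I would observe that $\mathbb{P}_T = Q[1]$ and $\mathbb{P}_X = P[1]$ are stalk complexes concentrated in degree $-1$, and that morphisms between two such stalk complexes in $K^b(\proj \L)$ coincide with the morphisms between the underlying projective modules. Hence a minimal right $\add(\mathbb{P}_T)$-approximation $\alpha\colon \mathbb{P}_T^+ \to \mathbb{P}_X$ is the same data as a minimal right $\add Q$-approximation $\alpha\colon Q' \to P$ in $\mods \L$ for some $Q' \in \add Q$; such an approximation exists since $\add Q$ is functorially finite. Applying the mapping-cone formula, $\cone(\alpha)$ is the two-term complex $(Q' \xrightarrow{\alpha} P)$ concentrated in degrees $-2$ and $-1$, so after the shift $\mathbb{Y} = \cone(\alpha)[-1]$ is $(Q' \xrightarrow{\alpha} P)$ placed in degrees $-1$ and $0$. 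It then follows immediately that
\[ V_{Q[1]}(P[1]) \;=\; H^0(\mathbb{Y}) \;=\; \coker(\alpha). \]

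The main (and only nontrivial) step is then to identify $\coker(\alpha)$ with $f_Q P$. Factoring $\alpha$ as $Q' \twoheadrightarrow \im \alpha \hookrightarrow P$ exhibits $\im \alpha$ simultaneously as a quotient of an object of $\add Q$ and as a submodule of $P$; hence $\im \alpha \in \Gen Q$, and therefore $\im \alpha \subseteq t_Q P$, where $t_Q P$ denotes the largest submodule of $P$ lying in $\Gen Q$. Conversely, any epimorphism $\pi\colon Q^s \twoheadrightarrow t_Q P$ composed with the inclusion $t_Q P \hookrightarrow P$ yields a morphism $Q^s \to P$ which, by the defining property of the right $\add Q$-approximation $\alpha$, must factor through $\alpha$; this forces $t_Q P = \im \pi \subseteq \im \alpha$. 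Combining the two inclusions gives $\im \alpha = t_Q P$, and therefore $\coker \alpha = P/t_Q P = f_Q P$. The only subtlety I anticipate is pinning down the sign and degree conventions in the mapping cone; once $\mathbb{Y}$ is correctly identified, the identification of $\coker \alpha$ with $f_Q P$ is a clean application of the universal property of right approximations together with the definition of the torsion-free functor $f_Q$.
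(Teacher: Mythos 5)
Your proof is correct and follows essentially the same route as the paper's: interpret the approximation of stalk complexes as a right $\add Q$-approximation of $P$, compute $\cone(\alpha)[-1]$ as the two-term complex $Q' \to P$, and identify $H^0$ with $P/t_Q P = f_Q P$ via the trace. The only difference is that you spell out the equality $\im\alpha = t_Q P$ in full detail where the paper asserts it directly, which is a welcome addition but not a change of method.
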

			
\begin{proof}
    We prove the statement here for completeness, but remark that the lemma follows from \cite[Def. 4.10, Rmk. 4.11]{tauexceptional_buanmarsh}.
    Let $\mathbb{P}_{P[1]}$ and $\mathbb{P}_{Q[1]}$ denote the shifted stalk complexes of $P$ and $Q$ respectively, so the $\add(\mathbb{P}_{Q[1]})$-approximation $\alpha$ of $\mathbb{P}_{P[1]}$ considered in \cref{def:Vmap} is given by a (shifted) $\add Q$-approximation $\beta$ of $P$. The negative shift of the cone of $\alpha$ is then given by the two-term complex $\cone (\alpha) [-1]\colon Q^+ \xrightarrow{\beta} P$. As $\beta$ is a right approximation, the image of $\beta$ must be equal to the trace of $Q$ in $P$, so $H^0(\cone(\alpha)[-1]) = f_Q P$.
\end{proof}

The following special cases will be useful in later sections.
 
\begin{lemma}\label{lem:gen_mut}
	Let $L,M,N$ be indecomposable $\tau$-rigid modules.
	\begin{itemize}
	\item[(a)] If there exists a right exact sequence
    \[ L \to M' \to N \to 0\]
	with $M' \in \add M$, $L \in \add(\bcomp(M))$ and $N \in \add(\ccomp(M))$, and the first map is a left $\add M$-approximation, then $V_M(N) \cong L$.
	\item[(b)] If there exists an exact sequence
					\[ 0 \to L \to M' \to N \to 0\]
					with $M' \in \add M$, $L \in \add(\bcomp(M))$ and $N \in \add(\ccomp(M))$, then $V_M(N) \cong L$.
	   \item[(c)] If $N \in \add(\bcomp(M))$ and $N[1] \in \add(\ccomp(M))$, then $V_M(N[1]) \cong N$.
    \end{itemize}
\end{lemma}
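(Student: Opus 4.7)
The plan is to apply \cref{def:Vmap} directly in each case: produce a candidate minimal right $\add \mathbb{P}_M$-approximation $\alpha\colon \mathbb{P}_M^+ \to \mathbb{P}_X$ in $K^b(\proj \Lambda)$ and read off $V_M(X)$ as $H^0$ of the negatively shifted cone. The recurring tool is that $M$ together with the proposed value of $V_M(X)$ always sits inside the $\tau$-tilting module $\bcomp(M)$, so the corresponding 2-term complexes form a presilting object, which supplies the required $\Hom$-vanishings in $K^b(\proj \Lambda)$.

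For (b), the short exact sequence $0 \to L \to M' \to N \to 0$ lifts, via the horseshoe lemma applied to minimal projective presentations, to a distinguished triangle $\mathbb{P}_L \to \mathbb{P}_{M'} \to \mathbb{P}_N \to \mathbb{P}_L[1]$ in $K^b(\proj \Lambda)$. Applying $\Hom_{K^b(\proj \Lambda)}(\mathbb{P}_M, -)$, the right approximation property of $\mathbb{P}_{M'} \to \mathbb{P}_N$ reduces to the vanishing $\Hom_{K^b(\proj \Lambda)}(\mathbb{P}_M, \mathbb{P}_L[1]) = 0$, which holds because $\mathbb{P}_M \oplus \mathbb{P}_L$ is presilting (both $M$ and $L$ lie in $\add \bcomp(M)$). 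Since $\mathbb{P}_{M'} \in \add \mathbb{P}_M$, this establishes the approximation, and the cone shifted by $-1$ identifies $\mathbb{P}_L$ as the output. Non-minimality only introduces contractible identity summands that drop out of the cone, so $V_M(N) \cong L$.

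Case (c) is almost immediate: since $M \oplus N$ is a summand of $\bcomp(M)$, it is $\tau$-rigid, giving $\Ext^1(M, N) = \Hom_{K^b(\proj \Lambda)}(\mathbb{P}_M, \mathbb{P}_{N[1]}) = 0$. Thus the minimal right $\add \mathbb{P}_M$-approximation of $\mathbb{P}_{N[1]}$ is the zero map with domain $\mathbb{P}_M^+ = 0$, and $\cone(0 \to \mathbb{P}_{N[1]})[-1] = \mathbb{P}_N$, yielding $V_M(N[1]) = H^0(\mathbb{P}_N) = N$.

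Case (a) is the main obstacle. A merely right exact sequence $L \to M' \to N \to 0$ does not lift directly to a triangle in $K^b(\proj \Lambda)$: applying the octahedral axiom to the epi--mono factorisation $L \twoheadrightarrow \im(L \to M') \hookrightarrow M'$ shows that $\cone(\mathbb{P}_L \to \mathbb{P}_{M'})$ differs from $\mathbb{P}_N$ by a $\mathbb{P}_{\ker(L \to M')}[1]$ contribution. The plan is to promote the left $\add M$-approximation hypothesis to a minimal left $\add \mathbb{P}_M$-approximation in $K^b(\proj \Lambda)$ and invoke the standard exchange-triangle mechanism of silting mutation: the second map in the resulting exchange triangle is automatically a right $\add \mathbb{P}_M$-approximation. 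The condition $N \in \add \ccomp(M)$ then forces the target of this approximation to agree with $\mathbb{P}_N$ (no shifted-projective contribution can appear, since $N$ is a genuine module summand of $\ccomp(M)$), and the conclusion follows as in case (b).
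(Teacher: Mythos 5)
Your parts (b) and (c) are essentially correct, but they follow a genuinely different route from the paper. The paper proves (b) by showing, via $\Ext^1(N,M)=0$ (Auslander--Smal\o{}), that $L \to M'$ is a left $\add M$-approximation and then reducing to (a); and it proves (a) and (c) by citing the module-level exchange-sequence description of the $V$-map from \cite[Def. 3.1]{BuanMarsh2018}. You instead work directly in $K^b(\proj \Lambda)$ with \cref{def:Vmap}, which makes (b) and (c) self-contained: the horseshoe triangle plus the presilting vanishing $\Hom_{K^b}(\mathbb{P}_M,\mathbb{P}_L[1])=0$ does give that $\mathbb{P}_{M'}\to\mathbb{P}_N$ is a right $\add(\mathbb{P}_M)$-approximation with cone $\mathbb{P}_L[1]$. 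Two small repairs: in (c) the group you need to kill is $\Hom_{K^b}(\mathbb{P}_M, N[1]) \cong D\Hom(N,\tau M)$, not $\Ext^1(M,N)$ (it still vanishes, precisely because $M\oplus N$ is $\tau$-rigid, so the conclusion stands); and in (b) a non-minimal right approximation does not merely contribute contractible summands to the cone --- it contributes shifted copies of objects of $\add(\mathbb{P}_M)$ --- so you should instead argue that $\cone(\beta)\cong \mathbb{P}_L[1]$ is indecomposable (as $L$ is indecomposable and its presentation minimal) and $N\notin\add M$, which forces the approximation you built to already be minimal.

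Part (a), however, is a plan rather than a proof, and the plan has a genuine gap. The exchange-triangle mechanism you invoke would finish the argument only if you produce a triangle $\mathbb{P}_L \to \mathbb{Q} \to \mathbb{P}_N \to \mathbb{P}_L[1]$ with $\mathbb{Q}\in\add(\mathbb{P}_M)$ whose third term is the \emph{minimal} two-term presentation $\mathbb{P}_N$, since \cref{def:Vmap} takes the cone of an approximation into that specific object. Lifting the module-level left $\add M$-approximation $f\colon L\to M'$ to a chain map $\tilde f\colon \mathbb{P}_L\to\mathbb{P}_{M'}$ only guarantees $H^0(\cone(\tilde f))\cong N$; when $f$ is not injective (the very case that separates (a) from (b)), $\cone(\tilde f)$ is a three-term complex that may differ from $\mathbb{P}_N$ by a summand $Q[1]$ with $Q$ projective, or carry cohomology in degree $-2$, and either defect changes the output of the $V$-map. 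You would also need to know that $\tilde f$ is a left $\add(\mathbb{P}_M)$-approximation in $K^b(\proj\Lambda)$, which does not follow formally from $f$ being one in $\mods\Lambda$ because $H^0\colon \Hom_{K^b}(\mathbb{P}_{M'},\mathbb{P}_M)\to\Hom_\Lambda(M',M)$ is surjective but not injective. Closing these two points is exactly the content of \cite[Def. 3.1, part I(b)]{BuanMarsh2018}, which the paper cites for (a); as written, your sketch does not supply it.
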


\begin{proof}
    Part (a) is a reformulation of \cite[Def. 3.1, part I(b)]{BuanMarsh2018}, noting that the map $L \to M$ is automatically left minimal, since $N$ is indecomposable and not in $\add M$ by assumption.
    
    For part (b), since $N \in \add(\ccomp(M))$, we have that $M \oplus N$ is $\tau$-rigid and so $M \in {}^\perp \tau N$, and hence $\Ext^1(N,M) = 0$ by \cite[Prop. 5.8]{auslandersmalo81}. Applying $\Hom(-,M)$ to the short exact sequence gives that $\Hom(M', M) \to \Hom(L,M)$ is an epimorphism, and so the map $L \to M'$ is a left $\add M$-approximation. It then follows from (a)
	that $V_M(N) \cong L$.
    
	For part (c), we apply \cite[Def. 3.1, part I(c)]{BuanMarsh2018}. Since $N \in \add(\bcomp(M))$, the exact sequence $N \to \bcomplement{T} \to M' \to 0$ with $N \to \bcomplement{T}$ a minimal left $\bcomp(M)$ approximation simply becomes $N \to N \to 0$ i.e. $\bcomplement{T}\cong N$. The claim follows directly from this.  
\end{proof}

We now recall the definition of the $E$-map as first introduced in \cite{tauexceptional_buanmarsh}.

\begin{definition}\cite[Prop. 5.6, Prop. 5.8, Prop 5.11]{tauexceptional_buanmarsh}\label{defn:Emap} For a $\tau$-rigid pair $T = (M,P)$ in a module category $\mathcal{C}$, we define the \textit{reduction map} \[E^\mathcal{C}_T\colon \comp{T}{\mathcal{C}} \to \comp{}{J(T)}.\] 

    Assume first that $T$ is on the form $(M,0)$ or $(0,P)$. We then define $E_T(X)$ for an indecomposable $X \in \str_T \mathcal{C}$ as 
    \[E^\mathcal{C}_T(X) = \begin{cases} f_T X & \text{if both $T$ and $X$ are $\tau$-rigid modules, and $X \not \in \Gen T$,}  \\X & \text{if $T$ is a shifted projective and $X$ is a $\tau$-rigid module,} \\ (f_{T[-1]}(X[-1]))[1] & \text{if $T$ is a shifted projective and $X$ is a shifted projective,} \\ f_T(V_T(X))[1] & \text{otherwise.}  \end{cases}\]
				
    Note that we are in the last case if $T$ is a $\tau$-rigid module and $X \in \Gen T$ or $X$ is a shifted projective. By \cref{prop:co-bongartz-summand-characterization} this is equivalent to $X$ being a summand of the co-Bongartz complement of $T$, hence $V_T(X)$ is well-defined. 

    Further, if $T = (M,P)$ for $M \neq 0$ and $P \neq 0$, we define $E^\mathcal{C}_T(X) = E^{J(M)}_{E^{\mathcal{C}}_M(P[1])}( E^\mathcal{C}_M(X))$. Lastly, we extend $E^{\mathcal{C}}_T$ additively, so that for $U \in \comp{T}{\mathcal{C}}$ we have $E^{\mathcal{C}}_T(U) = \bigoplus_{1 \leq i \leq t}E^{\mathcal{C}}_T(U_i)$ for a decomposition of $U$ into indecomposable direct summands $U = U_1 \oplus U_2 \oplus \dots \oplus U_t$.
				
\end{definition}

\begin{remark}
  Note that an additive function $f\colon \mathcal{C} \to \mathcal{D}$ between sets of (pairs of) modules taking indecomposable objects to indecomposable objects may be considered a function on \textit{ordered} (pairs of) modules by defining $f(M,\sigma) = (f(M),f \circ \sigma)$ for $M \in \mathcal{C}$.

    In particular, we will let $E^\mathcal{C}_T$ take ordered $\tau$-rigid pairs in $\str_T\mathcal{C}$ to ordered $\tau$-rigid pairs in $\str J(T)$.
\end{remark}

By \cite[Prop. 5.6,Prop 5.11]{tauexceptional_buanmarsh} the $E$-map defined above is bijective. When there is no ambiguity regarding the ambient category $\mathcal{C}$ we may omit the superscript. The following formula regarding associativity of the $E$-map was first investigated in the $\tau$-tilting finite case in \cite{BuanMarsh2018} and in the general case in \cite{buan2023perpendicular}.

\begin{theorem}\cite[Thm. 6.12]{buan2023perpendicular}\label{thm:E-associative}
    Let $T$ be a basic $\tau$-rigid pair in $\mathcal{C}$. The reduction induced by the $E$-map is associative in the sense that \[E^\mathcal{C}_T = E^{J(M)}_{E^{\mathcal{C}}_M(N)} \circ E^\mathcal{C}_M\] holds for any decomposition $T = M \oplus N$.
\end{theorem}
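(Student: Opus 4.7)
My plan is to prove the associativity identity by induction on $|N|$, combined with a case analysis on the types of summands. By additivity of $E^{\mathcal{C}}_T$, writing $N = N_1 \oplus N'$ with $N_1$ indecomposable, if the theorem is known whenever the second factor is indecomposable, then iterating gives $E^{\mathcal{C}}_T = E^{J(M \oplus N_1)}_{E^{\mathcal{C}}_{M \oplus N_1}(N')} \circ E^{J(M)}_{E^{\mathcal{C}}_M(N_1)} \circ E^{\mathcal{C}}_M$, and the induction hypothesis applied inside $J(M)$ to the decomposition $E^{\mathcal{C}}_M(N) = E^{\mathcal{C}}_M(N_1) \oplus E^{\mathcal{C}}_M(N')$ collapses the middle two factors into $E^{J(M)}_{E^{\mathcal{C}}_M(N)}$. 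So it suffices to treat the case where $N$ is indecomposable. The underlying categorical compatibility needed is that $\tau$-tilting reduction iterates, i.e.\ $J^{\mathcal{C}}(M \oplus N) = J^{J(M)}(E^{\mathcal{C}}_M(N))$ as subcategories of $\mathcal{C}$, which follows from the reduction theorems of Jasso and DIRRT cited in the paper and guarantees that both sides take values in the same category.

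Next I would perform a case analysis on the types of $M$ and $N$. If $M$ is a mixed pair $(M_0, P)$, then the definition of $E^{\mathcal{C}}_M$ invokes the decomposition $M = M_0 \oplus P[1]$ internally, so after the inductive reduction we may assume $M$ and $N$ are each either a pure $\tau$-rigid module or a pure shifted projective. The case with $M$ a $\tau$-rigid module and $N$ a shifted projective is precisely the identity built into the definition of $E^{\mathcal{C}}_T$ for mixed pairs, so nothing remains to prove. When both $M$ and $N$ are shifted projectives, the claim reduces via \cref{lem:v-map-on-shifted-projectives} to the factorisation $f_{P \oplus Q} = f_{f_P Q} \circ f_P$ of torsion-free functors, which follows from transitivity of the associated torsion pairs. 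When $M$ is a shifted projective and $N$ is a $\tau$-rigid module, the identity can be verified by unwinding both sides using the definition and using that any input $X$ already lies in $J(M)$.

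The main remaining and most technical case is when both $M$ and $N$ are $\tau$-rigid modules, which splits according to whether the indecomposable input $X$ lies in $\Gen T$. When $X \notin \Gen T$, both sides evaluate to $f_{M \oplus N}(X)$, and the required identity $f_{M \oplus N} = f_{E_M(N)} \circ f_M$ follows from the compatibility between the torsion pair $(\Gen T, T^\perp)$ in $\mathcal{C}$ and the torsion pair $(\Gen E_M(N), E_M(N)^\perp)$ in $J(M)$, together with $E_M(N) = f_M(N)$. When $X$ is a shifted projective or $X \in \Gen T$, the formula involves a $V$-map, and the key content is the analogous factorisation $V_{M \oplus N}(X) \cong V_{E_M(N)}(V_M(X))$ after matching summands via the reduction. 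I would establish this by comparing the minimal right $\add(\mathbb{P}_{M \oplus N})$-approximation of $\mathbb{P}_X$ in $K^b(\proj \mathcal{C})$ with a two-step approximation: first by $\add(\mathbb{P}_M)$ in $K^b(\proj \mathcal{C})$ and then by $\add(\mathbb{P}_{E_M(N)})$ inside $K^b(\proj J(M))$.

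The main obstacle is precisely this $V$-map factorisation: showing that the two-step approximation computes the same cone as the one-step approximation requires carefully matching minimal right approximations across the reduction and tracking how cones behave under the equivalence $\mods \Gamma \simeq J(M)$ from Jasso's theorem. Once this factorisation is in place, combining it with the torsion-free functor compatibility above handles the module--module case uniformly and completes the proof.
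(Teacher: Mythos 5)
First, note that the paper does not prove this statement at all: it is imported wholesale as \cite[Thm.~6.12]{buan2023perpendicular}, so there is no in-paper argument to compare yours against. The result is one of the main technical theorems underlying the $\tau$-cluster morphism category, and its proof in the cited reference is a substantial piece of work rather than a formal consequence of the definitions in this paper.

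Your outline identifies the right skeleton (additive reduction to $N$ indecomposable, iterated reduction $J(M\oplus N)=J^{J(M)}(E_M(N))$, case analysis by sign, torsion-functor compatibility), but it has two genuine gaps. The first is the step ``after the inductive reduction we may assume $M$ and $N$ are each pure.'' \cref{defn:Emap} defines $E_T$ for a mixed pair $(M_0,P)$ using one \emph{specific} decomposition, namely module part first, then shifted projectives; the claim that $E_T$ also factors through the decomposition $T=P[1]\oplus M_0$ (or through any interleaving of module and shifted-projective summands) is itself a nontrivial instance of the theorem, not something you may assume to set up the case analysis. Concretely, your ``$M$ shifted projective, $N$ a module'' case is not mere unwinding: it requires comparing $E_N$ and $V_N$ computed in $\mathcal{C}$ with $E_N$ and $V_N$ computed in $P^\perp\simeq\mods\Lambda/\langle e\rangle$, which is exactly the kind of cross-category comparison you defer. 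The second gap is the factorisation $V_{M\oplus N}(X)\cong V_{E_M(N)}(V_M(X))$, which you correctly flag as the main obstacle but do not prove; since $V_{E_M(N)}$ lives in $K^b(\proj\Gamma)$ for $\mods\Gamma\simeq J(M)$ while $V_{M\oplus N}$ lives in $K^b(\proj\Lambda)$, one must transport minimal approximations and cones across the reduction equivalence, and this is where essentially all of the content of the cited theorem resides. As it stands the proposal is a plausible roadmap, not a proof.
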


Using a reformulation of \cite[Thm. 3.15]{jassoreduction} we obtain a dual to \cref{prop:co-bongartz-summand-characterization}, characterising exactly when $X \in \stt_T \mathcal{C}$ is a direct summand of the Bongartz complement of $T$.

\begin{theorem}\cite[Thm. 3.15]{jassoreduction}\label{thm:E-map-order-preserving}
    Let $T = (M,P)$ be a $\tau$-rigid pair in a module category $\mods \Lambda$. Then $E_T$ restricts to an order-preserving bijection between $\tau$-tilting pairs on the form $T \oplus U$ and $\tau$-tilting pairs in $J(T)$.
\end{theorem}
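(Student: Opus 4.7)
The plan is to combine two facts: the already-established bijectivity of $E_T \colon \comp{T}{\L} \to \comp{}{J(T)}$ from \cite[Prop. 5.6, Prop. 5.11]{tauexceptional_buanmarsh}, and Jasso's and DIRRT's order-preserving bijection between $\tau$-tilting pairs in $\mods \L$ containing $T$ as a summand and $\stt J(T)$ from \cite[Thm. 3.15]{jassoreduction} and \cite[Thm. 4.12]{dirrt}. The crux is to identify $E_T$ restricted to the $\tau$-tilting completions of $T$ with the known reduction functor, so that order-preservation may be inherited.

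First, I would show that $E_T$ sends $\tau$-tilting completions to $\tau$-tilting pairs by a simple rank-count. If $T \oplus U \in \stt \L$, then $|U| = n - |T|$, and this equals the rank of the module category $J(T) \simeq \mods \Gamma$ by \cite[Thm. 4.12]{dirrt}. Since $E_T$ is additive and takes indecomposables to indecomposables by inspection of \cref{defn:Emap}, we get $|E_T(U)| = |U|$. Combined with $E_T(U) \in \str J(T)$, this forces $E_T(U) \in \stt J(T)$. The inverse direction follows from the already-known bijectivity of $E_T$ on $\str$.

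Next, for order-preservation, I would use the associativity of the $E$-map (\cref{thm:E-associative}) to reduce to two base cases: $T = M$ a $\tau$-rigid module, or $T = P[1]$ a shifted projective. For $T = P[1]$, we have $J(T) = \mods \L/\langle e\rangle$ (where $P = \L e$), and the formula $E_{P[1]}(X) = X$ for $\tau$-rigid modules $X$ together with $E_{P[1]}(Q[1]) = (f_P Q)[1]$ for shifted projectives is precisely the DIRRT reduction in this setting. For $T = M$ a $\tau$-rigid module, Jasso's reduction sends a summand $X$ of a $\tau$-tilting completion either to its image in $J(M)$ (when $X \notin \Gen M$) or to a signed projective of the reduced algebra (when $X$ is a summand of the co-Bongartz complement of $M$, i.e.\ $X \in \Gen M$ by \cref{prop:co-bongartz-summand-characterization}). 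These two cases match exactly the formulas $f_M X$ and $f_M(V_M(X))[1]$ of \cref{defn:Emap}. Once this identification is in hand, order-preservation is inherited directly from \cite[Thm. 3.15]{jassoreduction}, \cite[Thm. 4.12]{dirrt}, since the poset structure on $\stt \L$ restricts to the interval $[\ccomp(T), \bcomp(T)]$ of $\tau$-tilting completions of $T$.

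The main obstacle is the verification in the second base case that $f_M(V_M(X))[1]$ is the correct reduction of $X \in \Gen M$ to a signed projective in $J(M)$. This requires unpacking how Jasso's equivalence $J(M) \simeq \mods \Gamma$ treats the co-Bongartz summands: under the equivalence, the indecomposable projectives of $\Gamma$ correspond to the Bongartz complement summands, and the pairing between a co-Bongartz summand $X$ and its $V$-image $V_M(X)$ (which lives in the Bongartz complement by \cref{def:Vmap}) is exactly the pairing appearing in the definition of Jasso reduction; applying $f_M$ lands us in $J(M)$, and the shift records that we are naming the corresponding projective of $\Gamma$ rather than the module itself.
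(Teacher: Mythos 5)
Your proposal is correct and rests on the same two pillars as the paper's proof: the rank count plus bijectivity of $E_T$ for the bijection part (identical to the paper), and an appeal to \cite[Thm. 3.15]{jassoreduction} for order-preservation. The difference is in how you connect $E_T$ to Jasso's reduction. You propose to reduce to the base cases $T=M$ and $T=P[1]$ via associativity (\cref{thm:E-associative}) and then identify $E_T$ with the Jasso/DIRRT reduction summand by summand, including the verification that $f_M(V_M(X))[1]$ is the correct signed projective for co-Bongartz summands $X$ --- which you rightly flag as the main obstacle. The paper sidesteps all of this: since the partial order on $\stt J(T)$ is determined by $\Gen$ of the \emph{module} component alone, it suffices to observe that the module part of $E_T(U_i)$ is $f_M(N_i\oplus M)$ (the shifted-projective summands, and hence the whole $V$-map analysis, are irrelevant to the order), after which \cite[Thm. 3.15]{jassoreduction} gives $f_M(N_1\oplus M)\geq f_M(N_2\oplus M)$ in $J(M)$ and one restricts to $P^\perp$. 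So your argument would go through, but the identification you single out as the crux is unnecessary for this statement; recognizing that the order only sees module parts collapses your multi-case analysis to a one-line computation.
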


\begin{proof}
    Since $E_T\colon \str_T \Lambda \to \str J(T)$ is a bijection and $|E_T(U)| = |U|$ for all $U \in \str_T \Lambda$, we only need to prove the order-preserving part of the statement.

    Let $U_1 = (N_1,Q_1)$ and $U_2 = (N_2,Q_2)$ be $\tau$-rigid pairs such that $U_1 \oplus T$ and $U_2 \oplus T$ are $\tau$-tilting pairs. Assume that $U_1 \oplus T \geq U_2 \oplus T$, meaning that $\Gen (M \oplus N_1) \supseteq \Gen(M \oplus N_2)$.

    Let $E_T(U_1) = (N'_1,Q'_1)$ and $E_T(U_2) = (N'_2,Q'_2)$. Then $N'_1 \cong f_M(N_1 \oplus M)$ and $N'_2 \cong f_M(N_2 \oplus M)$, and by \cite[Thm. 3.15]{jassoreduction} we have therefore have $N'_1 \geq N'_2$ in $J(M)$ and therefore $N'_1 \geq N'_2$ in $J(M) \cap P^{\perp} = J(T)$.
\end{proof}

\begin{corollary}\label{cor:Bong-proj}
    Let $T$ be a $\tau$-rigid pair in a module category $\mathcal{C}$. Then $X \in \stt_T \mathcal{C}$ is a direct summand of $\bcomp(T)$ if and only if $E_T(X)$ is projective in $J(T)$.
\end{corollary}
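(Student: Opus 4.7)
My plan is to transport the statement through the order-preserving bijection provided by \cref{thm:E-map-order-preserving}. That theorem gives an order-preserving bijection between $\stt_T \mathcal{C}$ and $\stt J(T)$, and the key observation I will exploit is that such a bijection must send the unique maximum to the unique maximum. The maximum of $\stt_T \mathcal{C}$ is $\bcomp(T)$ by construction, while if I identify $J(T) \simeq \mods \Gamma$ for an algebra $\Gamma$ of rank $n - |T|$, the maximum of $\stt J(T)$ is $(\Gamma, 0)$, whose indecomposable summands are exactly the indecomposable projective modules of $J(T)$.

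The first step will be a bookkeeping reduction to get rid of $T$. Since $X \in \str_T \mathcal{C}$ means $T \oplus X$ is basic, $X$ shares no indecomposable summand with $T$; writing $\bcomp(T) = T \oplus \bcomplement{T}$, the condition that $X$ is a summand of $\bcomp(T)$ is therefore equivalent to the condition that $X$ is a summand of $\bcomplement{T}$. Since $E_T$ is defined additively on indecomposables (see \cref{defn:Emap}) and is a bijection on isomorphism classes of indecomposables in $\str_T \mathcal{C}$ and $\str J(T)$, it preserves and reflects the relation ``is a direct summand of''. Hence $X$ is a summand of $\bcomplement{T}$ if and only if $E_T(X)$ is a summand of $E_T(\bcomplement{T})$.

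The final step will combine these observations. By \cref{thm:E-map-order-preserving} and the maximality argument above, $E_T(\bcomplement{T}) = (\Gamma, 0)$. Hence $E_T(X)$ is a summand of $(\Gamma, 0)$, which happens if and only if $E_T(X)$ is of the form $(P, 0)$ for $P$ a projective module in $J(T)$. Chaining the equivalences gives exactly the statement of the corollary.

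I do not foresee a serious obstacle. The only subtlety is to be careful with the two conventions for the $E$-map (on indecomposables versus on pairs via additive extension) and to verify that the Bongartz complement $\bcomplement{T}$ is sent to the Bongartz completion of the zero pair in $J(T)$. The latter is forced by \cref{thm:E-map-order-preserving} together with the uniqueness of the maximum in a finite poset, so the proof ultimately reduces to this one transport-of-structure observation.
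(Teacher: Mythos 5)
Your argument is correct and follows essentially the same route as the paper's proof: apply the order-preserving bijection of \cref{thm:E-map-order-preserving} to deduce that the Bongartz complement of $T$ is sent to the maximal element of $\stt J(T)$, namely the sum of the projectives in $J(T)$, and then use additivity of $E_T$ on indecomposables to pass between summands. The extra bookkeeping you include (reducing from $\bcomp(T)$ to $\bcomplement{T}$ via basicness) is sound and merely makes explicit what the paper leaves implicit.
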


\begin{proof}
    Let $B$ be the Bongartz complement of $T$,
    that is: $\bcomp(T) = B \oplus T$.
    Then $E_T(B)$ must be maximal in $\stt J(T)$
    by \cref{thm:E-map-order-preserving}. The maximal element of $\stt J(T)$ is exactly the sum of the projectives in
   $J(T)$.
\end{proof}

We now describe the the map $\Psi$ from \cref{prop:reduction}, see \cite[Rmk. 5.13]{tauexceptional_buanmarsh} for further details.
\begin{theorem}\cite[Thm. 5.4]{tauexceptional_buanmarsh}\label{def:def-of-psi}
    For any module category $\mathcal{C}$ there is a bijection
    \begin{center}

        $\{\text{Ordered $\tau$-rigid pairs in $\mathcal{C}$ with $t$ indecomposable direct summands}\}$ \\
      $ \Psi^\mathcal{C}_t \Big\downarrow $ \\

      $  \{\text{Signed $\tau$-exceptional sequences in $\mathcal{C}$ of length $t$}\}$
     \end{center}

     given by taking an ordered $\tau$-rigid pair $T = T_1 \oplus T_2 \oplus \dots \oplus T_t$ in $\mathcal{C}$ to
\[\Psi^{\mathcal{C}}_t(T) = \begin{cases} (T_1) & \text{if $t = 1$,} \\ 
                                       (\Psi^{J(T_t)}_{t-1}(E_{T_t}(T_1) \oplus E_{T_t}(T_2) \oplus \dots \oplus E_{T_t}(T_{t-1})),T_t) & \text{otherwise.}             \end{cases}\]

\end{theorem}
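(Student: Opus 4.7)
The plan is to proceed by induction on $t$, using the recursive definition of signed $\tau$-exceptional sequences together with the bijectivity of the reduction map $E_T$.

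For the base case $t=1$, a signed $\tau$-exceptional sequence of length one is, by \cref{Def:signedtauexcep}, just a formally signed indecomposable module that is $\tau$-rigid; that is exactly an ordered $\tau$-rigid pair with one summand. So $\Psi_1^{\mathcal{C}}(T_1) = (T_1)$ is tautologically a bijection. The induction hypothesis is that $\Psi^{\mathcal{D}}_{t-1}$ is a bijection between ordered $\tau$-rigid pairs in $\mathcal{D}$ with $t-1$ summands and signed $\tau$-exceptional sequences in $\mathcal{D}$ of length $t-1$, for every module category $\mathcal{D}$ that arises as a $\tau$-perpendicular category.

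For the inductive step, let $T = T_1 \oplus \cdots \oplus T_t$ be an ordered $\tau$-rigid pair in $\mathcal{C}$. In particular $T_t$ is an indecomposable $\tau$-rigid pair, and $T_1 \oplus \cdots \oplus T_{t-1}$ lies in $\comp{T_t}{\mathcal{C}}$. By the bijectivity of $E_{T_t}\colon \comp{T_t}{\mathcal{C}} \to \comp{}{J(T_t)}$ recorded in \cref{defn:Emap} and the remark after it, applying $E_{T_t}$ summand-wise produces an ordered $\tau$-rigid pair $E_{T_t}(T_1) \oplus \cdots \oplus E_{T_t}(T_{t-1})$ in $J(T_t)$ with $t-1$ indecomposable summands. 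By the induction hypothesis applied to the module category $J(T_t)$, the prefix $\Psi^{J(T_t)}_{t-1}(E_{T_t}(T_1) \oplus \cdots \oplus E_{T_t}(T_{t-1}))$ is a signed $\tau$-exceptional sequence of length $t-1$ in $J(T_t)$. Appending $T_t$ on the right then yields a sequence which is a signed $\tau$-exceptional sequence of length $t$ in $\mathcal{C}$ by clause (2) of \cref{Def:signedtauexcep}. This shows $\Psi^{\mathcal{C}}_t$ is well-defined into signed $\tau$-exceptional sequences.

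For bijectivity, given any signed $\tau$-exceptional sequence $(X_1, X_2, \ldots, X_t)$ in $\mathcal{C}$, the last entry $X_t$ is a $\tau$-rigid pair in $\mathcal{C}$ and by definition $(X_1, \ldots, X_{t-1})$ is a signed $\tau$-exceptional sequence of length $t-1$ in $J(X_t)$. The inductive hypothesis produces a unique ordered $\tau$-rigid pair $U = U_1 \oplus \cdots \oplus U_{t-1}$ in $J(X_t)$ with $\Psi^{J(X_t)}_{t-1}(U) = (X_1, \ldots, X_{t-1})$, and applying $E_{X_t}^{-1}$ summand-wise gives a unique ordered $\tau$-rigid pair $T_1 \oplus \cdots \oplus T_{t-1}$ in $\comp{X_t}{\mathcal{C}}$ with $E_{X_t}(T_i) = U_i$. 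Setting $T_t = X_t$ exhibits a unique preimage under $\Psi^{\mathcal{C}}_t$, completing the induction. The only subtlety worth flagging is the compatibility with ordering: because $E_{T_t}$ is applied summand-wise and the recursive formula appends $T_t$ last, the indexing bijection $\sigma$ is transported correctly through the recursion, so no reordering issues arise.
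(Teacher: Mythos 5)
The paper does not prove this statement at all — it is recalled verbatim as \cite[Thm.\ 5.4]{tauexceptional_buanmarsh} — so there is no in-paper proof to compare against; your inductive argument is correct and is essentially the argument underlying the cited result, reducing everything to the bijectivity of $E_{T_t}\colon \comp{T_t}{\mathcal{C}} \to \comp{}{J(T_t)}$ (which the paper itself takes as input from \cite[Prop.\ 5.6, Prop.\ 5.11]{tauexceptional_buanmarsh}) together with the recursive clause of \cref{Def:signedtauexcep}. The one point you rightly rely on but should make explicit is that $E_{T_t}$ sends indecomposables to indecomposables (equivalently $|E_{T_t}(U)|=|U|$), which is what makes the summand-wise transport of the ordering legitimate.
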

Note that in the above theorem, we ignore all but the outermost parenthesis, so for example we write $(X,Y,Z)$ instead of $(((X),Y),Z)$ as naively dictated by the definition. Further, let $\Psi^\mathcal{C}(T) = \Psi^\mathcal{C}_{|T|}(T)$ so that we may drop the subscript if wanted. When there is no ambiguity about which category we are considering, we may also drop the superscript.
			
Using \cref{thm:E-associative}, we get the following useful formula, which generalises \cref{thm:mt}.

\begin{proposition}
    For $T = T_1 \oplus T_2 \oplus \dots \oplus T_t$ an ordered $\tau$-rigid pair with $t$ indecomposable direct summands, we have
    \[\Psi(T) = (E_{T_{>1}}(T_1),E_{T_{>2}}(T_2),\dots,E_{T_{t}}(T_{t-1}),T_t).\]
\end{proposition}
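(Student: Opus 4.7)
The plan is to proceed by induction on the number $t$ of indecomposable direct summands of $T$. The base case $t=1$ is immediate: by the first clause of \cref{def:def-of-psi} we have $\Psi(T_1) = (T_1)$, which agrees with the stated formula (here the product $T_{>1}$ is empty, so the convention is that $E_{T_{>1}}$ is the identity and $T_1$ is the only entry; the last coordinate $T_t = T_1$ in the stated formula reflects the single-term case).

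For the inductive step, suppose the formula holds for ordered $\tau$-rigid pairs with $t-1$ indecomposable summands, and let $T = T_1 \oplus \dots \oplus T_t$. By \cref{def:def-of-psi},
\[
\Psi^{\mathcal{C}}(T) = \bigl(\Psi^{J(T_t)}\bigl(E_{T_t}(T_1) \oplus \dots \oplus E_{T_t}(T_{t-1})\bigr),\, T_t\bigr).
\]
The inner ordered $\tau$-rigid pair lies in $J(T_t)$ and has $t-1$ indecomposable summands, so the inductive hypothesis applied inside $J(T_t)$ expresses it as
\[
\bigl(E^{J(T_t)}_{E_{T_t}(T_{>1,<t})}(E_{T_t}(T_1)),\; \dots,\; E^{J(T_t)}_{E_{T_t}(T_{t-1})}(E_{T_t}(T_{t-2})),\; E_{T_t}(T_{t-1})\bigr),
\]
where I write $T_{>i,<t} := T_{i+1}\oplus\dots\oplus T_{t-1}$, with the convention that $T_{>t-1,<t}$ is the zero pair.

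The key step is now to rewrite each composite $E^{J(T_t)}_{E_{T_t}(T_{>i,<t})} \circ E_{T_t}$ using the associativity of the $E$-map. Decomposing $T_{>i} = T_{>i,<t} \oplus T_t$ and applying \cref{thm:E-associative} with $M = T_t$ and $N = T_{>i,<t}$ yields
\[
E^{\mathcal{C}}_{T_{>i}} \;=\; E^{J(T_t)}_{E_{T_t}(T_{>i,<t})} \circ E^{\mathcal{C}}_{T_t},
\]
and hence $E^{J(T_t)}_{E_{T_t}(T_{>i,<t})}(E_{T_t}(T_i)) = E_{T_{>i}}(T_i)$ for every $1 \le i \le t-1$. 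In particular, for $i=t-1$ the pair $T_{>t-1,<t}$ is empty, so $E_{T_{>t-1}} = E_{T_t}$, which accounts for the last displayed entry of the proposition. Substituting these identifications back yields precisely
\[
\Psi(T) = (E_{T_{>1}}(T_1),\, E_{T_{>2}}(T_2),\, \dots,\, E_{T_t}(T_{t-1}),\, T_t),
\]
completing the induction.

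The only genuinely subtle point is the bookkeeping required to ensure that the ordered-module structure carried along through $E_{T_t}$ and through the recursive application of $\Psi^{J(T_t)}$ is preserved; this is immediate from the additivity convention for $E$-maps on ordered pairs and from the fact that $\Psi$ respects the given order of summands, so no additional argument is needed beyond a careful match of indices. Every other step is a direct appeal to the recursive definition of $\Psi$ and to the associativity of the $E$-map.
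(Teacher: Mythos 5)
Your proof is correct and follows essentially the same route as the paper: induction on $t$, unfolding the recursive definition of $\Psi$, applying the inductive hypothesis inside $J(T_t)$, and then using the associativity of the $E$-map (\cref{thm:E-associative}) with the decomposition $T_{>i} = T_{>i,<t}\oplus T_t$ to identify the composite reduction maps with $E_{T_{>i}}$. The only differences are notational (indexing the step as $t-1 \to t$ rather than $t \to t+1$).
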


\begin{proof}

    We proceed by induction on $t$. For $t = 1$, we have $\Psi(T_1) = (T_1)$ as wanted, by \cref{def:def-of-psi}.

    Assume now that the claim holds for all ordered $\tau$-rigid pairs with at most $t$ summands and over any algebra. Let now $T = T' \oplus T_{t+1} = T_1 \oplus T_2 \oplus \dots \oplus T_t \oplus T_{t+1}$ be an ordered $\tau$-rigid pair with $|T| = t+1$.

    Using the definition of $\Psi$, we get the first equality in the computation below:
    \begin{align*}
        \Psi(T) &= (\Psi^{J(T_{t+1})}_{t}(E_{T_{t+1}}(T')),T_{t+1}) \\
                &= ((E^{J(T_{t+1})}_{E_{T_{t+1}}(T')_{>1}}(E_{T_{t+1}}(T')_1),\dots,E^{J(T_{t+1})}_{E_{T_{t+1}}(T')_{t}}(E_{T_{t+1}}(T')_{t-1}),E_{T_{t+1}}(T')_t),T_{t+1}) \\
                &= ((E_{T_{>1}}(T'_1),\dots,E_{T_{>t-1}}(T'_{t-1}),E_{T_{t+1}}(T'_t)),T_{t+1}) \\
                &= (E_{T_{>1}}(T_1),\dots,E_{T_{>t-1}}(T_{t-1}),E_{T_{t+1}}(T_t),T_{t+1}). 
                 \end{align*}
    The second equality follows from our induction hypothesis (since $E_{T_{t+1}}(T')$ is a $\tau$-rigid module in $J(T_{t+1})$ with $t$ indecomposable direct summands) and the third equality follows from \cref{thm:E-associative}. Finally, removing the inner pair of parentheses and rewriting the sequence (as $T'_i = T_i$ for $i \leq t$) makes it clear that we have obtained the wanted result.
\end{proof}

\subsection{Some operations on TF-ordered $\tau$-rigid modules}
The mutation operation on $\tau$-exceptional sequences, as introduced in \cite{BHM2024}, induces a mutation on TF-ordered $\tau$-tilting modules. We now discuss some operations that are useful in explicitly describing this induced mutation.

We first consider two operations on (signed) $\tau$-exceptional sequences introduced in \cite{Buan_Marsh_2023}. Let $X = (X_1,X_2,\dots,X_n)$ be a signed $\tau$-exceptional sequence corresponding to the ordered $\tau$-rigid pair $T = T_1 \oplus T_2 \oplus \dots \oplus T_n$. The action of transposing summands in $T$ will clearly induce a new signed $\tau$-exceptional sequence. Also, if $X_i = P[s]$ where $P$ is projective in $J(T_{i+1} \oplus T_{i+2} \oplus \dots \oplus T_n)$, then replacing $P[s]$ with $P[1-s]$ gives a new signed $\tau$-exceptional sequence. We define the following operations.
			
\begin{definition}\cite[Thm. 3.4, Lem. 4.3]{Buan_Marsh_2023}
    We define $\overline{\pi}_i$ as the transposition $(i,i+1)$. Note that $\overline{\pi}_i$ can be considered an element of $\mathfrak{S}_j$ for all $j \geq i+1$. For an ordered pair or module \[T= T_1 \oplus \dots \oplus T_i \oplus T_{i+1} \oplus \dots \oplus T_t\] we thus have \[\overline{\pi}_i(T) = T_1 \oplus \dots \oplus T_{i+1} \oplus T_i \oplus \dots \oplus T_t.\]

    Let $X = (X_1,X_2,\dots,X_t)$  be an arbitrary signed $\tau$-exceptional sequence. We define $\pi_i(X) = (\Psi \circ \overline{\pi}_i \circ \Psi^{-1})(X)$. Further, if $X_i = P[x]$ for some relative projective $P$, then $s_i(X)$ is defined as the $\tau$-exceptional sequence $(X_1,X_2,\dots,
    X_{i-1}, P[1-x], X_{i+1}, \dots,X_t)$, and we lastly define $\overline{s}_i = \Psi^{-1} \circ s_i \circ \Psi$.
\end{definition}

\begin{remark}
 Note that by \cite[Sec. 3]{Buan_Marsh_2023},
 we have that $\pi_1(B,C) = (E_{E^{-1}_C(B)}(C),E^{-1}_C(B))$.
 
\end{remark}

The following special cases will be useful later.

\begin{lemma}\label{lem:v-map-on-ordered-tau-rigid-rank-2}
    Let $T = M \oplus N$ be an ordered $\tau$-rigid pair such that $E_N(M)$ is a projective module in $J(N)$. Then we have the following equality of ordered pairs: 
    \[\overline{s}_1(M \oplus N) = V_{N}^{-1}(M) \oplus N.\]
\end{lemma}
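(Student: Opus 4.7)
The plan is to apply the injective map $\Psi$ of \cref{def:def-of-psi} to both sides of the asserted equality. Since $\overline{s}_1 = \Psi^{-1} \circ s_1 \circ \Psi$ and $\Psi(M \oplus N) = (E_N(M), N)$, and since $E_N(M)$ is an unsigned projective in $J(N)$ by hypothesis, the operator $s_1$ flips its sign, yielding $\Psi(\overline{s}_1(M \oplus N)) = (E_N(M)[1], N)$. On the right-hand side, $\Psi(V_N^{-1}(M) \oplus N) = (E_N(V_N^{-1}(M)), N)$. Thus the lemma reduces to the single identity
\[E_N(V_N^{-1}(M)) = E_N(M)[1].\]

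I would first use \cref{cor:Bong-proj} to observe that the hypothesis on $E_N(M)$ translates to $M \in \add \bcomplement{N}$, so $V_N^{-1}(M)$ is defined; by \cref{prop:co-bongartz-summand-characterization} the module $M' := V_N^{-1}(M)$ is either a signed projective or lies in $\Gen N$. This placement of $M'$ in $\add \ccomplement{N}$ is precisely what routes $E_N(M')$ through a branch of \cref{defn:Emap} containing $V_N$, so that composition with $V_N^{-1}$ cleanly cancels.

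I would then split on the type of $N$. If $N$ is an unsigned $\tau$-rigid module, then $M$, as a summand of the necessarily unsigned Bongartz complement $\bcomplement{N}$, is unsigned with $M \notin \Gen N$, so the first branch of \cref{defn:Emap} gives $E_N(M) = f_N M$; the ``otherwise'' branch applied to $M'$ yields $E_N(M') = f_N(V_N(M'))[1] = f_N(M)[1] = E_N(M)[1]$. If instead $N = Q[1]$ is a shifted projective, then $E_N(M) = M$ and the hypothesis becomes that $M$ is projective in $J(Q[1]) = Q^\perp$; writing $M' = P[1]$, the shifted-projective-to-shifted-projective branch of \cref{defn:Emap} together with \cref{lem:v-map-on-shifted-projectives} gives $E_N(M') = (f_Q P)[1] = V_N(M')[1] = M[1] = E_N(M)[1]$. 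In both cases the identity holds, and applying $\Psi^{-1}$ completes the proof.

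The main technical hurdle is tracking which branch of the piecewise definition of the $E$-map applies in each sub-case; the pivotal observation that makes the argument uniform is that $V_N^{-1}(M) \in \add \ccomplement{N}$ (being either a signed projective or an element of $\Gen N$) always falls into a branch of $E_N$ in which $V_N$ appears explicitly, which is exactly what is needed for the cancellation with $V_N^{-1}$.
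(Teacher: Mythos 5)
Your proposal is correct and follows essentially the same route as the paper's proof: reduce via $\Psi$ to the identity $E_N(V_N^{-1}(M)) = E_N(M)[1]$, locate $V_N^{-1}(M)$ in the co-Bongartz complement so that the relevant branch of the $E$-map contains $V_N$, and split into the cases where $N$ is a module or a shifted projective, using \cref{lem:v-map-on-shifted-projectives} in the latter. The only cosmetic difference is that you justify $M \notin \Gen N$ via membership in the Bongartz complement rather than directly from the sign of $E_N(M)$, which amounts to the same observation.
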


\begin{proof} 

    Note first that $V_{N}^{-1}(M)$ is well-defined as by 
\cref{cor:Bong-proj} we have that $E_N(M)$ being projective in $J(N)$ is equivalent to $M$ being in the Bongartz complement of $N$.
By definition of  $\overline{s}_1$, the claimed equality is equivalent to
$\Psi(V^{-1}_N(M) \oplus N) = s_1(\Psi(M \oplus N))$, and 
by definition we have
\[s_1(\Psi(M \oplus N)) = s_1(E_N(M) \oplus N) = 
E_N(M)[1] \oplus N.\]
On the other hand, we have $\Psi(V^{-1}_N(M) \oplus N) = 
E_N(V^{-1}_N(M)) \oplus N$, so it is sufficient to show that 
\[E_N(V^{-1}_N(M)) = E_N(M)[1].\]

    Consider first the case where $N$ has sign $0$, i.e is a module. Since $V_{N}^{-1}(M)$ is a direct summand of the co-Bongartz completion of $N$, it is either generated by $N$ or a shifted projective by \cref{prop:co-bongartz-summand-characterization}, so that we obtain,
    \begin{align*}
        E_N(V_{N}^{-1}(M)) &= f_N(V_N(V_N^{-1}(M)))[1] = f_N(M)[1] = E_N(M)[1],
    \end{align*}
    where the first and last equalities follow from \cref{defn:Emap}, noting that $M \notin \Gen N$ as otherwise $E_N(M)$ would not be projective in $J(M)$.

    If now $N$ has sign $1$, then $V^{-1}_{N}(M) = Q$ must be a shifted projective since the co-Bongartz complement of $N$ only consists of shifted projectives in this case. Also, using \cref{defn:Emap} and \cref{lem:v-map-on-shifted-projectives}, we have that 
    \[E_N(Q) = f_{N[-1]}(Q[-1])[1] = V_{N}(Q)[1] = M[1].\]
    Lastly we observe that $M[1] = E_N(M)[1]$ because $E_N$ is the identity on modules (i.e of sign $0$) by \cref{defn:Emap}. This finishes the proof.
\end{proof}

In the case where we work with longer $\tau$-exceptional sequences, the below generalisation is useful.

\begin{lemma}\label{lem:v-map-on-tf-ordered}
    Let $M = (P[0],M_2,\dots,M_t)$ be an (unsigned) $\tau$-exceptional sequence, with $\Psi^{-1}(M) = T = T_1 \oplus T_2 \oplus \dots \oplus T_t$ and such that $P$ is relative projective in $J(T_{>1})$. Then \[\overline{s}_1(T) = V^{-1}_{T_{>1}}(T_1) \oplus T_2 \oplus \dots \oplus  T_t.\]
\end{lemma}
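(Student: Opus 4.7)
The plan is to apply $\Psi$ to both sides. Using the formula
\[\Psi(T') = (E_{T'_{>1}}(T'_1), E_{T'_{>2}}(T'_2), \ldots, T'_t)\]
and the fact that the last $t-1$ summands of $V^{-1}_{T_{>1}}(T_1) \oplus T_2 \oplus \ldots \oplus T_t$ coincide with those of $T$, the $E$-reductions in those positions proceed identically, yielding
\[\Psi(V^{-1}_{T_{>1}}(T_1) \oplus T_2 \oplus \ldots \oplus T_t) = (E_{T_{>1}}(V^{-1}_{T_{>1}}(T_1)), M_2, \ldots, M_t).\]
Since $P$ is relatively projective in $J(T_{>1})$, applying $s_1$ to $\Psi(T) = (P, M_2, \ldots, M_t)$ produces $(P[1], M_2, \ldots, M_t)$. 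Hence the lemma reduces to establishing the identity $E_{T_{>1}}(V^{-1}_{T_{>1}}(T_1)) = P[1]$, which is a direct generalisation of \cref{lem:v-map-on-ordered-tau-rigid-rank-2} to the case where the subscript of $E$ and $V$ is an arbitrary $\tau$-rigid pair rather than a single indecomposable.

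I would prove this identity by induction on $t$, with base case $t=2$ given by \cref{lem:v-map-on-ordered-tau-rigid-rank-2} itself. For the inductive step, write $S = T_2 \oplus \ldots \oplus T_{t-1}$ so that $T_{>1} = T_t \oplus S$, and use \cref{thm:E-associative} to obtain $E_{T_{>1}} = E^{J(T_t)}_{E_{T_t}(S)} \circ E_{T_t}$. The ordered $\tau$-rigid pair $E_{T_t}(T_1) \oplus E_{T_t}(T_2) \oplus \ldots \oplus E_{T_t}(T_{t-1})$ in $J(T_t)$ has $t-1$ indecomposable summands, and its first $E$-reduction by $E_{T_t}(S)$ yields $P$; moreover, $P$ is projective in $J(T_{>1})$, which coincides with the $\tau$-perpendicular category of $E_{T_t}(S)$ inside $J(T_t)$. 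The inductive hypothesis therefore gives the identity
\[E^{J(T_t)}_{E_{T_t}(S)}\bigl(V^{-1}_{E_{T_t}(S)}(E_{T_t}(T_1))\bigr) = P[1],\]
where the inner $V^{-1}$ is computed inside $J(T_t)$.

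The main obstacle is to then verify the compatibility
\[E_{T_t}\bigl(V^{-1}_{T_{>1}}(T_1)\bigr) = V^{-1}_{E_{T_t}(S)}\bigl(E_{T_t}(T_1)\bigr),\]
which, together with the inductive conclusion and the decomposition of $E_{T_{>1}}$, yields $E_{T_{>1}}(V^{-1}_{T_{>1}}(T_1)) = P[1]$ as desired. This compatibility follows from \cref{thm:E-map-order-preserving}: since $E_{T_t}$ is an order-preserving bijection between $\tau$-tilting pairs in $\mathcal{C}$ containing $T_t$ as a summand and those in $J(T_t)$, it identifies the Bongartz and co-Bongartz completions of $T_{>1}$ with those of $E_{T_t}(S)$ in $J(T_t)$, and therefore restricts to bijections $\add(\bcomplement{T_{>1}}) \leftrightarrow \add(\bcomplement{E_{T_t}(S)})$ and $\add(\ccomplement{T_{>1}}) \leftrightarrow \add(\ccomplement{E_{T_t}(S)})$ between the indecomposable summands of the (co-)Bongartz complements. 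The $V$-maps' defining approximation sequences, as in \cref{lem:gen_mut}, are preserved under $E_{T_t}$, identifying the two $V$-maps and completing the argument.
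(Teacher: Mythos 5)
Your reduction of the lemma to the single identity $E_{T_{>1}}(V^{-1}_{T_{>1}}(T_1)) = P[1]$ is correct and is exactly the reduction the paper makes. But from that point you take an unnecessary detour, and the detour contains a gap. The paper finishes in one line: since $M$ is unsigned, $T_{>1}$ is a $\tau$-rigid \emph{module} (a pair of the form $(M',0)$ with $M'$ possibly decomposable), so the last case of \cref{defn:Emap} applies verbatim and gives $E_{T_{>1}}(X) = f_{T_{>1}}(V_{T_{>1}}(X))[1]$ for any summand $X$ of the co-Bongartz complement of $T_{>1}$; taking $X = V^{-1}_{T_{>1}}(T_1)$ (well-defined by \cref{cor:Bong-proj}, since $P$ is projective in $J(T_{>1})$, and automatically a summand of $\ccomplement{T_{>1}}$) yields $f_{T_{>1}}(T_1)[1] = P[1]$ directly. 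No induction is needed: \cref{lem:v-map-on-ordered-tau-rigid-rank-2} is not the only available tool, because the case formula in \cref{defn:Emap} is stated for decomposable $T$ of the form $(M,0)$, not only for indecomposable $T$.

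The gap in your inductive step is the compatibility $E_{T_t}\bigl(V^{-1}_{T_{>1}}(T_1)\bigr) = V^{-1}_{E_{T_t}(S)}\bigl(E_{T_t}(T_1)\bigr)$, which is the crux of your argument and is asserted rather than proved. \cref{thm:E-map-order-preserving} does show that $E_{T_t}$ carries the Bongartz and co-Bongartz completions of $T_{>1}$ to those of $E_{T_t}(S)$ in $J(T_t)$, hence restricts to bijections between the summands of the respective complements; but this only identifies the domains and codomains of the two $V$-maps, not the maps themselves. Your closing sentence — that ``the $V$-maps' defining approximation sequences are preserved under $E_{T_t}$'' — is precisely the nontrivial claim that would need a proof: the $V$-map is defined via minimal right approximations in $K^b(\proj\Lambda)$ (\cref{def:Vmap}), and $E_{T_t}$ is a combinatorially defined bijection built from torsion-free functors and $V$-maps, with no evident compatibility with those approximation triangles; \cref{lem:gen_mut} only computes $V$ in special cases and says nothing about its behaviour under reduction. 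As written, you have replaced a one-line consequence of the definition of $E$ with an unproven intertwining statement of at least comparable difficulty.
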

			
\begin{proof}
    Clearly $E_{T_{>i}}(T_i) = M_i$ for all $i > 1$, by definition of $M$ and $T$ and \cref{def:def-of-psi}. We then compute 
    \begin{align*}
    E_{T_{>1}}(V^{-1}_{T_{>1}}(T_1)) = f_{T_{>1}}(V_{T_{>1}}(V^{-1}_{T_{>1}}(T_1))[1] = f_{T_{>1}}(T_1)[1] = P[1].
    \end{align*} The first equality above follows from the definition of $E$, noting that $V^{-1}_{T_{>1}}(T_1)$ must be generated by $T_{>1}$ or be a shifted projective, as it is a summand of the co-Bongartz complement of $T_{>1}$.
\end{proof}

\begin{remark}
    Note that the proof of the above lemma is dependent on the assumption that $M$ is an unsigned $\tau$-exceptional sequence.
\end{remark}

\begin{lemma}\label{lem:s2-on-tf-ordered}
    Let $T = M \oplus N$ be a TF-ordered $\tau$-rigid module such that $N$ is projective. Then 
    \[\overline{s}_2(M \oplus N) = E_N(M) \oplus N[1].\]
\end{lemma}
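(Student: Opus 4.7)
The plan is to unwind the definition $\overline{s}_2 = \Psi^{-1}\circ s_2 \circ \Psi$ one step at a time. First, since $M \oplus N$ is TF-ordered (so in particular $M \not\in \Gen N$) and $N$ is a $\tau$-rigid module (being projective), \cref{thm:mt}, or equivalently the proposition describing $\Psi$ via the $E$-map, yields
\[
\Psi(M \oplus N) \;=\; (E_N(M),\, N),
\]
where the first case of \cref{defn:Emap} gives $E_N(M) = f_N M$. Next, since $N$ is relatively projective in the ambient category $\Lambda = J(0)$, we may apply $s_2$ at the second (and last) position to obtain
\[
s_2(E_N(M),\, N) \;=\; (E_N(M),\, N[1]).
\]

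It then remains to identify $\Psi^{-1}(E_N(M), N[1])$. By the recursive description of $\Psi$ in \cref{def:def-of-psi}, any preimage has the form $T'_1 \oplus N[1]$ with $E_{N[1]}(T'_1) = E_N(M)$. Here is where the assumption that $N$ is projective, so that $N[1]$ is a shifted projective, does the work: according to the second and third cases of \cref{defn:Emap}, $E_{N[1]}$ acts as the identity on (unsigned) modules and sends shifted projectives to shifted projectives. Since $E_N(M) = f_N M$ is an unsigned module, $T'_1$ must itself be unsigned, and then $E_{N[1]}(T'_1) = T'_1$ forces $T'_1 = E_N(M)$. Assembling the pieces gives the claimed formula $\overline{s}_2(M \oplus N) = E_N(M) \oplus N[1]$.

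The one remaining verification, and the main potential source of friction, is that $E_N(M) \oplus N[1]$ is actually a valid ordered $\tau$-rigid pair over $\Lambda$, so that $\Psi^{-1}$ is indeed defined on $(E_N(M), N[1])$. The vanishing $\Hom(N, E_N(M)) = 0$ is immediate since $f_N M \in N^\perp$. That $E_N(M)$ is $\tau$-rigid in $\Lambda$ follows from the fact that, by construction of the $E$-map, $E_N(M) \in \str J(N)$, so via Jasso reduction the pair $(f_N M, N)$ belongs to $\str \Lambda$, which in particular makes $f_N M$ a $\tau$-rigid $\Lambda$-module. This completes the argument.
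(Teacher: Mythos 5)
Your proposal is correct and follows essentially the same route as the paper: unwind $\overline{s}_2 = \Psi^{-1}\circ s_2\circ\Psi$, compute $\Psi(M\oplus N)=(E_N(M),N)$ using $M\notin\Gen N$, flip the sign on $N$, and then use that $E_{N[1]}$ (hence $E^{-1}_{N[1]}$) is the identity on unsigned modules to conclude $\Psi^{-1}(E_N(M),N[1])=E_N(M)\oplus N[1]$. The extra verification that $E_N(M)\oplus N[1]$ is a genuine $\tau$-rigid pair is a reasonable addition but not needed beyond what the bijectivity of $\Psi$ already guarantees.
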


\begin{proof}
We have 
\begin{align*}
\overline{s_2}(M \oplus N) &= \psi^{-1} \circ s_2 \circ \psi (M \oplus N) \\&=
\psi^{-1} \circ s_2 (E_N(M) \oplus N) \\ &=  \psi^{-1} (E_N(M) \oplus N[1])
 \\& = E^{-1}_{N[1]}(E_N(M)) \oplus N[1]  
\\ &= E_N(M) \oplus N[1].    
\end{align*}

The last equality is given by \cref{defn:Emap}, using that
$M \not \in  \Gen N$ by assumption.
\end{proof}

Given any two TF-orders $(M,\sigma)$ and $(M,\phi)$ on the same $\tau$-rigid module $M$, there is  of course a permutation $\theta$ such that $\theta(M,\sigma) = (M,\phi)$, and any such permuation may be decomposed into a composition of transpositions. Below we show that there is such a permutation $\theta$ which can be written as a compositions of transpositions such that applying these transpositions in order to $(M,\sigma)$ yields a TF-order at every step

\begin{proposition}\label{lem:TFcombinatorialalgo}
    Let $A = (T,\sigma_A)$ and $B = (T,\sigma_B)$ be two TF-orders on the same underlying module $T$ with $|T| = t$. Then there is a sequence of transpositions $\overline{\pi}_{b_1},\overline{\pi}_{b_2},\dots,\overline{\pi}_{b_i}$ such that \[\overline{\pi}_{b_i} \circ \overline{\pi}_{b_{i-1}}\circ \dots \circ \overline{\pi}_{b_1}(B) = A\] and $\overline{\pi}_{b_j} \circ \overline{\pi}_{b_{j-1}} \circ \dots \circ \overline{\pi}_{b_1}(T)$ is TF-ordered for all $1 \leq j \leq i$.
\end{proposition}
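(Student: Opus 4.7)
The plan is to induct on $t$. The base case $t=1$ is immediate since both orderings coincide. For the inductive step, I would locate the first summand $A_1$ of $A$ within $B$, say $A_1 = B_j$. If $j=1$ I pass directly to the inductive hypothesis. Otherwise, the strategy is to bubble $A_1$ from position $j$ to position $1$ by applying the transpositions $\overline{\pi}_{j-1}, \overline{\pi}_{j-2}, \ldots, \overline{\pi}_1$ in this order, and then invoke the inductive hypothesis on $T \setminus A_1$. Transpositions $\overline{\pi}_k$ with $k \geq 2$ on the full sequence act trivially on position $1$, so any such transpositions produced by the induction lift cleanly.

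The crux is verifying that each intermediate ordering in the bubbling is TF-ordered. The main tool will be the monotonicity of $\Gen$: if $S \subseteq S'$ as collections of modules, then $\Gen(S) \subseteq \Gen(S')$, so $X \notin \Gen(S')$ forces $X \notin \Gen(S)$. After applying $k$ of the planned transpositions the ordering reads
\[B_1, \ldots, B_{j-k-1},\ A_1,\ B_{j-k}, \ldots, B_{j-1},\ B_{j+1}, \ldots, B_t,\]
and I would check the TF-condition at each position by splitting into four easy cases. For positions $i < j-k$ the tail (as a set) is unchanged from $B$, so the TF-condition is inherited from $B$. At position $j-k$ the tail is a subset of $T \setminus A_1 = A_{>1}$, and since $A$ is TF-ordered we have $A_1 \notin \Gen(A_{>1})$, hence by monotonicity $A_1 \notin \Gen(\text{tail})$. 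For positions $j-k < i \leq j$ the element is some $B_{i-1}$ and the tail equals the original tail of $B_{i-1}$ in $B$ with $B_j = A_1$ deleted, so TF is again inherited from $B$ via monotonicity. Finally, for positions $i > j$ the tail is literally the same as in $B$.

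With TF-ordering preserved throughout the bubbling, the result is $A_1 \oplus B_1 \oplus \cdots \oplus B_{j-1} \oplus B_{j+1} \oplus \cdots \oplus B_t$, and its truncation after position $1$ is a TF-order of the $\tau$-rigid module $T \setminus A_1$. The same is true of $A_2 \oplus \cdots \oplus A_t$, since the TF-conditions at positions $i \geq 2$ in a $t$-sequence translate verbatim to the TF-conditions of the corresponding $(t-1)$-sequence. Applying the inductive hypothesis to these two TF-orders of $T \setminus A_1$ yields a connecting sequence of transpositions, which lifts to transpositions $\overline{\pi}_k$ with $k \geq 2$ on the full sequence; these leave position $1$ fixed, so the TF-condition there (which depends only on the underlying set $T \setminus A_1$) is undisturbed. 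I do not foresee a serious obstacle: the whole proof is essentially a bookkept bubble sort, with the only substantive point being the monotonicity observation that drives each of the intermediate case checks.
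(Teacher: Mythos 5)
Your proof is correct and its core is the same as the paper's: bubble the target summand leftward one adjacent transposition at a time and verify TF-orderedness of each intermediate ordering via monotonicity of $\Gen$, with essentially the same four-case check (positions before, at, between, and after the moved summand). The only difference is organizational --- you package this as a direct induction on $t$, whereas the paper packages it as an extremal argument (take a TF-admissible sequence maximizing the length of the common prefix with $A$ and derive a contradiction) --- and this does not change the mathematical content.
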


\begin{proof}
    For two TF-ordered modules $A' = (T',\sigma_{A'})$ and $B' =(T',\sigma_{B'})$ on the same underlying module $T'$ with $|T'| = t'$, we first define the agreeableness between $A'$ and $B'$ as \[\mathrm{Ag}(A',B') = \max\{i \in \{0,1,2,\dots,t'\} \mid A'_{\leq i} = B'_{\leq i}\}, \] so for instance $\mathrm{Ag}(A',B') = t'$ if and only if $A' = B'$ and $\mathrm{Ag}(A',B') = 0$ if and only if $A'_1 \neq B'_1$. Also, a sequence of transpositions $\overline{\pi}_{b_1},\overline{\pi}_{b_2},\dots,\overline{\pi}_{b_i}$ is called $B'$-\textit{admissible} if $\overline{\pi}_{b_j} \circ \overline{\pi}_{b_{j-1}} \circ \dots \circ \overline{\pi}_{b_1}(B')$ is TF-ordered for all $1 \leq j \leq i$.
    
    Let now $\overline{\pi}_{c_1},\overline{\pi}_{c_2},\dots,\overline{\pi}_{c_i}$ be a $B$-admissible sequence of transpositions maximising $$\mathrm{Ag}(A,\overline{\pi}_{c_i} \circ \overline{\pi}_{c_{i-1}} \circ \dots \circ \overline{\pi}_{c_1}(B)),$$ i.e such that for all $B$-admissible sequences of transpositions $\overline{\pi}_{d_1},\overline{\pi}_{d_2},\dots,\overline{\pi}_{d_r}$ we have \[\mathrm{Ag}(A,\overline{\pi}_{c_i} \circ \overline{\pi}_{c_{i-1}} \circ \dots \circ \overline{\pi}_{c_1}(B)) \geq \mathrm{Ag}(A,\overline{\pi}_{d_r} \circ \overline{\pi}_{d_{r-1}} \circ \dots \circ \overline{\pi}_{d_1}(B)).\]

    Let $\theta = \overline{\pi}_{c_{i}} \circ \overline{\pi}_{c_{i-1}} \circ \dots \circ \overline{\pi}_{c_1}$. If $\mathrm{Ag}(A,\theta(B)) = t$, then we have identified the wanted sequence of transpositions. Assume for the sake of contradiction that $\mathrm{Ag}(A,\theta(B)) = g < t$.
    
    Let $A' = A_{>g}$ and $B' = \theta(B)_{>g}$. We will now identify a permutation $\phi$ such that $\mathrm{Ag}(A',\phi(B')) \geq 1$ and such that $\phi$ can be written as the composition of a $B'$-admissible sequence of transpositions:
    
    Note that $A'$ and $B'$ are TF-ordered modules with the same underlying module and with agreeableness $\mathrm{Ag}(A',B') = 0$. Let $i$ be the unique integer so that $B'_i = A'_1$ and note that $i > 1$. For an integer $j \in \{1,2,\dots,i-1\}$, we now decompose the cycle \[\phi_j = (j \quad j+1 \quad \dots \quad i-1 \quad i) = \overline{\pi}_j \circ \overline{\pi}_{j+1} \circ \dots \circ \overline{\pi}_{i-1}\] and consider the two ordered modules \begin{align*}
        B' &= B'_1 \oplus \dots \oplus B'_{j-1} \oplus B'_j \oplus B'_{j+1} \oplus B'_{j+2} \oplus \dots \oplus B'_{i-1} \oplus \; \;B'_{i}\; \; \oplus B'_{i+1} \oplus \dots \oplus B'_{t-g} & \text{and } \\
        \phi_j(B') &= B'_1 \oplus \dots \oplus B'_{j-1} \oplus B'_i \oplus \; \;B'_{j} \;\; \oplus B'_{j+1}\oplus \dots \oplus B'_{i-2} \oplus B'_{i-1} \oplus B'_{i+1} \oplus \dots \oplus B'_{t-g}. 
    \end{align*}

    We argue that $\phi_j(B')$ is TF-ordered. Note that $A'_1 = B'_i = \phi_{j}(B')_j$, and since $A'$ is TF-ordered we have $A'_1 \notin \Gen A'_{>1} = \Gen(\phi_{j}(B')_{<j} \oplus \phi_{j}(B')_{>j})$, so $\phi_{j}(B')_j \notin \Gen(\phi_{j}(B')_{>j})$. Now, we need to verify that $\phi_{j}(B')_k \notin \Gen(\phi_{j}(B')_{>k})$ for $k \neq j$. For $k < j$ and $k > i$, we have $\phi_{j}(B')_k = B'_k$ and $\phi_{j}(B')_{>k} \cong B'_{>k}$ as modules, so since $B'$ is TF-ordered, we have $\phi_{j}(B')_k \notin \Gen(\phi_{j}(B')_{>k})$. Let now $j < k \leq i$. Then $\phi_{j}(B')_k = B'_{k-1}$ and $\phi_j(B')_{>k} \oplus B'_i \cong B'_{>k-1}$ so $\Gen(\phi_j(B')_{>k}) \subseteq \Gen(B'_{>k-1})$. Again since $B'$ is TF-ordered, $B'_{k-1} \notin \Gen(B'_{>k-1})$, so $\phi_{j}(B')_k \notin \Gen(\phi_j(B')_{>k})$. Thus $\phi_j(B')$ is TF-ordered for all $1 \leq j \leq i-1$.

    Observe now that $\mathrm{Ag}(A',\phi_1(B')) \geq 1$, as $A'_1 = \phi_1(B')_{1}$. But then adjusting the transpositions appearing in $\phi_1$ to act on $\theta(B)$ instead of on $\theta(B)_{>g} = B'$, we get a $B$-admissible sequence of transpositions $\overline{\pi}_{c_1},\overline{\pi}_{c_2},\dots,\overline{\pi}_{c_i},\overline{\pi}_{g+i-1}, \dots, \overline{\pi}_{g+1}$ such that \[\mathrm{Ag}(A,\overline{\pi}_{g+1} \circ \dots \circ \overline{\pi}_{g+i-1} \circ \overline{\pi}_{c_i}\circ \overline{\pi}_{c_i-1} \circ \dots \circ \overline{\pi}_{c_1}(B)) = g + \mathrm{Ag}(A',\phi_1(B')) \geq g + 1 > g,\] a contradiction to the maximality-assumption on $\overline{\pi}_{c_1},\overline{\pi}_{c_2},\dots,\overline{\pi}_{c_i}$.
\end{proof}

\subsection{Mutation of $\tau$-exceptional sequences of length $2$}

We now have the machinery to discuss mutation of $\tau$-exceptional sequences, as introduced in \cite{BHM2024}. In particular, we will give an explicit description of the action on TF-ordered $\tau$-rigid modules induced by left regular mutation on $\tau$-exceptional sequences of length $2$. We are therefore in this section primarily interested in \textit{regular} (left) mutation of $\tau$-exceptional sequences of length $2$ which are called $\tau$-exceptional \textit{pairs}. This mutation can be applied to \textit{left regular} pairs:

\begin{definition}\cite[Def. 3.11a]{BHM2024}
    Let $T = M \oplus N$ be a TF-ordered $\tau$-rigid $\Lambda$-module. Then $\Psi(T)$ is called \textit{left regular} if $N \in \proj \L$ or $N \notin \add(\bcomp(M))$. Otherwise $\Psi(T)$ is called \textit{left irregular}.
\end{definition}

We remark that there is also \textit{left irregular} mutation which may be applied to $\tau$-exceptional pairs that are not left regular but respect certain other technical conditions.

\begin{definition}\cite[Def. 4.1]{BHM2024}
    Let $T = M \oplus N$ be a TF-ordered $\tau$-rigid $\Lambda$-module. Then $\Psi(T)$ is called \textit{left mutable} if $\Psi(T)$ is left regular or the minimal torsion class containing $J(M \oplus N)$ is functorially finite.
\end{definition}

For $\tau$-tilting finite algebras, all $\tau$-exceptional pairs which are not left regular can be (irregularly) left mutated, and it follows from \cite[Cor. 4.8]{BHM2024} that this left irregular mutation can be expressed, in the $\tau$-tilting finite case, as a composition of the inverse of left mutations. This justifies in part our interest in regular (left) mutations, which we now proceed to describe.

We will use the operations $s_i$ and $\pi_i$ defined in the previous subsection to define (left) regular mutation $\varphi$: For a left regular $\tau$-exceptional pair $(B,C)$, the pair $\varphi(B,C)$ can be computed by applying $s_2$ if possible, then $\pi_1$ and lastly $s_1$ if after applying $\pi_1$ we do not have an unsigned $\tau$-exceptional pair. More precisely:

\begin{definition}\cite[Def.-Prop. 4.3]{BHM2024}\label{def:left_regular_mutation_pairs}
    Let $(B,C)$ be a left regular $\tau$-exceptional pair. Then the left mutation of $(B,C)$ is defined as \[\varphi(B,C) = 
	\begin{cases} s_1 \circ \pi_1 \circ s_2 & \text{if $C$ is projective,} \\ s_1 \circ \pi_1 & \text{if $\pi_1(B,C)$ is not unsigned,} \\
	\pi_1 & \text{otherwise.}
\end{cases}\] 
\end{definition}

\begin{remark}
    The above definition is a reformulation of the original definition of regular mutation of $\tau$-exceptional pairs given in \cite[Def.-Prop. 4.3]{BHM2024}. We here recall the original definition by \cite{BHM2024} and explain why the two formulations are equivalent.

    Let $(B,C)$ be a left regular $\tau$-exceptional pair, and let \[B' = \begin{cases}
        E^{-1}_{C[1]}(B) & \text{if $C$ is projective,} \\
        E^{-1}_C(B) & \text{otherwise.}
    \end{cases}\] 
    
    Then \cite{BHM2024} define the left regular mutation $\varphi(B,C)$ as \[\varphi(B,C) = \begin{cases} (|E_{B'}(C[1])|,B') & \text{if $C$ is projective,} \\ (|E_{B'}(C)|,B') & \text{otherwise,}\end{cases}\] where for a signed module $X = M[i]$ we let $|X| = M[0]$. 

We now verify that the above expression is equivalent to \cref{def:left_regular_mutation_pairs}. Assume first that $C$ is projective. Then \[s_1 \circ \pi_1 \circ s_2(B,C) = s_1 \circ \pi_1(B,C[1]) = s_1(E_{B'}(C[1]),B') = (|E_{B'}(C[1])|,B')\] as wanted, where the last equality follows from noting that $E_{B'}(C[1])$ necessarily has sign $1$.

  Assume now that $C$ is not projective. If $\pi_1(B,C)$ is not unsigned then \[s_1 \circ \pi_1(B,C) = s_1(E_{B'}(C),B') = (|E_{B'}(C)|,B').\] Lastly, if $\pi_1(B,C)$ is unsigned then \[\pi_1(B,C) = (E_{B'}(C),B') = (|E_{B'}(C)|,B').\] Thus in all cases, \cref{def:left_regular_mutation_pairs} agrees with the definition given in \cite[Def.-Prop. 4.3]{BHM2024}.
\end{remark}

\begin{definition}\label{def:irregularmut}
    Let $\Lambda$ be a $\tau$-tilting finite algebra and let $M \oplus N$ be TF-ordered such that $\Psi(M \oplus N)$ is a left irregular but left mutable $\tau$-exceptional pair. Define \[ X = \eproj_s(\Gen(\eproj_{ns}(\Filt(\Gen(J(M \oplus N)))))) \text{   and   } \quad Y = \eproj_{ns}(\Filt(\Gen(J(M \oplus N))))/X.\]
    Then the left mutation of $\Psi(M \oplus N)$ is defined as $\varphi(E_N(M),N) = (E_Y(X),Y)$. 

    Note that $\Filt(\Gen(J(M \oplus N))))$ is the smallest
    torsion class containing $J(M \oplus N)$, and see \cite[Sec. 4]{BHM2024} for further details concerning irregular mutation. 
\end{definition}

\begin{remark}
  We remark that for $\tau$-tilting finite algebras, all left irregular $\tau$-exceptional pairs are left mutable since all torsion classes are functorially finite in this case \cite[Thm. 2.7]{tau}.
\end{remark}

Mutation of $\tau$-exceptional pairs lifts to a mutation $\overline{\varphi}(M \oplus N) = X \oplus Y$ of TF-ordered $\tau$-rigid modules. In the next section, we will restrict to Nakayama algebras, and in this case we will, in \cref{thm:Nakayamacase4}, explicitly describe the modules $X$ and $Y$ as in \cref{def:irregularmut} in terms of $M$ and $N$.

We now describe how the action of left regular mutation on $\tau$-exceptional pairs and sequences translates to TF-ordered $\tau$-rigid modules. We denote by $\overline{\varphi}$ the induced operation on TF-ordered $\tau$-rigid modules coming from the operation $\varphi$ on $\tau$-exceptional pairs:

\begin{definition}        
        Let $T = T_1 \oplus T_2$ be a TF-admissibly ordered $\tau$-rigid module, such that $\Psi(T)$  is left mutable. We then define \[\overline{\varphi}(T) = \Psi^{-1} \circ \varphi \circ \Psi(T).\]
\end{definition}

\begin{theorem}
    
\label{prop:TFadmissiblebehaviour}
    Let $M \oplus N$ be a TF-admissibly ordered $\tau$-rigid module inducing a left regular $\tau$-exceptional pair. We then have \[\overline{\varphi}(M \oplus N) = \begin{cases}
    V_{E_N(M)}(N[1]) \oplus E_N(M) &\text{if $N$ is projective,} \\
    V_M(N) \oplus M & \text{if $N \in \Gen M$,} \\
N \oplus M & \text{otherwise.}\end{cases}\]
			\end{theorem}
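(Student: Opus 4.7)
The plan is to verify the formula case-by-case according to the three cases of \cref{def:left_regular_mutation_pairs}, exploiting that $\overline{\varphi} = \Psi^{-1} \circ \varphi \circ \Psi$ decomposes through the operations $\overline{s}_i$ and $\overline{\pi}_i$. In every case the starting point is $\Psi(M \oplus N) = (f_N M, N) = (E_N(M), N)$, obtained from \cref{thm:mt} together with the TF-ordering hypothesis $M \notin \Gen N$. I will repeatedly use the identity $\pi_1(B, C) = (E_{E_C^{-1}(B)}(C), E_C^{-1}(B))$, the case analysis of \cref{defn:Emap}, and the fact that $E_N^{-1}(f_N M) = M$ (which follows from the first case of \cref{defn:Emap}).

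For the ``otherwise'' case (neither $N$ projective nor $N \in \Gen M$), we have $\varphi = \pi_1$. Since $N \notin \Gen M$, the first case of \cref{defn:Emap} gives $E_M(N) = f_M N$, yielding $\pi_1(f_N M, N) = (f_M N, M)$, which is unsigned, consistent with the clause of \cref{def:left_regular_mutation_pairs} being applied. Then $\Psi^{-1}(f_M N, M) = N \oplus M$ by \cref{thm:mt}. For the case $N \in \Gen M$, the same computation of $\pi_1$ uses instead the last case of \cref{defn:Emap} to produce $E_M(N) = f_M(V_M(N))[1]$, so $\pi_1(f_N M, N) = (f_M V_M(N)[1], M)$, which is signed; hence $\varphi = s_1 \circ \pi_1$, and applying $s_1$ and then $\Psi^{-1}$ (using $E_M(V_M(N)) = f_M V_M(N)$ from the first case of \cref{defn:Emap}) produces $V_M(N) \oplus M$ by bijectivity of $E_M$.

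For the case $N$ projective, $\varphi = s_1 \circ \pi_1 \circ s_2$. By \cref{lem:s2-on-tf-ordered}, $\overline{s}_2(M \oplus N) = E_N(M) \oplus N[1]$, and $\overline{\pi}_1$ then swaps the order to $N[1] \oplus E_N(M)$. Computing $\Psi$ through \cref{defn:Emap} yields $(f_{E_N(M)}(V_{E_N(M)}(N[1]))[1], E_N(M))$; here $V_{E_N(M)}(N[1])$ is well-defined because the signed projective $N[1]$ lies in $\add(\ccomplement{E_N(M)})$ by \cref{prop:co-bongartz-summand-characterization}. Applying $s_1$ (valid by \cref{cor:Bong-proj}, as $V_{E_N(M)}(N[1]) \in \add(\bcomplement{E_N(M)})$) and then $\Psi^{-1}$ as in the previous case yields $V_{E_N(M)}(N[1]) \oplus E_N(M)$.

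The main technical obstacle is verifying at each step that the expected case of \cref{defn:Emap} is actually activated, in particular that the Bongartz-complement summand $V_M(N)$ (respectively $V_{E_N(M)}(N[1])$) lies outside $\Gen M$ (respectively $\Gen E_N(M)$), so that the $E$-map and the torsion-free functor $f$ agree on it. These conditions are consequences of \cref{prop:co-bongartz-summand-characterization} combined with the left regularity hypothesis, which prevents a pathological overlap between Bongartz and co-Bongartz summands.
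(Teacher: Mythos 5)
Your proposal is correct and follows essentially the same route as the paper: it splits into the three cases of \cref{def:left_regular_mutation_pairs} and computes $\overline{s}_2$, $\overline{\pi}_1$ and $\overline{s}_1$ on the TF-ordered side, with the only difference being that you inline the computations the paper isolates as \cref{lem:v-map-on-ordered-tau-rigid-rank-2}, \cref{lem:v-map-on-tf-ordered} and \cref{lem:s2-on-tf-ordered}. One small attribution quibble: the fact that $V_M(N)$ (resp.\ $V_{E_N(M)}(N[1])$) lands outside $\Gen M$ (resp.\ $\Gen E_N(M)$) is not really a consequence of left regularity but of \cref{cor:Bong-proj} — a Bongartz-complement summand maps under $E$ to an unsigned projective, which rules out the $\Gen$ case of \cref{defn:Emap}.
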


\begin{proof}
    Assume first that $N$ is projective, which is equivalent to being in the first case in \cref{def:left_regular_mutation_pairs}. In that case we obtain
\begin{align*}
       \overline{\varphi}(M \oplus N) &= \overline{s}_1 \circ \overline{\pi}_1 \circ \overline{s}_2(M \oplus N) & (\text{Using \cref{def:left_regular_mutation_pairs}}) \\       
                               &= \overline{s}_1 \circ \overline{\pi}_1(E_N(M) \oplus N[1]) &  \text{(By \cref{lem:s2-on-tf-ordered})}
                               \\
                               &= \overline{s}_1 (N[1] \oplus E_N(M)) & 
        \text{(By definition of $\overline{\pi}_1$)}\\
                               &= V_{E_N(M)}(N[1]) \oplus E_N(M) & \text{(Using  \cref{lem:v-map-on-tf-ordered})}
\end{align*}

    The condition that $N \in \Gen M$ is equivalent to $N \oplus M$ not being TF-ordered or equivalently that $E_M(N)$ is a shifted projective so that $(E_M(N),M) = \pi_1(E_N(M),N)$ is not unsigned, so we are in the second case of \cref{def:left_regular_mutation_pairs}. We compute \[\overline{\varphi}(M \oplus N) = \overline{s}_1 \circ \overline{\pi}_1(M \oplus N) = \overline{s}_1(N \oplus M) = V_M(N) \oplus M\] where the last equality follows from \cref{lem:v-map-on-tf-ordered}.

    If the first two conditions are not met, we must be in the last case of \cref{def:left_regular_mutation_pairs} so we have $\overline{\varphi}(M \oplus N) = \overline{\pi}_1(M \oplus N) = N \oplus M$ as wanted.
\end{proof}

\subsection{Mutation of $\tau$-exceptional sequences of length $>2$}

The mutation of $\tau$-exceptional sequences of length $2$ induces a mutation on longer $\tau$-exceptional sequences.

\begin{definition}\cite[Def. 5.2]{BHM2024}
    Let $E = (X_1,X_2,X_3,\dots,X_n)$ be a $\tau$-exceptional sequence in $\mods \L$, induced by the TF-ordered $\tau$-tilting module $(T_1,T_2,\dots,T_n)$. Then $(X_1,X_2,\dots,X_{i},X_{i+1})$ is a $\tau$-exceptional sequence in $J(T_{>i+1})$, and $(X_{i},X_{i+1})$ is a $\tau$-exceptional pair in this category. If this pair is left regular (resp. left mutable) in $J(T_{>i+1})$, we say that $E$ is left $i$-regular (resp. left $i$-mutable). Assume $\varphi^{J(T_{>i+1})}(X_{i},X_{i+1}) = (Y_{i+1},Y_{i})$.
    Then we have \[\varphi_{i}(E) = (X_1,X_2,\dots,Y_{i+1},Y_{i},X_{i+2},\dots,X_n).\]
\end{definition}

The left mutation on longer $\tau$-exceptional sequences also lifts to a mutation on TF-ordered $\tau$-tilting modules which we now aim to describe.

\begin{definition}
    Let $E = (X_1,X_2,X_3,\dots,X_n)$ be a left $i$-mutable $\tau$-exceptional sequence induced by the TF-ordered $\tau$-tilting module $T = T_1 \oplus T_2 \oplus \dots \oplus T_n$. Then we define \[\overline{\varphi}_i(T) = \Psi^{-1} \circ \varphi \circ \Psi(T).\]
\end{definition}

To describe the above defined operation, it is a useful fact that $\tau$-tilting reductions are preserved by left mutations. More precisely, we have the following:
\begin{lemma}\cite[Def.-Prop. 4.3 and 4.4]{BHM2024}\label{prop:jasso_unchanged_after_mutation}
    Let $T = T_1 \oplus T_2$ be a $\tau$-rigid pair inducing a $\tau$-exceptional sequence $(X_1,X_2)$. If $T$ induces a left-mutable $\tau$-exceptional pair then $J(\overline{\varphi}(T)) = J(T)$.
\end{lemma}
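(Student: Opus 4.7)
The plan is to verify $J(\overline{\varphi}(T)) = J(T)$ by case analysis, using \cref{prop:TFadmissiblebehaviour} for left regular mutations and \cref{def:irregularmut} for left irregular but left mutable pairs. The main computational tool is associativity of $\tau$-tilting reduction (\cref{thm:E-associative}), which gives $J(M \oplus N) = J_{J(M)}(E_M(N)) = J_{J(N)}(E_N(M))$ and lets us compare $\tau$-perpendicular categories after passing to a smaller wide subcategory. The case $\overline{\varphi}(M \oplus N) = N \oplus M$ of \cref{prop:TFadmissiblebehaviour} is immediate, since the underlying basic pair is unchanged.

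In the case $N \in \Gen M$, so $\overline{\varphi}(M \oplus N) = V_M(N) \oplus M$, I would argue as follows. Since $V_M(N) \in \add \bcomp(M)$, \cref{cor:Bong-proj} gives that $E_M(V_M(N))$ is an unshifted projective $P$ in $J(M)$. By \cref{defn:Emap} we have $E_M(N) = f_M(V_M(N))[1]$, and \cref{cor:Bong-proj} then forces $V_M(N) \notin \Gen M$ (otherwise the formula for $E_M(V_M(N))$ from \cref{defn:Emap} would make it a shifted projective, contradicting that it is unshifted). Hence $P = f_M V_M(N)$ and $E_M(N) = P[1]$. Since $\tau_{J(M)} P = 0$, both $J_{J(M)}(P)$ and $J_{J(M)}(P[1])$ equal $P^\perp$ inside $J(M)$, so associativity yields $J(M \oplus N) = J(M \oplus V_M(N))$.

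In the case where $N$ is projective, $\overline{\varphi}(M \oplus N) = V_{f_N M}(N[1]) \oplus f_N M$ after simplifying $E_N(M) = f_N M$ via \cref{defn:Emap}. I would then use \cref{lem:gen_mut}(c) to identify $V_{f_N M}(N[1]) = N$: the membership $N \in \add \bcomp(f_N M)$ holds because $N$ is projective, so $\Hom(N, \tau(f_N M)) = 0$ places $N$ in ${}^\perp \tau(f_N M)$ as an indecomposable $\Ext$-projective; and the membership $N[1] \in \add \ccomp(f_N M)$ follows from \cref{prop:co-bongartz-summand-characterization} since $N[1]$ is a signed projective and $\Hom(N, f_N M) = 0$. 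Thus $\overline{\varphi}(M \oplus N) = N \oplus f_N M$, and combining the idempotence $f_N(f_N M) = f_N M$ with associativity gives $J(N \oplus f_N M) = J_{J(N)}(f_N M) = J_{J(N)}(E_N(M)) = J(M \oplus N)$.

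For the left irregular but left mutable case, $X \oplus Y$ is constructed in \cref{def:irregularmut} directly from the minimal torsion class $\Filt(\Gen(J(M \oplus N)))$ containing $J(M \oplus N)$, and \cite{BHM2024} shows that this construction is tailored so that $J(X \oplus Y) = J(M \oplus N)$. The main obstacle is the identification $V_{f_N M}(N[1]) = N$ in the projective case, since it requires simultaneously verifying the Bongartz and co-Bongartz membership conditions in \cref{lem:gen_mut}(c); the other regular cases reduce to direct applications of associativity together with the observation that a projective and its shift share the same $\tau$-perpendicular category in a wide subcategory, and the irregular case is $J$-invariant by construction.
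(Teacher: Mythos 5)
The paper offers no argument for this lemma at all: it is imported wholesale from \cite[Def.-Prop.~4.3 and 4.4]{BHM2024}, so any self-contained proof is necessarily a different route. Your first two regular cases are fine. The ``otherwise'' case is trivial, and the $N \in \Gen M$ case is a genuinely nice argument: $V_M(N)$ lies in the Bongartz complement, so $E_M(V_M(N))$ is an (unshifted) projective $P$ of $J(M)$ by \cref{cor:Bong-proj}, forcing $V_M(N) \notin \Gen M$ and $E_M(N) = P[1]$; since a projective and its shift have the same perpendicular category inside $J(M)$, associativity finishes it.

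The projective case, however, has a genuine gap. You assert $N \in \add(\bcomp(f_N M))$ ``because $N$ is projective, so $\Hom(N,\tau(f_N M)) = 0$.'' Projectivity gives Ext-projectivity in any torsion class containing $N$, but it does \emph{not} give $\Hom(N,\tau(f_N M)) = 0$: when $\Hom(N,M) \neq 0$ the quotient $f_N M$ is a proper quotient of $M$, and $\tau(f_N M)$ can acquire $\topp(N)$ as a composition factor even though $\Hom(N,\tau M)=0$. Concretely, let $\L = K(1 \to 2)$, $M = P(1)$, $N = P(2)$. Then $f_N M = S(1)$, $\tau S(1) = S(2)$ and $\Hom(P(2),S(2)) \neq 0$, so $N \notin {}^\perp\tau(f_N M)$; indeed $\bcomp(S(1)) = S(1)\oplus P(1)$ and $V_{S(1)}(N[1]) = P(1) \neq N$. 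Your claimed output $N \oplus f_N M = P(2)\oplus S(1)$ is not even $\tau$-rigid, whereas the correct mutation is $P(1)\oplus S(1)$ --- this is exactly Case TF-1b of \cref{prop:Nakayamacase1}, and \cref{lem:case1homs} shows the failure is systematic whenever $\Hom(N,M)\neq 0$. So the identification $V_{f_N M}(N[1]) = N$ via \cref{lem:gen_mut}(c) only holds in the subcase $\Hom(N,M)=0$. The repair is to avoid identifying $V_{f_N M}(N[1])$ altogether and argue exactly as in your $\Gen M$ case: $L = V_{f_N M}(N[1])$ is by construction a summand of the Bongartz complement of $f_N M$, so $E_{f_N M}(L)$ is projective in $J(f_N M)$ and $E_{f_N M}(N[1]) = E_{f_N M}(L)[1]$, whence $J(L \oplus f_N M) = J(f_N M, N) = J(M\oplus N)$ by associativity. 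Finally, for the irregular case you simply defer to \cite{BHM2024}, which is no more than the paper itself does, so nothing is gained there.
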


The following was first observed by Mendoza and Treffinger.

\begin{proposition}\cite[Cor. 5.3]{mendozatreffinger_stratifyingsystems}\label{prop:tf-reduced-still-tf}
    Let $T_1 \oplus T_2 \oplus \dots \oplus T_t$ be a TF-ordered $\tau$-rigid module and $1 < i < t$. Then \[E_{T_{>i}}(T_{\leq i})\] can be considered an ordered object in $\stt J(T_{>i})$ and is further TF-ordered.
\end{proposition}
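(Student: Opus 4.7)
The plan is to pass through the bijection $\Psi$ between ordered $\tau$-rigid pairs and signed $\tau$-exceptional sequences, and exploit the associativity of the $E$-map to reduce the TF-order condition in $J(T_{>i})$ to the TF-order condition of the original module $T$.

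First I would verify that $E_{T_{>i}}(T_j)$ is actually an unsigned module (no signed projective part) for every $j \leq i$, so that $T'_{\leq i} := E_{T_{>i}}(T_{\leq i})$ is a genuine $\tau$-rigid \emph{module} in $J(T_{>i})$. Since $T$ is TF-ordered we have $T_j \notin \Gen T_{>j}$, and since $T_{>i}$ is a summand of $T_{>j}$ for $j \leq i$ we obtain $\Gen T_{>i} \subseteq \Gen T_{>j}$, hence $T_j \notin \Gen T_{>i}$. Applying the first case in \cref{defn:Emap}, $E_{T_{>i}}(T_j) = f_{T_{>i}}(T_j)$, which is an unsigned module.

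Next, I would use \cref{thm:E-associative} applied to the decomposition $T_{>j} = T_{(j,i]} \oplus T_{>i}$ (where $T_{(j,i]} = T_{j+1} \oplus \dots \oplus T_i$) to compute, for each $j < i$,
\[
E^{J(T_{>i})}_{T'_{>j}}(T'_j) \;=\; E^{J(T_{>i})}_{E_{T_{>i}}(T_{(j,i]})}\bigl(E_{T_{>i}}(T_j)\bigr) \;=\; E_{T_{>j}}(T_j),
\]
with the obvious equality also at $j=i$. Consequently the signed $\tau$-exceptional sequence $\Psi^{J(T_{>i})}(T'_{\leq i})$ coincides with the initial segment $(E_{T_{>1}}(T_1),\dots,E_{T_{>i}}(T_i))$ of $\Psi^{\mathcal{C}}(T)$.

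Finally, since $T$ is TF-ordered in $\mathcal{C}$, \cref{thm:mt} implies that $\Psi^{\mathcal{C}}(T)$ is unsigned, hence so is its truncation, hence so is $\Psi^{J(T_{>i})}(T'_{\leq i})$. Applying \cref{thm:mt} in the reverse direction inside $J(T_{>i})$ yields that $T'_{\leq i}$ is TF-ordered. The main content is really the associativity book-keeping in the second step; the only genuine subtlety is confirming that $T_j \notin \Gen T_{>i}$ so that we stay in the unsigned case of the $E$-map, which is exactly where TF-orderedness of the whole module $T$ enters.
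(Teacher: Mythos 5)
Your proof is correct and follows essentially the same route as the paper: the key step in both is the associativity of the $E$-map (\cref{thm:E-associative}) applied to the decomposition $T_{>j} = T_{(j,i]} \oplus T_{>i}$, yielding $E^{J(T_{>i})}_{U_{>j}}(U_j) = E_{T_{>j}}(T_j)$, which cannot be a shifted projective since $T$ is TF-ordered. The paper concludes $U_j \notin \Gen U_{>j}$ directly from this, whereas you route the same observation through $\Psi$ and \cref{thm:mt}; the content is identical.
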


\begin{proof}
    Let $U = E_{T_{>i}}(T_{\leq i})$. Let $1 \leq j \leq i$. By \cref{thm:E-associative}, \[ E^{J(T_{>i})}_{U_{>j}}(U_j) = E_{T_{>j}}(T_j),\] which as $T$ is TF-ordered cannot be a shifted projective. Thus $U_j \notin \Gen{U_{>j}}$ for all $1 \leq j \leq i$.
\end{proof}

We now describe how $\overline{\varphi}_i$ acts on a TF-ordered $\tau$-rigid module.

 \begin{theorem}\label{thm:TFfullbehaviour}
    Let $T = T_1 \oplus T_2 \oplus \dots \oplus T_t$ be a TF-ordered $\tau$-rigid module in a module category $\mods \L$ such that $\Psi(T)$ is left $t-1$-mutable. Let $M = T_{t-1} \oplus T_t$ and $N = \overline{\varphi}(T_{t-1} \oplus T_t)$. Then we have
    \[\overline{\varphi}_{t-1}(T) = \left(\bigoplus_{1 \leq i \leq t-2} E^{-1}_N(E_M(T_i))\right) \oplus \overline{\varphi}(T_{t-1} \oplus T_t). \]
\end{theorem}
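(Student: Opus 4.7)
The approach is to unwind the definition $\overline{\varphi}_{t-1} = \Psi^{-1}\circ\varphi_{t-1}\circ\Psi$, read off the last two summands directly from the pair mutation, and then use associativity of the $E$-map together with uniqueness of TF-orderings to pin down the first $t-2$ summands.

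First I would compute $\Psi(T) = (X_1,\ldots,X_t)$ with $X_i = E_{T_{>i}}(T_i)$ for $i<t$ and $X_t = T_t$. Since $T_{>t} = 0$ we have $J(T_{>t}) = \mods\L$, so $\varphi_{t-1}$ acts on the pair $(X_{t-1},X_t) = \Psi(M)$ by the ordinary $\varphi$ in $\mods\L$. By the definition of $\overline{\varphi}$ and the assumption that $M$ is left mutable, the output is $\Psi(N)$ where $N = \overline{\varphi}(M)$. Writing $N = N_1 \oplus N_2$ in TF-order so that $\Psi(N) = (E_{N_2}(N_1), N_2)$, one obtains
\[ \varphi_{t-1}(\Psi(T)) = (X_1,\ldots,X_{t-2},\,E_{N_2}(N_1),\,N_2). \]
Applying $\Psi^{-1}$ and writing the result as $T' = T_1' \oplus \cdots \oplus T_t'$, one reads off directly that $T_{t-1}' \oplus T_t' = N_1 \oplus N_2 = \overline{\varphi}(M)$, which matches the second summand in the claimed formula.

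For $i \leq t-2$ I would apply \cref{thm:E-associative} to the decompositions $T_{>i} = (T_{i+1} \oplus \cdots \oplus T_{t-2}) \oplus M$ and $T'_{>i} = (T'_{i+1} \oplus \cdots \oplus T'_{t-2}) \oplus N$, which yields the two expressions
\[ X_i \;=\; E^{J(M)}_{E_M(T_{i+1}\oplus\cdots\oplus T_{t-2})}(E_M(T_i)) \;=\; E^{J(N)}_{E_N(T'_{i+1}\oplus\cdots\oplus T'_{t-2})}(E_N(T'_i)). \]
By \cref{prop:jasso_unchanged_after_mutation} we have $J(M) = J(N)$, and by \cref{prop:tf-reduced-still-tf} both $E_M(T_1) \oplus \cdots \oplus E_M(T_{t-2})$ and $E_N(T'_1) \oplus \cdots \oplus E_N(T'_{t-2})$ are TF-ordered $\tau$-rigid modules in this common $\tau$-perpendicular category, and the displayed identity says that they induce the very same $\tau$-exceptional sequence $(X_1,\ldots,X_{t-2})$ there.

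Finally I would invoke the uniqueness part of \cref{thm:mt}: two TF-ordered $\tau$-rigid modules inducing the same $\tau$-exceptional sequence must agree. This forces $E_N(T'_i) = E_M(T_i)$, hence $T'_i = E^{-1}_N(E_M(T_i))$ for every $1 \leq i \leq t-2$, giving the first summand of the claimed formula. The main technical subtlety I anticipate is verifying that $\overline{\varphi}(M)$ really is an unsigned TF-ordered module (so that \cref{thm:mt} applies), which must be checked both for the left regular case governed by \cref{prop:TFadmissiblebehaviour} and for the left irregular but left mutable case of \cref{def:irregularmut}; once this is secured, together with $J(M)=J(N)$, the remainder of the argument is purely formal.
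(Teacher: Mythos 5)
Your proposal is correct and rests on the same essential ingredients as the paper's proof, namely the associativity of the $E$-map (\cref{thm:E-associative}) and the equality $J(M)=J(N)$ from \cref{prop:jasso_unchanged_after_mutation}. The only difference is directional: the paper defines the candidate module $U$ equal to the claimed right-hand side and verifies $\Psi(U)=\varphi_{t-1}(\Psi(T))$ coordinate by coordinate, whereas you start from $T'=\overline{\varphi}_{t-1}(T)$ and recover its first $t-2$ summands via the injectivity of $\Psi$ on TF-orders (\cref{thm:mt}, together with \cref{prop:tf-reduced-still-tf}); both arguments reduce to the same chain of $E$-map identities.
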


 \begin{proof}
 
    Let $U = \left(\bigoplus_{1 \leq i \leq t-2} E^{-1}_N(E_M(T_i))\right) \oplus \overline{\varphi}(T_{t-1} \oplus T_t)$. As $J(N) = J(M)$ by \cref{prop:jasso_unchanged_after_mutation}, $E_M(T_i)$ is in the image of $E_N$ for all $i$ and in particular $E^{-1}_N(E_M(\bigoplus_{1 \leq i \leq t-2}(T_i))) \in \str_N \L$ so $U$ is indeed $\tau$-rigid. We will now directly compute \[\Psi(U) = (E_{U>1}(U_1),E_{U>2}(U_2),\dots,U_t)\]
    and verify that $\Psi(U) = \varphi_{t-1}(\Psi(T))$. Note that by definition of $\varphi_{t-1}$, we have that $\Psi(T)$ and $\Psi(\overline{\varphi}_{t-1}(T))$ should agree in all but the last two coordinates. Also by definition of $\varphi_{t-1}$, the last two summands of $\overline{\varphi}_{t-1}(T)$ are given by $\overline{\varphi}(T_{t-1} \oplus T_t)$. We now check that $\Psi(U)$ agrees with $\Psi(T)$ in the first $t-2$ coordinates.
    
    We decompose $U = U' \oplus \overline{\varphi}(T_{t-1} \oplus T_t) = U' \oplus N$ and $T = T' \oplus M$. Let now $1 \leq i \leq t - 2$. Then
    \begin{align*}
         E_{U_{>i}}(U_i) &= E^{J(N)}_{E_N(U'_{>i})}(E_N(U_i)) & \text{(by \cref{thm:E-associative})}\\
                      &= E^{J(N)}_{E_N(U'_{>i})}(E_N(E^{-1}_N(E_M(T_i))))  & \text{(by definition of $U$ and since $1 \leq i \leq t-2$)}\\
                      &= E^{J(N)}_{E_N(U'_{>i})}(E_M(T_i)) \\
                      &= E^{J(N)}_{E_M(T'_{>i})}(E_M(T_i)) & \text{(since $E_N(U'_j) = E_M(T'_j)$ for any $1 \leq j \leq t-2$)}\\
                      &= E^{J(M)}_{E_M(T'_{>i})}(E_M(T_i)) & \text{(by \cref{prop:jasso_unchanged_after_mutation})}\\
                      &= E_{T_{>i}}(T_i) & \text{(by \cref{thm:E-associative})}
    \end{align*}
    as wanted. 
 \end{proof}

 The following proposition is useful to describe the action of $\overline{\varphi}_i$ on a TF-ordered module with more than $i+1$ direct summands. In particular, combining \cref{thm:TFfullbehaviour} with \cref{prop:TFmutations_in_middle} shows that one can compute any left regular mutation $\overline{\varphi}_i$ using a generous amount of reductions ($E$-maps) and \cref{prop:TFadmissiblebehaviour}.

\begin{proposition}\label{prop:TFmutations_in_middle}
    Let $T = T_1 \oplus T_2 \oplus \dots \oplus T_t$ be a TF-ordered module that is left $i$-mutable for $i < t$. Then \[\overline{\varphi}_i(T) = E^{-1}_{T_{>i+1}}(\overline{\varphi}^{J(T_{>i+1})}_{i}(E_{T_{>i+1}}(T_{\leq i+1}))) \oplus T_{>i+1}.\]
\end{proposition}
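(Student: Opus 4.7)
The plan is to unravel the definition $\overline{\varphi}_i = \Psi^{-1} \circ \varphi_i \circ \Psi$ and exploit the recursive nature of $\Psi$ in \cref{def:def-of-psi} to isolate the part of the sequence lying in $J(T_{>i+1})$. Since $\varphi_i$ by definition only modifies the $i$-th and $(i+1)$-th entries of a $\tau$-exceptional sequence, and since these two entries are exactly the top two entries of the sub-sequence $\Psi^{J(T_{>i+1})}(E_{T_{>i+1}}(T_{\leq i+1}))$, the whole mutation reduces to a mutation internal to $J(T_{>i+1})$.

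Concretely, let $U = E_{T_{>i+1}}(T_{\leq i+1})$, which is a TF-ordered $\tau$-rigid module in $J(T_{>i+1})$ by \cref{prop:tf-reduced-still-tf}, hence $\Psi^{J(T_{>i+1})}(U)$ is a genuine $\tau$-exceptional sequence there of length $i+1$. Iterating \cref{def:def-of-psi} (or, more quickly, using the explicit formula $\Psi(T) = (E_{T_{>1}}(T_1), \dots, E_{T_t}(T_{t-1}), T_t)$), we get
\[\Psi(T) = (\Psi^{J(T_{>i+1})}(U),\, T_{i+2}, T_{i+3}, \dots, T_t),\]
dropping inner parentheses as in \cref{def:def-of-psi}. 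The definition of $\varphi_i$ then tells us precisely that it is computed by applying $\varphi^{J(T_{>i+1})}$ to the pair in positions $i$ and $i+1$, which is the same pair as the one obtained when computing $\varphi_i^{J(T_{>i+1})}$ on $\Psi^{J(T_{>i+1})}(U)$. Thus
\[\varphi_i(\Psi(T)) = \bigl(\varphi_i^{J(T_{>i+1})}(\Psi^{J(T_{>i+1})}(U)),\, T_{i+2}, \dots, T_t\bigr) = \bigl(\Psi^{J(T_{>i+1})}(\overline{\varphi}_i^{J(T_{>i+1})}(U)),\, T_{i+2}, \dots, T_t\bigr),\]
where the second equality uses the definition of $\overline{\varphi}_i^{J(T_{>i+1})}$ applied inside $J(T_{>i+1})$. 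Note that left $i$-mutability of $\Psi(T)$ is, by definition, precisely left $i$-mutability of $\Psi^{J(T_{>i+1})}(U)$, so $\overline{\varphi}_i^{J(T_{>i+1})}(U)$ is well-defined.

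Finally, applying $\Psi^{-1}$ and again using \cref{def:def-of-psi} (which recovers the ambient ordered $\tau$-rigid pair by applying $E^{-1}_{T_{>i+1}}$ to the first block and appending $T_{>i+1}$) yields
\[\overline{\varphi}_i(T) = E^{-1}_{T_{>i+1}}\bigl(\overline{\varphi}_i^{J(T_{>i+1})}(E_{T_{>i+1}}(T_{\leq i+1}))\bigr) \oplus T_{>i+1},\]
as claimed. There is no real obstacle here: the argument is a careful bookkeeping of the recursive correspondence $\Psi$, and the only points that must be checked are that $U$ is TF-ordered in $J(T_{>i+1})$ (ensuring the internal $\Psi^{J(T_{>i+1})}$ really produces a $\tau$-exceptional sequence and that the outputs of $\overline{\varphi}_i^{J(T_{>i+1})}$ lie in the domain of $E^{-1}_{T_{>i+1}}$) and that the left $i$-mutability of $\Psi(T)$ coincides with that of its image under reduction; both follow immediately from the definitions together with \cref{prop:tf-reduced-still-tf} and \cref{thm:E-associative}.
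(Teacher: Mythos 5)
Your proposal is correct and follows essentially the same route as the paper: both arguments rest on the facts that $\Psi^{J(T_{>i+1})}(E_{T_{>i+1}}(T_{\leq i+1}))$ is the initial segment of $\Psi(T)$ (via \cref{thm:E-associative} and \cref{prop:tf-reduced-still-tf}) and that $\varphi_i$ only alters coordinates $i$ and $i+1$ through a mutation performed inside $J(T_{>i+1})$; the paper simply runs the computation in the opposite direction, checking coordinate by coordinate that $\Psi$ applied to the proposed answer equals $\varphi_i(\Psi(T))$. The only blemish is notational: the tail of $\Psi(T)$ consists of the reduced modules $E_{T_{>j}}(T_j)$ rather than the $T_j$ themselves, but this does not affect your argument since you only use that the tail is fixed by $\varphi_i$ and that $\Psi^{-1}$ recovers $T_{>i+1}$ from it.
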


\begin{proof}
    Let $U' = E^{-1}_{T_{>i+1}}(\overline{\varphi}^{J(T_{>i+1})}_{i}(E_{T_{>i+1}}(T_{\leq i+1})))$ and $U = U' \oplus T_{>i+1}$. We will verify that $\Psi(U) = \varphi_i(\Psi(T))$. Let first $j > i+1$. Then \[ \Psi(U)_j = E_{U_{>j}}(U_j) = E_{T_{>j}}(T_j) = \Psi(T)_j  = \varphi_i(\Psi(T))_j,\] where the last equality follows from noting that $\varphi_i$ leaves coordinates $j \notin \{i,i+1\}$ unchanged. Let now $j \leq i+1$. Then
    \begin{align*}
        \Psi(U)_j &= E_{U_{>j}}(U_j) \\
                  &= E^{J(T_{>i+1})}_{E_{T_{>i+1}}(U'_{>j})}(E_{T_{>i+1}}(U'_j)) & \text{(by \cref{thm:E-associative})} \\
                  &= E^{J(T_{>i+1})}_{(\overline{\varphi}^{J(T_{>i+1})}_i(E_{T_{>i+1}}(T_{\leq i+1})))_{>j}}(\overline{\varphi}^{J(T_{>i+1})}_i(E_{T_{>i+1}}(T_{\leq i+1}))_j) & \text{(by definition of $U'$)}\\
                  &= \Psi^{J(T_{>i+1})}(\overline{\varphi}^{J(T_{>i+1})}_i(E_{T_{>i+1}}(T_{\leq i+1})))_j & \text{(by definition of $\Psi$)}\\
                  &= (\varphi^{J(T_{>i+1})}_i(\Psi^{J(T_{>i+1})}(E_{T_{>i+1}}(T_{\leq i+1}))))_j & \text{(since $\overline{\varphi}_i=\Psi^{-1} \circ \varphi_i \circ \Psi$)}\\
                  &= (\varphi^{}_i(\Psi^{}(T_{\leq i+1} \oplus T_{>i+1})))_j & \text{}\\
                  &= \varphi^{}_i(\Psi^{}(T))_j.
    \end{align*}

    The second to last equality may be verified by noting that $\Psi^{J(T_{>i+1})}(E_{T_{>i+1}}(T_{\leq i+1}))$ is a sub-sequence of $ \Psi^{}(T_{\leq i+1} \oplus T_{>i+1})$. Indeed, for integers $t > i > j$ we can write
    \begin{align*}
        \Psi(T)_j &= E_{T_{>j}}(T_j) \\
                  &= E^{J(T_{>i})}_{E_{T_{>i}}(\bigoplus_{k = j+1}^{i}(T_k))}(E_{T_{>i}}(T_j)) & \text{(by \cref{thm:E-associative})} \\
                  &= E^{J(T_{>i})}_{(E_{T_{>i}}(T_{\leq i}))_{>j}}(E_{T_{>i}}(T_{\leq i})_{j}) \\
                  &= \Psi^{J(T_{>i})}(E_{T_{>i}}(T_{\leq i}))_j & \text{(by definition of $\Psi$).}
    \end{align*}
    Thus $B = \Psi^{}(T_{\leq i+1} \oplus T_{>i+1})$ and $C = \Psi^{J(T_{>i+1})}(E_{T_{>i+1}}(T_{\leq i+1}))$ are two $\tau$-exceptional sequences of length $t$ and $i+1$ respectively where the two agree in the first $i+1$ coordinates and $C$ lies in $J(T_{>i+1})$. Thus we must have $\varphi_i(B)_j = \varphi^{J(T_{>i+1})}_i(C)_j$ for all $j \leq i+1$. This finishes the proof.
\end{proof}

In the later sections we will use the above results in the context of Nakayama algebras. We  illustrate the results using a non-Nakayama algebra of rank $3$ in \cref{exmp:nonNak1}.

\section{Mutation of TF-ordered modules for Nakayama algebras}\label{sec:Nakmutation}

We now consider Nakayama algebras and give explicit formulas for mutation of their TF-ordered $\tau$-rigid modules, including the irregular case. For a thorough background on Nakayama algebras we refer to \cite[Chap. V]{assem_skowronski_simson_2006}, and now introduce the notions needed in this paper.

Recall that for a module $M \in \mods \L$, we have a descending sequence of submodules 
\begin{equation}\label{eq:radicalseries} M \supseteq \rad M \supseteq \rad^2 M \supseteq \dots \supseteq \rad^{\ell-1} M \supseteq \rad^\ell M = 0. \end{equation}
We call the least integer $\ell$ such that $\rad^\ell M = 0$ the \textit{(Loewy) length} of $M$ and denote it by $\ell(M)$. If the module $M$ has a unique composition series, or equivalently, see \cite[Lem. V.2.2]{assem_skowronski_simson_2006}, if \cref{eq:radicalseries} is a composition series, then $M$ is called \textit{uniserial}. 

\begin{definition}
An algebra $\L$ is a \textit{Nakayama algebra} if every indecomposable projective and every indecomposable injective $\L$-module is uniserial. 
\end{definition}

As a consequence of \cite[Thm. V.2.6]{assem_skowronski_simson_2006}, a basic finite-dimensional algbebra $\L$ is a Nakayama algebra if and only if it can be written as $\L \cong KQ/I$, where $Q$ is a union of quivers of the following types:
\begin{equation} \label{eq:Nakayamaquiver}
\begin{tikzcd}[column sep=5mm, row sep=3mm]
    &&&&&&&& 2 \arrow[ld] & 3 \arrow[l] \\
    \overrightarrow{\A}_n: &n-1 \arrow[r] & n-2 \arrow[r] &\dots \arrow[r] & 1 \arrow[r] &0, & \overrightarrow{\Delta}_n: &1 \arrow[rd] & && \vdots \arrow[lu]\\
    &&&&&&&& 0 \arrow[r] & n-1 \arrow[ru]
\end{tikzcd}\end{equation}
where $n \geq 1$.

For a connected Nakayama algebra, the underlying quiver is either $\overrightarrow{\A}_n$ of $\overrightarrow{\Delta}_n$. In particular, there exists a natural cyclic order $0<1< \dots < n-1<0$ on the vertices and hence on the simple modules which we want use to formulate the statements of certain results, see \cref{prop:Bongartzcompletion}, \cref{lem:cocompletion}, \cref{lem:indexorder}. For a disconnected Nakayama algebra, every connected component $Q'$ of its quiver is either $\overrightarrow{\A}_n$ of $\overrightarrow{\Delta}_n$, and therefore we have a cyclic order on the vertices of the subquiver $Q'$ given by $0_{Q'} < 1_{Q'} < \dots < (n-1)_{Q'} < 0_{Q'}$.
Let $\Lambda$ be a Nakayama algebra. The following characterisation of the indecomposable $\tau$-rigid modules is a useful observation for studying the $\tau$-tilting theory of Nakayama algebras.

\begin{proposition}\cite[Prop. 2.5]{Adachi2016}\label{prop:Adachilength}
Let $M$ be a non-projective indecomposable module, then $M$ is $\tau$-rigid if and only if $\ell(M) < n$. 
\end{proposition}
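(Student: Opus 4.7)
The plan is to exploit the uniserial structure of indecomposable modules over a Nakayama algebra $\Lambda$ together with the explicit form of the Auslander--Reiten translate. Every indecomposable $\Lambda$-module is uniserial, and hence is determined up to isomorphism by its top and its length; write $M(i,\ell)$ for the indecomposable with top $S_i$ and length $\ell$. The indecomposable projectives are exactly the $P_i = M(i,\ell(P_i))$, so for a non-projective indecomposable $M(i,\ell)$ one has $1 \leq \ell < \ell(P_i)$.

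First I would compute $\tau M$ explicitly. The projective cover $P_i \twoheadrightarrow M(i,\ell)$ has kernel $\rad^\ell P_i$, which is itself uniserial with top $S_{i-\ell}$, where the subtraction is taken in the natural cyclic order on the relevant connected component of $Q$. The projective cover of this kernel is then $P_{i-\ell}$, yielding a minimal projective presentation
\[ P_{i-\ell} \longrightarrow P_i \longrightarrow M(i,\ell) \longrightarrow 0. \]
Applying the Nakayama functor $\nu = D\Hom_\Lambda(-,\Lambda)$ turns this into a map of injectives; the kernel of that map is $\tau M(i,\ell)$, and a direct calculation identifies it as $M(j,\ell)$ for an index $j$ obtained from $i$ by a fixed shift in the cyclic order (one that depends only on the combinatorics of $Q$ and the admissible ideal). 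In particular, $\tau M$ is again uniserial of the \emph{same} length $\ell$.

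Next I would analyse $\Hom_\Lambda(M(i,\ell),\,\tau M) = \Hom_\Lambda(M(i,\ell),\,M(j,\ell))$. For uniserial modules, this $\Hom$-space is nonzero if and only if $S_i$ appears as a composition factor of $M(j,\ell)$ at a position whose quotient has length at least $\ell$. Since $M(j,\ell)$ has length $\ell$ and its composition factors traverse $\ell$ consecutive simples along the cyclic order, the condition $\ell < n$ ensures that the support of $\tau M$ neither contains $S_i$ in a high enough position nor wraps around far enough to produce a nonzero map. Conversely, if $\ell \geq n$, then the composition factors of $\tau M$ exhaust all simples in the component, so $S_i$ must occur among them, and moreover the shift aligns it with a quotient of length $\geq \ell$, yielding a nonzero element of $\Hom(M,\tau M)$.

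The main obstacle is the bookkeeping of the cyclic indexing together with the length truncations imposed by the admissible ideal. I would treat $\overrightarrow{\A}_n$ and $\overrightarrow{\Delta}_n$ separately: in the linear case every non-projective indecomposable automatically satisfies $\ell(M) < n$, so the content reduces to verifying $\tau$-rigidity of these modules; the substantive case is $\overrightarrow{\Delta}_n$, where one must pin down precisely how the shift $i \mapsto j$ depending on the projective lengths makes the threshold $\ell = n$ correspond exactly to the support of $\tau M$ reaching back around to the top of $M$.
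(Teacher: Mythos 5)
The paper does not prove this statement at all: it is imported verbatim as \cite[Prop.~2.5]{Adachi2016}, so there is no in-paper argument to compare against. Your plan is essentially the standard proof of Adachi's result and the strategy is sound: for a non-projective uniserial $M=M(i,\ell)$ one has the minimal presentation $P_{i-\ell}\to P_i\to M\to 0$, the translate $\tau M$ is again uniserial of the same length $\ell$, and $\Hom(M,\tau M)$ is then decided by a purely combinatorial alignment of tops and socles. Two details in your write-up should be tightened, though neither breaks the argument. First, the shift $i\mapsto j$ is not some quantity "depending on the admissible ideal": one always has $\tau M\cong \rad P_i/\rad^{\ell+1}P_i$, so $\topp(\tau M)=S_{(i-1)_n}$ and the shift is exactly one step in the cyclic order (this is the paper's \crefnakayamalist{list:5}). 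Second, your stated Hom-criterion ("$S_i$ appears at a position whose quotient has length at least $\ell$") is garbled as written and, read literally, would force $\Hom=0$ always. The correct criterion is that $\Hom(U,V)\neq 0$ for uniserials iff some nonzero quotient of $U$ is isomorphic to a submodule of $V$; since here $\ell(\tau M)=\ell(M)=\ell$, every submodule of $\tau M$ with top $S_i$ automatically has length $\leq\ell$ and hence is a quotient of $M$, so the condition collapses to: $S_i$ is a composition factor of $\tau M=M((i-1)_n,\ell)$. The factors of $\tau M$ are $S_{(i-1)_n},\dots,S_{(i-\ell)_n}$, and $S_i$ occurs among them iff $\ell\geq n$, which gives both directions at once and makes the separate treatment of $\overrightarrow{\A}_n$ and $\overrightarrow{\Delta}_n$ unnecessary (in the linear case $\ell<n$ holds automatically for non-projectives, as you note). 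With these two corrections your outline closes up into a complete proof.
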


This allows us to describe non-projective indecomposable $\tau$-rigid modules $\Lambda$-modules uniquely by their simple top and their simple socle. If $\Lambda$ is connected we denote by $M_{s,t}$ the unique $\tau$-rigid module with $\topp M_{s,t} \cong S(t)$ and $\soc M_{s,t} \cong S(s+2)_n$, where the notation $(a+b)_n$ means addition modulo $n$. If $\Lambda$ is not connected, consider an indecomposable module $M$. Since $M$ is indecomposable, it is a module of the subalgebra coming from exactly one connected component $Q_M'$ of its quiver whose cyclic order is as previously described. If $\topp(M) \cong S(t_{Q'})$ and $\soc(M) \cong S(r_{Q'})$ we denote $M$ by $M_{s,t}^{Q'}$ where $(s+2)_{|Q'|}=r$ with addition modulo $|Q'|$. If statements of our results do not use the expressions $M_{s,t}$ or the cyclic order of vertices, they hold for arbitrary Nakayama algebras. If they do and the generalisation to disconnected Nakayama algebras involves more than what was described above, we give more details following the proofs. Importantly, all our main results hold for arbitrary (possibly disconnected) Nakayama algebras. We first make an observation concerning the possible TF-orders of a given $\tau$-rigid module.
\begin{lemma}\label{lem:NakayamaTFdecs}
    Let $\Lambda$ be an arbitrary Nakayama algebra. Let $M \in \mods \Lambda$ be a basic $\tau$-rigid module and let $\bigoplus_{i=1}^n P(i)^{a_i} \to M$ be the projective cover of $M$. Then
\[ \text{\# \{TF-orders of $M$\}} = \frac{|M|!}{\prod_{i=1}^n a_i!}. \]
\end{lemma}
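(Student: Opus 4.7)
The plan is to reduce the problem to a purely combinatorial count by characterising exactly which orderings are TF-orderings. The key observation is that for Nakayama algebras, indecomposable modules are uniserial (hence so are the indecomposable summands of $M$), and containment in $\Gen$ among uniserials is controlled by tops and Loewy lengths.

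The technical heart of the argument is the following lemma, which I would prove first: if $X$ is an indecomposable uniserial module and $N_1, \dots, N_k$ are uniserial modules, then $X \in \Gen(N_1 \oplus \cdots \oplus N_k)$ if and only if there exists $j$ with $\topp N_j \cong \topp X$ and $\ell(N_j) \geq \ell(X)$. The reverse direction is exhibited by the explicit surjection $N_j \twoheadrightarrow N_j/\rad^{\ell(X)} N_j \cong X$. For the forward direction, suppose we have a surjection $f\colon \bigoplus_j N_j^{c_j} \twoheadrightarrow X$. Each restriction of $f$ to a direct summand has image a submodule of $X$; since $X$ is uniserial, its submodules form a chain, so the sum of these images equals the largest one. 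Therefore at least one component map $N_j \to X$ is already surjective. Since $N_j$ is uniserial, a surjection $N_j \twoheadrightarrow X$ forces $\topp N_j \cong \topp X$ and $\ell(N_j) \geq \ell(X)$.

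Applying this lemma to an ordering $M_1 \oplus \cdots \oplus M_t$ of the indecomposable summands of $M$, the condition $M_i \notin \Gen(M_{>i})$ translates to the statement that no summand $M_j$ with $j > i$ satisfies both $\topp M_j \cong \topp M_i$ and $\ell(M_j) \geq \ell(M_i)$. Because $M$ is basic, any two indecomposable summands sharing a top must have different lengths, and so this condition is equivalent to saying that the summands sharing a given top $S(i)$ appear in strictly decreasing order of Loewy length as one reads the ordering from left to right. Consequently, a TF-order of $M$ is determined by choosing, for each simple $S(i)$, the set of positions in $\{1, \dots, |M|\}$ occupied by summands with top $S(i)$ (the internal ordering within each class being then forced).

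Finally, noting that the multiplicity $a_i$ of $P(i)$ in the projective cover of $M$ equals the number of indecomposable summands of $M$ with top $S(i)$, the count of TF-orders is the multinomial coefficient
\[
\binom{|M|}{a_1, a_2, \dots, a_n} = \frac{|M|!}{\prod_{i=1}^n a_i!},
\]
as claimed. The only non-trivial step is the $\Gen$-characterisation of the first paragraph; once that is in hand, the rest is book-keeping, and the argument applies verbatim to disconnected Nakayama algebras since uniserial summands lie in a single connected component.
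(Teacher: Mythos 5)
Your proposal is correct and follows essentially the same route as the paper: both arguments reduce to the observation that two distinct indecomposable summands constrain each other's position precisely when they share a top, and then count via the multinomial coefficient. The one place you add genuine value is the explicit chain-of-submodules argument showing that $X \in \Gen(N_1 \oplus \cdots \oplus N_k)$ forces some single component map $N_j \twoheadrightarrow X$ to be surjective --- the paper's proof works only with pairwise comparisons $X \in \Gen Y$ and leaves this reduction implicit.
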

\begin{proof}
    Let $X, Y \in \mods \Lambda$ be distinct indecomposable modules. From the uniseriality of $\Lambda$ we have that exactly one of $X \in \Gen Y$ or $Y \in \Gen X$ arises if and only if $\topp(X) \cong \topp(Y)$. The module $M$ admits $a_i$ indecomposable direct summands $M_{j_1}, \dots, M_{j_{a_i}}$ such that $\topp(M_{j-1}) \cong \dots \cong \topp(M_{j_{a_i}}) \cong S(i)$ and these admit a unique TF-order relative to each other. Any indecomposable direct summand $N \in \add M$ with projective cover $P(N) \not \cong P(i)$
    can be inserted in any position to give a TF-order. With this observation, the result is obtained by forming the multinomial coefficient. 
\end{proof}

As an immediate consequence we obtain the following.

\begin{corollary} \label{cor:Nakayamafactorial}
    The number of TF-orders a $\tau$-rigid module divides $|\Lambda|!$.
\end{corollary}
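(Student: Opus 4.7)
The plan is to reduce the statement to elementary factorial arithmetic using the preceding \cref{lem:NakayamaTFdecs}. That lemma counts the TF-orders of $M$ as $|M|!/\prod_{i=1}^n a_i!$, where $a_i$ denotes the multiplicity of $P(i)$ in the projective cover of $M$. Since $|\Lambda| = n$, the task is to show that $|M|!/\prod_i a_i!$ divides $n!$.

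I would begin by recording two small facts. First, $\sum_{i=1}^n a_i = |M|$, because every indecomposable summand of $M$ has a unique simple top and thus contributes to exactly one $a_i$. Second, $|M| \leq n$ for any basic $\tau$-rigid module; indeed, $(M,0)$ may be completed to the $\tau$-tilting pair $\bcomp(M)$, whose indecomposable summands (projective and non-projective together) number exactly $n$. These two observations imply in particular that $n!/|M|! = n(n-1)\cdots(|M|+1)$ is a well-defined integer.

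With these facts in hand, I would then display the identity
\[ n! \;=\; \frac{|M|!}{\prod_{i=1}^n a_i!} \cdot \frac{n!}{|M|!} \cdot \prod_{i=1}^n a_i!, \]
and observe that both $n!/|M|!$ and every $a_i!$ are integers, so the last two factors on the right combine to give an integer multiplier. This exhibits $|\Lambda|! = n!$ as an integer multiple of the number of TF-orders of $M$, which is the desired divisibility. The argument is a formal consequence of \cref{lem:NakayamaTFdecs} together with the cardinality bound on $\tau$-rigid modules, and there is no substantive obstacle beyond these preliminaries.
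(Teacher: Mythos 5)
Your argument is correct and is essentially the paper's (the paper treats the corollary as an immediate consequence of \cref{lem:NakayamaTFdecs}, with exactly this reasoning left implicit): the count is a multinomial coefficient with $\sum_i a_i = |M| \leq n$, so $n!$ factors as that coefficient times the integer $(n!/|M|!)\prod_i a_i!$. No gaps.
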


The following example illustrates that this behaviour is special.
\begin{example}
    Consider the algebra $KQ$ with $Q$ the inward orientation of $\A_3$ as drawn below:
    \begin{center} $Q: \quad$\begin{tikzcd}
    1 \arrow[r] & 2 &\arrow[l]  3
\end{tikzcd}\end{center} Then $T = P(1) \oplus I(2) \oplus P(3)$ is a $\tau$-tilting module. The only obstruction to an ordering of $T$ being a TF-order is if $I(2)$ appears before both $P(2)$ and $P(3)$. Thus $T$ has $4$ TF-orders.
\end{example}

We now collect some well-known facts about the module category of a Nakayama algebra.
\newcommand{\crefenum}[2]{%
\namecref{#1}~\hyperref[#2]{\labelcref*{#1}~\ref*{#2}}%
}

\begin{proposition}\label{prop:nakayama-collection-of-facts}
    
    Let $\Lambda$ be a connected Nakayama algebra. Let $M$ be 
    an indecomposable $\Lambda$-module with minimal projective presentation $P_{-1}^M \to P_0^M \to M \to 0$. Let $L$ be a submodule of $M$ with minimal projective presentation $P_{-1}^L \to P_0^L \to L \to 0$. Furthermore, let $M_{s,t}$ denote an indecomposable non-projective module with top $S(t)$ and socle $S(s+2)_n$.
    
    \begin{enumerate}[label=\upshape(\arabic*),ref=(\arabic*)]
        \item\label{list:1}  $M \cong P_0^M/ \rad^{\ell(M)} P_0^M$.
        \item\label{almost_split} For $M$ non-projective, there is an almost split sequence \[0 \to \tau M \to  (\rad P_0^M/\rad^{\ell(M)}P_0^M) \oplus (P_0^M/\rad^{\ell(M)+1}P_0^M) \to M \to 0\]
        where the map $\tau M \to \rad (P_0^M/\rad^{\ell(M)}P_0^M)$ is an epimorphism.
        \item\label{list:2} If $f: M \to N$ is an epimorphism
        of modules, then there exists an epimorphism $\rad^i f: \rad^i M \to \rad^i N$ for all $i \geq 0$.
        \item\label{list:3} $L \cong \rad^{\ell(M)-\ell(L)} M$.
        \item\label{list:4} If $M$ is non-projective, we have $P_{-1}^M \cong P_{-1}^L$.
         \item\label{list:11} If $M$ is non-projective, no proper non-zero submodule of $M$ is projective.
        \item\label{list:5} $\tau M_{s,t} = M_{(s-1)_n, (t-1)_n}$.
        \item\label{list:7} $\tau (\rad^i M_{s,t}) = \rad^i (\tau M_{s,t})$.
        \item\label{list:6} $\ell(M_{s,t}) = (t-s -1)_n$.
        \item\label{list:8} $\Hom(P(j),M_{s,t}) = \begin{cases}
            k & \text{if } j \in \{(s+2)_n,(s+3)_n,\dots,t \},\\ 0 & \text{if } j \in \{(t+1)_n,(t+2)_n,\dots,s, (s+1)_n\}.
        \end{cases}$
        \item\label{list:9}\label{cor:projectiverigid} $\Hom(P(j), \tau M_{s,t})=0$ if and only if $j \in \{ t, (t+1)_n, \dots, s\}$.
        \item\label{list:13} $\Hom(P(j), \tau(\rad^k M_{s,t} / \rad^i M_{s,t}))=0$ for all $j \in \{ t, (t+1)_n, \dots, s\}$, $i \geq 1$ and $k \in \{ 0, \dots, i-1\}$.
       \item\label{list:10}\label{lem:submodrigid}  Let $L',L$ be two non-isomorphic submodules,
       or two non-isomorphic quotient modules of $M_{s,t}$. Then $\Hom(L, \tau L') = 0$.
        \item\label{list:14}If $M$ is $\tau$-rigid then $\Hom((\rad^j M)/\rad^i M, \tau M)=0$ for $i \geq 1$ and $j \in \{0,\dots, i-1\}$.
        \item\label{list:dimhom} Let $N \in \mods \Lambda$ be indecomposable. If $\ell(M) \leq n$ or
        $\ell(N) \leq n$, then $\dim_K \Hom(M,N) \leq 1$.
        
    \end{enumerate}
\end{proposition}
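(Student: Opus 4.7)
The plan is to prove the fifteen statements in the order given, using two foundational facts about Nakayama algebras: every indecomposable module is uniserial, and each indecomposable projective $P(i)$ is the uniserial module of maximal length with top $S(i)$, with composition series obtained by following arrows from $i$. First, \ref{list:1} holds because $P_0^M$ is uniserial with the same top as $M$, and uniseriality forces $M = P_0^M/\rad^{\ell(M)} P_0^M$. The almost split sequence \ref{almost_split} is the classical description of irreducible maps ending at an indecomposable of a Nakayama algebra (see e.g.\ \cite[V.4.1]{assem_skowronski_simson_2006}), which I would simply invoke. For \ref{list:2}, functoriality of $\rad^i = \rad^i \Lambda \cdot (-)$ gives $f(\rad^i M) = \rad^i f(M) = \rad^i N$. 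For \ref{list:3}, uniseriality of $M$ exhibits $L$ as the unique submodule of length $\ell(L)$, namely $\rad^{\ell(M)-\ell(L)} M$. For \ref{list:4}, combining \ref{list:1} applied to both $M$ and $L$ with \ref{list:3} identifies $P_{-1}^M$ and $P_{-1}^L$ as the projective covers of the kernels $\rad^{\ell(M)} P_0^M$ and $\rad^{\ell(L)} P_0^L$, whose tops both equal the simple socle-shift $S((s+1)_n)$ of $M$ (since any non-zero submodule of $M$ has the same socle as $M$), hence isomorphic. For \ref{list:11}, suppose $L$ is a proper non-zero projective submodule of $M$; writing $L = \rad^k P_0^M/\rad^{\ell(M)} P_0^M$ with $L \cong P(i)$, the composition $P(i) \twoheadrightarrow \rad^k P_0^M \twoheadrightarrow L \cong P(i)$ is a surjection of a projective onto itself and thus an isomorphism, forcing $\rad^{\ell(M)} P_0^M = 0$ and contradicting $\ell(M) < \ell(P_0^M)$.

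For the $\tau$-formulas, reading off the top and socle of the middle term of \ref{almost_split} gives $\topp \tau M_{s,t} = S((t-1)_n)$ and $\soc \tau M_{s,t} = S((s+1)_n)$, hence \ref{list:5}. Statement \ref{list:7} then follows because $\rad^i M_{s,t}$ is the uniserial submodule with socle $S((s+2)_n)$ and top $S((t-i)_n)$, and applying \ref{list:5} to it agrees (after an index shift) with applying $\rad^i$ to $\tau M_{s,t}$. Statement \ref{list:6} is the direct length count for the uniserial module with top $S(t)$ and socle $S((s+2)_n)$. The Hom-computations \ref{list:8}, \ref{list:9}, and \ref{list:13} all reduce to the observation that $\Hom(P(j), X) \cong K$ for uniserial $X$ precisely when $S(j)$ appears as a composition factor of $X$, and is $0$ otherwise; applying this to $X = M_{s,t}$ (composition factors forming the cyclic arc from $(s+2)_n$ to $t$) yields \ref{list:8}, to $X = \tau M_{s,t} = M_{(s-1)_n,(t-1)_n}$ via \ref{list:5} yields \ref{list:9}, and to $X = \tau(\rad^k M_{s,t}/\rad^i M_{s,t})$ via \ref{list:5} again yields \ref{list:13}, using that the composition factors of this $\tau$ form an arc contained in the arc of composition factors of $\tau M_{s,t}$.

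For \ref{list:10}, since $L$ and $L'$ are uniserial subobjects (resp.\ quotients) of $M_{s,t}$ with distinct tops, any non-zero map $L \to \tau L'$ would factor via $P(\topp L) \to \tau L'$. A length count comparing $\ell(L)$ with the length of the unique submodule of $\tau L'$ with top $\topp L$ shows that this candidate length is exactly $\ell(L)+1$, so the map cannot descend to $L$, giving the vanishing; the quotient case is identical with composition factors read in the opposite direction. Statement \ref{list:14} uses \ref{list:13} analogously, together with $\Hom(M,\tau M) = 0$ (from $\tau$-rigidity) to handle the boundary case $j = 0$. Finally, \ref{list:dimhom} follows because a non-zero morphism between two uniserial modules of length at most $n$ is determined up to scalar by the image of a generator of the top, forcing $\dim_K \Hom(M,N) \leq 1$. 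The main obstacle throughout will be managing the cyclic-index arithmetic modulo $n$ and reconciling the $(s,t)$-parametrisation (where the socle is shifted by two) with the vertex ranges appearing in \ref{list:8}--\ref{list:9}. I expect a single uniform convention — describing each module's composition factors as an arc in the cyclic order and each Hom-vanishing condition as its complement — to reduce all remaining arguments to short arc-intersection checks.
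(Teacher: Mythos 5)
Your proposal is correct and follows essentially the same route as the paper's proof: uniseriality, the standard facts from Assem--Simson--Skowro\'nski, the description of composition factors as cyclic arcs, and the multiplicity count for $\Hom(P(j),-)$. The only points worth flagging are that your argument for (5) tacitly assumes the kernel $\rad^{\ell(L)}P_0^L$ is non-zero, i.e.\ that $L$ is non-projective --- which your independent argument for (6) supplies, so those two items should be established in the opposite order (the paper instead gets (5) via an epimorphism between the two kernels and then deduces (6) from it) --- and that for (14) the relevant earlier item is the submodule-vanishing statement (13) rather than (12), although the ``analogous'' composition-factor argument you sketch does go through.
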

\begin{proof}
    \begin{enumerate}
    \item[] \ref{list:1}, \ref{almost_split}, \ref{list:2} These are (reformulations of) \cite[Thm. V.3.5, Thm. V.4.1, Lem. V.1.1]{assem_skowronski_simson_2006}.
        
\item[\ref{list:3}] By uniseriality every indecomposable module $M$ has a unique maximal proper submodule $\rad M$, and every other proper submodule of $M$ is also a submodule of $\rad M$ and uniserial, thus it follows that $L \cong \rad^t M$ for some $t \geq 0$. The fact that $t=\ell(M)-\ell(L)$ is obvious.
        \item[\ref{list:4}] By \ref{list:1} we have $M \cong P_0^M /\rad^{\ell(M)}P_0^M$. Therefore by \ref{list:2} and \ref{list:3} we get an epimorphism
        \[\rad^{\ell(M)-\ell(L)}P_0^M \to L \cong \rad^{\ell(M)-\ell(L)}M.\] 
        By uniseriality, this implies that there is an epimorphism $P_0^L \to \rad^{\ell(M)-\ell(L)} P_0^M$ and using \ref{list:2} we obtain an epimorphism
    \[ \rad^{\ell(L)} P_0^L \to \rad^{\ell(M)} P_0^M.\]
    Now from (1) it follows that $\rad^{\ell(L)} P_0^L \cong \ker (P_0^L \to L)$ and $\rad^{\ell(M)} P_0^M \cong \ker(P_0^M \to M)$. If $M$ is non-projective then $\rad^{\ell(M)}P_0^M \cong \ker(P_0^M \to M)$ is non-zero. By uniseriality, the existence of the epimorphism $\rad^{\ell(L)} P_0^L \to \rad^{\ell(M)} P_0^M$ then implies $P_{-1}^M \cong P_{-1}^L$ as required.
    \item[\ref{list:11}]This is a direct consequence of 
    (\ref{list:4}).
        \item[\ref{list:5}] This is a reformulation of \cite[Thm. V.4.1]{assem_skowronski_simson_2006}.        
        \item[\ref{list:7}] Note first that \[\rad M_{s,t} = \begin{cases}
            M_{s,t-1} & \text{if $\ell(M) > 1$,} \\
            0 & \text{otherwise.}
         \end{cases}\]
         So by \ref{list:5} we have $\tau \rad M_{s,t} = \rad \tau M_{s,t}$. The result now follows by induction on $i$.
        \item[\ref{list:6}] By uniseriality and \ref{list:4}, the composition factors of $M_{s,t}$ are exactly 
        $$\{S(s+2)_n,S(s+3)_n,\dots,S(t)\},$$ all with multiplicity $1$, since $\ell(M_{s,t}) <n$. So $\ell(M_{s,t}) = (t-(s+2)+1)_n =
        (t-s-1)_n.$
        \item[\ref{list:8}] By \cite[Lem. III.2.11, Cor. III.3.6]{assem_skowronski_simson_2006} we have that $\dim_K \Hom(P(j), X)$
    equals the multiplicity of $\topp P(j) \cong S(j)$ in $X$. So we argue as in the proof of \ref{list:6}.
        \item[\ref{list:9}] This follows directly from \ref{list:8} and \ref{list:5}.  

    \item[\ref{list:13}] Note that any composition factor of $\tau (\rad^k M_{s,t}/\rad^i M_{s,t})$ is also a composition factor of $\tau M_{s,t}$ by \ref{list:7}.
    So if $\Hom(P(j),\tau (\rad^k M_{s,t}/\rad^i M_{s,t})) \neq 0$, then also
    $\Hom(P(j), \tau M_{s,t}) \neq 0$.
    The result then follows from \ref{list:9}.
    \item[\ref{list:10}] Let $L \cong \rad^k M_{s,t}$ for some $k \in \{0, \dots, \ell(M_{s,t})-1\}$, then the composition factors of $L$ are
$$\{S(s+2)_n, \dots, S(t-k)_n) \}.$$
By \ref{list:5}, we have $\soc(\tau L') = S(s+1)_n$.
If $\Hom(L, \tau L') \neq 0$, then $S(s+1)$ would be a composition factor of $L$. But then we would have $ \ell(L) \geq n$, which is a contradiction, and so $\Hom(L, \tau L') = 0$. A dual proof works for two non-isomorphic quotient modules.

 \item[\ref{list:14}] This follows directly from \ref{list:10}, using that ${^\perp(\tau M)}$ is closed under quotient modules since $M$ is $\tau$-rigid, see \cite[Thm. 2.10]{tau}.
\item[\ref{list:dimhom}] If $\ell(N) \leq n$, every composition factor of $N$ appears at most once. So if $\topp(M)$ is a composition factor of $N$, there is a
unique submodule $N'$ of $N$ with $\topp(M) = \topp(N')$.  Assume $f$ is a non-zero map from $M$ to $N$. Then 
$\im f = N'$.
 Since this holds for any such non-zero map,
we have $\dim_K \Hom(M,N) \leq 1$.
The case $\ell(M)\leq n$ is dual.
    \end{enumerate} 
\end{proof}

\newcommand{\crefnakayamalist}[1]{%
\crefenum{prop:nakayama-collection-of-facts}{#1}%
}

We remark that \crefnakayamalist{list:5} and \crefnakayamalist{list:7} hold for arbitrary Nakayama algebras using the adaptation of the cyclic order because the Auslander-Reiten translation is a ``local operation'' on the subalgebra of the corresponding connected component. \crefnakayamalist{list:8}, \crefnakayamalist{list:9}, \crefnakayamalist{list:13} and \crefnakayamalist{list:14} hold using the adapted notation and the inclusion of the case $\Hom(M_{s,t}^{Q'}, N)=0$ whenever $N$ does not correspond to a representation of $Q'$.

\subsection{Bongartz completions and $\tau$-tilting reduction}

The following observation will be crucial.

\begin{lemma}\label{prop:Nakayamareduction}
    Let $\L$ be an arbitrary Nakayama algebra and $(M, P)$ a $\tau$-rigid pair. Then the $\tau$-perpendicular subcategory $J(M,P)$ is Morita equivalent to a Nakayama algebra.
\end{lemma}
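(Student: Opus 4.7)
I would prove \cref{prop:Nakayamareduction} by induction on $|M|+|P|$. The base case $(M,P)=(0,0)$ is immediate since $J(0,0)=\mods\L$ and $\L$ is Nakayama by hypothesis. For the inductive step, decompose $(M,P) = (M',P') \oplus X$ with $X$ an indecomposable $\tau$-rigid pair, and use the iterative nature of $\tau$-tilting reduction -- specifically, the identification $J_\L(M,P) \simeq J_{J(X)}(E_X(M',P'))$ which follows from \cref{thm:E-associative} together with the compatibility of the $E$-map with $\tau$-perpendicular subcategories established in \cite{jassoreduction,dirrt}. Assuming $J(X)$ is Morita equivalent to some Nakayama algebra $\L'$, then $E_X(M',P')$ is a $\tau$-rigid pair in $\mods\L'$ with $|E_X(M',P')| = |(M',P')| < |M|+|P|$, so the inductive hypothesis applied to $\L'$ closes the step.

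It thus suffices to treat the case where $X$ itself is indecomposable. If $X=(0,Q)$ is a shifted indecomposable projective with $Q = \L e$, then $J(X) = Q^\perp \simeq \mods(\L/\langle e\rangle)$. The Gabriel quiver of $\L/\langle e\rangle$ is obtained from that of $\L$ by deleting the vertex corresponding to $e$; since the quiver of $\L$ is by assumption a disjoint union of components of types $\overrightarrow{\A}_n$ and $\overrightarrow{\Delta}_n$, the resulting quiver is a disjoint union of such components (a cyclic component being cut open to a linear one, and a linear component being split into two shorter linear pieces). Hence $\L/\langle e\rangle$ is Nakayama.

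The remaining, and main, case is $X=(N,0)$ with $N$ an indecomposable $\tau$-rigid module. Here I would use the forthcoming explicit description of the Bongartz completion $\bcomp(N)$ in \cref{prop:Bongartzcompletion}, together with Jasso's identification $J(N) \simeq \mods(\End_\L(\bcomp(N))/\langle e_N\rangle)$. Under this identification, the indecomposable projectives of $J(N)$ are given by $f_N B$ as $B$ ranges over the indecomposable summands of $\bcomp(N)$ not isomorphic to $N$. Since $\L$ is Nakayama, every indecomposable $\L$-module -- in particular every summand $B$ of $\bcomp(N)$ -- is uniserial, and hence so is the quotient $f_N B$. A dual argument using the co-Bongartz completion described in \cref{lem:cocompletion} produces uniserial indecomposable injectives in $J(N)$. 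Finally, by reading off the arrows between the summands of $\bcomp(N)$ from its explicit description, one verifies that the Gabriel quiver of the reduced algebra, with the vertex $e_N$ removed, is again a disjoint union of quivers of the form $\overrightarrow{\A}_n$ or $\overrightarrow{\Delta}_n$.

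The main obstacle is precisely this last combinatorial verification: one needs the explicit shape of $\bcomp(N)$ (and of $\ccomp(N)$ for the injective side) to identify the quiver of the reduction, and to check that no ``crossing'' arrows or relations outside the Nakayama pattern can be introduced by the quotient. Once the Bongartz completion is described in the following subsection, this verification becomes a bookkeeping exercise on the cyclic order of vertices and the support of $N$.
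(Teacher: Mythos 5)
Your strategy is workable but takes a far longer road than necessary, and it defers its decisive step. The paper's own proof is a two-line observation: $J(M,P)$ is a wide subcategory of $\mods\L$ equivalent to $\mods\Gamma$ for some algebra $\Gamma$ (by \cite{jassoreduction,dirrt}); since kernels in a wide subcategory are computed in the ambient category, the subobject lattice of any object of $J(M,P)$, viewed in $\mods\Gamma$, is a subposet of its submodule lattice in $\mods\L$, which is a chain because every indecomposable module over a Nakayama algebra is uniserial; hence every indecomposable $\Gamma$-module is uniserial and $\Gamma$ is Nakayama. No induction, no Bongartz completion, and no quiver combinatorics are needed. Notice that your own proposal already contains this shortcut without exploiting it: once you have argued that the indecomposable projectives and injectives of $J(N)$ are uniserial, the definition of Nakayama used in this paper (uniserial indecomposable projectives and injectives) finishes the proof immediately, and the entire ``bookkeeping exercise'' on the Gabriel quiver becomes redundant.

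Two further cautions. First, the step you flag as the main obstacle --- reading off the quiver of $\End(\bcomp(N))/\langle e_N\rangle$ from the explicit Bongartz completion --- is genuinely nontrivial; it is carried out in this paper only later, in \cref{prop:reductionquiver}, and as written your proposal asserts rather than proves it. Relying on the quiver characterisation of Nakayama algebras is also more fragile than the uniseriality definition, since the identification of a basic algebra with $KQ/I$ presupposes hypotheses on the base field that the direct argument avoids. Second, your claim that a ``dual argument using the co-Bongartz completion produces uniserial indecomposable injectives'' is not right as stated: the $E$-map sends summands of the co-Bongartz complement to \emph{shifted projectives} of $J(N)$, not to injectives, so the injective side would need the dual reduction machinery, not $\ccomp(N)$. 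Neither issue is fatal --- the induction scheme, the associativity of reduction via \cref{thm:E-associative}, and the idempotent-quotient treatment of the shifted projective case are all sound --- but the clean route is to prove uniseriality of all indecomposables of $J(M,P)$ directly, which requires none of this apparatus.
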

\begin{proof}
    We have that $M^\perp \cap {}^\perp \tau M \cap P^\perp \subseteq \mods \L$ is equivalent the module category of some algebra $\Gamma$ by \cite[Thm. 3.8]{jassoreduction} and \cite[Thm. 4.12]{dirrt}. Since every indecomposable module of $\mods \L$ is uniserial, so is every indecomposable module of its subcategory. As a consequence, every module of $\Gamma$ is uniserial and thus $\Gamma$ is a Nakayama algebra. 
\end{proof}

\begin{remark}
    Similar results for $\tau$-tilting reductions have been obtained for the following classes of algebras:
    \begin{itemize}[label={\tiny\raisebox{0.7ex}{\textbullet}}]
        \item hereditary algebras, by \cite[Prop. 1.1]{GeigleLenzing1991}, for which the $\tau$-perpendicular categories correspond precisely with the perpendicular categories considered in \cite{GeigleLenzing1991}.
        \item gentle algebras, by \cite[Thm. 1.1]{Schroer1999} where it is shown that $\End(M)$ is a gentle algebra for any module $M$ without self-extensions over a gentle algebra $\L$. Since $\tau$-rigid modules have no self-extensions by \cite[Thm. 5.8]{auslandersmalo81} and $J(M)$ is Morita equivalent to $ \End(\bcomp(M))/\langle e_M\rangle$ by \cite[Thm. 3.8]{jassoreduction}, the result follows.
        \item preprojective algebras of Dynkin type ADE, by \cite[Thm. A]{Marks2016} since a $\tau$-tilting reduction $\Gamma$ of a preprojective algebra $\L$ of Dynkin type ADE is equivalent to considering a functorially finite wide subcategory of $\mods \L$ by \cite[Thm. 3.8]{jassoreduction}. Hence the composition of the equivalence with the canonical inclusion define a weakly homological embedding $\mods \Gamma \to \mods \L$, see \cite[Prop. 3.2]{Marks2016}, and by \cite[Thm. A]{Marks2016} we have that $B$ is Morita equivalent to a direct product of preprojective algebras of Dynkin types ADE.
    \end{itemize} 
\end{remark}

\cref{prop:Nakayamareduction} allows us to use results on mutation of $\tau$-exceptional pairs over Nakayama algebras to infer properties of mutation of longer $\tau$-exceptional sequences over Nakayama algebras.

Generalising a result of Msapato \cite[Prop. 6.3]{Msapato2022}, we now give explicit descriptions of the Bongartz and co-Bongartz completions of an indecomposable module of a Nakayama algebra. It is clear that if $M$ is projective, then $\bcomp(M) \cong \L$, otherwise we have the following description.

\begin{proposition} \label{prop:Bongartzcompletion}
    Let $\L$ be a connected Nakayama algebra and $M \in \mods \L$ be an indecomposable non-projective $\tau$-rigid module such that $\topp(M)= S(t)$. The Bongartz completion $\bcomp(M)$ is given by the $\tau$-tilting module
    \[\bcomp(M) = M \oplus \bigoplus_{i=1}^{\ell(M)-1} \rad^i M \oplus 
    \bigoplus_{i=0}^{n-\ell(M)-1} P(t+i)_n. \]
\end{proposition}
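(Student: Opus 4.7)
My plan is to identify the proposed module $T$ with $\eproj({}^\perp \tau M) = \bcomp(M)$ via \cite[Thm. 2.10]{tau}. Since $\bcomp(M)$ is $\tau$-tilting, it has exactly $n$ pairwise non-isomorphic indecomposable summands, all of which are Ext-projective in the torsion class ${}^\perp \tau M$. The strategy is to produce $n$ such Ext-projectives from the summands of the proposed $T$ and then conclude by matching cardinalities.

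First I would verify $T \subseteq {}^\perp \tau M$ summand by summand. For $X = M$ this is $\tau$-rigidity. For $X = \rad^i M$ with $1 \leq i \leq \ell(M)-1$, both $\rad^i M$ and $M$ are non-isomorphic submodules of $M$, so \crefnakayamalist{list:10} gives $\Hom(\rad^i M, \tau M) = 0$. For each projective $X = P((t+i)_n)$ with $0 \leq i \leq n - \ell(M) - 1$, we have $(t+i)_n \in \{t, (t+1)_n, \dots, s\}$, and \crefnakayamalist{list:9} yields $\Hom(X, \tau M) = 0$.

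The heart of the argument is showing Ext-projectivity of each summand $X$ in ${}^\perp \tau M$, i.e.\ $\Hom(Y, \tau X) = 0$ for every $Y \in {}^\perp \tau M$. This is immediate for the projective summands since $\tau P = 0$, and for $X = M$ it is the very definition of ${}^\perp \tau M$. For $X = \rad^i M$, the key observation — which I expect to be the main conceptual step — is that \crefnakayamalist{list:7} gives $\tau \rad^i M \cong \rad^i \tau M$, which embeds as a submodule of $\tau M$ via the natural inclusion $\iota$. Any morphism $f \colon Y \to \rad^i \tau M$ composes with $\iota$ to give a morphism $Y \to \tau M$, which must vanish by assumption on $Y$; since $\iota$ is monic, $f = 0$.

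To conclude, I would check the $n$ candidate summands are pairwise non-isomorphic: the radicals $\rad^i M$ for $0 \leq i \leq \ell(M) - 1$ have pairwise distinct Loewy lengths by \crefnakayamalist{list:3}, and each is non-projective (by the hypothesis on $M$ for $i = 0$, and by \crefnakayamalist{list:11} for $i \geq 1$); the listed projectives are obviously distinct and cannot coincide with any $\rad^i M$. The total count is $1 + (\ell(M) - 1) + (n - \ell(M)) = n$, matching $|\bcomp(M)|$, and therefore $T \cong \bcomp(M)$.
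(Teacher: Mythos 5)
Your proof is correct, but it follows a genuinely different route from the paper's. The paper first verifies that the candidate module $U$ is $\tau$-rigid by checking $\Hom(L,\tau L')=0$ for the various pairs of summands, counts summands to conclude it is $\tau$-tilting, and then establishes maximality by contradiction: if $\Gen U \subsetneq {}^\perp\tau M$ there would be a covering relation in $\ftors\L$ above $\Gen U$, and the mutation description (\cref{prop:tau_mutation_air}) would force some $\rad^k M$ to be generated by a summand of the larger $\tau$-tilting module, which is incompatible with $\tau$-rigidity. You instead bypass both the pairwise rigidity check and the mutation argument by directly exhibiting each candidate summand as an indecomposable $\Ext$-projective of the torsion class ${}^\perp\tau M$ and then matching cardinalities against $|\eproj({}^\perp\tau M)|=n$; your key step, that $\tau\rad^i M\cong\rad^i\tau M$ embeds into $\tau M$ so that $\Hom(Y,\tau\rad^i M)=0$ for every $Y\in{}^\perp\tau M$, is clean and uses \crefnakayamalist{list:7} for a different purpose than the paper does. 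The one thing you leave implicit is the standard criterion that $X\in\Tcal$ is $\Ext$-projective in a torsion class $\Tcal$ whenever $\Hom(\Tcal,\tau X)=0$ (a consequence of \cite[Prop. 5.8]{auslandersmalo81}, which the paper cites elsewhere); with that reference added your argument is complete, and arguably more economical than the paper's, since the $\Ext$-projectivity of all summands automatically subsumes the mutual $\tau$-rigidity of $U$.
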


\begin{proof}
    Let $U$ denote the right-hand side of the above, so that we want to show $\bcomp(M) \cong U$. Note that 
    $(t+n-\ell(M)-1)_n = s$, by \crefnakayamalist{list:6}. Then $U$ is $\tau$-rigid since $\Hom(P(j), \tau U) = 0$ for all $j \in \{ t, \dots, (t+(n-\ell(M)-1))_n\}$ follows from \crefnakayamalist{cor:projectiverigid} and the observation that $\tau(\rad^i M) \cong \rad^i (\tau M)$ by \crefnakayamalist{list:7}. Moreover, $\Hom(L,\tau U)=0$ for any submodule $L \subseteq M$ by \crefnakayamalist{lem:submodrigid}. By counting that $U$ has $n$ non-isomorphic direct summands, we obtain that it is $\tau$-tilting. 

    Assume now that $\bcomp(M) \not \cong U$, in other words, $\Gen U = {}^\perp \tau U \subsetneq {}^\perp \tau M = \Gen  \bcomp(M)$, where the equalities follow from \cite[Thm. 2.12]{tau} and the inclusion is clear since $M$ is a direct summand of $U$. Using \cite[Thm. 2.7]{tau}, let $U'$ be a $\tau$-tilting module such that $\Gen U \subsetneq \Gen U' \subseteq \Gen \bcomp(M_{s,t})$ where the first inclusion is a covering relation in $\ftors \L$. Then by \cref{prop:tau_mutation_air} there exists an indecomposable direct summand $X \in (\add U \cap \add U') \setminus \add M$ which is generated by some direct summand $Y \in \add U'$. Clearly $X$ is not projective, so $X$ is a proper submodule $\rad^k M$ of $M$ for some $k \in \{1, \dots, \ell(M)-1\}$. However, any such module $Y$ would have a non-zero homomorphism to $\tau M$ thus making $U'$ not $\tau$-rigid. A contradiction, hence $\bcomp(M) \cong U$.
\end{proof}

If $\Lambda$ is not connected in \cref{prop:Bongartzcompletion}, the statement may be adapted by changing the cyclic order to be local to $Q_M'$ and adding the direct sum of all projective $\Lambda$-modules which do not correspond with representations of $Q_M'$. As a consequence we can describe the mutation of TF-ordered modules of radical square zero Nakayama algebras using fewer cases.

\begin{corollary}
    Let $\L$ be a radical square zero Nakayama algebra, then there are no left irregular $\tau$-exceptional pairs, in other words, a TF-ordered module $B \oplus C$ never falls into Case TF-4. Furthermore $B \oplus C$ never falls into Case TF-2a.
\end{corollary}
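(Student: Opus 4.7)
My plan is to first establish the structural observation that in a radical square zero Nakayama algebra every non-projective indecomposable module is simple. Indeed, any indecomposable $\Lambda$-module is a uniserial quotient of its projective cover, and because $\rad^2\Lambda = 0$ every projective has length at most $2$; hence a non-simple indecomposable must coincide with its projective cover and is therefore projective.

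Ruling out Case TF-4 then follows quickly. Assume $B\oplus C$ is TF-ordered and falls in Case TF-4, so that $C$ is a non-projective summand of $\bcomp(B)$ different from $B$. If $B$ is itself projective, then $\bcomp(B) = \Lambda$, forcing $C$ to be projective, a contradiction. Otherwise $B$ is non-projective indecomposable, hence simple by the observation above, so $\ell(B) = 1$. Applying \cref{prop:Bongartzcompletion} (together with the componentwise extension to the disconnected case described after the proposition) then presents $\bcomp(B)$ as $B$ together with a direct sum of projective modules. The only non-projective summand is $B$ itself, contradicting $C \neq B$. Since Case TF-4 is precisely the left-mutable, left-irregular case, this shows that no $\tau$-exceptional pair over a radical square zero Nakayama algebra is left irregular.

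For Case TF-2a the hypothesis is that $B$ is non-projective and $C \in \add\ccomp(B)$, which by \cite[Lem.~4.4]{tauexceptional_buanmarsh} is equivalent to $C \in \Gen B$. The structural observation again forces $B$ to be simple, so $\Gen B = \add B$, whence the indecomposable $C$ must be isomorphic to $B$, contradicting basicness of $B\oplus C$.

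No step poses a real obstacle: the substantive content lies entirely in the length-$2$ bound on indecomposables in the radical square zero setting and in the explicit shape of $\bcomp(B)$ from \cref{prop:Bongartzcompletion}. The disconnected case requires only running the same argument on the connected component containing the summand $B$.
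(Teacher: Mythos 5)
Your proposal is correct and follows essentially the same route as the paper: both arguments reduce to the observation that a non-projective indecomposable over a radical square zero Nakayama algebra has length $1$, then invoke the explicit description of $\bcomp(B)$ from \cref{prop:Bongartzcompletion} to rule out Case TF-4 and the absence of nontrivial proper quotients of a simple $B$ (equivalently, $\Gen B = \add B$) to rule out Case TF-2a. The only difference is presentational, in that you isolate the length-$1$ observation as a preliminary lemma rather than arguing inline.
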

\begin{proof}
    Assume $(f_C (B),C)$ is a left irregular $\tau$-exceptional pair then by definition $C  \in \add (\bcomp(B))$ and $C \not \in \proj \L$. If $B$ is projective, then $\bcomp(B)$ only has projective summands so $C$ would be projective. If $B$ is non-projective, then since $\L$ is a radical square zero Nakayama algebras, $\ell(B) = 1$ and by the description of $\bcomp(B)$ from \cref{prop:Bongartzcompletion}, the only non-projective direct summand of $\bcomp(B)$ is $B$ since it has no proper submodules. Therefore $C \in \add (\bcomp(B))$ implies $C \in \proj \L$. Thus, Case TF-4 may not occur. We also have that for a radical square Nakayama algebra $\ell(B)=2$ implies that $B$ is projective. Therefore there cannot exist a non-projective $B$ with a nontrivial proper quotient $C$. So Case TF-2a cannot arise.
\end{proof}

We now give an explicit description of the co-Bongartz completion of an indecomposable $\tau$-rigid module.
\begin{proposition}\label{lem:cocompletion}
    Let $\L$ be a connected Nakayama algebra and $M \in \mods \L$ be an indecomposable $\tau$-rigid module such that $\topp(M) = S(t)$. Then the co-Bongartz completion $\ccomp(M)$ is given by
    \[ \ccomp(M) = \left(M \oplus \bigoplus_{i = 1}^{\min(n-1,\ell(M)-1)} (M/\rad^i M),\bigoplus_{i = 1}^{n - \ell(M)} P((t +i)_n) \right). \]
\end{proposition}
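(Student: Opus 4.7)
The plan is to verify directly that the pair
\[ (N, Q) := \left(M \oplus \bigoplus_{i=1}^{\min(n-1,\ell(M)-1)}(M/\rad^i M),\; \bigoplus_{i=1}^{n-\ell(M)} P((t+i)_n)\right) \]
is a $\tau$-tilting pair with $\Gen N = \Gen M$. Since the co-Bongartz completion is defined as the unique $\tau$-tilting pair whose unsigned component generates $\Gen M$, this will complete the proof.

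The equality $\Gen N = \Gen M$ is immediate: the inclusion $\Gen M \subseteq \Gen N$ holds because $M \in \add N$, and $\Gen N \subseteq \Gen M$ because every other summand of $N$ is a quotient of $M$. For the count $|N| + |Q| = n$, a short case analysis on the size of $\ell(M)$ relative to $n$ suffices: when $\ell(M) \leq n$ we get $|N| = \ell(M)$ and $|Q| = n - \ell(M)$, while when $\ell(M) > n$ (which forces $M$ to be projective, by \cref{prop:Adachilength}) the indexing range for $Q$ is empty and $|N| = n$. The summands of $N$ are pairwise non-isomorphic since they have pairwise distinct Loewy lengths.

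It remains to verify that $(N, Q)$ is a $\tau$-rigid pair. Each summand $M/\rad^i M$ with $i \geq 1$ has length $i \leq n-1$ and is therefore $\tau$-rigid by \cref{prop:Adachilength}, while the condition $\Hom(Q, N) = 0$ follows from \crefnakayamalist{list:8} since the tops $S((t+i)_n)$ of the summands of $Q$ do not appear as composition factors of any quotient of $M$, whose composition factors all lie in $\{S((s+2)_n), \ldots, S(t)\}$. For the cross-vanishings $\Hom(X, \tau Y) = 0$ between distinct summands of $N$, I apply \crefnakayamalist{list:10}, taking as common ambient either $M$ itself (when $M$ is non-projective, so $M = M_{s,t}$), or the non-projective $\tau$-rigid quotient $M/\rad^{\max(i,j)} M$ (when $M$ is projective; this quotient has length at most $n-1$ and is genuinely of the form $M_{s'',t}$, and both modules in question are non-isomorphic quotients of it). The main technical subtlety lies in this latter case: the boundary vanishings $\Hom(M, \tau(M/\rad^j M)) = 0$ are not directly covered by \crefnakayamalist{list:10} (as $M$ itself is not of the form $M_{s,t}$ when projective), but they follow from \crefnakayamalist{list:9} applied with $M/\rad^j M$ playing the role of $M_{s,t}$, while $\Hom(M/\rad^i M, \tau M) = 0$ is trivial as $\tau M = 0$.
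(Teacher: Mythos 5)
Your proposal is correct and follows the same overall strategy as the paper: exhibit the explicit pair, verify that it is a $\tau$-tilting pair whose module part has the same $\Gen$-class as $M$, and conclude by uniqueness (the paper routes this last step through \cref{prop:co-bongartz-summand-characterization}, but the content is identical). Where you genuinely diverge is the verification that $N$ is $\tau$-rigid. The paper exploits the fact that every summand of $N$ lies in $\Gen M$: a nonzero map $N \to \tau N$ would yield a nonzero map $M \to \tau T'$ for some summand $T'$, and this is ruled out in one stroke because each proper quotient summand $T'$ is non-projective with $\topp(\tau T') \cong S(t-1)_n$ and $\ell(\tau T') < n$, so $S(t) = \topp(M)$ cannot be a composition factor of $\tau T'$. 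You instead check all pairs $\Hom(X,\tau Y)$ directly, using \cref{prop:Adachilength} for the diagonal terms, part (13) of \cref{prop:nakayama-collection-of-facts} for pairs of distinct quotients (with a well-chosen non-projective ambient quotient $M/\rad^{\max(i,j)}M$ when $M$ is projective), and part (11) together with $\tau M = 0$ for the boundary terms. Both arguments are valid: the paper's reduction via $\Gen M$ is shorter and avoids the projective/non-projective case split entirely, while yours is more granular but correctly isolates, and resolves, the one delicate point — that the quotient-module statement does not literally apply with $M$ itself as ambient when $M$ is projective. (For completeness you might note that each $M/\rad^i M$ with $1 \le i \le \ell(M)-1$ is non-projective, as required to invoke \cref{prop:Adachilength}; this is immediate since its projective cover is $P(t)$, which has strictly greater length.)
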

\begin{proof}
    Denote the pair on the right hand side above by $(T,Q)$.
    Note that $\Gen M = \Gen T$, so by \cref{prop:co-bongartz-summand-characterization}, the pair $(T,Q)$ must be the co-Bongartz completion if it is a $\tau$-tilting pair.

    Assume that $\Hom(T,\tau T) \neq 0$. Then $\Hom(M,\tau T) \neq 0$, as $T \in \Gen M$. By assumption $M$ is $\tau$-rigid, so let $T'$ be an arbitrary indecomposable summand of $T$ not isomorphic to $M$. Thus $T'$ is non-projective and by \crefnakayamalist{list:5} the module $\tau T'$ has simple top given by $S(t - 1)_n$. Since $\ell(\tau T') < n$ it does not have $S(t)$ as a composition factor. Thus $\Hom(M,\tau T') = 0$ and therefore $T$ is $\tau$-rigid. It follows from \crefnakayamalist{list:8} that $\Hom(Q,M) = 0$ and therefore $\Hom(Q,T) = 0$ as $T$ is supported by the same simples as $M$. It only remains to verify that $(T,Q)$ has $n$ non-isomorphic indecomposable direct summands. Observe that if $\ell(M) \geq n$ then $T$ has $n$ indecomposable direct summands and $Q$ has $0$ indecomposable direct summands. Lastly, if $\ell(m) < n$ then $T$ has $\ell(m)$ summands and $Q$ has $n - \ell(m)$ summands, adding to $n$ indecomposable direct summands. This completes the proof.
\end{proof}

\begin{lemma}
    Let $\Lambda = \Lambda_1 \times \Lambda_2$ be a disconnected algebra and $T$ a $\tau$-rigid pair over $\Lambda$. Then $T$ can be decomposed $T = T_1 \oplus T_2$ where $T_i$ is naturally a $\tau$-rigid pair over $\Lambda_i$ for $i = 1,2$ and \[\bcomp_\Lambda(T) = \bcomp_{\Lambda_1}(T_1) \oplus \bcomp_{\Lambda_2}(T_2).\]
\end{lemma}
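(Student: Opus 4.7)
The plan is to exploit the fact that a product of algebras yields a product of module categories, and that all relevant pieces of $\tau$-tilting data (the functor $\tau$, $\Hom$-spaces, projective modules, torsion classes) respect this product decomposition.

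First I would recall that $\mods \Lambda \cong \mods \Lambda_1 \times \mods \Lambda_2$, so that every indecomposable $\Lambda$-module is annihilated by exactly one of the central idempotents corresponding to the two factors. In particular, every module $M$ over $\Lambda$ decomposes uniquely as $M = M_1 \oplus M_2$ with $M_i \in \mods \Lambda_i$, every projective $P$ decomposes as $P = P_1 \oplus P_2$ with $P_i \in \proj \Lambda_i$, and moreover $\Hom_\Lambda(M_i, N_j) = 0$ and $\tau_\Lambda(M_i) = \tau_{\Lambda_i}(M_i) \in \mods \Lambda_i$ whenever $i \neq j$. Applying this to $T = (M,P)$ gives the desired decomposition $T = T_1 \oplus T_2$ with $T_i = (M_i, P_i)$. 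The $\tau$-rigidity of $T$ over $\Lambda$ translates, via the vanishing of cross-terms, to the $\tau$-rigidity of each $T_i$ over $\Lambda_i$, and conversely.

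Next I would verify the decomposition of the Bongartz completion by checking that $U := \bcomp_{\Lambda_1}(T_1) \oplus \bcomp_{\Lambda_2}(T_2)$ satisfies the characterisation of $\bcomp_\Lambda(T)$ given just before \cref{prop:co-bongartz-summand-characterization}, namely that it is a $\tau$-tilting pair over $\Lambda$ containing $T$ as a direct summand and with $\Gen(U) = {}^\perp \tau_\Lambda T$. That $U$ contains $T$ as a direct summand is immediate from the construction. For $\tau$-rigidity I would use that $\Hom_\Lambda$ and $\tau_\Lambda$ vanish across components, so $\Hom_\Lambda(U, \tau_\Lambda U) = \Hom_{\Lambda_1}(\bcomp_{\Lambda_1}(T_1), \tau_{\Lambda_1} \bcomp_{\Lambda_1}(T_1)) \oplus \Hom_{\Lambda_2}(\bcomp_{\Lambda_2}(T_2), \tau_{\Lambda_2} \bcomp_{\Lambda_2}(T_2)) = 0$, and analogously $\Hom_\Lambda(Q, N) = 0$ where $Q$ and $N$ are the projective and module parts of $U$. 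The size count $|U| = n_1 + n_2 = n$ together with $\tau$-rigidity gives $\tau$-tiltingness. Finally, ${}^\perp \tau_\Lambda T = \{X \in \mods \Lambda \mid \Hom_\Lambda(X, \tau_\Lambda T) = 0\}$ decomposes component-wise as ${}^{\perp_1} \tau_{\Lambda_1} T_1 \oplus {}^{\perp_2} \tau_{\Lambda_2} T_2$, which equals $\Gen_{\Lambda_1}(\bcomp_{\Lambda_1}(T_1)) \oplus \Gen_{\Lambda_2}(\bcomp_{\Lambda_2}(T_2)) = \Gen_\Lambda(U)$.

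There is no real obstacle here; the proof is entirely routine bookkeeping once one notes that all the relevant constructions (\,$\tau$, $\Hom$, projective cover, generation\,) are local to each factor of the product algebra. The only mild subtlety is making explicit that the idempotent decomposition commutes with taking minimal projective presentations (so that the definition of $\tau$ via $\Tr$ is preserved), but this is immediate since minimal projective presentations of $M = M_1 \oplus M_2$ are direct sums of those of $M_1$ and $M_2$.
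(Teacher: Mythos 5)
Your proof is correct and rests on the same observation as the paper's: over a product algebra, $\Hom$, $\tau$, and hence $\Gen$ all decompose component-wise, so the completion can be assembled factor by factor. The paper phrases this via the maximality of $\Gen$ among $\tau$-tilting pairs having $T$ as a summand rather than via the ${}^{\perp}\tau T$ characterisation you use (which for a pair $(M,P)$ should strictly be ${}^{\perp}\tau M\cap P^{\perp}$), but the content is identical.
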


\begin{proof}
    Since the Bongartz-completion $\bcomp(T) = (M,P)$ is characterized by maximizing $\Gen M$ while having $T$ as a summand, it is enough to observe that when we decompose $M = M_1 \oplus M_2$ for $\Lambda_i$-modules $M_i$, we have $\ind(\Gen M) = \ind(\Gen M_1) \cup \ind (\Gen M_2)$. 
\end{proof}

By iteratively applying the above lemma, we can stitch together Bongartz completions of disconnected Nakayama algebras using \cref{lem:cocompletion}. It is clear that the corresponding technique also works for finding co-Bongartz-completions.

\subsection{Left regular mutation}
We now give concrete descriptions for the mutation of TF-ordered modules of the form $B \oplus C$ for Nakayama algebras. Because the $\tau$-tilting reduction of a Nakayama algebra is again a Nakayama algebra by \cref{prop:Nakayamareduction} and the description from \cref{thm:TFfullbehaviour}, we focus on the case where $B \oplus C$ are the two right-most modules in a TF-order of a $\tau$-rigid module. From \cref{prop:TFadmissiblebehaviour} we have a simple description whenever $B \oplus C$ falls into Case TF-3. The aim of this subsection is to describe the $V$-maps arising in \cref{prop:TFadmissiblebehaviour} in the regular cases, Case TF-1 and Case TF-2 for Nakayama algebras, refining the simplifications given more generally in \cref{lem:gen_mut}. We begin with the following useful observation.
\begin{lemma} \label{lem:case1homs}
    Let $B \oplus C$ be a TF-ordered $\tau$-rigid module. If $C \in \proj \L$ and $B \not \in \proj \L$, then $\Hom(C,B) =0$. 
\end{lemma}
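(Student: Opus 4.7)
The plan is to combine the $\tau$-rigidity of $B \oplus C$ (which restricts which projectives $C$ can be, via the characterisation in \crefnakayamalist{list:9}) with the $\Hom$-computation from \crefnakayamalist{list:8} (which tells us exactly when $\Hom(P(j),B) \neq 0$), and then use the TF-ordered hypothesis to rule out the one remaining possibility.

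First I would reduce to the connected case: if $B$ and $C$ lie in different connected components of the underlying quiver of $\L$, then $\Hom(C,B) = 0$ trivially, so we may assume both are modules over the same connected component. Write $B = M_{s,t}$ so that $\topp(B) = S(t)$ and $\soc(B) = S((s+2)_n)$. Since $B$ is non-projective and $\tau$-rigid, \cref{prop:Adachilength} gives $\ell(B) < n$, so the composition factors of $B$ are exactly $\{S((s+2)_n), S((s+3)_n), \dots, S(t)\}$, each with multiplicity one.

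Next, suppose towards a contradiction that $\Hom(C,B) \neq 0$, and write $C = P(j)$. On one hand, $\tau$-rigidity of $B \oplus C$ forces $\Hom(P(j), \tau B) = 0$, so by \crefnakayamalist{list:9} we have $j \in \{t, (t+1)_n, \dots, s\}$. On the other hand, $\Hom(P(j),B) \neq 0$ combined with \crefnakayamalist{list:8} forces $j \in \{(s+2)_n, \dots, t\}$. The intersection of these two index sets consists of the single vertex $j = t$, so $C \cong P(t)$.

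Finally I would use the TF-order hypothesis to derive a contradiction. Since $\topp(B) = S(t) = \topp(P(t)) = \topp(C)$, the projective cover $P(t) \to B$ is an epimorphism, and hence $B \in \Gen C$. This contradicts the assumption that $B \oplus C$ is TF-ordered (which demands $B \notin \Gen C$). So $\Hom(C,B) = 0$, as required. I do not anticipate a real obstacle here; the only delicate point is remembering to dispatch the disconnected case first so that the cyclic-order notation of \cref{prop:nakayama-collection-of-facts} applies verbatim.
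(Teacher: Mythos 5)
Your argument is correct, but it takes a genuinely different route from the paper's. The paper also argues by contradiction from $\Hom(C,B)\neq 0$, but it uses the TF-order hypothesis first: since $B\notin\Gen C$, the image of a nonzero map $C\to B$ is a proper submodule $\rad^k B$ of $B$, and the paper then produces an explicit nonzero morphism $C\to\tau B$ by comparing the socle and length of $\tau B$ with those of a suitable quotient of $C$, contradicting $\tau$-rigidity. You invert the roles of the two hypotheses: $\tau$-rigidity, via the vanishing criterion for $\Hom(P(j),\tau M_{s,t})$ in \cref{prop:nakayama-collection-of-facts}, together with the support formula for $\Hom(P(j),M_{s,t})$, pins the index $j$ down to the single value $t$; then TF-orderedness is violated because $C\cong P(t)$ is the projective cover of $B$, so $B\in\Gen C$. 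Your computation of the intersection of the two index sets is right: the support of $B$ is $\{(s+2)_n,\dots,t\}$, while $\{t,(t+1)_n,\dots,s\}$ is the complement of the support $\{(s+1)_n,\dots,(t-1)_n\}$ of $\tau B=M_{(s-1)_n,(t-1)_n}$, so the two sets meet only in $t$. Your approach is more combinatorial and leans on the catalogued Hom-formulas (and correctly dispatches the disconnected case first so that the cyclic-order notation applies); the paper's is morphism-theoretic and never needs to identify $C$ explicitly. Both are sound and of comparable length.
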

\begin{proof}
    Assume for a contradiction that $\Hom(C,B) \neq 0$. By TF-admissibility we have $B \not \in \Gen C$, so there exists a proper submodule $\rad^k B \in \Gen C$ of $B$ for some $k \in \{1,\dots, \ell(B)-1\}$. Write $\rad^k B \cong C/\rad^{\ell(\rad^k B)} C$. Since $B \not \in \proj \L$, we have $\tau B \neq 0$. By \crefnakayamalist{list:5}, we have that $\soc \tau B = \soc (C/\rad^{\ell(\rad^k B)+1} C)$, and we have $\ell(C/\rad^{\ell(\rad^k B)+1} C) = \ell(C/\rad^{\ell(\rad^k B)} C) +1 \leq \ell(B) = \ell(\tau B)$. By uniseriality, it follows that
    \[ \Hom(C/\rad^{\ell(\rad^k B)+1} C, \tau B) \neq 0,\]
    which implies that $\Hom(C, \tau B)\neq0$. So $B \oplus C$ is not $\tau$-rigid, which is a contradiction. Hence $\Hom(C,B) = 0$.
\end{proof}

We are now able to describe Case TF-1 of mutating TF-ordered modules. 

\begin{proposition} \label{prop:Nakayamacase1}
    Let $B \oplus C$ be a TF-ordered $\tau$-rigid module. If $C \in \proj \L$, then
    \[ \overline{\varphi}(B \oplus C) \cong \begin{cases} 
    C \oplus B & \text{ if } \Hom(C,B) = 0, \\ B \oplus f_C (B) & \text{ otherwise.}
    \end{cases}\]
\end{proposition}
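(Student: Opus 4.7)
The plan is to apply \cref{prop:TFadmissiblebehaviour}: since $C$ is projective it gives
\[\overline{\varphi}(B \oplus C) = V_{E_C(B)}(C[1]) \oplus E_C(B),\]
and the TF-ordering assumption $B \notin \Gen C$ together with \cref{defn:Emap} yields $E_C(B) = f_C B$. So in both subcases the task reduces to identifying the indecomposable module $V_{f_C B}(C[1])$.

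In the subcase $\Hom(C,B) = 0$ one has $f_C B = B$ and the goal becomes $V_B(C[1]) = C$. Since $B \oplus C$ is $\tau$-rigid we have $\Hom(C,\tau B) = 0$, and since $C$ is projective it is automatically Ext-projective in ${}^\perp \tau B$, hence a summand of $\bcomp(B)$. Meanwhile $(B,C)$ is a $\tau$-rigid pair, so $C[1]$ is a summand of $\ccomp(B)$ by \cref{prop:co-bongartz-summand-characterization}. Applying \cref{lem:gen_mut}(c) then yields $V_B(C[1]) = C$, giving $\overline{\varphi}(B \oplus C) = C \oplus B$.

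In the subcase $\Hom(C,B) \neq 0$ I first show that $B$ must itself be projective. Writing $C = P(j)$, the hypotheses $\Hom(C,B) \neq 0$ and $\Hom(C,\tau B) = 0$ say via \crefnakayamalist{list:8} (applied to both $B$ and $\tau B$, using \crefnakayamalist{list:5}) that $S(j)$ is a composition factor of $B$ but not of $\tau B$, while $B \notin \Gen C$ forces $j \neq t$; since the composition factors of $M_{s,t}$ and $\tau M_{s,t}$ differ only in $S(t)$, these three conditions are incompatible unless $B$ is projective. Hence $B = P(t)$, and uniseriality identifies the largest submodule of $B$ in $\Gen C$ as $\rad^k P(t)$ with $k := (t-j)_n \in \{1,\ldots,\ell(P(t))-1\}$, so $f_C B \cong P(t)/\rad^k P(t)$, which is non-projective. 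I will then enumerate the summands of $\bcomp(f_C B)$ and $\ccomp(f_C B)$ via \cref{prop:Bongartzcompletion} and \cref{lem:cocompletion}: the shifted-projective summands of $\ccomp(f_C B)$ are $P((t+i)_n)[1]$ for $i = 1,\ldots,n-k$, with $(t+n-k)_n = j$ so $C[1]$ is the last of them, whereas the projective summands of $\bcomp(f_C B)$ are $P((t+i)_n)$ for $i = 0,\ldots,n-k-1$, with $P(t) = B$ as the first. The modular summands $(f_C B)/\rad^i(f_C B)$ of $\ccomplement{f_C B}$ pair with the modular summands $\rad^i(f_C B)$ of $\bcomplement{f_C B}$ via \cref{lem:gen_mut}(b) applied to the canonical short exact sequence $0 \to \rad^i(f_C B) \to f_C B \to (f_C B)/\rad^i(f_C B) \to 0$, and the remaining shifted projectives $P((t+i)_n)[1]$ with $1 \leq i \leq n-k-1$ pair with their unshifted counterparts by \cref{lem:gen_mut}(c). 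Bijectivity of $V_{f_C B}$ then forces $V_{f_C B}(C[1]) = B$, yielding $\overline{\varphi}(B \oplus C) = B \oplus f_C B$.

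The main obstacle will be this second subcase: both the composition-factor argument forcing $B$ to be projective and the careful accounting of Bongartz/co-Bongartz summands are Nakayama-specific and require attention; once these are in place the rest follows mechanically from the bijectivity of the $V$-map and the explicit formulas in \cref{lem:gen_mut}.
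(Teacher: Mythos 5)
Your proof is correct, and while the first subcase and the overall reduction (via \cref{prop:TFadmissiblebehaviour} and $E_C(B)=f_CB$) coincide with the paper's, your treatment of the subcase $\Hom(C,B)\neq 0$ takes a genuinely different route. The paper first invokes \cref{lem:case1homs} to get $B\in\proj\L$ and then computes $V_{f_CB}(C[1])$ directly: it exhibits the right exact sequence $C\to B\to f_CB\to 0$, verifies that the two maps are a minimal left $\add(\bcomp(f_CB))$-approximation and a minimal left $\add(f_CB)$-approximation respectively (using $\dim_K\Hom(B,f_CB)=1$ and a factorisation argument for maps $C\to P(i)$), and then applies the construction of \cite[Def.\ 3.1, part I(c)]{BuanMarsh2018}. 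You instead re-derive $B\in\proj\L$ by a composition-factor count (sound, though your phrase that the composition factors of $M_{s,t}$ and $\tau M_{s,t}$ ``differ only in $S(t)$'' is loose — they differ in $S(t)$ on one side and $S(s+1)_n$ on the other; what your argument actually uses, namely that $S(t)$ is the unique composition factor of $B$ absent from $\tau B$, is correct), and you then determine $V_{f_CB}(C[1])$ by elimination: you match every other summand of $\ccomplement{f_CB}$ with a summand of $\bcomplement{f_CB}$ using \cref{lem:gen_mut}(b) and (c) together with the explicit lists from \cref{prop:Bongartzcompletion} and \cref{lem:cocompletion}, and conclude by bijectivity of the $V$-map. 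Your approach avoids any verification of minimality or approximation properties for the map $C\to B$, at the cost of a complete enumeration of both complements; the paper's approach is more local but requires the more delicate factorisation argument. Both are valid, and your index bookkeeping ($(t+n-k)_n=j$ so the leftover shifted projective is $C[1]$, and $P(t)=B$ is the leftover projective) checks out.
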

\begin{proof}
    Assume first that  $\Hom(C,B)=0$. Then we have $C[1]
    \in \add (\ccomp(B))$, by \cref{lem:cocompletion}. Moreover, since $B \oplus C$ is $\tau$-rigid we have $\Hom(C, \tau B) = 0$ and thus $C \in \add (\bcomp(B))$ by \cref{prop:Bongartzcompletion}. We therefore have $\overline{\varphi}(B \oplus C) =  V_B(C[1]) \oplus B = C \oplus B$ by \cref{prop:TFadmissiblebehaviour} and \cref{lem:gen_mut}(c). 

    Assume now that $\Hom(C,B) \neq 0$, and hence $f_C(B) \not \cong B$ and
    $t_C(B) \neq 0$. Then, we have $B \in \proj \L$ by \cref{lem:case1homs}. We claim that 
    $f_C(B)$ is $\tau$-rigid. By uniseriality, we have that $t_C(B)$ is the maximal submodule of $B$ with $\topp(t_C(B)) = \topp(C)$. Hence
    $\ell(f_C(B)) = \ell(B) - \ell(t_C(B)) <n$,  and
    therefore $f_C(B)$ is $\tau$-rigid by 
    \cref{prop:Adachilength}. 
    
    By definition of $f_C(B)$ we have $\Hom(C,f_C B)=0$ and thus $C[1] \in \add (\ccomp(f_C(B)))$. Since $f_C(B)$ is a quotient of $B$, \cref{prop:Bongartzcompletion} implies that $B \in \add (\bcomp(f_C(B)))$. Consider the following right exact sequence 
    \[ C \xrightarrow{g} B \xrightarrow{f} f_C(B) \to 0\]
    where $f$ is the projective cover of $f_C(B)$ and $g$ is the composition of the projective cover $g_1: C \twoheadrightarrow t_C(B)$ with the canonical inclusion $g_2: t_C(B) \hookrightarrow B$. Since $\ell(f_C(B)) < n$, we obtain that $\dim_K \Hom(B, f_C B)= 1$, by \crefnakayamalist{list:dimhom} and so the morphism $f$ is a minimal left $\add (f_C B)$-approximation. 
    We next claim that $g$ is a left $\add (\bcomp(f_C B ))$-approximation. For this, we use the description of $\bcomp(f_C B)$ from \cref{prop:Bongartzcompletion}, noting that any direct summand in $\bcomp(f_C B)$ is either a submodule of $f_C B$, or projective. Observe first that $\Hom(C,f_C B)=0$ implies $\Hom(C,L)=0$ for any submodule $L$ of $f_C B$. Now we have
    \[ \topp(C) \cong \topp(t_C(B)) \cong \topp(\rad^{\ell(f_C B)}B) \cong S(t-\ell(f_C B))_n ,\]
    where $B \cong P(t)$.
    Let $P(i) \in \add (\bcomp(f_C B ))$, then by \cref{prop:Bongartzcompletion} we have $$i \in \{ t, \dots, (t+(n-\ell(f_C B)-1))_n\}.$$ Since $(t-\ell(f_C B))_n < t < (t+(n-\ell(f_C B))-1)_n$ we get that any $h \in \Hom(C,P(i))$ factors through $P(t) \cong B$. Thus $g$ is a left $\add (\bcomp(f_C B))$-approximation. We have that $g$ is minimal, since 
    $\coker g = f_C B$ is indecomposable.
    We can now apply the construction in 
    \cite[Def. 3.1, part I(c)]{BuanMarsh2018}
    to obtain that $V_{f_C B}(C[1]) \cong B$.
    We conclude that $\overline{\varphi}(B \oplus C) = V_B(C[1]) \oplus f_C B = B \oplus f_C B$ by \cref{prop:TFadmissiblebehaviour}.
\end{proof}

In a similar we may describe Case TF-2.

\begin{proposition}\label{lem:Nakayamacase2}
    Let $B \oplus C$ be a TF-ordered $\tau$-rigid module. If $C \in \Gen B$, we have $C \cong B / \rad^i B$ for some $i \in \{1, \dots, \ell(B)-1\}$ and
    \[ \overline{\varphi}(B \oplus C) \cong \begin{cases} \rad^i B \oplus B & \text{ if } B \not \in \proj \L, \\ P(\rad^i B) \oplus B & \text{ if } B \in \proj \L, \end{cases} \]
    where $P(\rad^i B) \to \rad^i B$ is the projective cover of $\rad^i B$. 
\end{proposition}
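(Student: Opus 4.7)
The plan is to apply \cref{prop:TFadmissiblebehaviour} to reduce the computation of $\overline{\varphi}(B \oplus C)$ to evaluating $V_B(C)$, and then to identify this module using \cref{lem:gen_mut} together with the explicit descriptions of Bongartz and co-Bongartz completions in \cref{prop:Bongartzcompletion} and \cref{lem:cocompletion}.

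For the shape of $C$: the assumption $C \in \Gen B$ (with $C$ unsigned) places $C$ in $\add \ccomp(B)$ by \cref{prop:co-bongartz-summand-characterization}, so combining this with \cref{lem:cocompletion} and the uniseriality of $B$ yields $C \cong B/\rad^i B$ for some $i$; TF-admissibility excludes $i = 0$, while $C \neq 0$ excludes $i = \ell(B)$. Observing that $C \notin \add \bcomp(B)$ (as $C$ is neither a submodule of $B$ nor projective), the pair is left regular and \cref{prop:TFadmissiblebehaviour} gives $\overline{\varphi}(B \oplus C) = V_B(C) \oplus B$, so the task reduces to identifying $V_B(C)$.

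If $B \notin \proj \L$, I would apply \cref{lem:gen_mut}(b) to the short exact sequence $0 \to \rad^i B \to B \to C \to 0$, whose hypotheses are met because $\rad^i B \in \add \bcomp(B)$ by \cref{prop:Bongartzcompletion} and $C \in \add \ccomp(B)$ by \cref{lem:cocompletion}, giving $V_B(C) \cong \rad^i B$. If $B \in \proj \L$ (so $\bcomp(B) = \L$), I would instead apply \cref{lem:gen_mut}(a) to the right exact sequence $P(\rad^i B) \xrightarrow{f} B \to C \to 0$, where $f$ is the composition of the projective cover $P(\rad^i B) \twoheadrightarrow \rad^i B$ with the inclusion $\rad^i B \hookrightarrow B$ (this is the minimal projective presentation of $C$). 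The containments $P(\rad^i B) \in \add \bcomp(B)$ and $C \in \add \ccomp(B)$ are immediate.

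The main technical obstacle is verifying that $f$ is a left $\add B$-approximation in the projective case. Writing $B \cong P(t)$, an arbitrary map $g \colon P(\rad^i B) \cong P((t-i)_n) \to P(t)$ has image a submodule of the uniserial module $B$ with top $S((t-i)_n)$, hence of the form $\rad^{i+kn} B \subseteq \rad^i B$ for some $k \geq 0$. Such a $g$ can then be written as $h \circ f$ for an endomorphism $h \in \End(P(t))$ corresponding to a path of length $kn$ from $t$ to $t$ in $\L$; in type $\overrightarrow{\A}_n$ only $k = 0$ arises, while for $\overrightarrow{\Delta}_n$ the required path exists in $\L$ because $i + kn < \ell(B)$. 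Applying \cref{lem:gen_mut}(a) then gives $V_B(C) \cong P(\rad^i B)$, which combined with the non-projective case yields the claimed formula.
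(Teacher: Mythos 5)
Your proposal is correct and follows essentially the same route as the paper: reduce to computing $V_B(C)$ via \cref{prop:TFadmissiblebehaviour}, apply \cref{lem:gen_mut}(b) to $0 \to \rad^i B \to B \to C \to 0$ when $B$ is non-projective, and when $B$ is projective apply the exchange-sequence description to $P(\rad^i B) \to B \to C \to 0$ after checking the first map is a left $\add B$-approximation. The only (cosmetic) difference is that you verify the approximation property by factoring maps through paths in the quiver, whereas the paper argues via epi--mono factorisations of module maps and the induced epimorphism $\rad^i B \twoheadrightarrow \rad^j B$; the two arguments are equivalent.
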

\begin{proof}
   It is clear by \crefnakayamalist{list:1} that $C \cong B/ \rad^i B$ for some $i \in \{1, \dots, \ell(B)-1\}$ in this case. By \cref{prop:TFadmissiblebehaviour} we have $\varphi(B \oplus C) = V_B(C) \oplus B$. Assume first $B \not \in \proj \L$, then $\rad^i B \in \add (\bcomp(B))$ and $C \in \add (\ccomp(B))$, therefore \cref{lem:gen_mut}(b) gives $V_{B}(C) \cong \rad^k B$ as required. 
   
   Assume now that  $B \in \proj \L$. Then $\bcomp(B) \cong \L$ and there exists an exact sequence
    \[ P(\rad^i B) \xrightarrow{g} B \xrightarrow{f} C \to 0,\]
    where $f$ is the projective cover of $C$ and $g$ is the composition of the projective cover $g_1: P(\rad^i B) \to \rad^i B$ and the canonical inclusion $g_2: \rad^i B \hookrightarrow B$. Since $\ell(C) < n$ by \cref{prop:Adachilength}, we have $\dim_K \Hom(B, C)=1$ by \crefnakayamalist{list:dimhom} and hence $f$ is a minimal right $\add B$-approximation. We next claim that $g$ is
    a minimal left $\add B$-approximation. For this, let $h \in \Hom(P(\rad^i B), B)$, then consider the epi-mono factorisation of $h$ given by
    \[ P(\rad^i B) \xrightarrow{h_1} \rad^j B \xrightarrow{h_2} B, \]
    for some $j \in \{1, \dots, \ell(B)-1\}$. Using uniseriality and $\ell(C)< n$, it follows that $\ell(\rad^i B) \geq \ell(\rad^j B)$ and hence there exist an epimorphism $p: \rad^i B \to \rad^j B$. It follows that $h$ factors through $g$ and thus $g$ is a left $\add B$-approximation. It is clear that it is minimal. Using now that $f$ is a minimal right $\add B$-approximation, and that $g$ is a minimal left $\add B$-approximation, the 
    construction in \cite[Def. 3.1 Case 1(b)]{BuanMarsh2018} gives $V_B(C) \cong P(\rad^k B)$ as required.
\end{proof}

As a corollary of \cref{prop:Nakayamacase1} and \cref{lem:Nakayamacase2} we obtain the following explicit description of the $V$-maps in these cases:
\begin{corollary}\label{cor:Vmapregular}
    Let $\L$ be a Nakayama algebra and $B \oplus C$ be a TF-ordered $\tau$-rigid module. Then if $C \in \Gen B$ may be written as $C \cong B/\rad^i B$ we get
    \[
    V_B(C) \cong \begin{cases} \rad^i B & \text{ if $B \not \in \proj \L$,} \\ P(\rad^i B) & \text{ if $B \in \proj \L$}. \end{cases}
    \]
    And if $C$ is projective, then we get
    \[
    V_{f_C(B)}(C[1]) \cong \begin{cases} C & \text{ if $\Hom(C,B) = 0$}, \\
    B & \text{ if $\Hom(C,B) \neq 0$}.
    \end{cases}
    \]
\end{corollary}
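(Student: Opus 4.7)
The plan is to extract the two $V$-map identifications directly from the arguments already given in \cref{prop:Nakayamacase1} and \cref{lem:Nakayamacase2}, together with the general formula supplied by \cref{prop:TFadmissiblebehaviour}. In both cases the corollary is obtained by matching summands.

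For the case $C \in \Gen B$ with $C \cong B/\rad^i B$, \cref{prop:TFadmissiblebehaviour} gives
\[
\overline{\varphi}(B \oplus C) = V_B(C) \oplus B.
\]
On the other hand, \cref{lem:Nakayamacase2} identifies the left-hand side as $\rad^i B \oplus B$ when $B \notin \proj \L$ and as $P(\rad^i B) \oplus B$ when $B \in \proj \L$. The formulas for $V_B(C)$ then follow by the uniqueness of Krull--Schmidt decomposition, matching the non-$B$ summand on each side.

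For the case $C \in \proj \L$, TF-admissibility of $B \oplus C$ forces $B \notin \Gen C$, so by \cref{defn:Emap} we have $E_C(B) = f_C(B)$. Thus \cref{prop:TFadmissiblebehaviour} yields
\[
\overline{\varphi}(B \oplus C) = V_{f_C(B)}(C[1]) \oplus f_C(B).
\]
When $\Hom(C,B) = 0$, we have $t_C(B) = 0$, so $f_C(B) = B$; combining with $\overline{\varphi}(B \oplus C) = C \oplus B$ from \cref{prop:Nakayamacase1} gives $V_{f_C(B)}(C[1]) = V_B(C[1]) = C$. When $\Hom(C,B) \neq 0$, \cref{lem:case1homs} forces $B \in \proj \L$ and so $f_C(B) \not\cong B$; comparing with $\overline{\varphi}(B \oplus C) = B \oplus f_C(B)$ from \cref{prop:Nakayamacase1} gives $V_{f_C(B)}(C[1]) = B$.

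There is no real obstacle: the corollary is essentially a bookkeeping consequence of the two preceding propositions, which have already done the substantive work of computing $\overline{\varphi}(B \oplus C)$ by constructing the minimal approximations required to evaluate the $V$-map. The only points needing a brief word are the verification that $E_C(B) = f_C(B)$ in the projective case (coming from TF-admissibility) and the appeal to Krull--Schmidt to isolate the individual summand of interest in each decomposition.
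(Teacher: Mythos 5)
Your proposal is correct and matches the paper's treatment: the paper gives no separate proof, presenting the corollary as an immediate consequence of \cref{prop:Nakayamacase1} and \cref{lem:Nakayamacase2}, whose proofs compute exactly these $V$-maps (via the approximation constructions of \cite{BuanMarsh2018}) before assembling the mutation formulas. Your reverse bookkeeping --- reading the $V$-maps off the final $\overline{\varphi}$ formulas by comparing with \cref{prop:TFadmissiblebehaviour}, using $E_C(B)=f_C(B)$ from TF-ordering and \cref{lem:case1homs} to pin down $f_C(B)$ --- is sound and amounts to the same argument.
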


\subsection{Left irregular mutation}
Assume for notational simplicity that $\Lambda$ is a connected Nakayama algebra in this section. All results hold for arbitrary Nakayama algebras since they concern the direct sum of two indecomposable modules $B \oplus C$ where $C$ is a submodule of $B$. Therefore $B$, $C$, their submodules and quotient moduels all lie in the same connected component, and the generalisation is straight-forward by simply adjusting the cyclic order for the projective direct summands in \cref{eq:widehatB}.
We complete the description of the mutation of TF-ordered $\tau$-rigid modules for Nakayama algebras by considering the irregular case, i.e. Case TF-4. By \cref{prop:Bongartzcompletion} and the definition of Case TF-4, this implies that $B \oplus C$ satisfies $B \not \in \proj \L$ and $C \cong \rad^i B$ for some $i \in \{1, \dots, \ell(B)-1\}$. The main result of this section is the following.

\begin{theorem} \label{thm:Nakayamacase4}
    Let $B \oplus C$ be a TF-ordered $\tau$-rigid module. If $B \not \in \proj \L$ and $C \cong \rad^i B$ for some $i \in  \{1, \dots, \ell(B)-1\}$, then 
    \[ \overline{\varphi}(B \oplus C) = B \oplus (B/C). \]
\end{theorem}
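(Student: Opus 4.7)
The plan is to apply \cref{def:irregularmut} directly. Setting $\mathcal{T} := \Filt(\Gen J(B \oplus C))$, the goal is to prove $\eproj_{ns}(\mathcal{T}) = B \oplus (B/C)$ and $\eproj_s(\Gen(B \oplus (B/C))) = B$. This immediately yields $X = B$ and $Y = (B \oplus (B/C))/X = B/C$, hence $\overline{\varphi}(B \oplus C) = X \oplus Y = B \oplus (B/C)$.

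First I would verify that $B \oplus (B/C)$ is itself TF-ordered and $\tau$-rigid. The TF-property $B \notin \Gen(B/C)$ follows from $\ell(B) > \ell(B/C) = i$; the vanishing $\Hom(B/C, \tau B) = 0$ follows from quotient-closure of ${}^\perp \tau B$ via \cite[Thm. 2.10]{tau}; and $\Hom(B, \tau(B/C)) = 0$ follows from a composition-factor argument using \crefnakayamalist{list:5} and \crefnakayamalist{list:6}, showing $\topp(B) = S(t)$ is not a composition factor of $\tau(B/C)$ (which uses $\ell(B) < n$).

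The main step is the explicit description of $\mathcal{T}$ and its $\Ext$-projectives. By \cref{prop:Nakayamareduction}, $J(B \oplus C)$ is equivalent to modules over a Nakayama algebra of rank $n - 2$, whose indecomposable projective objects correspond via the $E$-map to the $n - 2$ summands of $\bcomp(B \oplus C) = \bcomp(B)$ other than $B$ and $C$ (see \cref{prop:Bongartzcompletion}); denote by $R$ the direct sum of the corresponding $\Lambda$-module representatives. I would then exhibit the $\tau$-tilting pair with module component $B \oplus (B/C) \oplus R$ and verify $\Gen(B \oplus (B/C) \oplus R) = \mathcal{T}$. The split versus non-split $\Ext$-projective classification is then read off from epimorphisms in $\mathcal{T}$: the canonical quotient $B \twoheadrightarrow B/C$ is non-split (showing $B/C$ is non-split), a non-split epimorphism from a summand of $R$ onto $B$ (coming from the structure of $\bcomp(B)$) shows $B$ is non-split, and each summand of $R$, being projective in the wide subcategory $J(B \oplus C)$, sits at the ``top'' of $\mathcal{T}$ and admits only the identity as incoming epimorphism, hence is split. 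Finally, $\Gen(B \oplus (B/C))$ equals $\add(B \oplus (B/C))$ by uniseriality and the vanishing of the relevant extensions; inside this subclass, $B$ has no non-identity incoming epimorphism (as $B/C$ is too short to surject onto $B$), so $B$ is split, while $B/C$ is covered by $B \twoheadrightarrow B/C$, hence non-split.

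The main obstacle will be constructing the $\tau$-tilting pair representing $\mathcal{T}$ and verifying the equality $\Gen(B \oplus (B/C) \oplus R) = \mathcal{T}$; this requires a careful analysis of extensions between $B$, $B/C$, and the summands of $R$ inside $\mathcal{T}$, using the Nakayama-specific composition-factor and length tools of \cref{prop:nakayama-collection-of-facts}, together with an argument that no other $\Ext$-projectives arise.
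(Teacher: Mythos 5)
Your overall strategy --- identify $\eproj_{ns}(\Filt(\Gen J(B\oplus C)))$ as $B\oplus(B/C)$ and then read off $X$ and $Y$ from \cref{def:irregularmut} --- is the same as the paper's, and your preliminary checks (that $B\oplus(B/C)$ is $\tau$-rigid and TF-ordered, and that $B$ is split while $B/C$ is non-split) are fine in substance. The gap is in the central step: the identification of $\mathcal{T}=\Filt(\Gen J(B\oplus C))$ and its $\Ext$-projectives. The module $R$ you propose is wrong under either reading. If $R$ denotes the projective objects of $J(B\oplus C)$ realised as $\L$-modules, these are the torsion-free quotients $f_{B\oplus C}(Z)$ of the summands $Z$ of $\bcomplement{B\oplus C}$; for the projective summands $P((t+j)_n)$ of $\bcomp(B)$ this gives $f_{B\oplus C}P((t+j)_n)$, which for a cyclic Nakayama algebra is typically a proper quotient of $P((t+j)_n)$, need not be $\tau$-rigid over $\L$, and is not $\Ext$-projective in $\mathcal{T}$. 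Concretely, for $\L=K\overrightarrow{\Delta}_3/R^4$, $B=P(t)/\rad^2P(t)$ and $C=\rad B$, one has $\bcomplement{B\oplus C}=P(t)$ and $f_{B\oplus C}P(t)=P(t)/\soc P(t)$, which has length $3=n$ and is therefore not $\tau$-rigid by \cref{prop:Adachilength}; moreover $P(t)\in\mathcal{T}$ but $P(t)\notin\Gen(B\oplus(B/C)\oplus f_{B\oplus C}P(t))$, since an epimorphism onto an indecomposable projective splits. Hence $\Gen(B\oplus(B/C)\oplus R)\subsetneq\mathcal{T}$ and your proposed verification would fail. If instead $R=\bcomplement{B\oplus C}$ itself, then for $i\ge 2$ the summands $\rad^jB$ with $1\le j\le i-1$ do not even lie in $\mathcal{T}$ (no summand of $\eproj(\mathcal{T})$ surjects onto them), so again the equality fails.

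The paper works with the module $\widehat B$ of \cref{eq:widehatB}, which keeps the honest projectives $P((t+j)_n)$ and replaces $\rad^jB$ by $(\rad^jB)/C$ for $j<i$. Establishing $\Gen\widehat B=\mathcal{T}$ requires proving that $\widehat B$ is $\tau$-rigid (\cref{prop:Bhatrigid}) and, crucially, that $P((t+j)_n)\in\Filt(\Gen J(B\oplus C))$: the $\Filt$-closure genuinely adds modules beyond $\Gen$ of the $J(B\oplus C)$-projectives together with $B$ and $B/C$, and this is the content of the almost-split-sequence arguments in \cref{lem:case4extensions} and \cref{lem:case4coverinJ}. Your outline does not address this step, which is the heart of why the irregular case is delicate. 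A smaller inaccuracy: $\Gen(B\oplus(B/C))=\Gen B$ contains all quotients $B/\rad^jB$, so it is not $\add(B\oplus(B/C))$; the conclusion $\eproj_s(\Gen(B\oplus(B/C)))=B$ is nevertheless correct.
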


Recall that by \cref{def:irregularmut} we need to determine the non-split $\Ext$-projective modules of $\Filt (\Gen J(B \oplus C))$ in order
to describe irregular mutation. Throughout this subsection, fix $B \oplus C$ to be a TF-ordered module satisfying the assumptions of \cref{thm:Nakayamacase4}. In particular, assume $\topp(B) = S(t)$ and $C \cong \rad^i B$ for some integer $i \in \{1, \dots, \ell(B)-1\}$. We begin by considering the following related module
\begin{equation}\label{eq:widehatB} \widehat{B} = B \oplus B/C \oplus \bigoplus_{j=1}^{i-1} (\rad^j B)/C \oplus \bigoplus_{j=i+1}^{\ell(B)-1} \rad^j B \oplus \bigoplus_{j=0}^{n-\ell(B)-1} P(t+j)_n.\end{equation}

As a first step of the proof we show that $\widehat{B}$ is a $\tau$-rigid module. Due to the similarities with the construction of $\widehat{B}$ with $\bcomp(B)$ we are able to ignore morphisms between direct summands and Auslander-Reiten translations of direct summands that $\widehat{B}$ shares with $\bcomp(B)$.

\begin{proposition}\label{prop:Bhatrigid}
    The module $\widehat{B}$ is a $\tau$-rigid module.
\end{proposition}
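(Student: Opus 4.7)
My plan is to exploit the fact that $\widehat{B}$ shares many direct summands with $\bcomp(B)$, which is already $\tau$-rigid by \cref{prop:Bongartzcompletion}. Concretely, write $\widehat{B} = (\text{shared summands}) \oplus \bigoplus_{j=0}^{i-1} Q_j$, where $Q_j := (\rad^j B)/C$, and the shared summands are $B$, the modules $\rad^j B$ for $j \in \{i+1, \ldots, \ell(B)-1\}$, and the projectives $P((t+l)_n)$ for $l \in \{0, \ldots, n-\ell(B)-1\}$. The only pairs of summands whose $\tau$-rigidity relation has not yet been established are the pairs $(Q_j, Q_k)$ and the pairs $(Q_j, X)$ or $(X, Q_j)$ where $X$ is a shared summand. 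I would verify these in turn.

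First, each $Q_j$ is individually $\tau$-rigid: $\ell(Q_j) = i - j \leq i \leq \ell(B)-1 < n$ (using \crefnakayamalist{list:6} and that $B$ is non-projective), so by \cref{prop:Adachilength} it is $\tau$-rigid whenever non-projective, while projective modules are trivially $\tau$-rigid. For the pairwise vanishing $\Hom(Q_j, \tau Q_k) = 0$ with $j \neq k$, observe that $Q_0, Q_1, \ldots, Q_{i-1}$ are exactly the submodules $\rad^j(B/C)$ of the uniserial module $B/C$, and they are pairwise non-isomorphic since their lengths differ; hence \crefnakayamalist{list:10} applied to $M_{s,t} := B/C$ gives the required vanishing.

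Next, I would handle $\Hom(Q_j, \tau X) = 0$ for each shared summand $X$. For $X = B$ this is exactly \crefnakayamalist{list:14} applied to $M = B$. For $X = \rad^k B$ with $k \in \{i+1, \ldots, \ell(B)-1\}$, note $\tau \rad^k B \cong \rad^k \tau B$ by \crefnakayamalist{list:7}, which embeds into $\tau B$, so left-exactness of $\Hom(Q_j, -)$ reduces this case to the previous one. For $X$ projective, $\tau X = 0$.

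Finally, for $\Hom(X, \tau Q_j) = 0$: the projective cases $X = P((t+l)_n)$ with $l \in \{0, \ldots, n-\ell(B)-1\}$ follow directly from \crefnakayamalist{list:13} applied to $M_{s,t} := B$; the case $X = B$ then follows because $B$ is a quotient of $P(t)$, hence $\Hom(B, \tau Q_j) \hookrightarrow \Hom(P(t), \tau Q_j) = 0$. For $X = \rad^k B$ with $k \in \{i+1, \ldots, \ell(B)-1\}$ I would use a composition-factor argument: $\tau Q_j$ is uniserial with top $S((t-j-1)_n)$ and length $i-j$, so its composition factors are exactly $\{S((t-k')_n) : k' \in \{j+1, \ldots, i\}\}$; meanwhile $\topp(\rad^k B) = S((t-k)_n)$ with $k > i$ and $k < n$, so $S((t-k)_n)$ is not among these composition factors, forcing $\Hom(\rad^k B, \tau Q_j) = 0$.

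The main obstacle I expect is not conceptual but bookkeeping: keeping careful track of indices modulo $n$ and ensuring the ranges $k \in \{i+1, \ldots, \ell(B)-1\}$ and $k' \in \{j+1, \ldots, i\}$ do not accidentally overlap modulo $n$ (which uses crucially $\ell(B) < n$, i.e.\ the non-projectivity of $B$ via \cref{prop:Adachilength}). Beyond this, the proof is a direct combination of the description in \cref{prop:Bongartzcompletion} with the collected facts in \cref{prop:nakayama-collection-of-facts}.
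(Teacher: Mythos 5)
Your proposal is correct and follows essentially the same route as the paper: both exploit that the summands shared with $\bcomp(B)$ need no further checking and then verify $\Hom(L,\tau L')=0$ for the remaining pairs case by case using the facts collected in \cref{prop:nakayama-collection-of-facts}. The only differences are cosmetic (you use left-exactness of $\Hom(Q_j,-)$ and factoring through the projective cover where the paper uses composition-factor/top arguments, and you explicitly cover the diagonal case $Q_j=Q_k$), so nothing is missing.
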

\begin{proof}
    Let $L,L' \in \add\widehat{B}$ be indecomposable, we investigate $\Hom(L, \tau L')$ for all different cases: 
    \begin{itemize}[label={\tiny\raisebox{0.7ex}{\textbullet}}]
        \item If $L \cong P(t+j)_n$ and $L' \cong \rad^k B$ for some $j \in \{0, \dots, n- \ell(B)-1\}$ and $k \in \{0 \} \cup \{i+1, \dots, \ell(B)-1\}$ then $\Hom(L, \tau L')=0$ by \cref{prop:Bongartzcompletion}.
        \item If $L \cong P(t+j)$ and $L' \cong (\rad^k B)/C$ for some $j \in \{0, \dots, n- \ell(B)-1\}$ and $k \in \{ 0, \dots, i-1\}$ then $\Hom(L, \tau L') = 0$ by \crefnakayamalist{list:13}. From the case $j=0$ it follows also that $\Hom(B, \tau L')\neq 0$ since $\topp(B) = S(t)$.
        \item If $L \cong \rad^j B$ and $L' \cong (\rad^k B)/C$ for some $j \in \{ i+1, \dots, \ell(B)-1\}$ and $k \in \{0, \dots, i-1\}$, then we observe that the composition factors of $\tau L'$ consist exactly of the composition factors of $\tau(\rad^k B)$ except the composition factors of the submodule $\tau C$. Since $j>k$, $\topp( L)$ is a composition factor of $\tau C$. Hence $\Hom(L, \tau L')=0$ as required.
        \item If $L \cong \rad^j B$ and $L' \cong \rad^k B$ for some $j,k \in \{ 0\} \cup \{i+1, \dots, \ell(B)-1\}$, then $\Hom(L, \tau L')=0$ by \crefnakayamalist{list:10}.
        \item If $L \cong (\rad^j B)/C$ and $L' \cong (\rad^k B)/C$ for some $j,k \in \{0, \dots, i-1\}$, then $\Hom(L, \tau L')=0$ by \crefnakayamalist{list:10}.
        \item If $L \cong (\rad^j B)/C$ and $L' \cong B$ for some $j \in \{0, \dots, i-1\}$, then $\Hom(L, \tau L') = 0$ by \crefnakayamalist{list:14}.
        \item If $L \cong (\rad^j B)/C$ and $L' \cong \rad^k B$ for some $j \in \{0, \dots, i-1\}$ and $k \in \{i+1, \dots, \ell(B)-1\}$, then it follows directly from the sequence of proper inclusions $L' \hookrightarrow C \hookrightarrow \rad^j B$ that $\topp(\rad^j B)$ is not a composition factor of $\tau L'$ and hence $\Hom(L, \tau L') =0$. 
    \end{itemize}
    Therefore $\widehat{B}$ is a $\tau$-rigid module.
\end{proof}

As a next step we aim to understand the extension closure $\Filt (\Gen J(B \oplus C))$. We begin with the following lemma. 

\begin{lemma}\label{lem:case4extensions}
    Let $j \in \{0, \dots, n-2\}$. Then
    \[ \Filt (\Gen ( P(j) \oplus S(j+1))) = \Gen (P(j) \oplus P(j+1)).\]
    Moreover, if $\Hom(P(n-1), P(0)) \neq 0$, then the same holds for $P(n-1)$ and $S(0)$.
\end{lemma}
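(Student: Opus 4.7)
The plan is to prove the two inclusions separately, exploiting the fact that $\Gen(P(j) \oplus P(j+1))$ is already a torsion class, and then peeling off tops of uniserial modules to build filtrations.

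For the inclusion $\Filt(\Gen(P(j) \oplus S(j+1))) \subseteq \Gen(P(j) \oplus P(j+1))$, I would observe that $P(j) \oplus P(j+1)$ is projective, hence $\tau$-rigid, so $\Gen(P(j) \oplus P(j+1))$ is a functorially finite torsion class by \cite[Thm. 2.7]{tau}, in particular closed under extensions. Since $S(j+1)$ is a quotient of $P(j+1)$, we have $\Gen(P(j) \oplus S(j+1)) \subseteq \Gen(P(j) \oplus P(j+1))$, and passing to the filtration closure stays inside the right-hand side.

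For the reverse inclusion, I would show that every indecomposable $X \in \Gen(P(j) \oplus P(j+1))$ lies in $\Filt(\Gen(P(j) \oplus S(j+1)))$. Since $\L$ is Nakayama, $X$ is uniserial, so $\topp X$ is simple; being generated by $P(j) \oplus P(j+1)$ forces $\topp X \in \{S(j), S(j+1)\}$. If $\topp X = S(j)$, then $X$ is a quotient of $P(j)$ and already lies in $\Gen(P(j) \oplus S(j+1))$. If $\topp X = S(j+1)$, then $X$ is a quotient of $P(j+1)$, and the short exact sequence
\[ 0 \to \rad X \to X \to S(j+1) \to 0 \]
does the job: either $\rad X = 0$, or $\rad X$ has top $S(j)$ by the cyclic order convention (the second composition factor of $P(j+1)$ is $S(j)$, by \crefnakayamalist{list:1} and the description of composition factors in \crefnakayamalist{list:6}), so $\rad X \in \Gen P(j)$. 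Thus $X$ sits in a two-step filtration with factors in $\Gen(P(j) \oplus S(j+1))$, so $X \in \Filt(\Gen(P(j) \oplus S(j+1)))$, as required.

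The \emph{moreover} case $j = n-1$ is handled by exactly the same argument, with $S(0)$ in place of $S(j+1)$. The role of the hypothesis $\Hom(P(n-1), P(0)) \neq 0$ is to guarantee that $S(n-1)$ is a composition factor of $P(0)$, equivalently that $\rad P(0) \neq 0$ has top $S(n-1)$; this is what is needed to form the analogous short exact sequence $0 \to \rad X \to X \to S(0) \to 0$ for each proper quotient $X$ of $P(0)$ and conclude $\rad X \in \Gen P(n-1)$. I do not expect any genuine obstacle in executing this plan; the only point requiring some care is the reduction to indecomposable summands in the second inclusion, which follows from uniseriality and the observation that a generator of a quotient must have a top among the tops of the given generating modules.
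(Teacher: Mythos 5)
Your proof is correct, and the first inclusion is identical to the paper's. For the reverse inclusion, however, you take a genuinely different and more elementary route. The paper only shows that $P(j+1)$ itself lies in $\Filt(\Gen(P(j)\oplus S(j+1)))$, and does so by an induction on Loewy length using the almost split sequences
\[0 \to P(j)/\rad^k P(j) \to (P(j)/\rad^{k-1}P(j)) \oplus (P(j+1)/\rad^{k+1}P(j+1)) \to P(j+1)/\rad^k P(j+1) \to 0,\]
building the quotients of $P(j+1)$ up from $S(j+1)$ one step at a time; it then concludes via the fact that $\Filt(\Gen(-))$ is the smallest torsion class containing $\Gen(P(j)\oplus S(j+1))$ and now contains $P(j)\oplus P(j+1)$. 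You instead classify all indecomposables of $\Gen(P(j)\oplus P(j+1))$ by their tops and exhibit, for each one, the two-step filtration $0 \subseteq \rad X \subseteq X$ with $\rad X \in \Gen P(j)$ (since $\topp(\rad X)\cong S(j)$ by uniseriality and the arrow $j+1 \to j$) and $X/\rad X \cong S(j+1)$. This avoids Auslander--Reiten theory entirely and only uses extension-closure of $\Filt$ rather than the full statement that $\Filt(\Gen(-))$ is a torsion class; the paper's argument, on the other hand, needs to place only the two projective generators inside the filtration closure and lets the torsion-class machinery do the rest. Your handling of the degenerate case $\rad X = 0$ and of the \emph{moreover} clause is also consistent with the statement.
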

\begin{proof}
    Let $j \in \{0, \dots, n-1\}$ and assume that $\Hom(P(j), P(j+1)_n)\neq 0$. Since $P(j) \oplus P(j+1)_n$ is $\tau$-rigid, the subcategory $\Gen (P(j) \oplus P(j+1)_n)$ is a torsion class by \cite[Thm. 5.10]{auslandersmalo81}. Since $S(j+1)_n \in \Gen P(j+1)_n$ we obtain
    \[ \Gen (P(j) \oplus S(j+1)_n) \subseteq \Gen (P(j) \oplus P(j+1)_n)\]
    and therefore
    \[ \Filt (\Gen (P(j) \oplus S(j+1)_n)) \subseteq \Gen (P(j) \oplus P(j+1)_n), \]
    since torsion classes, in particular $\Gen (P(j) \oplus P(j+1)_n)$, are closed under extensions by definition. 
    
    Let us now prove the reverse inclusion. First we remark that $\Filt (\Gen (P(j) \oplus S(j+1)_n))$ is the smallest torsion class containing $\Gen (P(j) \oplus S(j+1)_n)$. We can write $S(j+1)_n \cong P(j+1)_n/\rad P(j+1)_n$ by \crefnakayamalist{list:1}. 
    By \crefnakayamalist{almost_split} there exists an almost split exact sequence
    \[ 0 \to S(j) \to P(j+1)_n / \rad^2 P(j+1)_n \to S(j+1)_n \to 0\]
    since $S(j) \cong \tau S(j+1)_n$. We observe that $S(j) \cong P(j)/\rad P(j) \in \Gen (P(j) \oplus S(j+1)_n)$. By extension closure we get $P(j+1)_n/\rad^2 P(j+1)_n \in \Filt(\Gen (P(j) \oplus S(j+1)_n))$. Now let $k \geq 2$ and assume $P(j+1)_n / \rad^{k} P(j+1)_n \in \Filt (\Gen (P(j) \oplus S(j+1)_n))$. Again, by  \crefnakayamalist{almost_split} there exists an almost split exact sequence
    \begin{align*} 0 \to P(j)/ \rad^k P(j) \to (P(j)/\rad^{k-1}P(j)) \oplus (P(j+1)_n/\rad^{k+1} P(j+1)_n) \\
    \to P(j+1)_n / \rad^k P(j+1)_n \to 0.\end{align*}
    Again, extension closure implies that $P(j+1)_n / \rad^{k+1} P(j+1)_n \in \Filt(\Gen (P(j) \oplus S(j+1)_n))$. For $k \geq \ell(P(j+1)_n)$, we therefore have $P(j+1)_n \cong P(j+1)_n/ \rad^k P(j+1)_n \in \Filt \Gen (P(j) \oplus S(j+1)_n)$. Since now $P(j) \oplus P(j+1)_n \in \Filt(\Gen (P(j) \oplus S(j+1)_n))$, we have shown the reverse inclusion and thus equality. 
\end{proof}

\begin{lemma}\label{lem:case4coverinJ}
    We have $P(t) \in \Filt (\Gen (J(B \oplus C)))$, where
    $\topp(B) = S(t)$.
\end{lemma}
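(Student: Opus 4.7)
My plan is to realise $P(t)$ as an extension of modules in $\Gen(J(B \oplus C))$, using the $E$-map to locate the relative projective of $J(B \oplus C)$ naturally associated with $P(t)$.

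First, I would observe via \cref{prop:Bongartzcompletion} that $P(t)$ is a direct summand of $\bcomp(B) = \bcomp(B \oplus C)$ (the latter equality using that $C = \rad^i B \in \add \bcomp(B)$ too). Since $P(t)$ is projective while neither $B$ nor $C$ is, any surjection $(B \oplus C)^r \twoheadrightarrow P(t)$ would split and place $P(t) \in \add(B \oplus C)$, which is impossible; hence $P(t) \notin \Gen(B \oplus C)$. Then \cref{defn:Emap} yields $Q := E_{B \oplus C}(P(t)) = f_{B \oplus C}(P(t)) = P(t)/t_{B \oplus C}(P(t))$, and \cref{cor:Bong-proj} ensures $Q$ is projective in $J(B \oplus C)$, so in particular $Q \in J(B \oplus C)$.

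Next I would compute $t_{B \oplus C}(P(t))$ via uniseriality. Every submodule of $P(t)$ has the form $\rad^j P(t)$ with top $S((t-j)_n)$, and membership in $\Gen(B \oplus C)$ forces this top to lie in $\add(S(t) \oplus S((t-i)_n))$ together with a length bound coming from a surjection $B \twoheadrightarrow \rad^j P(t)$ or $C \twoheadrightarrow \rad^j P(t)$. This analysis identifies the largest such submodule as $\rad^{kn} P(t)$ for the smallest $k \geq 1$ with $\ell(P(t)) - kn \leq \ell(B)$, or as $0$ if no such $k$ exists (in which case $Q = P(t) \in J(B \oplus C)$ and the claim is immediate). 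In the nontrivial case, $\ell(t_{B \oplus C}(P(t))) \leq \ell(B) < n \leq kn = \ell(Q)$, and both $Q$ and $t_{B \oplus C}(P(t))$ are uniserial with top $S(t)$; uniseriality then forces $t_{B \oplus C}(P(t)) \cong Q/\rad^{\ell(t_{B \oplus C}(P(t)))} Q$, exhibiting it as a quotient of $Q$ and placing it in $\Gen Q \subseteq \Gen(J(B \oplus C))$. The short exact sequence $0 \to t_{B \oplus C}(P(t)) \to P(t) \to Q \to 0$ then yields $P(t) \in \Filt(\Gen(J(B \oplus C)))$.

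The hardest case will be $\ell(P(t)) > n$, which arises for cyclic Nakayama algebras with sufficiently long projectives; there $P(t) \notin J(B \oplus C)$ in general, so the extension argument is genuinely needed rather than just a direct verification that $P(t)$ lies in $J(B \oplus C)$. The key observation making the proof work is that the $E$-map, combined with the uniseriality of Nakayama modules, automatically identifies the relative projective $Q$ of $J(B \oplus C)$ and forces $t_{B \oplus C}(P(t))$ to be a quotient of it, so the required extension presents itself essentially without choice.
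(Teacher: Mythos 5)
Your proposal is correct and follows essentially the same route as the paper's proof: identify $Q = f_{B\oplus C}(P(t))$ as an object of $J(B\oplus C)$ (via the $E$-map and the fact that $P(t)\in\add\bcomp(B\oplus C)$), show that the torsion part $t_{B\oplus C}(P(t))$ lies in $\Gen Q$, and conclude by extension-closure applied to the canonical short exact sequence $0\to t_{B\oplus C}(P(t))\to P(t)\to Q\to 0$. The only (harmless) difference is in the middle step: the paper shows $B\in\Gen f_B P(t)$ and uses $t_{B\oplus C}(P(t))=t_B P(t)\in\Gen B$, whereas you compute the trace explicitly as $\rad^{kn}P(t)$ and use the length comparison $\ell(t_{B\oplus C}(P(t)))\le\ell(B)<n\le kn=\ell(Q)$ together with uniseriality to exhibit it directly as a quotient of $Q$; both verifications are valid.
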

\begin{proof}
    From our setup we have that $C \in \add (\bcomp(B))$, which implies that $\bcomp(B) \cong \bcomp(B \oplus C)$. Denote by $t_{B \oplus C}$ and $f_{B \oplus C}$ the torsion and torsion-free functors of the torsion pair $(\Gen (B \oplus C), (B \oplus C)^\perp)$ respectively. Since $B,C$ and $P(t)$ are uniserial, we either have $t_{B \oplus C}P(t) = t_CP(t)$ or $t_{B \oplus C} P(t) = t_B P(t)$. Consider $g: C \twoheadrightarrow t_C P(t)$. Then $\ker g$ is also submodule of $B$ and hence $t_C P(t)$ is a submodule of $B/\ker g$. Moreover, $B/\ker g$ is a submodule of $P(t)$ as $\ell(P(t))>  \ell(t_BP(t))$. It follows that $t_{B \oplus C}P(t) = t_B P(t)$. By uniseriality there is a unique indecomposable $Y$ making the following sequence exact 
    \[ 0 \to t_B P(t) \to P(t) \to Y \to 0. \]
    Therefore $Y \cong f_{B}P(t) \cong f_{B \oplus C}P(t)$. 

Let us first show that $B \in \Gen f_B P(t)$. If $\Hom(B, P(t))=0$, then $P(t) \in B^\perp$, hence $f_B P(t) = P(t)$ and as $\topp(B) = S(t)$ we have $B \in \Gen P(t)$. If now $\Hom(B, P(t))\neq 0$, then we necessarily have $\ell(f_B P(t)) > \ell(B)$ because their simple tops coincide and otherwise $\Hom(B, f_B P(t)) \neq 0$ by uniseriality. Again, uniseriality implies that $B \in \Gen f_BP(t)$.

    Now from \cref{prop:Bongartzcompletion} we have $P(t) \in \add (\bcomp(B \oplus C))$ and from \cref{defn:Emap} we get $f_B P(t) = f_{B \oplus C} P(t) \in J(B \oplus C)$. From the torsion pair $(\Gen(B \oplus C), (B \oplus C)^\perp)$ we get a short exact sequence
    \[ 0 \to t_B P(t) \to P(t) \to f_B P(t) \to 0.\]
    We have that $t_B P(t) \in \Gen B \subseteq \Gen f_B P(t)$, where the inclusion follows from $B \in \Gen f_B P(t)$. Since the outer terms of the above short exact sequence are in $\Gen f_B P(t) \subseteq \Gen J(B \oplus C)$, we obtain $P(t) \in \Filt(\Gen J(B \oplus C))$ by the extension closure of this torsion class. 
\end{proof}

We are now ready to show the main result of this subsection.

\begin{proof}[Proof of \cref{thm:Nakayamacase4}]
    We begin by showing that 
    \begin{equation}\label{eq:Nakcase4} J(B \oplus C) \subseteq \Gen \widehat{B} \subseteq \Filt(\Gen J (B \oplus C)),\end{equation}
    where $\widehat{B}$ is from \cref{eq:widehatB}. Since $\widehat{B}$ is a $\tau$-rigid module by \cref{prop:Bhatrigid}, the corresponding subcategory $\Gen \widehat{B}$ is a torsion class by \cite[Thm. 5.10]{auslandersmalo81}. Then, if \cref{eq:Nakcase4} holds, we must have $\Gen \widehat{B} = \Filt(\Gen J(B \oplus C))$, since $\Filt(\Gen J(B \oplus C))$ is the smallest torsion class containing $J(B\oplus C)$ by \cite[Lem. 3.1]{MarksStovicek}. It then follows from the bijection \cite[Thm. 2.7]{tau} that the indecomposable direct summands of $\widehat{B}$ are the $\Ext$-projective modules of $\Filt(\Gen J(B \oplus C))$. Since $B$ and $B/C$ are both generated by $P(t)$, which
    is a summand in $\widehat{B}$, they are non-split $\Ext$-projective and by \cite[Prop. 3.13g]{BHM2024} they are the only ones. Thus we get the result by \cref{def:irregularmut}, where $B \cong \eproj_s(\Gen (B \oplus B/C))$ and $Y \cong B/C$.

    First let us show that $J(B \oplus C) \subseteq \Gen \widehat{B}$. \cref{prop:Bongartzcompletion} states that no indecomposable direct summand $X$ of $\bcomplement{B \oplus C}$ is generated by $B \oplus C$. Therefore we have $E_{B \oplus C}(X) = f_{B \oplus C}(X)$ by \cref{defn:Emap}. Applying $f_{B \oplus C}$ component-wise to the indecomposable direct summands of $\bcomplement{B \oplus C}$ we obtain
    \[ \widetilde{B} = f_{B \oplus C}(\bcomp(B \oplus C)/(B \oplus C)) \cong \bigoplus_{j=1}^{i-1} (\rad^j B)/C \oplus \bigoplus_{j=i+1}^{\ell(B)-1} \rad^j B \oplus \bigoplus_{j=0}^{n-\ell(B)-1} f_{B \oplus C}P(t+j)_n \]
    where $C \cong \rad^i B$ for some $1 \leq i \leq \ell(B)-1$. The $\Ext$-projective module $\bcomplement{B \oplus C}$ maps to the projective module $\widetilde{B}$ by \cref{cor:Bong-proj} which generates $J(B \oplus C)$. We therefore have $J(B \oplus C) = \Gen_{J(B \oplus C)}\widetilde{B}$. Since $\widetilde{B} \in \Gen \widehat{B}$ we obtain $J(B \oplus C) \subseteq \Gen \widehat{B}$.
    
    To obtain $\Gen \widehat{B} \subseteq \Filt(\Gen J(B \oplus C))$ it suffices to show that $\widehat{B} \in \Filt(\Gen J(B \oplus C))$ since $\Gen \widehat{B}$ is the smallest torsion class containing $\widehat{B}$ as it is $\tau$-rigid by \cref{prop:Bhatrigid}. Because the non-projective direct summands of $\widehat{B}$ also occur as direct summands of $\widetilde{B}$ we only need to show $P(t+j)_n \in \Filt(\Gen J(B \oplus C))$ for all $j \in \{0,\dots, n- \ell(B)-1 \}$. From \cref{lem:case4coverinJ} we know $P(t) \in \Filt(\Gen J(B \oplus C))$. Since $B$ and $C$ are not projective, we get $f_{B \oplus C} (P(t+j)_n) \neq 0$ and hence $S(t+j)_n \in \Gen f_{B \oplus C}P(t+j)_n \subseteq \Gen J(B \oplus C)$ implies
    \[ P(t+j)_n \in \Filt(\Gen J(B \oplus C)),\]
    for all $j \in \{1, \dots, n- \ell(B)-1 \}$ by \cref{lem:case4extensions}. As a consequence $\Gen \widehat{B} \subseteq \Filt(\Gen J(B \oplus C))$ as required. This completes the proof.
\end{proof}

\section{Transitivity of mutation for Nakayama algebras}
In this section, we use the description of the mutation of TF-ordered modules of Nakayama algebras in the previous section to show that the mutation of complete $\tau$-exceptional sequences is transitive for Nakayama algebras. While this behaviour is exhibited by hereditary algebras, in general only the case of algebras of rank 2 is characterised \cite[Cor. 7.9]{BHM2024}. We recall from \cite{ringel_exceptional} that for hereditary algebras transitivity is achieved by relating every exceptional sequence with one consisting only of simple modules, see \cite[Thm. 2 and 3]{ringel_exceptional}.

In our case, the proof idea is as follows: First, given a $\tau$-tilting module $M$, we show that there exists a sequence of mutations of TF-orders between any two TF-orders of $M$. Second, given two $\tau$-tilting modules $M$ and $M'$, related by a mutation of $\tau$-tilting modules, we show that there exists a mutation of TF-orders between some TF-order of $M$ and some TF-order of $M'$. The connectivity of the exchange graph of $\tau$-tilting pairs $\Hasse(\stt \L)$ then gives the desired result. Throughout this section, let $A$ be an arbitrary Nakayama algebra. 

\begin{lemma}\label{lem:NakTFswapmut}
    Let $B \oplus C$ be a TF-ordered module such that $C \oplus B$ is also TF-ordered. Then there is a sequence of mutations of TF-orders from $B \oplus C$ to $C \oplus B$. 
\end{lemma}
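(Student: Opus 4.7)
The plan is to case-split $B \oplus C$ according to the classification TF-1 through TF-4, using the explicit mutation formulas of \cref{thm:introNakmutation}. The first observation is that Case TF-2 is ruled out by hypothesis, since $C \in \Gen B$ would contradict the TF-admissibility of $C \oplus B$. In Cases TF-1a and TF-3 the formula immediately gives $\overline{\varphi}(B \oplus C) = C \oplus B$, so a single mutation suffices and there is nothing further to show. The two remaining cases, TF-1b and TF-4, each require exactly one additional mutation; the common feature driving the argument is that in both cases the first mutation sends $B \oplus C$ to an ordered pair of the form $B \oplus Q$ with $Q$ a proper quotient of $B$, so we land in Case TF-2 and can apply \cref{lem:Nakayamacase2} a second time.

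For Case TF-1b, I would first invoke \cref{lem:case1homs} to conclude that $B$ must also be projective, so \cref{prop:Nakayamacase1} gives $\overline{\varphi}(B \oplus C) = B \oplus f_C B$ with $f_C B$ a proper quotient of $B$. Write $f_C B = B/\rad^k B$, so that $t_C B = \rad^k B$. Since $B$ is projective, $B \oplus f_C B$ lies in Case TF-2b and \cref{lem:Nakayamacase2} outputs $P(\rad^k B) \oplus B$. The remaining point is to identify $P(\rad^k B) \cong C$: since $t_C B \in \Gen C$ is uniserial with simple top, $\topp(t_C B) = \topp(C)$, and because $C$ is itself projective we obtain $C \cong P(\topp(C)) \cong P(\rad^k B)$, as required.

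For Case TF-4, the description of $\bcomp(B)$ in \cref{prop:Bongartzcompletion} together with \cref{prop:nakayama-collection-of-facts}\ref{list:11} forces $C \cong \rad^i B$ for some $i \in \{1, \dots, \ell(B)-1\}$ and $B$ non-projective. Then \cref{thm:Nakayamacase4} gives $\overline{\varphi}(B \oplus C) = B \oplus (B/C)$, where $B/C = B/\rad^i B$ is a proper quotient of the non-projective module $B$, placing $B \oplus (B/C)$ into Case TF-2a. A second application of \cref{lem:Nakayamacase2} then yields $\rad^i B \oplus B = C \oplus B$, closing the argument. The whole proof is essentially bookkeeping with the explicit formulas already established; the only point requiring a short verification is the identification $P(\rad^k B) \cong C$ in Case TF-1b.
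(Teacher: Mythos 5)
Your proof is correct and follows the same basic strategy as the paper's (a case analysis driven by the explicit formulas of \cref{thm:introNakmutation}), but it handles one case differently, and arguably more cleanly. For the left regular situations where $B$ or $C$ is non-projective, the paper does not always compute $\overline{\varphi}(B\oplus C)$ directly: it observes that either $\overline{\varphi}(B\oplus C)=C\oplus B$ or $\overline{\varphi}(C\oplus B)=B\oplus C$, and in the latter case invokes $\tau$-tilting finiteness to conclude that some iterate $\overline{\varphi}^i(B\oplus C)$ equals $C\oplus B$ (an orbit argument that gives no bound on $i$). You instead observe that TF-2 is vacuous under the hypothesis, that TF-1a and TF-3 swap in one step, and that TF-1b and TF-4 each land in Case TF-2 after one mutation and swap on the second; this yields a uniform bound of at most two mutations and avoids the finiteness argument entirely. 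You also supply the small verification $P(t_C B)\cong C$ in Case TF-1b (via $\topp(t_C B)\cong\topp(C)$ and projectivity of $C$), which the paper asserts without comment. The only cosmetic quibble is the citation of \crefnakayamalist{list:11} in Case TF-4, where the relevant facts are really just \cref{prop:Bongartzcompletion} and the observation that $B$ projective would force $\bcomp(B)\cong\Lambda$; this does not affect correctness.
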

\begin{proof}
    Assume first that $B$ or $C$ is non-projective, and that $\psi(B \oplus C)$ is left regular. Then \cref{prop:TFadmissiblebehaviour} gives us that $\overline{\varphi}(B \oplus C) = C \oplus B$ or $\overline{\varphi}(C \oplus B) = B \oplus C$. If the first equality holds, we are done. If the second equality is true then there must be an $i$ such that $\overline{\varphi}^i(B \oplus C) = C \oplus B$ as wanted, since $\L$ is $\tau$-tilting finite.

    Assume now that $B, C \in \proj \Lambda$ then $B \oplus C$ does not fall into Case TF-2. From \cref{thm:introNakmutation} we have that if $B \oplus C$ falls into Case TF-1a or Case TF-3, then
    $\overline{\varphi}(B \oplus C) = C \oplus B$. We consider the remaining two cases separately.

    In Case TF-1b, we apply \cref{prop:Nakayamacase1} and obtain $\overline{\varphi}(B \oplus C) = B \oplus f_C(B)$. As $B \in \proj \L$, the TF-ordered module $B \oplus f_C(B)$ falls into Case TF-2b. Applying \cref{lem:Nakayamacase2}, we find that $\overline{\varphi}(B \oplus f_C (B)) = P(t_C (B)) \oplus B = C \oplus B$ as wanted.

    Finally, we consider the Case TF-4, where $\psi(B \oplus C)$ is left irregular. We have $C \not \in \proj \L$ and $C \cong \rad^i B$ for some $1 \leq i \leq \ell(B)-1$ by \cref{prop:Bongartzcompletion}. By \cref{thm:Nakayamacase4}, we get that $\overline{\varphi}(B \oplus C) = B \oplus (B/C)$. From the assumption $C \not \in \proj \L$ and $C \in \add (\bcomp(B))$, it follows that $B \not \in \proj \L$. Thus $B \oplus (B/C)$ falls into Case TF-2a and hence $\overline{\varphi}(B \oplus (B/C)) = C \oplus B$ by \cref{lem:Nakayamacase2}.
\end{proof}

We are now able to show the first of the two main steps towards transitivity.
\begin{lemma}\label{lem:TFdecsconnected}
    Let $M = (T,\sigma)$ and $M' = (T,\phi)$ be to TF-orders of a $\tau$-rigid module $T$. Then there is a sequence of mutations $\{\overline{\varphi}_{a_i}\}_{i \in \{1,2,3,\dots,t\}}$ such that $\overline{\varphi}_{a_t} \circ \overline{\varphi}_{a_{t-1}} \circ \dots \circ \overline{\varphi}_{a_1}(M) = M'$.
\end{lemma}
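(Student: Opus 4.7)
The plan is to combine the combinatorial TF-preserving sequence of transpositions from \cref{lem:TFcombinatorialalgo} with the rank-$2$ swap result \cref{lem:NakTFswapmut}, using \cref{prop:TFmutations_in_middle} to realise each such transposition in the middle of a TF-order as a top-of-module swap in a $\tau$-tilting reduction. First, I would apply \cref{lem:TFcombinatorialalgo} to the pair $(M,M')$ to obtain transpositions $\overline{\pi}_{b_1},\dots,\overline{\pi}_{b_r}$ whose composition carries $M$ to $M'$ and leaves every intermediate ordering TF-ordered. This reduces the statement to the single-swap claim: whenever $N = N_1\oplus\dots\oplus N_t$ is TF-ordered and $\overline{\pi}_b(N)$ is also TF-ordered, some finite sequence of mutations $\overline{\varphi}_b$ takes $N$ to $\overline{\pi}_b(N)$.

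To handle this single-swap step, I would pass to the $\tau$-tilting reduction $J(N_{>b+1})$, which by \cref{prop:Nakayamareduction} is (the module category of) a Nakayama algebra. Set $\widetilde{N} := E_{N_{>b+1}}(N_{\leq b+1})$, which is TF-ordered in $J(N_{>b+1})$ by \cref{prop:tf-reduced-still-tf}, and write $B := E_{N_{>b+1}}(N_b)$ and $C := E_{N_{>b+1}}(N_{b+1})$ for its last two summands. The TF-admissibility of both $N$ and $\overline{\pi}_b(N)$ then translates into both $B\oplus C$ and $C\oplus B$ being TF-ordered rank-$2$ pairs in $J(N_{>b+1})$, so \cref{lem:NakTFswapmut} applied in this Nakayama algebra provides a finite sequence of rank-$2$ mutations $\overline{\varphi}$ taking $B\oplus C$ to $C\oplus B$.

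Each such $\overline{\varphi}$ can be viewed, via \cref{thm:TFfullbehaviour} applied in $J(N_{>b+1})$, as a top mutation $\overline{\varphi}_b$ of the full module $\widetilde{N}$: the last two summands transform by $\overline{\varphi}$, while each preceding summand is relabelled by $E^{-1}_{M^{\mathrm{new}}}\circ E_{M^{\mathrm{old}}}$, with $M^{\mathrm{old}}$ and $M^{\mathrm{new}}$ denoting the last two summands before and after the step. Along the full sequence $B\oplus C = M^{(0)}\to M^{(1)}\to\dots\to M^{(k)} = C\oplus B$, the intermediate factors $E_{M^{(i)}}\circ E^{-1}_{M^{(i)}}$ telescope, since \cref{prop:jasso_unchanged_after_mutation} keeps $J(M^{(i)})=J(M^{(i-1)})$ throughout, leaving the overall relabelling as $E^{-1}_{C\oplus B}\circ E_{B\oplus C}$. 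Because \cref{thm:E-associative} shows that $E_T$ depends on $T$ only through its underlying set of summands, one has $E_{B\oplus C}=E_{C\oplus B}$, so this composite is the identity. Hence the composed top mutations swap the last two summands of $\widetilde{N}$ while leaving the preceding ones fixed. Lifting each step back through \cref{prop:TFmutations_in_middle} produces a finite sequence of mutations $\overline{\varphi}_b$ in $\mods\L$ carrying $N$ to $\overline{\pi}_b(N)$, and concatenating the sequences for $\overline{\pi}_{b_1},\dots,\overline{\pi}_{b_r}$ yields the desired composition $M\to M'$.

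The hard part will be the telescoping/relabelling cancellation: one must verify that all the intermediate rank-$2$ pairs $M^{(i)}$ produced by \cref{lem:NakTFswapmut} share the common $\tau$-perpendicular category $J(B\oplus C)$ so that the $E$-maps compose correctly, and then extract from the order-independence implicit in \cref{thm:E-associative} the equality $E_{B\oplus C}=E_{C\oplus B}$ that makes $E^{-1}_{C\oplus B}\circ E_{B\oplus C}$ the identity on the preceding summands. Both points reduce to carefully exploiting the invariance of the $\tau$-perpendicular subcategory under mutation.
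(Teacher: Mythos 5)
Your proposal is correct and follows essentially the same route as the paper: reduce to adjacent TF-preserving transpositions via \cref{lem:TFcombinatorialalgo}, pass to the Nakayama reduction $J(N_{>b+1})$ (\cref{prop:Nakayamareduction}, \cref{prop:tf-reduced-still-tf}), swap the last two summands by iterating \cref{lem:NakTFswapmut}, and lift back with \cref{thm:TFfullbehaviour} and \cref{prop:TFmutations_in_middle}. The telescoping cancellation you flag as the hard part is handled only implicitly in the paper and is in fact immediate, since $E_T$ is defined on the underlying (unordered) $\tau$-rigid pair, so $E_{B\oplus C}=E_{C\oplus B}$ by definition rather than via \cref{thm:E-associative}.
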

\begin{proof}
    Note first that if $n = 2$ then the claim follows directly from \cref{lem:NakTFswapmut}. 
    Recall that we denote by $\overline{\pi}_i$ the transposition $\begin{pmatrix} i & (i+1)\end{pmatrix}$ acting on ordered $\tau$-rigid pairs. Assume that $\overline{\pi}_i(M)$ is also TF-ordered.

    Then by \cref{prop:tf-reduced-still-tf} both $E_{M_{>i+1}}(M_i) \oplus E_{M_{>i+1}}(M_{i+1})$ and $E_{M_{>i+1}}(M_{i+1}) \oplus E_{M_{>i+1}}(M_{i})$ are TF-ordered in $J(M_{>i+1})$ which by \cref{prop:Nakayamareduction} is the module category of a Nakayama algebra. Thus, by \cref{lem:NakTFswapmut}, we have 
    \[\left({\overline{\varphi}^{J(M_{>i+1})}}\right)^j(E_{M_{>i+1}}(M_i) \oplus E_{M_{>i+1}}(M_{i+1})) = E_{M_{>i+1}}(M_{i+1}) \oplus E_{M_{>i+1}}(M_{i})\] 
    for some $j \geq 1$. By \cref{thm:TFfullbehaviour} we then have \[\left({\overline{\varphi}_i^{J(M_{>i+1})}}\right)^j(E_{M_{>i+1}}(M_{\leq i+1})) = \overline{\pi}_i(E_{M_{>i+1}}(M_{\leq i+1})), \] and lastly by \cref{prop:TFmutations_in_middle} this gives us \[\overline{\varphi}_i^j(M) = E^{-1}_{M_{>i+1}}(\overline{\pi}_i(E_{M_{>i+1}}(M_{\leq i+1}))) \oplus M_{>i+1} = \overline{\pi}_i(M).\] This shows that any transposition of neighboring summands preserving TF-order can be obtained via iterated application of left mutations of the TF-ordered module $M$. By \cref{lem:TFcombinatorialalgo}, we have that $M'$ can be obtained from $M$ using a sequence of such transpositions, meaning it can also be obtained from $M$ using a sequence of left mutations of TF-ordered modules.
\end{proof}

Before investigating the second step in the proof of transitivity, we carefully describe the terms arising in the mutation of $\tau$-tilting modules.

\begin{lemma}\label{lem:minimality}
Let $M \xrightarrow{\alpha = (\alpha_j)_{j=1}^t} \bigoplus_{j=1}^t N_i$
be a minimal left $\add N$-approximation.
Then for any index $i$, the map $\alpha_i$ cannot factor through any combination of the maps $\alpha_j$ for $j \neq i$. This means more precisely that for any fixed index $i$ we can not have maps
$\beta_j \colon N_j \to N_i$ for each $j\neq i$, with the property that $\alpha_i = \sum_{j \neq i} \beta_j \alpha_j$. 
\end{lemma}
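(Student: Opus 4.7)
The plan is to argue by contradiction using the definition of left minimality: if $\alpha_i$ could be written as $\sum_{j \neq i}\beta_j\alpha_j$, then I can cook up a non-isomorphism $g \in \End(\bigoplus_{j=1}^t N_j)$ with $g\circ\alpha = \alpha$, contradicting the left minimality of $\alpha$.

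Concretely, assume for contradiction that there exist maps $\beta_j\colon N_j \to N_i$ for $j \neq i$ such that $\alpha_i = \sum_{j \neq i}\beta_j\alpha_j$. I define $g\colon \bigoplus_{j=1}^t N_j \to \bigoplus_{j=1}^t N_j$ by its matrix of components $(g_{k\ell})$, where $g_{k\ell}\colon N_\ell \to N_k$ is given by
\[
g_{k\ell} = \begin{cases} \id_{N_k} & \text{if } k = \ell \text{ and } k \neq i, \\ \beta_\ell & \text{if } k = i \text{ and } \ell \neq i, \\ 0 & \text{otherwise.} \end{cases}
\]
That is, $g$ is the identity matrix modified only in row $i$: the $(i,i)$-entry is $0$, and the entries $(i,\ell)$ for $\ell \neq i$ are given by $\beta_\ell$.

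I then verify $g\circ\alpha = \alpha$ componentwise. For $k \neq i$, the $k$-th component of $g\alpha$ is $\sum_\ell g_{k\ell}\alpha_\ell = \alpha_k$ by construction. For $k = i$, the $i$-th component of $g\alpha$ is $\sum_{\ell \neq i}\beta_\ell\alpha_\ell$, which equals $\alpha_i$ by the assumed factorisation. Hence $g\circ\alpha = \alpha$.

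Finally, I argue that $g$ is not an isomorphism. The only nonzero entry in the $i$-th column of the matrix for $g$ would be $g_{ii} = 0$, so the inclusion of $N_i$ as the $i$-th summand lies in the kernel of $g$. Since all modules here are finite-dimensional and $N_i \neq 0$, the endomorphism $g$ is not injective and therefore not an isomorphism. This contradicts the left minimality of $\alpha$, completing the proof. The argument is entirely formal and I do not anticipate any obstacle beyond keeping the matrix bookkeeping straight.
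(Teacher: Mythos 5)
Your proof is correct and is essentially identical to the paper's: both construct the endomorphism that is the identity off row $i$, has zero $(i,i)$-entry, and has the $\beta_\ell$'s in row $i$, then observe that the $i$-th column is zero so it is not an isomorphism while still satisfying $g\circ\alpha=\alpha$, contradicting left minimality. No issues.
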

\begin{proof}
    Assume that $\alpha_i = \sum_{j \neq i} \beta_j \alpha_j$. Consider the endomorphism $\psi$ of $\bigoplus_{i=1}^t N_i$ which has diagonal entries $\psi_{jj}$ equal to 1 if $j \neq i$ and 0 if $j = i$ and for which $\psi_{ij} = \beta_j$. Since the $i$-th column is zero this is not an isomorphism. However, by construction $\psi \circ \alpha = \alpha$, a contradiction to minimality. 
\end{proof}

\begin{lemma}\label{lem:epiormono}
    Let $\L$ be a Nakayama algebra. Let $M,N \in \mods \L$ be indecomposable modules such that $M \oplus N$ is $\tau$-rigid. If $N$ is not projective, then any nonzero morphism $f \in \Hom(M,N)$ is either an epimorphism or a monomorphism.
\end{lemma}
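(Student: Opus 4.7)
The plan is to argue by contradiction: assuming $f$ is neither an epimorphism nor a monomorphism, I will construct a nonzero morphism $M \to \tau N$, contradicting the vanishing $\Hom(M, \tau N) = 0$ that follows from the $\tau$-rigidity of $M \oplus N$.

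First, since $N$ is indecomposable and hence uniserial, the image $\im f$ is of the form $\rad^k N$ for some $k$. The hypothesis that $f$ is not epi forces $k \geq 1$, while $\ker f \neq 0$ forces $\ell(M) \geq \ell(\im f) + 1 = \ell(N) - k + 1$. Factoring $f$ as $M \twoheadrightarrow \im f \hookrightarrow N$, I also read off that $\topp M = \topp \rad^k N$.

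The key step is to produce a suitable uniserial submodule of $\tau N$ whose top matches $\topp M$. By \crefnakayamalist{list:11}, the submodule $\rad^{k-1} N$ of $N$ is non-projective (when $k=1$ it equals $N$), so applying \crefnakayamalist{almost_split} to $\rad^{k-1}N$ yields an epimorphism $\tau \rad^{k-1} N \twoheadrightarrow \rad(\rad^{k-1} N) = \rad^k N$. Combined with $\tau \rad^{k-1} N = \rad^{k-1} \tau N$ from \crefnakayamalist{list:7}, this gives $\topp \rad^{k-1} \tau N = \topp M$. Using also that $\ell(\tau N) = \ell(N)$ (by \crefnakayamalist{list:5} and \crefnakayamalist{list:6}), the submodule $\rad^{k-1} \tau N$ of $\tau N$ is uniserial of length $\ell(N) - k + 1$ with top $\topp M$.

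Finally, since a uniserial module over a Nakayama algebra is determined up to isomorphism by its top and its length, the quotient $M/\rad^{\ell(N)-k+1}M$ --- well defined by the length bound above --- is isomorphic to $\rad^{k-1} \tau N$. Composing the projection, this isomorphism, and the inclusion into $\tau N$ then produces the desired nonzero morphism, giving the contradiction. The main technical hurdle will be the identification of tops in the previous paragraph; invoking the almost split sequence as above handles this cleanly without any explicit manipulation of the cyclic order on vertices.
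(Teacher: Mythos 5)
Your proof is correct, and it follows the same overall strategy as the paper's: assume $f$ is neither an epimorphism nor a monomorphism and derive a contradiction by exhibiting a nonzero morphism $M \to \tau N$, which is forbidden since $M \oplus N$ is $\tau$-rigid. The only real divergence is in how that morphism is produced. The paper writes $\im f$ as a proper quotient of $M$, observes that the quotient of $M$ with one additional composition factor has socle $\soc(\tau N)$ (using the explicit formula $\tau M_{s,t} \cong M_{(s-1)_n,(t-1)_n}$), and maps it into $\tau N$; you instead write $\im f \cong \rad^k N$, use the almost split sequence together with $\tau(\rad^{k-1}N) \cong \rad^{k-1}(\tau N)$ to match tops rather than socles, and then invoke the classification of uniserial modules by top and length. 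Both routes are sound and rest on the same uniseriality input; yours has the minor advantage of making explicit the length inequality $\ell(M)\ge \ell(N)-k+1$ needed for the relevant quotient of $M$ to exist, a point the paper's write-up leaves implicit.
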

\begin{proof}
    Assume $N$ is not projective so that $\tau N \not \cong 0$. Let $0 \neq f \in \Hom(M,N)$, and assume $f$ is not an epimorphism. Then $\im f = M/\rad^t M$ is a proper submodule of $N$, that is $\soc(M/\rad^t M) = \soc N$. If $t \neq 0$, we would have that $\soc(M/\rad^{t-1}M) = \soc(\tau N)$ by \crefnakayamalist{list:5} and hence we would get a non-zero map $M \to M/\rad^{t-1}M \to \tau N$. Thus, we have $t=0$, and so $\im f = M$, which means $f$ is a monomorphism.
\end{proof}

We recall the following theorem of Adachi, Iyama, Reiten.
\begin{theorem}\cite[Thm. 2.30]{tau}\label{thm:AIR}
Let $U$ be a sincere $\tau$-rigid module with $|U| = n-1$. Then
there exist indecomposable $\tau$-rigid modules $X,Y$, such that $U \oplus X$ and $U \oplus Y$ are $\tau$-tilting
and there is an exact sequence
 \begin{equation}\label{eq:exchange-sequence}
 X \xrightarrow{f} U' \xrightarrow{g} Y \to 0  
 \end{equation}
where $f$ is a minimal left $\add U$-approximation and $g$ is a right $\add U$-approximation. 
\end{theorem}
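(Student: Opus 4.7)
The plan is to produce $X$ and $Y$ via Bongartz and co-Bongartz completions, and then identify the exchange sequence constructively.

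First, obtain $X$: since $U$ is $\tau$-rigid, the Bongartz completion $\bcomp(U)$ is a $\tau$-tilting pair containing $U$ as a summand, and the sincerity hypothesis forces the completing pair to have no shifted-projective component. Indeed, if $(U,P)$ were a $\tau$-tilting pair with $P \neq 0$, then $\topp(P)$ would be a composition factor of $U$ by sincerity, giving $\Hom(P,U) \neq 0$ and contradicting the definition of a $\tau$-rigid pair. Hence $\bcomp(U) = (U \oplus X, 0)$ for an indecomposable $\tau$-rigid module $X$ not in $\add U$.

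Next, construct the exchange sequence. Take a minimal left $\add U$-approximation $f \colon X \to U'$, which exists since $\add U$ is functorially finite, and set $Y = \coker f$ with $g \colon U' \to Y$ the canonical surjection. There are three things to verify: that $Y \neq 0$; that $Y$ is indecomposable, $\tau$-rigid, and not in $\add U$; and that $g$ is a right $\add U$-approximation. If $f$ were surjective, then by \cref{prop:tau_mutation_air} the mutation of $U \oplus X$ at $X$ would introduce a shifted projective $Q[1]$, producing a $\tau$-tilting pair $(U, Q)$ with $Q \neq 0$; sincerity rules this out exactly as above, so $Y \neq 0$. Indecomposability of $Y$ follows from the minimality of $f$ via \cref{lem:minimality}: any decomposition $Y \cong Y_1 \oplus Y_2$ would pull back through $g$ to a decomposition of $U'$ in which the components of $f$ factor through one another, contradicting minimality.

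For $\tau$-rigidity of $U \oplus Y$, note that $\Hom(X, \tau U) = 0$ (from $\tau$-rigidity of $U \oplus X$) propagates to any quotient of $X$ and in particular to $Y$, giving $\Hom(Y, \tau U) = 0$; an analogous argument along the exact sequence applied to $\Hom(-, \tau Y)$ yields $\Hom(Y, \tau Y) = 0$. To see that $g$ is a right $\add U$-approximation, apply $\Hom(U, -)$ to the short exact sequence $0 \to \im f \to U' \to Y \to 0$; surjectivity of $\Hom(U, U') \to \Hom(U, Y)$ reduces to $\Ext^1(U, \im f) = 0$, which follows from $\Hom(\im f, \tau U) = 0$ by the Auslander-Reiten formula (projective summands of $U$ contribute trivially). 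Counting summands then gives $|U \oplus Y| = n$, so $U \oplus Y$ is a $\tau$-tilting module.

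The main obstacle is the simultaneous verification that $Y$ is nonzero, indecomposable, and $\tau$-rigid in such a way that $U \oplus Y$ has exactly $n$ non-isomorphic indecomposable summands; this interplay between the approximation-theoretic construction and the numerical condition $|U| = n-1$ is precisely what makes the exchange sequence behave canonically as the transition between the Bongartz and co-Bongartz completions of $U$.
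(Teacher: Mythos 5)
This theorem is not proved in the paper at all --- it is imported verbatim from Adachi--Iyama--Reiten \cite[Thm.~2.30]{tau} --- so your attempt can only be measured against the literature. Your overall architecture (Bongartz completion gives $X$; sincerity excludes shifted-projective complements; $Y$ is the cokernel of the minimal left $\add U$-approximation) is indeed the right one, and two pieces are sound: the sincerity argument ($\Hom(P,U)\neq 0$ for every nonzero projective $P$) and the verification that $g$ is a right $\add U$-approximation via $\Ext^1(U,\im f)=0$. But the two steps that carry the real content of the theorem are not justified.

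First, indecomposability of $Y=\coker f$. A direct sum decomposition of $Y$ does \emph{not} ``pull back through $g$ to a decomposition of $U'$'': quotients of indecomposable modules can be decomposable, and \cref{lem:minimality} constrains the components of $f$, not the summands of its cokernel. What minimality actually gives you is that no summand of $Y$ lies in $\add U$ (if it did, $g$ would split off that summand and the corresponding component of $f$ would vanish). To conclude that $Y$ is a single new indecomposable you need to know in advance that the almost complete pair $U$ has exactly two completions, or equivalently that $\coker f$ lies in $\add Z$ for the unique indecomposable $Z$ with $\eproj(\Gen U)=U\oplus Z$; this is the two-complement theorem, which your argument silently presupposes. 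Second, $\tau$-rigidity of $U\oplus Y$. Your reason for $\Hom(Y,\tau U)=0$ misidentifies $Y$ as a quotient of $X$ (it is a quotient of $U'$; the conclusion survives because ${}^\perp\tau U$ is a torsion class containing $\add U$), but the conditions $\Hom(U,\tau Y)=0$ and $\Hom(Y,\tau Y)=0$ are respectively never addressed and waved off as ``analogous.'' They are not analogous: one cannot control $\tau Y$ from the exact sequence directly. The standard route is to show $Y$ is $\Ext$-projective in $\Gen U$, which requires upgrading $f$ from a left $\add U$-approximation to a left $\Gen U$-approximation using $\Ext^1(X,\Gen U)=0$ (Auslander--Smal\o{}, from $\Hom(X,\tau U)=0$), and then invoking the bijection between $\tau$-tilting pairs and functorially finite torsion classes. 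Note also that the paper's \cref{prop:tau_mutation_air}, which you cite only to rule out $Y=0$, already asserts that $(U\oplus Y,0)$ is a $\tau$-tilting pair; if you are permitted to use it as a black box, indecomposability and $\tau$-rigidity of $Y$ come for free from $|U|=n-1$, and your from-scratch re-derivations are both unnecessary and, as written, incorrect.
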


\begin{corollary}\label{cor:projgencogen}
    Let $\L$ be a Nakayama algebra and consider the setting of \cref{thm:AIR}. If $X$ is projective, then any indecomposable $U'' \in \add U'$ is either projective or generated by $X$. If $X$ is not projective, then any indecomposable $U'' \in \add U'$ is either projective, a cogenerator of $X$ or generated by $X$.
\end{corollary}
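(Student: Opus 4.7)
The plan is to analyse, for each indecomposable summand $U''$ of $U'$, the component of the minimal left $\add U$-approximation $f \colon X \to U'$ provided by \cref{thm:AIR} landing in $U''$. Fix a decomposition $U' = \bigoplus_{j=1}^{t} U''_j$ into indecomposable summands, and write $f = (f_j)_{j=1}^t$ with $f_j \colon X \to U''_j$. The first observation is that no component $f_j$ can be zero: if $f_j = 0$, then the choice $\beta_k = 0$ for all $k \neq j$ yields the trivial factorisation $f_j = \sum_{k \neq j} \beta_k f_k$, contradicting \cref{lem:minimality}. Hence for each indecomposable $U'' \in \add U'$ there is at least one nonzero morphism $X \to U''$ arising as a component of $f$.

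From here the strategy is a case split. Since $U \oplus X$ is a basic $\tau$-tilting module and $U''$ is an indecomposable summand of $U$, the module $X \oplus U''$ is $\tau$-rigid. If $U''$ is projective, the projective alternative in the conclusion is immediately satisfied, so assume $U''$ is non-projective. Then \cref{lem:epiormono} applies to the nonzero morphism $X \to U''$ and forces it to be either an epimorphism or a monomorphism. In the epimorphism case $U''$ is a quotient of $X$, hence generated by $X$, which is one of the allowed outcomes.

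The delicate case is when the morphism $X \to U''$ is a monomorphism. Note that $X \not\cong U''$, because $X \notin \add U$ (since $U \oplus X$ is basic $\tau$-tilting) while $U'' \in \add U$; consequently the inclusion $X \hookrightarrow U''$ is proper. If $X$ is projective, this proper inclusion would realise a nonzero proper projective submodule of the non-projective indecomposable module $U''$, which is forbidden by \crefnakayamalist{list:11}. Hence the monomorphism case is impossible when $X$ is projective, leaving only the epimorphism case, so $U''$ is generated by $X$. If instead $X$ is non-projective, then the monomorphism $X \hookrightarrow U''$ directly witnesses that $U''$ is a cogenerator of $X$, yielding the third allowed outcome.

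The main ingredient is the dichotomy between monomorphism and epimorphism supplied by \cref{lem:epiormono}; the slightly subtle point is the careful use of minimality via \cref{lem:minimality} to guarantee that every component of $f$ is nonzero. Once these two ingredients are in place, the proof is a short case split, so I do not anticipate a significant obstacle.
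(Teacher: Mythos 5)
Your proof is correct and follows essentially the same route as the paper's: both arguments use the minimality of the approximation $f$ to see that each indecomposable summand of $U'$ receives a nonzero component, apply \cref{lem:epiormono} to get the epimorphism/monomorphism dichotomy, and invoke \crefnakayamalist{list:11} to rule out the monomorphism case when $X$ is projective. Your write-up is in fact slightly more careful than the paper's (which blurs the two cases), in that you justify explicitly via \cref{lem:minimality} why no component vanishes and why the inclusion $X \hookrightarrow U''$ is proper.
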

\begin{proof}
    Assume that $X$ is not projective, then this follows directly from \cref{lem:epiormono}, noting that a nonzero morphism from an indecomposable projective module to an indecomposable nonprojective module is not a monomorphism by \crefnakayamalist{list:11}.
    
    Assume that $X$ is projective and let $U'' \in \add U'$ be indecomposable and nonprojective. 
    Let $f' \in \Hom(X, U'')$ be a non-zero corestriction of $f$ to $U''$, and apply \cref{lem:epiormono}. If $f'$ is a monomorphism then $U''$ is a cogenerator of $X$, and if $f$ is an epimorphism then $X$ generates $U''$. 
\end{proof}

\begin{lemma} \label{lem:mutationsseqprops}
Let $\L$ be a Nakayama algebra and consider the setting of 
\cref{thm:AIR}. We then have the following.
    \begin{enumerate}
        \item  $U'$ has at most one indecomposable projective direct summand (and with multiplicity 1).
        \item For any two non-isomorphic indecomposable  direct summands $U_1'', U_2'' \in \add U'$ we have $U_1'' \not \in \Gen U_2''$ and $U_2'' \not \in \Gen U_ 1''$. 
    \end{enumerate}
\end{lemma}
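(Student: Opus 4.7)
Both parts will be proved by contradiction using \cref{lem:minimality}: no component of a minimal left approximation can factor through a combination of the other components. The key inputs are \crefnakayamalist{list:dimhom} (bounding $\Hom$-spaces to be at most one-dimensional when lengths do not exceed $n$), \cref{prop:Adachilength} (non-projective $\tau$-rigid modules have length less than $n$), and the uniserial structure of Nakayama modules.

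For part (2), suppose $U_1'' \in \Gen U_2''$ for non-isomorphic indecomposable summands $U_1'', U_2'' \in \add U'$. The uniserial structure then forces $U_1''$ to be a proper quotient of $U_2''$ with the same top, yielding a surjection $\pi\colon U_2'' \twoheadrightarrow U_1''$ with $\ker \pi = \rad^{\ell(U_1'')}U_2''$. Since $U_1''$ must be non-projective (otherwise $U_1'' \cong U_2''$), we have $\ell(U_1'') < n$ by \cref{prop:Adachilength}, and therefore $\dim_K \Hom(X, U_1'') \leq 1$ by \crefnakayamalist{list:dimhom}. Consequently $f|_{U_1''}$ and $\pi \circ f|_{U_2''}$ are proportional; if both are nonzero, then $f|_{U_1''}$ factors through $f|_{U_2''}$ via $\pi$, contradicting \cref{lem:minimality}. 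Otherwise $\pi \circ f|_{U_2''} = 0$, so $\im f|_{U_2''} \subseteq \rad^{\ell(U_1'')}U_2''$, which would force $\topp X$ to appear among the composition factors of $\rad^{\ell(U_1'')}U_2''$. But the nonvanishing of $f|_{U_1''}$ equally requires $\topp X$ to be a composition factor of $U_1''$, and these two sets of composition factors are disjoint in the uniserial chain of $U_2''$; this gives the desired contradiction.

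For part (1), the multiplicity-at-least-two case is immediate: if an indecomposable projective $P(a)$ appears twice in $U'$, then the two restrictions of $f$ are proportional (by one-dimensionality of $\Hom(X, P(a))$), and a scalar endomorphism of $P(a)$ produces a forbidden factoring. For two non-isomorphic projective summands $P(a), P(b)$ of $U'$, we split on whether $X$ is projective. If $X$ is non-projective, \cref{cor:projgencogen} forces $X$ to embed into both $P(a)$ and $P(b)$: the alternative that $X$ generates one of these projectives would make it a summand of a power of $X$ and hence isomorphic to $X$ itself, contradicting $X \notin \add U$. Thus $P(a)$ and $P(b)$ share the socle of $X$, and being indecomposable uniserials with a common socle they are nested; the inclusion $P(a) \hookrightarrow P(b)$ (WLOG $\ell(P(a)) < \ell(P(b))$) then makes $f|_{P(b)}$ factor through $f|_{P(a)}$, contradicting \cref{lem:minimality}. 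If $X = P(k)$ is itself projective, then $S(k)$ is a composition factor of both $P(a)$ and $P(b)$; by comparing the cyclic depths $(a-k)_n$ and $(b-k)_n$ one shows that at least one of $\Hom(P(a), P(b))$, $\Hom(P(b), P(a))$ is nonzero, and the composition of the corresponding nonzero morphism with the appropriate $f|_{P(\cdot)}$ yields a map with the same image as the other $f|_{P(\cdot)}$, producing the forbidden factoring.

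The main obstacle lies in the positional analysis for cyclic Nakayama algebras containing projectives of length greater than $n$: there $\Hom$-spaces between modules may be higher-dimensional and images of maps may lie at multiple possible cyclic depths, so both the composition-factor-disjointness argument in part (2) and the cyclic-depth argument in part (1) require additional care. In such cases, one must exploit the minimality of $f$ to uniquely pin down image positions and verify that the relevant compositions still produce the factorings needed to contradict \cref{lem:minimality}.
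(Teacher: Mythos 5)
Your overall strategy (derive a contradiction with \cref{lem:minimality} from a factorisation of one component of $f$ through another, using \crefnakayamalist{list:dimhom} and uniseriality) is the same as the paper's, and your arguments are sound in every case where all the relevant Hom-spaces are one-dimensional. The problem is the case you yourself flag at the end and do not close: cyclic Nakayama algebras with an indecomposable projective of length greater than $n$. This is not a peripheral technicality but a genuine gap covering a standard class of examples (e.g.\ self-injective Nakayama algebras, or $K\overrightarrow{\Delta}_n/R^m$ with $m>n$). Concretely: in part (2), when the generating summand $U_2''$ is projective with $\ell(U_2'')>n$, the composition factors of $U_1''\cong U_2''/\rad^{\ell(U_1'')}U_2''$ and of $\rad^{\ell(U_1'')}U_2''$ are no longer disjoint, so the branch $\pi\circ f|_{U_2''}=0$ yields no contradiction; and in part (1) with $X$ projective, $\Hom(X,P(a))$ may have dimension greater than one, so ``same image'' does not give proportionality and the cyclic-depth comparison you gesture at is exactly the missing argument. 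Saying that ``one must exploit the minimality of $f$ to pin down image positions'' names the difficulty without resolving it.

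The paper closes these cases by different means, which is worth internalising. For part (1) it never compares Hom-dimensions at all: it precomposes the two corestrictions $f_1,f_2$ with the projective cover $\gamma\colon P_X\to X$, so that both become maps between indecomposable projectives; uniseriality then gives (after reordering) $f_2\gamma=\beta f_1\gamma$ for some $\beta\in\Hom(U_1,U_2)$, and since $\gamma$ is epic this descends to $f_2=\beta f_1$, contradicting \cref{lem:minimality} uniformly in both the repeated-summand and distinct-projectives cases, regardless of lengths. For part (2) the paper first uses \cref{lem:epiormono} together with the left-mutation hypothesis $X\notin\Gen U$ --- a hypothesis your part (2) never invokes --- to show that in the problematic case (generator projective) the component of $f$ landing in the non-projective summand must be a monomorphism; this forces $\ell(X)<n$, which by \crefnakayamalist{list:dimhom} restores one-dimensionality of \emph{both} Hom-spaces $\Hom(X,U_i'')$, and the factorisation is then built explicitly through the common kernel. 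To repair your proof you would need to supply arguments of this kind; as written, the proposal is incomplete precisely where the Nakayama-specific difficulty lies.
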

\begin{proof}
    Let $U'' \in \add U'$ be indecomposable and consider the corestriction of $f$ to $U''$, denoted by $f|^{U''}: X \to U''$. The minimality of $f$ implies that $f|^{U''} \neq 0$. 
    
    Non-projective indecomposable direct summands of $U'$ have length smaller than $n$, by \cref{prop:Adachilength}.
    It then follows from \crefnakayamalist{list:dimhom} and \cref{lem:minimality} that $U_1'' \not \cong U_2''$ for all non-projective indecomposable direct summands $U_1'', U_2'' \in \add U'$. 
    \begin{enumerate}
        \item 
        Assume for contradiction that $U_1 \oplus U_2$ is a direct summand of $U'$, with  $U_1$ and $U_2$ both indecomposable
        projectives. For $i=1,2$, let $f_i : X \to U_i$ be the corestrictions of $f : X \to U$, and
        let $\gamma : P_X \to X$ be the projective cover.
        It follows from 
        uniseriality that (up to reordering) $f_2 \gamma = \beta f_1 \gamma$
        for some $\beta \in \Hom(U_1, U_2)$. Since $\gamma$ is an epimorphism, it follows that $f_2 = \beta f_1$. So, by \cref{lem:minimality}, we have a contradiction to the minimality 
        of $f$. Hence $U'$ has at most one indecomposable direct summand, and with multiplicity one. 
        \item Without loss of generality, assume that there are non-isomorphic indecomposable direct summands $U_1'', U_2'' \in \add U'$ such that $U_2'' \in \Gen U_1''$. Then $f$ restricts to a minimal morphism 
        \[X \xrightarrow{f^{'} = \tiny\begin{pmatrix} f_1'' & f_2'' \end{pmatrix}^T} U_1'' \oplus U_2''.\]
        By the minimality of $f'$ we have $f_i'' \neq 0$ for $i=1,2$ and hence it follows from \cref{lem:epiormono} that $f_i''$ is either an epimorphism or a monomorphism or that $U_i''$ is projective. By (1) at most one of $U_1''$ and $U_2''$ can be projective. We distinguish between two cases.

        Assume $U_1''$ is projective. If $f_2''$ is an epimorphism, then $U_2'' \in \Gen X$. Since all indecomposable modules are uniserial we have that $\topp(U_1'') \cong \topp(U_2'') \cong \topp(X)$. As $U_1''$ is projective we obtain that $X \in \Gen U_1'' $. This is a contradiction to the assumption that the mutation at $X$ is a left mutation by \cite[Def.-Thm. 2.28]{tau}. Thus $f_2''$ has to be a monomorphism and thus $X$ a submodule of $U_2''$. As $U_2''$ is $\tau$-rigid and not projective we have that $\ell(U_2'')<n$  by \cref{prop:Adachilength}. Therefore we obtain $\ell(X) \leq \ell(U_2'')< n$. It follows from \crefnakayamalist{list:dimhom} and $f_i'' \neq 0$ that we have $\dim_K(\Hom(X, U_i'')) =1$ for $i=1,2$. Let $L$ be the kernel of the map $f_1'' \colon X \to U_1''$. Note that $L$ may be 0. Since $X$ is a subobject of $U_2''$, it follows that $L$ is also a subobject of $U_2''$. Since $X/L$ is a subobject of $U_1''$ and also of $U_2''/L$, and we have 
        $\ell(U_2''/L)  \leq \ell(U_2'') < \ell(U_1)''$, it follows by uniseriality that $(U_2'')/L$ is a subobject of $U_1''$. Note that the composition 
        \[X \overset{f_2''}{\hookrightarrow} U_2''
        \twoheadrightarrow U_2''/L \hookrightarrow U_1''\]
        is non-zero. Using that $\dim_K \Hom(X, U_1'')=1$
        by \crefnakayamalist{list:dimhom},
        we have that $\alpha h \circ f_2'' = f_1''$ for some $0 \neq \alpha \in K$. So $f_1''$ factors through $f_2''$ and we obtain a contradiction to minimality by \cref{lem:minimality}.

        If $U_1''$ is not projective, then we have $\ell(U_1'') < n$ by \cref{prop:Adachilength} and as $U_2'' \in \Gen U_1''$ and $U_1'' \not \cong U_2''$ we have $\ell(U_2'') < \ell(U_1'')$.
        Assume $f_1''$ is a monomorphism, then $X$ is a proper submodule of $U_1''$ since $X \not \in \add U'$. Consequently, $\topp(U_1'') \cong \topp(U_2'')$ is not a composition factor of $X$. It follows that $f_2''$ is not an epimorphism. Similarly $f_2''$ cannot be a monomorphism as otherwise $X$ is a proper submodule of $U_1''$ and a proper submodule of $U_2''$. But since $\ell(U_1'')<n$ this contradicts the assumption $U_2'' \in \Gen U_1''$. Assume $f_1''$ is an epimorphism,
        since $U_2'' \in \Gen U_1''$, there exists an epimorphism $h': U_1'' \to U_2''$. Because $\ell(U_2'') < \ell(U_1'')<n$, we have that $\topp(X)$ arises as a composition factor only once in $U_1''$ and $U_2''$ we have $\dim_K \Hom(X,U_i'')=1$ for both $i=1,2$. Since $h'$ and $f_1''$ are epimorphisms their composition is non-zero and we have $\beta h' \circ f_1'' = f_2''$ for some $0 \neq \beta \in K$. Thus, $f_2''$ factors through $f_1''$, a contradiction to minimality by \cref{lem:minimality}.
    \end{enumerate}
\end{proof}

The two next lemmas give further properties of  
the middle in the exchange sequence of \cref{thm:AIR}.
\begin{lemma}\label{lem:exchseqmiddleterm}
    Let $\L$ be a Nakayama algebra and consider the setting of \cref{thm:AIR}. Then there is a decomposition $U' \cong U_1 \oplus U_2$ where $U_1$ is indecomposable and $U_2$ is zero or indecomposable and such that
    \begin{itemize}[label={\tiny\raisebox{0.7ex}{\textbullet}}]
        \item $Y \in \Gen U_1$;
        \item $U_2 \in \Gen X$;
        \item $U_2 \in \Cogen Y$;
        \item $U_1 \not \in \Gen X$.
    \end{itemize}
\end{lemma}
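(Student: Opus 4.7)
The plan is to combine the classification of indecomposable direct summands of $U'$ from \cref{cor:projgencogen} with the structural constraints of \cref{lem:mutationsseqprops} and the minimality obstruction \cref{lem:minimality}, exploiting the uniseriality of modules over the Nakayama algebra $\L$.

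The key preliminary observation is that any two non-isomorphic indecomposable direct summands of $U'$ must have distinct simple tops. Indeed, two such summands with a common top would, by uniseriality of modules over $\L$, be comparable (one a quotient of the other, since both are either quotients of the same projective cover or one of them is that projective cover), placing one in the $\Gen$ of the other and contradicting \cref{lem:mutationsseqprops}(2). Now, since $g\colon U' \to Y$ is an epimorphism onto the indecomposable uniserial module $Y$, the images of its components $g_i$ form a chain in $Y$, so at least one $g_1$ is surjective, producing a summand $U_1$ with $\topp U_1 = \topp Y$; by the distinct-tops observation $U_1$ is unique. This immediately yields $Y \in \Gen U_1$, and I set $U_2$ to be the sum of the remaining indecomposable summands of $U'$.

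I next plan to show that $U_2$ is either zero or indecomposable and lies in $\Gen X \cap \Cogen Y$, and also that $U_1 \notin \Gen X$. By \cref{cor:projgencogen} each indecomposable summand of $U'$ is projective, generated by $X$, or (when $X$ is non-projective) a cogenerator of $X$. A stray projective summand $U_i \neq U_1$ will be ruled out by exhibiting, via composition factor bookkeeping together with the dimension bound \crefnakayamalist{list:dimhom}, a factorisation of $f_i$ through another component of $f$ (contradicting \cref{lem:minimality}); the case of two cogenerators of $X$ is ruled out similarly, since $X$ would be a common uniserial submodule of both, producing via uniseriality a nested inclusion through which one component of $f$ factors. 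Combined with the distinct-tops observation, this yields $|U_2| \leq 1$, and any summand of $U_2$ lies in $\Gen X$. The inclusion $U_2 \subseteq \Cogen Y$ then follows by a diagram chase in the exact sequence $X \to U_1 \oplus U_2 \to Y \to 0$, showing $\ker g_2 \subseteq f_2(\ker f_1) = 0$. For $U_1 \notin \Gen X$, suppose the contrary; then $\topp U_1 = \topp X$, and \crefnakayamalist{list:dimhom} forces $f_1\colon X \to U_1$ to be (up to scalar) the canonical surjection. If $U_2 = 0$ then $Y = \coker f_1 = 0$, a contradiction; if $U_2 \neq 0$, the same diagram chase yields $Y \cong U_2/f_2(\ker f_1) \in \Gen U_2$, so $\topp U_2 = \topp Y = \topp U_1$, contradicting the distinct-tops observation.

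The main obstacle I anticipate is the detailed case analysis in the third paragraph, especially ruling out stray projective summands distinct from $U_1$ and coexisting multiple cogenerator summands; handling these cleanly requires tight use of the $\dim_K \Hom \leq 1$ bound from \crefnakayamalist{list:dimhom} for $\tau$-rigid uniserial modules together with the minimality of $f$ via \cref{lem:minimality}.
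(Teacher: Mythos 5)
Your overall architecture is reasonable, and two of your ingredients are sound: the distinct-tops observation (two non-isomorphic indecomposable summands of $U'$ with a common top are quotients of the same indecomposable projective, hence comparable under $\Gen$, contradicting \cref{lem:mutationsseqprops}), and your argument that $U_1 \notin \Gen X$. The genuine gap is in the step that is supposed to give $|U_2| \le 1$. By \cref{cor:projgencogen} the summands of $U'$ come in three types (projective, generated by $X$, cogenerator of $X$), and your analysis bounds each type separately: at most one projective, at most one summand in $\Gen X$, at most one cogenerator of $X$. Nothing you say excludes the mixed configuration $U' \cong U_1 \oplus W_g \oplus W_c$ with $W_g \in \Gen X$ and $W_c$ a cogenerator of $X$, where $U_1$ is not itself a cogenerator of $X$ (for instance $U_1$ projective). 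Your ``two cogenerators'' argument only bites if one already knows that $U_1$ must be a cogenerator of $X$ whenever $X$ is non-projective; that is precisely the content of \cref{lem:UprojthenXproj} and \cref{lem:XprojthenUproj}, which the paper proves \emph{using} the present lemma, so invoking it here would be circular. A second, smaller gap: in the $\Cogen Y$ step you assert $f_2(\ker f_1)=0$ with no justification. This is equivalent to $\ker f_1 \subseteq \ker f_2$, which is true but needs an argument (the two kernels are comparable submodules of the uniserial $X$; if $\ker f_2 \subsetneq \ker f_1$ then the second projection $\im f \to U_2$ would be an isomorphism, forcing $Y \cong U_1 \in \add U'$, a contradiction). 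It does not follow from injectivity of $f_1$, since $f_1$ need not be a monomorphism when $U_1$ is projective.

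Both gaps are closed simultaneously by the paper's argument, which takes a different and shorter route at exactly this point: set $X' = \im f$, a quotient of the uniserial module $X$, hence indecomposable or zero and lying in $\Gen X$; then apply the snake lemma to the inclusion of $0 \to \ker g_1 \to U_1 \to Y \to 0$ into $0 \to X' \to U' \to Y \to 0$ to obtain $U_2 \cong X'/\ker g_1$. This yields in one stroke that $U_2$ is indecomposable or zero \emph{and} that $U_2 \in \Gen X$, with no case analysis on the types of summands. The paper then obtains $U_2 \in \Cogen Y$ by showing $g_2 \neq 0$ and applying \cref{lem:epiormono}: $g_2$ is epi or mono, and the epi case is excluded because it would force $U_1$ and $U_2$ to be comparable under $\Gen$, contradicting \cref{lem:mutationsseqprops}. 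I recommend replacing your third paragraph with this snake-lemma argument; the rest of your proof can then stand essentially as written.
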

\begin{proof}
    Let $X' = \im f$. Since $X$ is uniserial, its quotient module $X'$ is indecomposable. Consider the induced sequence
    \[ 0 \to X' \xrightarrow{f'} U' \xrightarrow{g} Y \to 0.\]
    By the uniseriality of $Y$ it is generated by at least one indecomposable direct summand of $U'$. Thus, let $U_1\in \add U'$ be indecomposable and such that $g_1 = g|_{U_1}$ is an epimorphism. Therefore we have $Y \in \Gen U_1$. Now consider the decomposition $U' \cong U_1 \oplus U_2$ with the morphism $g$ decomposing as 
    \[U_1 \oplus U_2 \xrightarrow{\tiny\begin{pmatrix} g_1 & g_2\end{pmatrix}} Y. \]
    First we show that $U_2$ is indecomposable or zero. Let $L = \ker g_1$ and consider the following commutative diagram with exact rows.
    \begin{equation} \label{eq:middleseq}
    \begin{tikzcd}[ampersand replacement=\&]
    0 \arrow[r]  \& L \arrow[r] \arrow[d,"h"] \& U_1 \arrow[r,"g_1"] \arrow[d, hookrightarrow] \& Y \arrow[r] \arrow[d, equal] \& 0 \\
       0 \arrow[r] \& X' \arrow[r,"f'"]  \& U_1 \oplus U_2 \arrow[r,"g"]  \& Y \arrow[r] \& 0 
    \end{tikzcd}
    \end{equation}
    where the map $h: L \to X'$ exists by the universal property of kernels. The snake lemma implies that $h$ is a monomorphism and that we get an isomorphism $(X')/L \cong U_2$. As $(X')/L$ is a quotient module of a uniserial module it is indecomposable or zero, hence so is $U_2$. Therefore we have $U_2 \in \Gen X$.

    Assume that $U_2$ is non-zero, then we show that $g_2$ is non-zero. Assume for a contradiction that $g_2=0$, then $X' \cong \ker g \cong \ker{\begin{pmatrix} g_1 & 0 \end{pmatrix}} \cong \ker g_1 \oplus U_2$. Since $X'$ is indecomposable and $U_2$ assumed to be non-zero we must have $\ker g_1 =0$, so that $U_1 \cong Y$, which is a contradiction to $Y \not \in \add U'$.

    By \cref{lem:epiormono} the morphism $g_2$ is either an epimorphism or a monomorphism. If it is an epimorphism then the uniseriality of $U_2$ and the non-projectivity of $Y$ imply that $Y \in \Gen U_2$. In particular, since also $Y \in \Gen U_1$, the uniseriality of all modules implies that $U_1 \in \Gen U_2$ or $U_2 \in \Gen U_1$, a contradiction to \cref{lem:mutationsseqprops}(1). It follows that $g_2$ is a monomorphism and therefore $U_2 \in \Cogen Y$.
    
    Finally, assume for a contradiction that $U_1$ is generated by $X$. By assumption $U_1 \not \cong X$ as $X \not \in \add U$, thus $U_1$ is a proper quotient module and hence not projective. Since $U_1$ is $\tau$-rigid we have $\ell(U_1)<n$ by \cref{prop:Adachilength} and thus $\dim_K \Hom(X,U_1) = 1$ by \crefnakayamalist{list:dimhom} and the observation that $f_1: X \to U_1$ is nonzero since $f$ is a minimal. Therefore $\im f_1 \cong U_1$ and hence $f = \begin{pmatrix} f_1 & f_2\end{pmatrix}$ is an epimorphism. It follows that $Y \cong U'/\im f \cong 0$, a contradiction.
\end{proof}

\begin{lemma}\label{lem:UprojthenXproj}
    Let $\L$ be a Nakayama algebra and consider the setting of \cref{thm:AIR} and the description $U' \cong U_1 \oplus U_2$ from \cref{lem:exchseqmiddleterm}. Then we have that if $U_1$ is projective, so is $X$. 
\end{lemma}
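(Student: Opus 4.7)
The plan is to argue by contradiction: suppose $U_1$ is projective but $X$ is not. Since $U_1$ is indecomposable projective with $U_1 \twoheadrightarrow Y$ (from \cref{lem:exchseqmiddleterm}), $U_1$ must be the projective cover $P(\topp Y)$ of $Y$, so the kernel $L = \ker(U_1 \twoheadrightarrow Y)$ equals $\rad^{\ell(Y)} U_1$. Using the commutative diagram in the proof of \cref{lem:exchseqmiddleterm}, the image $X' = \im f$ fits into a short exact sequence $0 \to L \to X' \to U_2 \to 0$, and the surjection $X \twoheadrightarrow X'$ yields $\topp X = \topp X'$ together with $\ell(X) \geq \ell(X') = \ell(U_1) - \ell(Y) + \ell(U_2)$.

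Next I would examine the component map $f_1: X \to U_1$ of $f$. By minimality of $f$ this map is nonzero, and by the property $U_1 \not\in \Gen X$ from \cref{lem:exchseqmiddleterm} it is not surjective, so $\im f_1 = \rad^j U_1$ for some $j \geq 1$. In particular $\topp X = \topp(\rad^j U_1) = S((\topp Y - j)_n)$, so $\topp X \neq \topp Y$. Composing with the projective cover gives a map $P(X) \to U_1$ between indecomposable projectives; computing its kernel in two ways — once as $\rad^{\ell(U_1)-j} P(X)$ using the projective-cover factorisation of $\im f_1$, and once as $\rad^{\ell(X')} P(X) + \Omega X$ via the factorisation through $X$ — gives the containment $\Omega X \subseteq \rad^{\ell(X')} P(X)$.

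The crucial input is $\tau$-rigidity: since $X \oplus U_1$ is a summand of the $\tau$-tilting module $X \oplus U$, $\Hom(U_1, \tau X) = 0$, so by \cref{prop:nakayama-collection-of-facts}(\ref{list:5},\ref{list:9}) the simple $S(\topp Y)$ is not a composition factor of $\tau X$, which translates (using the cyclic formula for the top and composition factors of $\tau X$ determined above) into the upper bound $\ell(X) \leq n - \ell(Y) - 1$. Combining this with the lower bound from the previous paragraph, together with the containment $\Omega X \subseteq \rad^{\ell(X')} P(X)$ and the fact that $\ell(P(X)) \leq n$, forces $\ell(\Omega X) = 0$, i.e.\ $X$ is projective, contradicting the assumption.

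The main obstacle will be to carry out the combinatorial length analysis cleanly across the two subcases $U_2 = 0$ and $U_2 \neq 0$, where the cyclic arithmetic modulo $n$ (to cover both $\overrightarrow{\A}_n$ and $\overrightarrow{\Delta}_n$) has to be tracked carefully. As a shortcut, I would also try to exploit \cref{cor:projgencogen}: if $X$ is non-projective, every indecomposable summand of $U'$ is projective, generated by $X$, or a cogenerator of $X$, and since $U_1 \not\in \Gen X$ we must have $U_1$ projective or $U_1$ cogenerating $X$. Ruling out the cogenerator alternative, using that $Y \cong U_1/L$ forces a particular uniserial structure incompatible with $U_1 \hookrightarrow X^r$, would leave only the projective case, at which point the length analysis above closes the argument.
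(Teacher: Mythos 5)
Your overall strategy (a direct length count by contradiction) is genuinely different from the paper's proof, which is homological: the paper writes down the minimal projective presentations $\mathbb{P}_X$, $\mathbb{P}_{U'}$, $\mathbb{P}_Y$, observes that minimality of $g$ together with $Y \in \Gen U_1$ and $U_2 \in \Cogen Y$ forces $g_2\colon U_1 \to P_0^Y$ and $g_3\colon P_{-1}^{U_2} \to P_{-1}^Y$ to be isomorphisms, and then reads off $\Sigma\mathbb{P}_X$ as the mapping cone of $g\colon \mathbb{P}_{U'} \to \mathbb{P}_Y$; after splitting off contractible summands one finds $\mathbb{P}_X \cong P_0^{U_2}$ (or $P_{-1}^Y$ when $U_2=0$), so $X$ is projective. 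Unfortunately, as written your length analysis does not close. The two bounds you extract are $\ell(X) \geq \ell(X') = \ell(U_1) - \ell(Y) + \ell(U_2)$ and (from $\Hom(U_1,\tau X)=0$ with $\topp X = \topp(\rad^j U_1)$) $\ell(X) \leq n - \ell(Y) - 1$; together these only give $\ell(U_1)+\ell(U_2) \leq n-1$, which is no contradiction when the projective $U_1$ is short (e.g.\ over a linear Nakayama algebra, where moreover the upper bound itself is unavailable because $\Hom(U_1,\tau X)=0$ can hold simply because the composition factors of $\tau X$ never wrap around to $\topp U_1$). The auxiliary steps do not supply the missing strength: the containment $\Omega X \subseteq \rad^{\ell(X')}P(X)$ is just a restatement of $\ell(X') \leq \ell(X)$ and carries no new information, and the assertion $\ell(P(X)) \leq n$ is false for cyclic Nakayama algebras with deep radicals, so the final deduction ``forces $\ell(\Omega X)=0$'' is unsupported.

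The proposed shortcut via \cref{cor:projgencogen} also cannot rescue the argument: that corollary tells you that a non-projective $X$ forces each summand of $U'$ to be projective, generated by $X$, or a cogenerator of $X$ -- but the alternative ``$U_1$ projective'' is exactly the hypothesis you are working under, so nothing is ruled out. If you want an elementary route, the fact you actually need is a relation between the projective presentation of $X$ and those of $U'$ and $Y$ (equivalently, control of $P_{-1}^X$), and the cleanest way to get it is precisely the two-term complex computation in the paper's proof of \cref{lem:UprojthenXproj}; a purely length-theoretic argument would at minimum have to bring in $\Hom(X,\tau U)=0$ and the minimality of $f$ in a more substantive way than sketched here.
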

\begin{proof}
    Assume $U_1$ is projective. Then we consider the following 2-term complexes in $K^b(\proj \L)$ given by the minimal projective presentations:
    \[ \mathbb{P}_X = (0 \to P_{-1}^X \to P_0^X \to 0), \]
    \[ \mathbb{P}_Y = (0 \to P_{-1}^Y \xrightarrow{m} P_0^Y \to 0), \]
    \[ \mathbb{P}_{U'} = (0 \to P_{-1}^{U_2}  \xrightarrow{\tiny \begin{pmatrix} m' & 0 \end{pmatrix}^t} P_0^{U_2} \oplus U_1 \to 0). \]
    Note that $U_2$ may be zero in which case $P_0^{U_2}, P_{-1}^{U_2}$ and $m'$ are zero. The exchange sequence \cref{eq:exchange-sequence} is given by taking $H^0$ of the sequence $\mathbb{P}_X \to \mathbb{P}_{U'} \xrightarrow{g} \mathbb{P}_Y \to \Sigma \mathbb{P}_X$. In particular, we are interested in the mapping cone of the chain map $g: \mathbb{P}_{U'} \to \mathbb{P}_Y$ which is isomorphic to $\Sigma \mathbb{P}_X$. More precisely, we calculate the mapping cone of the following chain map:
    \[
    \begin{tikzcd}[ampersand replacement =\&]
        \dots \arrow[r] \& 0 \arrow[r] \arrow[d] \& P_{-1}^{U_2} \oplus 0 \arrow[r, "{\tiny\begin{pmatrix} m' & 0 \\ 0 & 0 \end{pmatrix}}"] \arrow[d, "{\tiny \begin{pmatrix} g_3 & 0 \end{pmatrix}}"] \& P_0^{U_2} \oplus U_1 \arrow[r] \arrow[d, "{\tiny \begin{pmatrix} g_1 & g_2\end{pmatrix}}"] \& 0 \arrow[r] \arrow[d] \& \dots \\
        \dots \arrow[r] \& 0 \arrow[r] \& P_{-1}^Y \arrow[r, "m"] \& P_0^Y \arrow[r] \& 0 \arrow[r] \& \dots
    \end{tikzcd}
    \]
    Note again, that if $U_2$ is zero then the map $g_3$ is zero. Since $Y \in \Gen U_1$ by \cref{lem:exchseqmiddleterm} and $g$ is a minimal approximation it follows that $g_2: U_1 \to P_0^Y$ is an isomorphism. Similarly, as $U_2 \in \Cogen Y$ and $g$ is a minimal approximation it follows that $g_3: P_{-1}^{U_2} \to P_{-1}^Y$ is an isomorphism (whenever $U_2$ is nonzero) by \crefnakayamalist{list:4}. We obtain the following description of the mapping cone and thus of $\Sigma \mathbb{P}_X$:
    \[
    \begin{tikzcd}[ampersand replacement =\&, column sep = 85, row sep = 65]
        \Sigma \mathbb{P}_X \cong \arrow[d, "h"] \& P_{-1}^{U_2} \arrow[r, "{\begin{pmatrix} -m' & 0 & g_3\end{pmatrix}^t}"] \arrow[d,"1"]  \& P_0^{U_2} \oplus U_1 \oplus P_{-1}^Y \arrow[r, "{\begin{pmatrix} g_1 & g_2 & m \end{pmatrix}}"] \arrow[d,"{\begin{pmatrix} 1 & 0 & m'g_3^{-1} \\ g_2^{-1} g_1 & 1 & g_2^{–1} m \\ 0 & 0 & 1 \end{pmatrix}}"]  \& P_0^Y \arrow[d,"1"] \\
        \Sigma \mathbb{P}_X \cong  \& P_{-1}^{U_2} \arrow[r, "{\begin{pmatrix} 0 & 0 & g_3\end{pmatrix}^t}"]  \& P_0^{U_2} \oplus U_1 \oplus P_{-1}^Y \arrow[r, "{\begin{pmatrix} 0 & g_2 & 0 \end{pmatrix}}"] \& P_0^Y
    \end{tikzcd}
    \]
    where the chain map $h$ is an isomorphism of chain complexes. We have thus split up the complex $\Sigma \mathbb{P}_X$ into a direct sum of indecomposable summands. Since any 2-term chain complex whose differential is an isomorphism is isomorphic to the zero object in $K^b(\proj \L)$, we obtain that some direct summands of $\mathbb{P}_X$ above are zero. In particular, if $U_2$ is zero, then $P_0^{U_2}, P_{-1}^{U_2}$ and the maps $m', g_3$ are zero and we obtain an isomorphism $\Sigma \mathbb{P}_X \cong P_{-1}^Y$ and $X$ is projective in this case. If $U_2$ is non-zero then $\mathbb{P}_X \cong P_0^{U_2}$ and so $X$ is projective. 
\end{proof}
\color{black}

Furthermore we also obtain the converse to \cref{lem:UprojthenXproj}.

\begin{lemma}\label{lem:XprojthenUproj}
    Let $\L$ be a Nakayama algebra and consider the setting of \cref{thm:AIR} and the description $U' \cong U_1 \oplus U_2$ from \cref{lem:exchseqmiddleterm}. We have that if $X$ is projective, so is $U_1$. Moreover, if $X$ is not projective then $X \cong \rad^i U_1$ for some $i \in \{1, \dots, \ell(U_1)-1\}$. 
\end{lemma}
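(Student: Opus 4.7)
The plan is to combine \cref{cor:projgencogen}, which classifies the possible behaviour of nonzero maps out of $X$ into indecomposable summands of $U'$, with the structural constraint $U_1\not\in\Gen X$ from \cref{lem:exchseqmiddleterm}, and then to use \cref{lem:UprojthenXproj} contrapositively in the second assertion. The first preliminary observation I would make is that the component $f_1\colon X\to U_1$ of the minimal left $\add U$-approximation $f$ is nonzero; otherwise $f$ would factor through the inclusion $U_2\hookrightarrow U'$, contradicting minimality (this is the same argument used at the start of the proof of \cref{lem:mutationsseqprops}).

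For the first statement, suppose $X$ is projective. Applying \cref{cor:projgencogen} to the indecomposable summand $U_1$ (whose map $f_1$ from $X$ is nonzero) forces $U_1$ to be either projective or generated by $X$. But $U_1\not\in\Gen X$ by \cref{lem:exchseqmiddleterm}, and so $U_1$ is projective, as claimed.

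For the second statement, suppose $X$ is not projective. The contrapositive of \cref{lem:UprojthenXproj} then gives that $U_1$ is not projective either. Applying \cref{cor:projgencogen} again, together with $U_1\not\in\Gen X$, the only remaining option is that $U_1$ cogenerates $X$ via $f_1$, so $f_1\colon X\hookrightarrow U_1$ is a monomorphism. Since $U_1$ is indecomposable over a Nakayama algebra it is uniserial, so every submodule of $U_1$ has the form $\rad^i U_1$; hence $X\cong\rad^i U_1$ for some $i\ge 0$. The value $i=0$ is excluded because $X\not\in\add U$ while $U_1$ is a summand of $U$, and $i\ge \ell(U_1)$ is excluded because $X\ne 0$, leaving $i\in\{1,\dots,\ell(U_1)-1\}$ as required.

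The argument is essentially a bookkeeping exercise given the preceding lemmas, so I do not anticipate any real obstacle; the only subtle point worth stressing is the non-vanishing of $f_1$, which is needed to apply \cref{cor:projgencogen} to the summand $U_1$.
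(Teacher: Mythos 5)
Your proof is correct and follows essentially the same route as the paper: apply \cref{cor:projgencogen} to $U_1$, rule out $U_1\in\Gen X$ via \cref{lem:exchseqmiddleterm}, and use the contrapositive of \cref{lem:UprojthenXproj} in the non-projective case. The extra details you supply (non-vanishing of $f_1$ and the exclusion of the endpoint values of $i$) are accurate and only make explicit what the paper leaves implicit.
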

\begin{proof}
    Assume $X$ is projective, then $U_1$ is either projective or generated by $X$ by \cref{cor:projgencogen}. Since it cannot be generated by $X$ by \cref{lem:exchseqmiddleterm} we have that $U_1$ is projective. Now assume $X$ is not projective, then $U_1$ is not projective by the contrapositive of \cref{lem:UprojthenXproj}. It follows from \cref{cor:projgencogen} that $U_1$ must be a cogenerator of $X$ since it cannot be generated by $X$ by \cref{lem:exchseqmiddleterm}. Hence $X \cong \rad^i U'$ for some $i \in \{1, \dots, \ell(U')-1\}$ as required. 
\end{proof}

We can now continue with the second step in the proof of transitivity. The following lemma provides the setting for relating the mutation of $\tau$-tilting modules with that of TF-ordered $\tau$-tilting modules.

\begin{lemma}\label{lem:mutationsetting}
    Let $\L$ be a Nakayama algebra and consider $U \oplus X$ and $U \oplus Y$ as in the setting of \cref{thm:AIR}. Let $U_1 \in \add U$ be the indecomposable direct summand which generates $Y$ as in \cref{lem:exchseqmiddleterm}. There exists a map $\sigma: \{1,\dots,n\} \to \ind(\add(U \oplus Y))$ satisfying the following two properties:
    \begin{enumerate}
        \item  $(U \oplus Y, \sigma)$ is TF-ordered and such that $U_1$ and $Y$ are adjacent, that is $\sigma(\sigma^{-1}(Y)-1) = U_1$. 
        \item $(U \oplus X, \sigma_X^Y)$ is TF-ordered, where 
        \[\sigma_X^Y(i) = \begin{cases} X & \text{ if } i = \sigma^{-1}(Y), \\ \sigma(i) & \text{ otherwise.} \end{cases} \]
    \end{enumerate}
    
\end{lemma}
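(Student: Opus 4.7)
The plan is to construct $\sigma$ as a suitable linear extension of the partial order $\geq_\Gen$ on $\ind(\add(U \oplus Y))$, where $M \geq_\Gen N$ if and only if $N \in \Gen M$; recall that an ordering is a TF-order precisely when it is a linear extension of $\geq_\Gen$ with larger elements placed earlier.

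I will begin with two preliminary observations. Using uniseriality, $Y \in \Gen U_1$ combined with $Y \not\in \add U \ni U_1$ forces $U_1 \not\in \Gen Y$; and $U_1 \not\in \Gen X$ is furnished by \cref{lem:exchseqmiddleterm}. The main technical input is the following key claim: no $V \in \ind(\add U)$ is a proper quotient $U_1/\rad^j U_1$ of $U_1$ with $\ell(Y) < j < \ell(U_1)$. I intend to prove this by contradiction, exploiting that any such $V$ would make $V \oplus X \subseteq U \oplus X$ $\tau$-rigid; a direct computation of the composition factors of $\tau V$ via \crefnakayamalist{list:5} will show that $S(\topp X)$ appears in $\tau V$ at a depth compatible with a non-zero map $X \to \tau V$, contradicting $\tau$-rigidity. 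The computation splits along Case 1 ($X$ and $U_1$ both projective by \cref{lem:UprojthenXproj}) and Case 2 ($X \cong \rad^i U_1$ by \cref{lem:XprojthenUproj}), but is formally uniform once the relevant length relations are established. In the sub-case where $U_2 \neq 0$ in \cref{lem:exchseqmiddleterm}, I will first extract $\ell(Y)$ and $\topp X$ directly from the exchange presentation $X \to U_1 \oplus U_2 \to Y \to 0$; in Case 2 for instance this gives $\ell(Y) = i + \ell(U_2)$.

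To build $\sigma$, I will partition $U$ into three subsets $P_L, P_R, P_0$, where $P_L = \{B \in U : X \in \Gen B\}$ consists of modules that generate $X$, $P_R = \{B \in U : B \in \Gen X\}$ consists of quotients of $X$, and $P_0$ is the set of remaining summands (incomparable with $X$ in $\geq_\Gen$). Every element of $P_L \cup P_R$ has top equal to $\topp X \neq \topp U_1$, so these are incomparable in $\geq_\Gen$ to both $U_1$ and $Y$, and in particular $U_1 \in P_0$. By the key claim combined with the preliminary observations, $U_1$ covers $Y$ in the poset $\geq_\Gen$ on $\ind(\add(U \oplus Y))$, so there is enough freedom to produce a linear extension $\sigma$ placing $U_1$ at some position $p-1$, $Y$ at position $p$, every $B \in P_L$ at a position $< p-1$, and every $B \in P_R$ at a position $> p$, while respecting all remaining $\geq_\Gen$ constraints among the $P_0$-elements.

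To conclude, I will verify that $\sigma_X^Y$ remains a TF-order of $U \oplus X$. The positions $> p$ are unchanged, so their TF-conditions are inherited from $\sigma$. At position $p$, $X \not\in \Gen(\sigma(>p))$ holds because $\sigma(>p)$ contains no element of $P_L$. At position $p-1$, the TF-condition combines $U_1 \not\in \Gen X$ with the corresponding condition for $\sigma$. For positions $k < p-1$, the only new potential obstruction compared to $\sigma$ is $\sigma(k) \in \Gen X$, but this is excluded since $\sigma(k)$ lies either in $P_L$ or in the portion of $P_0$ placed before $U_1$, and in either case is not a quotient of $X$. I expect the hardest step to be establishing the key claim uniformly: pinning down the depth of $\topp X$ in $\tau V$ requires a careful case-by-case description of $\ell(Y)$ and $\topp X$ in terms of $\ell(U_1)$, $\ell(U_2)$, and the inclusion or projection depth arising in \cref{lem:exchseqmiddleterm}.
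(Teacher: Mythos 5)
Your proposal is correct, and it reaches the statement by a genuinely different route in both of its main components. For the crucial fact that nothing in $\add U$ sits strictly between $U_1$ and $Y$ in the generation order, the paper instead shows that $U_1$ has minimal length among summands of $U$ generating $Y$: it first establishes $\Hom(X,Z)\neq 0$ for any intermediate $Z$, then invokes \cref{lem:epiormono} to force $Z\in\Gen X$ or $X\in\Cogen Z$ and rules out each by socle/length considerations. Your alternative — exhibiting a non-zero map $X\to\tau V$ for $V\cong U_1/\rad^j U_1$ with $\ell(Y)<j<\ell(U_1)$, contradicting $\tau$-rigidity of $U\oplus X$ — does check out: in the non-projective case $X\cong\rad^i U_1$ one gets $\topp X=S(q-i)$ with $i<\ell(Y)+1\leq j$, and the submodule of $\tau V$ with top $S(q-i)$ has length $j-i+1\leq\ell(U_1)-i=\ell(X)$ precisely because $j<\ell(U_1)$; in the projective case $\topp X=S(q-\ell(Y)+\ell(U_2))$ with $1\leq\ell(Y)-\ell(U_2)<j$ (using $\ell(U_2)<\ell(Y)$ from $U_2\in\Cogen Y$, $Y\notin\add U$), so $\topp X$ is a composition factor of $\tau V$ and projectivity of $X$ finishes it. This costs more explicit bookkeeping with tops and lengths but dispenses with \cref{lem:epiormono}. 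For the construction of $\sigma$, the paper starts from an arbitrary TF-order (existence via Mendoza–Treffinger) and massages it by transpositions — first sliding $U_1$ next to $Y$, then pushing the quotients of $X$ to the right of $Y$'s slot, and finally re-checking that the result still TF-orders $U\oplus Y$ — whereas you build $\sigma$ in one pass as a linear extension of the generation poset, which over a Nakayama algebra decomposes into independent chains indexed by tops. Your route makes the combinatorics more transparent and avoids the paper's final back-substitution check, at the price of relying explicitly on the chain structure (which is fine here, and is the same structure underlying the paper's \cref{lem:NakayamaTFdecs}). Both arguments hinge on the identical covering fact, so the two proofs are different implementations of the same underlying idea rather than conceptually independent arguments.
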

\begin{proof}
    First we show that $U_1$ is the indecomposable module of minimal length in $\add U$ which generates $Y$. From \cref{lem:exchseqmiddleterm} we know that $U_1 \not \cong 0$ is such that $Y \in \Gen U_1$. Let $Z \in \add U$ be an indecomposable direct summand not isomorphic to $U_1$ and such that $Y \in \Gen Z$. Assume $\ell(Z) < \ell(U_1)$, i.e. $Z$ is a proper quotient of $U_1$ and thus not projective. We have that $\ell(Y) < \ell(Z) < \ell(U_1)$ from which it follows that $\Hom(X,Z) \neq 0$. 
    
    Next we claim that it follows that $\Hom(X,Z) \neq 0$. This is clear if $U_2 = 0$, and if $U_2 \neq 0$, then the composition $X \overset{f\mid^{U_2}}{\rightarrow} U_2 
    \overset{g\mid_{U_2}}{\rightarrow}Y$ is non-zero, since the first map is an epimorphism and the second map is a monomorphism by \cref{lem:exchseqmiddleterm}. So we have that both $\Hom(X,Y) \neq 0$ and $\Hom(X,U_1) \neq 0$. It then 
    follows by uniseriality that also $\Hom(X,Z) \neq 0$.
    
    Hence by \cref{lem:epiormono} we distinguish between two cases. If $Z \in \Gen X$ then it is generated by $X$ and $U_1$ so by the uniseritality of the indecomposable modules we have either $U_1 \in \Gen X$, a contradiction to \cref{lem:exchseqmiddleterm} or $X  \in \Gen U_1$, a contradiction to the assumption that the mutation at $X$ is a left mutation by \cref{prop:tau_mutation_air}. Otherwise, $X \in \Cogen Z$. Since $Z$ is non-projective, it follows from \crefnakayamalist{list:11}
    that $X$ is non-projective. Hence, by \cref{lem:XprojthenUproj}, also $U_1$ is non-projective.
    However then $\ell(X)< \ell(Z)< \ell(U_1) < n$ by \cref{prop:Adachilength}, which is a contradiction to the existence of monomorphisms $X \hookrightarrow U_1$ and $X \hookrightarrow Z$. Hence $U_1$ is the summand of minimal length in $U$ which generates $Y$.

     We proceed as follows. We first construct 
        a TF-ordering of $U \oplus Y$, satisfying (1).
        Then we replace $Y$ with $X$ in this ordering, 
        and modify it to produce a TF-ordering of
        $U \oplus X$. Then we replace $X$ with $Y$ in
        this ordering, to give us a new TF-ordering of
        $U \oplus Y$ which also satisfies (1),
        and in addition has the property that (2) is satisfied.
        By \cite[Prop. 3.2]{mendozatreffinger_stratifyingsystems} the module $U \oplus Y$ admits a TF-order $(U \oplus Y, \theta)$. We must have $\theta^{-1}(U_1) < \theta^{-1}(Y)$ since $Y \in \Gen U_1$. Since $U_1$ is of minimal length in $\add U$ generating $Y$, we obtain that
    \[ (U \oplus Y, \theta') = (U \oplus Y, \pi_{\theta^{-1}(Y)-2} \circ \dots \circ \pi_{\theta^{-1}(U_1)+1} \circ \pi_{\theta^{-1}(U_1)} \circ \theta) \]
    is TF-ordered. By construction $U_1$ and $Y$ are adjacent.
    If $(U \oplus X, (\theta')_X^Y)$ is TF-ordered, we are done. Otherwise, there exists an indecomposable direct summand $U' \in \Gen X$ such that $(\theta')^{-1}(U') < (\theta')^{-1}(Y)$. Let $U''$ be of minimal length among such indecomposable direct summands of $U$. Then
    \[ (U \oplus X, \theta'') = (U \oplus X, \pi_{\theta^{-1}(Y)} \circ \dots \circ \pi_{\theta^{-1}(U'')} \circ (\theta')_X^Y) \]
    satisfies $(\theta'')^{-1}(U'') > (\theta'')^{-1}(X)$ as required. We repeat this process for all such direct summands of $U$ to obtain a TF-ordered module $(U \oplus X, \theta''')$, with the property that $U_1$ is adjacent to $X$.
    
    We claim that $(U \oplus Y, (\theta''')_Y^X)$ is also TF-ordered. We have that $U_1$ generates $Y$, so if $X$ also generated $Y$, we would, by uniseriality, either have that $U_1 \in \Gen X$ or that  $X \in \Gen U_1$. By \cref{lem:exchseqmiddleterm} we have that $U_1 \not\in \Gen X$, and $X \not \in \Gen U_1 \subset \Gen U$, since $X$ is the Bongartz complement of $U$. So $X$ does not generate $Y$. The summands that were moved to the right of $X$ when defining $(U \oplus X, \theta''')$, all were factor modules of $X$, so none of these generates $Y$ either. 
    Hence, by construction we also have that $(U \oplus Y, (\theta''')_Y^X)$ is TF-ordered. This concludes the proof.
 \end{proof}
 
If $\L$ is a connected Nakayama algebra, recall that we assume its vertices to be labelled in (linear or cyclic) order from $0$ to $n-1$, which induces a cyclic order on the simple $\L$-modules $S(i)$ using addition modulo $n$ on their indices. 

\begin{lemma}\label{lem:indexorder}
        Let $\L$ be a basic connected Nakayama algebra and consider the setting of \cref{thm:AIR} with the description $U' \cong U_1 \oplus U_2$ from \cref{lem:exchseqmiddleterm}. We have $\topp(U_1) \leq \soc(\tau^2 Y) \leq \topp(X)$ in the cyclic order.
    \end{lemma}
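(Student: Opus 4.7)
The plan is to identify the three simple modules appearing in the claim and to reduce the cyclic inequality to an arc constraint on $\topp(X)$. First, observe that $Y$ must be non-projective: otherwise the epimorphism $g\colon U' \to Y$ in the exchange sequence of \cref{thm:AIR} would split, placing $Y$ in $\add U' \subseteq \add U$, contradicting $Y \notin \add U$. Write $Y = M_{s,t}$; applying \crefnakayamalist{list:5} twice gives $\tau^2 Y \cong M_{s-2, t-2}$, so $\soc(\tau^2 Y) = S(s_n)$. The epimorphism $U_1 \twoheadrightarrow Y$ from \cref{lem:exchseqmiddleterm}, combined with uniseriality, forces $\topp(U_1) = \topp(Y) = S(t)$. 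A short computation of cyclic positions from $t$, using $(s - t)_n = n - \ell(Y) - 1$ (see \crefnakayamalist{list:6}), then reduces the cyclic order claim to showing that $\topp(X) \in \{S((s+1)_n), S((s+2)_n), \ldots, S((t-1)_n)\}$.

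Next, I would split into the two cases of \cref{lem:XprojthenUproj}. In the non-projective case $X \cong \rad^i U_1$ for some $i \in \{1, \ldots, \ell(U_1) - 1\}$, and I would analyze the exchange sequence $X \xrightarrow{(f_1, f_2)^T} U_1 \oplus U_2 \xrightarrow{(g_1, g_2)} Y \to 0$. Since $U_2 \in \Cogen(Y)$ is indecomposable and uniserial, either $U_2 = 0$ or $U_2 \cong \rad^k Y$ for some $k \in \{1, \ldots, \ell(Y)-1\}$ (with $k \geq 1$ because $U_2 \not\cong Y$). Projecting the exactness $\im f = \ker g$ onto $U_1$ yields $f_1(X) = g_1^{-1}(\im g_2) = \rad^k U_1$ when $U_2 \neq 0$, and $f_1(X) = \ker g_1 = \rad^{\ell(Y)} U_1$ when $U_2 = 0$. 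The bound $\dim_K \Hom(X, U_1) \leq 1$ from \crefnakayamalist{list:dimhom} pins $f_1$ down, up to scalar, to the canonical inclusion $\rad^i U_1 \hookrightarrow U_1$ with image $\rad^i U_1$; matching then forces $i = k$ or $i = \ell(Y)$ respectively. Either way $i \leq \ell(Y)$, so $\topp(X) = S((t - i)_n)$ lies in the arc required by the first paragraph.

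In the projective case, \cref{lem:XprojthenUproj} gives $U_1 = P(t)$, and I would reuse the mapping-cone computation already carried out in the proof of \cref{lem:UprojthenXproj}. When $U_2 = 0$ it yields $\Sigma \mathbb{P}_X \cong P_{-1}^Y$, so $X$ is the projective cover of $\Omega Y = \rad^{\ell(Y)} P(t)$, whose top is $S((t - \ell(Y))_n) = S((s+1)_n)$, giving $X = P((s+1)_n)$. When $U_2 \neq 0$ it yields $\mathbb{P}_X \cong P_0^{U_2}$, the projective cover of $U_2 \cong \rad^k Y$, so $X = P((t-k)_n)$ with $k \in \{1, \ldots, \ell(Y) - 1\}$. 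In both subcases $\topp(X)$ lies in $\{S((s+1)_n), \ldots, S((t-1)_n)\}$, completing the argument. The main obstacle I expect is bookkeeping in the non-projective case: one has to translate the exactness $\im f = \ker g$ into a statement about a single submodule of $U_1$, and this hinges on the uniseriality-driven one-dimensionality of $\Hom(X, U_1)$ forcing $f_1$ to be the canonical inclusion.
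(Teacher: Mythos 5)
Your argument is correct, but it takes a genuinely different route from the paper's. The paper's proof is a uniform length count: it shows $\soc(\im f)\cong\soc(U_1)$ using the commutative diagram from the proof of \cref{lem:exchseqmiddleterm}, then uses additivity of composition length over $0\to\im f\to U_1\oplus U_2\to Y\to 0$ to get $\ell(Y)\geq\ell(U_1)-\ell(\im f)$, and converts this into the cyclic inequality via $\topp(\im f)\cong\topp(X)$ — with no case split on whether $X$ is projective. You instead pin down $\topp(X)$ (in fact $X$ itself) exactly: in the non-projective case by identifying $\im f_1=g_1^{-1}(\im g_2)$ with $\rad^k U_1$ (resp.\ $\ker g_1=\rad^{\ell(Y)}U_1$) and using $\dim_K\Hom(X,U_1)\leq 1$ to force $X\cong\rad^k U_1$ (resp.\ $\rad^{\ell(Y)}U_1$); in the projective case by reusing the mapping-cone identification $\mathbb{P}_X\cong P_{-1}^Y$ or $P_0^{U_2}$ from the proof of \cref{lem:UprojthenXproj}. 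Your version is longer and leans on \cref{lem:XprojthenUproj} and \cref{lem:UprojthenXproj}, but it buys a sharper output — an explicit description of $X$ in terms of $U_1$, $U_2$ and $Y$, and strict membership of $\topp(X)$ in the open arc from $S((s+1)_n)$ to $S((t-1)_n)$ — whereas the paper's argument only needs the top and socle of $\im f$. Two small points you should make explicit: $f_1\neq 0$ (either by minimality of $f$ as in \cref{lem:mutationsseqprops}, or because the computed image $\rad^k U_1$ is nonzero since $k<\ell(U_1)$), and the step identifying the projection of $\ker g$ onto $U_1$ with $g_1^{-1}(\im g_2)$ deserves a sentence; neither is a gap.
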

    \begin{proof}
         Assume that $\topp(U_1) \cong S(q)$, $\topp(X) \cong S(t)$ and $\soc(\tau^2 Y) \cong S(r)$ for $q,r,t \in \{0,\dots, n-1\}$. Since $Y \in \Gen U_1$ by \cref{lem:exchseqmiddleterm} we have $\topp(Y) \cong \topp(U_1)$, which means that $Y \cong M_{r,q}$ by \cref{prop:nakayama-collection-of-facts} or equivalently $\soc(Y) \cong S(r+2)_n$. Since $\ell(Y)<n$ we have $\ell(Y) = (q-r-1)_n = (q-r-2)_n+1$. Consider the exchange sequence $X \xrightarrow{f} U_1 \oplus U_2 \to Y \to 0$ and let $X' \cong \im f$. Let us show that $\soc(X')\cong \soc(U_1)$. From \cref{eq:middleseq} we know that $U_1$ and $X'$ have a common submodule. Uniseriality implies that either $X' \in \Cogen U_1$ or $U_1 \in \Cogen X'$. In both cases we obtain $\soc(X')\cong \soc(U_1)$ by uniseriality.
         Because $X'\in \Gen X$ we have $\topp(X') \cong \topp(X)$. From this we obtain
        \begin{align*}
            (q-r-1)_n & = \ell(Y) \\
            & = \ell(U_1) + \ell(U_2) - \ell(X') \\
            & \geq \ell(U_1) - \ell(X')  \\
            & = (q-s+1)_n - (t-s+1)_n \\
            & = (q-t)_n
        \end{align*}
        where we assumed that $\soc(X') \cong \soc(U_1) \cong S(s)$. It follows that $q \leq r+1 \leq t$ in the cyclic order modulo $n$ and thus $\topp(U_1) \leq \soc(\tau Y) \leq \topp(X)$ in the induced cyclic order of simples. Moreover we have $(r+1)_n \neq q$ as otherwise $\ell(Y) =n$, whereas $\ell(Y) < n$ by \cref{prop:Adachilength} because $Y$ is $\tau$-rigid
        and non-projective. Therefore we obtain $q \leq r \leq t$ as required, and the result follows.
    \end{proof}

Recall that if $\L$ is not connected, we may consider a cyclic order on the simple $\L$-modules which lie in the same connected component of the underlying quiver. In this way \cref{lem:indexorder} holds for arbitrary Nakayama algebras when the cyclic order is changed to one for the connected component that $Y$ and by \cref{lem:mutationsseqprops} also $U_1,U_2$ and $X$ correspond to.

\begin{lemma}\label{lem:unchangedTFdec}
    Let $\L$ be a Nakayama algebra and let $T = (U \oplus X, \sigma_X^Y)$ and $T' = (U\oplus Y, \sigma)$ be as in \cref{lem:mutationsetting}. Recall that this means that $T_m = T_m'$ for all $m \neq i$. Then we have 
    \[ E_{T_{i-1} \oplus X}(T_j) \cong E_{T_{i-1} \oplus Y}(T_j)\]
    for all $j \in \{1, \dots, i-2\}$.
\end{lemma}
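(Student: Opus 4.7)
\emph{Proof plan.} The strategy will be to apply associativity of the $E$-map (\cref{thm:E-associative}) to rewrite both sides as reductions within the Nakayama algebra $J(V)$ (where $V := T_{i-1}$, using \cref{prop:Nakayamareduction}), and then to exploit the asymmetry between $X$ and $Y$ established in \cref{lem:mutationsetting}. By associativity one obtains
\[
E_{V \oplus X}(T_j) = E^{J(V)}_{\widetilde{X}}(\widetilde{T}_j), \qquad E_{V \oplus Y}(T_j) = E^{J(V)}_{\widetilde{Y}}(\widetilde{T}_j),
\]
where $\widetilde{(-)} := E_V(-)$; TF-admissibility gives $T_j \notin \Gen V$ (since $V \in T_{>j}$), so $\widetilde{T}_j = f_V(T_j)$ is a nonzero unsigned module.

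Since $Y \in \Gen V$ by \cref{lem:mutationsetting}, \cref{defn:Emap} shows that $\widetilde{Y}$ is a shifted projective in $J(V)$, and then the $E$-rule for a shifted projective on a module yields $E^{J(V)}_{\widetilde{Y}}(\widetilde{T}_j) = \widetilde{T}_j$. The claim thus reduces to $E^{J(V)}_{\widetilde{X}}(\widetilde{T}_j) \cong \widetilde{T}_j$. Because the underlying $\tau$-tilting mutation is a left mutation, $X \notin \Gen V$ and $\widetilde{X}$ is a module. A first observation is that $\widetilde{T}_j \notin \Gen \widetilde{X}$: if instead $\widetilde{T}_j \in \Gen \widetilde{X}$, then the short exact sequence $0 \to t_V T_j \to T_j \to \widetilde{T}_j \to 0$ would exhibit $T_j$ as an extension of a module in $\Gen X$ by one in $\Gen V$, placing $T_j$ in the torsion class $\Gen(V \oplus X) \subseteq \Gen T_{>j}$, contradicting TF-admissibility of $T$. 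By \cref{defn:Emap}, the reduced object is therefore $f_{\widetilde{X}}(\widetilde{T}_j)$, and the statement collapses to $\Hom_{J(V)}(\widetilde{X}, \widetilde{T}_j) = 0$; since $\widetilde{X}$ is a quotient of $X$, it will be enough to prove $\Hom_\L(X, \widetilde{T}_j) = 0$.

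To obtain this vanishing I will apply $\Hom(-, \widetilde{T}_j)$ to the exchange sequence $X \to V \oplus U_2 \to Y \to 0$ furnished by \cref{thm:AIR} (with $U_1 = V$ by \cref{lem:mutationsetting}). Using $\widetilde{T}_j \in V^\perp$ together with $Y \in \Gen V$ makes both $\Hom(V, \widetilde{T}_j)$ and $\Hom(Y, \widetilde{T}_j)$ vanish, and $\Ext^1(Y, \widetilde{T}_j) = 0$ will follow from \cite[Prop. 5.8]{auslandersmalo81} because ${}^\perp \tau Y$ is a torsion class containing $T_j$, and hence its quotient $\widetilde{T}_j$. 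The long exact sequence then collapses to the isomorphism $\Hom(X, \widetilde{T}_j) \cong \Hom(U_2, \widetilde{T}_j)$. If $U_2 = 0$ (a case permitted by \cref{lem:exchseqmiddleterm}) we are done; otherwise \cref{lem:exchseqmiddleterm} supplies an inclusion $U_2 \hookrightarrow Y$ with quotient $Z := Y/U_2 \in \Gen V$, and feeding $0 \to U_2 \to Y \to Z \to 0$ through $\Hom(-, \widetilde{T}_j)$ will reduce the problem to the Ext-vanishing $\Ext^1(Z, \widetilde{T}_j) = 0$.

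I expect this last Ext-vanishing to be the main obstacle. By Auslander--Reiten duality it is equivalent to $\Hom(\widetilde{T}_j, \tau Z) = 0$, and since $\widetilde{T}_j$ is a quotient of $T_j$ it suffices to establish $\Hom(T_j, \tau Z) = 0$. Here I plan to exploit that, in the Nakayama setting, $\tau Z$ is a uniserial quotient of $\tau V$ whose composition factors are determined by \crefnakayamalist{list:5} and \crefnakayamalist{list:7}; combined with the $\tau$-rigidity of $T_j \oplus V$ (which kills $\Hom(T_j, \tau V)$) and a careful comparison of tops and socles using uniseriality, this should force $\Hom(T_j, \tau Z) = 0$ and hence complete the proof.
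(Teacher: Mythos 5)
Your reduction is sound and in fact equivalent to what the paper proves: by $E$-associativity and the shifted-projective rule, the lemma amounts to $f_{\widetilde X}(\widetilde T_j)=\widetilde T_j$, i.e.\ to $\Hom(\widetilde X,\widetilde T_j)=0$; and since $\widetilde T_j\in V^\perp$ has no nonzero submodule in $\Gen V$, every map $X\to\widetilde T_j$ kills $t_V(X)$, so your ostensibly stronger target $\Hom_\L(X,\widetilde T_j)=0$ is actually equivalent, not just sufficient. The preparatory vanishings ($\Hom(V,\widetilde T_j)=\Hom(Y,\widetilde T_j)=0$ and $\Ext^1(Y,\widetilde T_j)=0$) are all correct. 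The gaps are in the two steps that are supposed to finish the proof. First, the exchange sequence $X\to V\oplus U_2\to Y\to 0$ is only right exact; the connecting map into $\Ext^1(Y,-)$ lives on the short exact sequence $0\to\im f\to V\oplus U_2\to Y\to 0$, so what your argument actually yields is $\Hom(\im f,\widetilde T_j)\cong\Hom(U_2,\widetilde T_j)$, not $\Hom(X,\widetilde T_j)\cong\Hom(U_2,\widetilde T_j)$. Maps $X\to\widetilde T_j$ that do not kill $\ker f$ are unaccounted for, and there is no a priori containment $\ker f\subseteq t_V(X)$ that would let you invoke the factorisation through $\widetilde X$ to close this.

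Second, and more seriously, the final vanishing $\Ext^1(Z,\widetilde T_j)=0$ --- which by your own long exact sequence for $0\to U_2\to Y\to Z\to 0$ is literally equivalent to the desired $\Hom(U_2,\widetilde T_j)=0$ --- is exactly where the content of the lemma lives, and the proposed deduction does not work: $\Hom(T_j,\tau V)=0$ does not imply $\Hom(T_j,\tau Z)=0$ for a quotient $\tau Z$ of $\tau V$, because Hom-vanishing into a uniserial module does not pass to its quotients (already over $K(1\to 0)$ one has $\Hom(S(1),P(1))=0$ but $\Hom(S(1),\topp P(1))\neq 0$). Compare with the paper's proof, which attacks this point head-on and spends essentially all of its effort there: it shows $\Hom(U_2,T_j)=0$ when $T_j$ is non-projective by a socle comparison using $U_2\in\Cogen Y$ and \cref{lem:epiormono}, and when $T_j$ is projective it produces an explicit factorisation of any map $X\to T_j$ through $V$ using $\dim_K\Hom\leq 1$ and, in the hardest subcase where $X$ and $V$ are both projective, the cyclic-order estimate of \cref{lem:indexorder}. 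None of these ingredients, nor a substitute for them, appears in your plan, so the essential difficulty is deferred rather than resolved.
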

\begin{proof}
    Assume that $\topp(T_j) = S(p)$. Since $T$ and $T'$ are TF-orders by \cref{lem:mutationsetting}, we have $T_j \not \in \Gen(T_{i-1} \oplus X)$ and $T_j \not \in \Gen(T_{i-1} \oplus Y)$. Thus, by \cref{defn:Emap} we have
    \[ E_{T_{i-1} \oplus X}(T_j) = f_{T_{i-1} \oplus X}(T_j) \quad \text{and} \quad E_{T_{i-1} \oplus Y}(T_j) = f_{T_{i-1} \oplus Y}(T_j).\]
    Moreover, since $Y \in \Gen T_{i-1}$ we have $f_{T_{i-1} \oplus Y} = f_{T_{i-1}}$. If $\Hom(X, T_j) = 0$ then the desired result is obvious. We distinguish between two cases.

    First, assume $T_j$ is not projective, and let $\alpha \in \Hom(X, T_j)$ be a nonzero morphism. From \cref{lem:exchseqmiddleterm} we have that 
    \[f = \begin{pmatrix} f_1 & f_2 \end{pmatrix}^t: X \to U_1 \oplus U_2\]
    is an $\add U$-approximation, where $U_1 \cong T_{i-1}$. Since $T_j \in \add U$ there exists a homomorphism 
    \[h = \begin{pmatrix} h_1 & h_2 \end{pmatrix}: U_1 \oplus U_2 \to T_j\]
    such that $h \circ f = \alpha$. If $\Hom(U_2,T_j)=0$ then $h_2=0$ and we are done, since any map from $X$ to $T_j$ factors through $U_1$ i.e. $f_{X \oplus T_{i-1}}(T_j) = f_{T_{i-1}}(T_j)$. 
    
    Therefore assume that $\Hom(U_2, T_j)\neq 0$ so that in particular $U_2 \not \cong 0$. This implies $U_2 \cong T_k$ for $k > i$ in the TF-order of $T$ of \cref{lem:mutationsetting}, since $U_2 \in \Gen X$ by \cref{lem:exchseqmiddleterm}. From \cref{lem:epiormono} we obtain that $h_2 : U_2 \to T_j$ is a monomorphism, since $T_j$ is not projective and $T_j\not \in \Gen U_2$ as $j < i <k$ in the TF-order. Therefore $\soc(U_2) \cong \soc(T_j)$.

    By \cref{lem:epiormono} we obtain that $\alpha: X \to T_j$ has to be a monomorphism since $T_j$ is not projective and $T_j \not \in \Gen X$ since $X \cong T_i$ and $j<i$ in the TF-order. Moreover, this implies that $X$ cannot be projective by \crefnakayamalist{list:11}. As $X$ is not projective and since $\alpha$ is a monomorphism we obtain $\soc(X) \cong \soc(T_j)$ and by the previous paragraph $\soc(X) \cong \soc(U_2)$. This is a contradiction to the fact that $U_2 \in \Gen X$ and $\ell(U_2)< \ell(X) < n$ by \cref{prop:Adachilength}. Hence $\Hom(U_2, T_j)=0$.

    Assume now that $T_j$ is projective and let $\alpha \in \Hom(X,T_j)$ be a nonzero morphism. We distinguish between two cases. If $X$ is not projective then neither is $U_1$ and we have $X \cong \rad^l U_1$ for some $l \in \{1, \dots, \ell(U_1)-1\}$ by \cref{lem:XprojthenUproj}. Therefore $\ell(X) < \ell(U_1) < n$ by \cref{prop:Adachilength} and by \crefnakayamalist{list:dimhom} we have 
    \[ \dim_K \Hom(X, T_j) = 1 \geq \dim_K \Hom(U_1, T_j).\]
    From $\alpha$ we construct a nonzero morphism $U_1 \to T_j$ as follows: Since $X \cong \rad^l U_1$ it follows that $\ker \alpha$ is also a submodule of $U_1$, and therefore $U_1/\ker \alpha$ is a submodule of $T_j$ since $X/\ker \alpha \cong \im \alpha$. We then have that the following composition, which we denote by $\beta$, is nonzero
    \[ X \overset{f_1}{\hookrightarrow} U_1 \twoheadrightarrow U_1/\ker \alpha \hookrightarrow T_j \]
    since $\ker \alpha \not \cong X$ because $\alpha$ is nonzero. Since $\dim_K \Hom(X,T_j)=1$ we must have that $\alpha = a\beta$ for some $0 \neq a \in K$. So we have that $\alpha$ factors through $f_1: X \to U_1$ and hence $f_{T_{i-1} \oplus X}(T_j) = f_{T_{i-1}}(T_j)$ as required.

    Assume now that $X$ is projective, then $U_1 \cong T_{i-1}$ is also projective by \cref{lem:XprojthenUproj}. If $f_{T_{i-1} \oplus X}(T_j) \neq f_{T_{i-1}}(T_j)$ then we must have that $f_{T_{i-1} \oplus X}(T_j) = f_{X}(T_j)$ by the uniseriality of $T_j$ and moreover $t_{T_{i-1}}(T_j) \cong \rad^l t_X(T_j)$ for some $l \in \{1, \dots, \ell(t_X(T_j))\}$. In particular, this implies that we have $\topp(U_1) < \topp(X) < \topp(T_j)$ in the (strict) cyclic order. On the other hand we obtain from \cref{lem:indexorder} the relation $\topp(U_1) \leq \soc(\tau^2 Y) \leq \topp(X)$ in the induced non-strict version of the cyclic order. As $U \oplus Y$ is $\tau$-rigid we have in particular that $\Hom(T_j, \tau Y)=0$. By \crefnakayamalist{list:9} we have $\topp( Y) \leq \topp(T_j) \leq \soc(\tau^2 Y)$ in the induced non-strict cyclic order. Since $Y \in \Gen U_1$ by \cref{lem:exchseqmiddleterm}, have $\topp(Y) = \topp(U_1)$ by uniseriality and therefore $\topp( U_1) \leq \topp(T_j) \leq \soc(\tau^2 Y)$. Using the fact that $a \leq b \leq c$ and $a \leq c \leq d$ in a cyclic order implies $a \leq b \leq d$, we have that $\topp(U_1) \leq \topp(T_j) \leq \soc(\tau^2 Y)$ and $\topp(U_1) \leq \soc(\tau^2 Y) \leq \topp(X)$ in the cyclic orders implies $\topp(U_1) \leq \topp(T_j) \leq \topp(X)$, a contradiction to $\topp(U_1) < \topp(X) < \topp(T_j)$ in the cyclic order. Thus we obtain the desired result.
\end{proof}

\begin{proposition}\label{prop:mutationmutation}
    Let $T = (U \oplus X, \sigma_X^Y)$ and $T' = (U\oplus Y, \sigma)$ be as in \cref{lem:mutationsetting}. Then $\overline{\varphi}_{i-1}(T) = T'$ where $i = \sigma^{-1}(Y) = {\sigma^Y_X}^{-1}(X)$.
\end{proposition}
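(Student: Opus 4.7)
The plan is to verify the equivalent statement $\varphi_{i-1}(\Psi(T)) = \Psi(T')$ at the level of signed $\tau$-exceptional sequences; since $\Psi$ is a bijection by \cref{prop:reduction}, this yields $\overline{\varphi}_{i-1}(T) = T'$. Setting $\mathcal{J} = J(T_{>i})$ and $\widetilde{(-)} = E_{T_{>i}}(-)$, and recalling from \cref{prop:Nakayamareduction} that $\mathcal{J}$ is again (equivalent to the module category of) a Nakayama algebra, the goal decomposes into three tasks: the entries of $\Psi(T)$ and $\Psi(T')$ at positions $j \geq i+1$ agree automatically since $T$ and $T'$ coincide outside position $i$; the entries at positions $j \leq i-2$ must be shown to agree; and the two entries at the crucial positions $i-1$ and $i$ must be shown to be transformed by $\varphi$ into those of $\Psi(T')$.

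For positions $j \leq i-2$ I would decompose $T_{>j} = R \oplus (U_1 \oplus X)$, where $R$ collects the remaining summands, and apply \cref{thm:E-associative} to rewrite $E_{T_{>j}}(T_j)$ as $E^{J(U_1 \oplus X)}_{E_{U_1 \oplus X}(R)}(E_{U_1 \oplus X}(T_j))$, and analogously for $T'_{>j}$. \cref{lem:unchangedTFdec}, applied to $T_j$ and to every summand of $R$, equates the inner data, while \cref{prop:jasso_unchanged_after_mutation} ensures $J(U_1 \oplus X) = J(U_1 \oplus Y)$ so that the outer $E$-maps live in the same category. Combining these gives $\Psi(T)_j = \Psi(T')_j$ for every $j \leq i-2$.

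The core step is to show $\overline{\varphi}^{\mathcal{J}}(\widetilde{U}_1 \oplus \widetilde{X}) = \widetilde{U}_1 \oplus \widetilde{Y}$ inside $\mathcal{J}$; combining this with \cref{prop:TFmutations_in_middle} and \cref{thm:TFfullbehaviour} is what recovers the entries at positions $i-1,i$. The pair $\widetilde{U}_1 \oplus \widetilde{X}$ is TF-ordered in $\mathcal{J}$ by \cref{prop:tf-reduced-still-tf}, and since the required output preserves $\widetilde{U}_1$ in the first position, only Cases TF-1b and TF-4 of \cref{thm:introNakmutation} can occur. By \cref{lem:UprojthenXproj,lem:XprojthenUproj}, these two cases correspond to whether $X$ (equivalently $U_1$) is projective in $\mods \L$. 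In the non-projective case \cref{lem:XprojthenUproj} gives $X \cong \rad^l U_1$, and the analysis of the middle term $U' = U_1 \oplus U_2$ from \cref{lem:exchseqmiddleterm}, combined with the explicit construction of $\sigma$ in \cref{lem:mutationsetting} (which places every summand of $U$ generated by $X$ after position $i$), forces $U_2 \in \add T_{>i}$. The exchange sequence of \cref{thm:AIR} then descends in $\mathcal{J}$ to $\widetilde{X} \hookrightarrow \widetilde{U}_1 \twoheadrightarrow \widetilde{Y}$, placing us in Case TF-4 and yielding $\widetilde{U}_1 \oplus (\widetilde{U}_1/\widetilde{X}) = \widetilde{U}_1 \oplus \widetilde{Y}$ by \cref{thm:Nakayamacase4}. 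When $X$ is projective, \cref{prop:Nakayamacase1} produces $\widetilde{U}_1 \oplus f_{\widetilde{X}}(\widetilde{U}_1)$, which is identified with $\widetilde{U}_1 \oplus \widetilde{Y}$ by the analogous analysis of the descended exchange sequence, with \cref{lem:indexorder} used to control the index combinatorics. The principal obstacle is precisely this identification of the Nakayama formula outputs with $\widetilde{Y}$: tracking the middle term $U'$ and its summand $U_2$ through the $E$-map reduction is the crux of the argument.
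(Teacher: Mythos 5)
Your overall architecture coincides with the paper's: positions $j\geq i+1$ are untouched, positions $j\leq i-2$ are handled by \cref{lem:unchangedTFdec} together with \cref{thm:E-associative} and \cref{thm:TFfullbehaviour}, and the crucial pair at positions $i-1,i$ is pushed into the Nakayama algebra $J(T_{>i})$ via \cref{prop:TFmutations_in_middle}. The paper, however, resolves the crucial pair by a two-step bootstrap that you do not reproduce: it first proves the case $i=n$ entirely in the ambient category, where $X$ sits in the last position, so no summand of $U$ is generated by $X$, whence $U_2=0$ in \cref{lem:exchseqmiddleterm}, the exchange sequence reads $X\to U_1\to Y\to 0$ with the first map a left $\add U$-approximation, and $Y\cong f_X(U_1)$ follows directly; the pair $U_1\oplus X$ then falls into Case TF-1b or TF-4 according to whether $U_1$ (equivalently $X$, by \cref{lem:UprojthenXproj,lem:XprojthenUproj}) is projective, and both cases output $U_1\oplus f_X(U_1)$. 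For $i<n$ the paper checks that the reduced data $E_{T_{>i}}(T_{\leq i})$ and $E_{T_{>i}}(T'_{\leq i})$ is again an instance of the setting of \cref{lem:mutationsetting} over $J(T_{>i})$ (using that $E$ is mutation-preserving and that $E_{T_{>i}}(Y)\in\Gen(E_{T_{>i}}(T_{i-1}))$) and invokes the $i=n$ case there, so the exchange sequence it works with is the one produced by \cref{thm:AIR} \emph{inside} $J(T_{>i})$, not a descended one.

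This is where your proposal has a genuine gap. You assert that the ambient exchange sequence ``descends'' to $\widetilde X\hookrightarrow\widetilde U_1\twoheadrightarrow\widetilde Y$ in $J(T_{>i})$, but $E_{T_{>i}}$ acts on these summands as the torsion-free functor $f_{T_{>i}}$, which is not exact, so neither the exactness of the image sequence nor the approximation property of its first map is automatic; the identification $f_{\widetilde X}(\widetilde U_1)\cong\widetilde Y$ is precisely the point requiring proof, and you flag it as ``the crux'' without supplying an argument. Relatedly, your case distinction for $\widetilde U_1\oplus\widetilde X$ is pinned to projectivity of $X$ in $\mods\L$, whereas Cases TF-1 and TF-4 of \cref{thm:introNakmutation}, applied in $J(T_{>i})$, are governed by projectivity of $\widetilde X$ in $J(T_{>i})$, which by \cref{cor:Bong-proj} amounts to $X\in\add\bcomp(T_{>i})$ rather than $X\in\proj\L$. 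Since both cases output $B\oplus f_CB$ this would not alter the final formula, but your exclusion of Cases TF-1a and TF-3 (where the mutation would be a mere transposition) is circular, as you infer the case from the desired output. Both issues evaporate if you prove the $i=n$ case first and then apply it verbatim to the reduced data in $J(T_{>i})$, which is what the paper does.
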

\begin{proof}
    Assume first that $i = n$. By \cref{lem:exchseqmiddleterm} the exchange sequence \cref{eq:exchange-sequence} is given by
    \[ X \xrightarrow{f} U_1 \oplus U_2 \to Y \to 0\]
    where $f$ is a minimal left $(\add U)$-approximation and $U_1 \cong T_{i-1}$ generates $Y$. No indecomposable direct summand of $U$ is generated by $X$ since $X$ is the right-most summand of the TF-ordered module $T$,  therefore $U_2 = 0$. Because $f: X \to U_1$ appearing in the exchange sequence \cref{eq:exchange-sequence} is a left $(\add U)$-approximation, $\im f$ is the maximal submodule of $U_1$ generated by $X$. However this implies that $\im f= t_X(U_1)$ and hence $Y\cong U_1/\im X \cong f_X(U_1)$ and thus $f_X(T_{i-1}) = f_X(U_1) \cong Y$.
    
    Since $X \not \in \Gen T_{i-1}$ by \cref{prop:tau_mutation_air} the TF-ordered module $T_{i-1} \oplus X$ does not fall into Case TF-2. Moreover, if $T_{i-1}$ is projective, then $\overline{\varphi}(T_{i-1} \oplus X)= T_{i-1} \oplus Y$ by Case TF-1b of \cref{thm:introNakmutation} and using that $f_X(T_{i-1})\cong Y$. If $T_{i-1}$ is not projective, then $X \cong \rad^k(T_{i-1})$ for some $k \in \{1, \dots, \ell(T_{i-1})-1\}$ by \cref{lem:XprojthenUproj}. In this case $\overline{\varphi}(T_{i-1} \oplus X) \cong T_{i-1} \oplus Y$ by Case TF-4 of \cref{thm:introNakmutation} and using that $f_X(T_{i-1})\cong Y$. It follows from \cref{lem:unchangedTFdec} and \cref{thm:TFfullbehaviour} that $\overline{\varphi}_{n-1}(T) \cong T'$ as required. 

    Assume now that $i < n$. Note that $T_{>i} = T'_{>i}$, so $E_{T_{>i}}(T_{\leq i})$ and $E_{T_{>i}}(T'_{\leq i})$ are two TF-ordered $\tau$-tilting modules in the category $J(T_{>i})$, which by \cref{prop:Nakayamareduction} is the module category of a Nakayama algebra. Further, \cref{thm:E-associative} gives us the equalities 
    \begin{equation}\label{eq:Egenpreserve}
        E^{J(T_{i-1})}_{E_{T_{i-1}}(T_{>i})}(E_{T_{i-1}}(Y)) = E_{T_{>i} \oplus T_{i-1}}(Y) = E^{J(T_{>i})}_{E_{T_{>i}}(T_{i-1})}(E_{T_{>i}}(Y)).
    \end{equation}
    By \cref{defn:Emap} we have that $E_{T_{i-1}}(Y)$ is a shifted projective and $E_{T_{>i}}(Y)$ is a module.  Hence \cref{eq:Egenpreserve} shows that the module $E_{T_{>i}}(Y)$ becomes a shifted projective after applying $E_{E_{T_{>i}}(T_{i-1})}^{J(T_{>i})}$ and therefore $E_{T_{>i}}(Y) \in \Gen(E_{T_{>i}}(T_{i-1}))$ by \cref{defn:Emap}, so we are in the setting of \cref{lem:mutationsetting}. Thus by the $i = n$ case dealt with above, 
    \[\overline{\varphi}_{i-1}^{J(T_{>i})}(T_{\leq i}) = E_{T>i}(T'_{\leq i}).\]
    Finally, by \cref{prop:TFmutations_in_middle} we have \[\overline{\varphi}_{i-1}(T) = E^{-1}_{T_{>i}}(E_{T>i}(T'_{\leq i})) \oplus T_{>i} = T'\] as wanted.
\end{proof}

We are now able to prove the main result of this section.
\begin{theorem}\label{thm:transitivemut}
    Mutation of TF-ordered $\tau$-tilting modules, and thus of complete $\tau$-exceptional sequences, is transitive for Nakayama algebras.
\end{theorem}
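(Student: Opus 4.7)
The plan is to combine \cref{lem:TFdecsconnected} and \cref{prop:mutationmutation} via connectivity of the exchange graph of $\tau$-tilting pairs. Let $T$ and $T'$ be two TF-ordered $\tau$-tilting modules, with underlying $\tau$-tilting modules $M$ and $M'$ respectively. Since Nakayama algebras are $\tau$-tilting finite, the Hasse quiver $\Hasse(\stt \L)$ is connected by \cite[Cor. 2.38]{tau}, so there exists a sequence of $\tau$-tilting modules $M = M^{(0)}, M^{(1)}, \ldots, M^{(r)} = M'$ such that each consecutive pair $M^{(k)}, M^{(k+1)}$ is related by a single left or right mutation of $\tau$-tilting pairs as in \cref{prop:tau_mutation_air}.

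I would proceed by induction on $r$. The base case $r=0$, meaning $M = M'$, is exactly \cref{lem:TFdecsconnected}: any two TF-orders of the same $\tau$-tilting module are connected by a sequence of mutations of TF-ordered modules. For the inductive step, assume we have already produced, via some sequence of mutations of TF-ordered modules, a TF-order $T^{(k)}$ of $M^{(k)}$. Consider the mutation relating $M^{(k)}$ to $M^{(k+1)}$; after possibly reversing its direction (which only means going from $M^{(k+1)}$ to $M^{(k)}$ in the exchange graph), we may assume it is a left mutation, exchanging an indecomposable summand $X$ of $M^{(k)}$ for an indecomposable summand $Y$ of $M^{(k+1)}$, with common almost complete part $U$, as in the setting of \cref{thm:AIR}. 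Applying \cref{lem:mutationsetting} produces TF-orders $(U \oplus X, \sigma_X^Y)$ and $(U \oplus Y, \sigma)$ that differ only in one position, and \cref{prop:mutationmutation} shows that they are related by a single mutation $\overline{\varphi}_{i-1}$ of TF-ordered modules.

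To glue this step to the induction hypothesis, I would first use \cref{lem:TFdecsconnected} to move from $T^{(k)}$ to the chosen TF-order $(U\oplus X, \sigma_X^Y)$ of $M^{(k)}$ via a sequence of mutations of TF-ordered modules, then apply $\overline{\varphi}_{i-1}$ to reach $(U\oplus Y, \sigma)$, which is a TF-order of $M^{(k+1)}$. Setting this as $T^{(k+1)}$ completes the inductive step. After $r$ such steps we reach some TF-order of $M'$, and one final application of \cref{lem:TFdecsconnected} converts this to $T'$ itself.

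The main conceptual obstacle, already overcome in the earlier sections, is ensuring that the mutation $\overline{\varphi}_{i-1}$ produced in \cref{prop:mutationmutation} really does take a TF-order of $M^{(k)}$ to a TF-order of $M^{(k+1)}$ rather than some unrelated module; this is exactly what \cref{lem:mutationsetting} arranges, with the crucial point being the compatibility of the direct summand $U_1 \in \add U$ generating $Y$. Given these lemmas, the proof itself is an essentially formal induction, so the write-up should be short.
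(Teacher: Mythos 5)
Your proposal is correct and follows essentially the same strategy as the paper: connectivity of the exchange graph of $\tau$-tilting modules (via $\tau$-tilting finiteness), \cref{lem:TFdecsconnected} to pass between TF-orders of a fixed $\tau$-tilting module, and \cref{lem:mutationsetting} together with \cref{prop:mutationmutation} to cross each edge of the exchange graph; the paper writes this as one alternating chain rather than an explicit induction on the length of the mutation sequence, but the content is identical.
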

\begin{proof}
  Let $(M,\sigma)$ and $(N,\sigma_N)$ be two TF-ordered $\tau$-tilting modules. Since Nakayama algebras are representation finite and thus $\tau$-tilting finite, the mutation of $\tau$-tilting modules is transitive. Let $M - \mu_{a_1} M - \dots - \mu_{a_r} M \cong N$ be a sequence of mutations of $\tau$-tilting modules. It follows that there exists a corresponding sequence of mutations of TF-ordered $\tau$-tilting modules 
    \[ (M, \sigma) - (M, \sigma') - (\mu_{a_1} M, \sigma_1) - (\mu_{a_1} M, \sigma_1 ') - \dots - (\mu_{a_r} M, \sigma_r) - (\mu_{a_r} N, \sigma_N) \]
    where a mutation between two TF-orders of the same $\tau$-tilting module exists by \cref{lem:TFdecsconnected} and a mutation between two different $\tau$-tilting modules related by a single mutation exists by \cref{prop:mutationmutation}. Therefore mutation of TF-ordered $\tau$-tilting modules and and hence of complete $\tau$-exceptional sequences is transitive.
\end{proof}

\section{Examples and illustrations} \label{sec:examples}

In this section we give examples of the theory developed in this paper. Moreover, to illustrate the examples of $\tau$-rigid modules of Nakayama algebras we will use the geometric model for $\tau$-rigid developed in \cite{Adachi2016}. We begin with an example illustrating the regular mutation of TF-orders for general finite-dimensional algebras as described in \cref{thm:TFfullbehaviour}.

\begin{example}\label{exmp:nonNak1}
    Let $\Lambda =KQ/\langle ba,dc\rangle$ with $Q$ as below. Note that $\Lambda$ is a gentle algebra.
\[\begin{tikzcd}
	&&& 2 \\
	{Q:} && 1 && 4 \\
	&&& 3
	\arrow["b", from=1-4, to=2-5]
	\arrow["a", from=2-3, to=1-4]
	\arrow["c"', from=2-3, to=3-4]
	\arrow["d"', from=3-4, to=2-5]
\end{tikzcd}\] Then $T = P(1) \oplus P(2) \oplus P(3) \oplus P(4)$ is a TF-ordered $\tau$-tilting module. We will compute $\overline{\varphi}_3(T)$. Note that $\Psi(P(3) \oplus P(4))$ is left regular as $P(4)$ is projective. \cref{prop:TFadmissiblebehaviour} thus gives \[\overline{\varphi}(P(3) \oplus P(4)) = V_{E_{P(4)}(P(3))}(P(4)[1]) \oplus E_{P(4)}(P(3)).\] As $E_{P(4)}(P(3)) = S(3)$, we have $V_{E_{P(4)}(P(3))}(P(4)[1]) = V_{S(3)}(P(4)[1])$. Since $\mathbb{P}_{S(3)}$ is on the form $P(4) \to P(3)$, we can verify that $V_{S(3)}(P(4)[1]) = P(3)$, so \[\overline{\varphi}(P(3) \oplus P(4)) = P(3) \oplus S(3).\]

    Letting $M = P(3) \oplus P(4)$ and $N = P(3) \oplus S(3)$, \cref{thm:TFfullbehaviour} gives that \[\overline{\varphi}_3(T) = E^{-1}_N(E_M(P(1))) \oplus E^{-1}_N(E_M(P(2))) \oplus P(3) \oplus S(3).\]

    It may be verified directly that $E^{-1}_N(E_M(P(1))) = P(1)$ and that $E^{-1}_N(E_M(P(2))) = I(4)$. We can conclude that \[\overline{\varphi}_3(P(1) \oplus P(2) \oplus P(3) \oplus P(4)) = P(1) \oplus I(4) \oplus P(3) \oplus S(3).\]
\end{example}

\begin{example}\label{exmp:radqszeropreproj}
    Let $\L\cong R\A_3$ be the radical square zero quotient of the preprojective algebra of type $\A_3$ whose vertices are labelled by $\{1,2,3\}$ with 2 being the middle vertex. Consider the TF-ordered $\tau$-tilting module $P(3) \oplus P(1) \oplus I(2)$. Since $I(2) \not \in \Gen P(1)$ and $I(2)$ is not projective, \cref{prop:TFadmissiblebehaviour} immediately gives 
    \[ \overline{\varphi}_2(P(3) \oplus P(1) \oplus I(2)) = P(3) \oplus I(2) \oplus P(1).\]
    Now let us consider the $\tau$-tilting reduction with respect to $P(1)$ since $P(3), I(2) \not \in \Gen(P(1))$ we have that $E_{P(1)}(P(3))=f_{P(1)}(P(3)) = P(3)$ and $E_{P(1)}(I(2)) = f_{P(1)}(I(2)) = S(3)$ by \cref{defn:Emap}. Consider the mutation of TF-ordered modules in $J(P(1))$
    \[ \overline{\varphi}^{J(P(1))}(P(3) \oplus S(3)) = V_{P(3)}^{J(P(1))}(S(3)) \oplus P(3) = I(3) \oplus P(3)\]
    where the first equality follows from \cref{prop:TFadmissiblebehaviour} and the second equality follows from the fact that $\bcomp^{J(P(3))}(P(3)) = I(3) \oplus P(3)$ in $J(P(3))$. We obtain that
    \[ \overline{\varphi}_1( P(3) \oplus I(2) \oplus P(1)) = P(2) \oplus P(3) \oplus P(1),\]
    since $f_{P(1)}^{-1}(I(3)) = P(2)$. Now since $P(1)$ is projective, from \cref{prop:TFadmissiblebehaviour} we have
    \[\overline{\varphi}(P(3) \oplus P(1))= V_{E_{P(1)}(P(3))} (P(1)[1]) \oplus E_{P(1)}(P(3)).\]
    Since $P(3) \not \in \Gen P(1)$ we have $E_{P(1)}(P(3)) = f_{P(1)}(P(3)) = P(3)$ and since $P(2) \in \add(\bcomp(P(3)))$ and $P(2)[1] \in \add(\ccomp(P(3)))$ we have 
    \[\overline{\varphi}_2(P(2) \oplus P(3) \oplus P(1))= P(2) \oplus P(1) \oplus P(3)\]
    by \cref{lem:gen_mut}c). Thus we have demonstrated the three cases of \cref{prop:TFadmissiblebehaviour}. 
\end{example}

We now introduce the geometric model for $\tau$-rigid modules of \cite{Adachi2016}. For a given (basic, connected) Nakayama algebra of rank $n$, consider a disk $\Dcal$, as a connected oriented 2-dimensional Riemann surface with boundary, with puncture $\bullet$ and boundary marked point labelled $0$ to $n-1$ in counter-clockwise orientation. 

An \textit{inner arc} $\langle i,j \rangle$ in $\Dcal$ is a path from the marked point $i$ to the marked point $j$ which is homotopic to the boundary path $i,(i+1)_n, \dots, (i+t)_n=j$, where $t\in[2,n]$. For such an inner arc we call $j$ the \textit{terminal point} and $\ell\langle i,j \rangle = t$ the \textit{length} of the arc. A \textit{projective arc} $\langle \bullet, j\rangle$ is a path from the puncture to the marked point $j$. Now, a \textit{triangulation} of $\Dcal$ is a maximal set of pairwise nonhomotopic pairwise non-intersecting inner arcs and projective arcs. Let $\Tcal(n;\ell_1, \dots, \ell_n)$ denote the set of triangulations of $\Dcal$ such that the length of every inner arc with terminal point $j$ is at most $\ell_j$ for all $j \in \{0, \dots, n-1\}$.

Recall that throughout, we have written non-projective $\tau$-rigid modules as $M_{s,t}$ where $s$ is such that $\soc M_{s,t} \cong S(s+2)_n$ and $t$ is such that $\topp M_{s,t} \cong S(t)$. This is because of the following bijection between such modules and inner arcs.

\begin{theorem}\cite[Thm. 2.16]{Adachi2016}\label{thm:diskmutation}
    There is a bijection $\taut(\L) \to \Tcal(n; \ell(P(0)), \dots, \ell(P(n-1)))$ given on indecomposable direct summands as $M_{s,t} \mapsto \langle s, t \rangle$ and $P(i) \mapsto \langle \bullet, i \rangle$.
\end{theorem}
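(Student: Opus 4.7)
The plan is to construct the inverse map and verify the correspondence is a bijection via a single combinatorial-homological translation combined with a cardinality argument. Since the map acts summand by summand, and distinct indecomposable modules yield distinct arcs, injectivity is automatic once well-definedness is established; the content lies in verifying that the image of a $\tau$-tilting module is really a triangulation, and that every triangulation arises this way.

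The key step is the following translation: for two non-isomorphic indecomposable modules $X, Y$ each of the form $M_{s,t}$ or $P(i)$, the direct sum $X \oplus Y$ is $\tau$-rigid if and only if their associated arcs do not cross in $\Dcal$. I would prove this by case analysis. If both summands are non-projective, non-vanishing of $\Hom(X, \tau Y)$ or $\Hom(Y, \tau X)$ translates, via \crefnakayamalist{list:5} and \crefnakayamalist{list:10}, into geometric crossing of $\langle s, t \rangle$ and $\langle s', t' \rangle$: crossing arcs correspond to the situation where the two modules are simultaneously submodules (or simultaneously quotient modules) of a common uniserial module long enough to produce the required composition factors in $\tau Y$ or $\tau X$. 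If $X = P(i)$ is projective, then $\Hom(P(i), \tau Y) = 0$ exactly when $i \in \{t, (t+1)_n, \dots, s\}$ by \crefnakayamalist{list:9}, which is precisely the condition that the projective arc $\langle \bullet, i \rangle$ does not cross $\langle s, t \rangle$. The length constraint $\ell\langle s, t \rangle \leq \ell(P(t))$ on inner arcs corresponds, via $\ell\langle s, t\rangle = \ell(M_{s,t}) + 1$, to $M_{s,t}$ being a proper quotient of $P(t)$, equivalently to $M_{s,t}$ being non-projective, which is automatic from $\topp(M_{s,t}) \cong S(t)$ together with \cref{prop:Adachilength}.

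With the translation in hand, well-definedness follows: the arcs associated to the indecomposable summands of a $\tau$-tilting module are pairwise non-crossing and satisfy the length constraint. To conclude that this non-crossing family is maximal (hence a genuine triangulation), I would invoke the standard Euler-characteristic count applied to the triangulated disk: a triangulation of a disk with $n$ boundary marked points and one puncture contains exactly $n$ arcs (inner or projective), matching $|M|=n$ for a $\tau$-tilting module $M$. Surjectivity is then immediate: given any triangulation, the associated direct sum is $\tau$-rigid by the key translation and has $n$ indecomposable summands, so it is $\tau$-tilting by definition.

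The main obstacle is executing the case analysis for the key translation cleanly, in particular being careful with the cyclic arithmetic modulo $n$ when arcs wrap around the boundary, distinguishing strict from non-strict cyclic inequalities, and handling degenerate configurations where arcs share an endpoint or where one of the modules saturates the length bound $\ell(M_{s,t}) = \ell(P(t))-1$. The cardinality count is a one-line Euler-characteristic computation but is essential for closing the argument.
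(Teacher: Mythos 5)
The paper does not prove this statement: it is imported verbatim from \cite[Thm. 2.16]{Adachi2016}, so there is no in-paper argument to compare against. Your outline follows what is essentially the strategy of the original source: translate pairwise compatibility of indecomposable $\tau$-rigid modules into non-crossing of the associated arcs, observe that the length bound on inner arcs with terminal point $t$ matches the condition that $M_{s,t}$ be a proper quotient of $P(t)$ of length $<n$ (i.e.\ an indecomposable non-projective $\tau$-rigid module, via \cref{prop:Adachilength}), and close with the count that every maximal family of pairwise non-crossing, pairwise non-homotopic arcs in the once-punctured disk with $n$ boundary marked points has exactly $n$ elements, so that the $n$ pairwise non-crossing arcs coming from a $\tau$-tilting module automatically form a triangulation. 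The global architecture is sound; in particular you correctly use that $\tau$-rigidity of a direct sum is a pairwise condition on its indecomposable summands, so surjectivity does reduce to the pairwise dictionary plus the cardinality count.

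The gap is that the central dictionary --- $X\oplus Y$ is $\tau$-rigid if and only if the arcs of $X$ and $Y$ do not cross --- is asserted rather than proved, and the ingredients you cite do not suffice to establish it. The item of \cref{prop:nakayama-collection-of-facts} on two submodules or two quotient modules of a common uniserial module only covers arcs sharing the endpoint $s$ or the endpoint $t$, and the statement about $\Hom(P(j),\tau M_{s,t})$ only covers the projective case; the generic configuration of two non-projective summands is untouched. What is needed there is the explicit criterion: since $\tau M_{s,t}=M_{(s-1)_n,(t-1)_n}$ has composition factors $S(s+1)_n,\dots,S(t-1)_n$ and the modules are uniserial, one has $\Hom(M_{u,v},\tau M_{s,t})\neq 0$ if and only if some uniserial module with top $S(v)$ and socle $S(s+1)_n$ is simultaneously a quotient of $M_{u,v}$ and a submodule of $\tau M_{s,t}$, which unwinds to a pair of cyclic-interval conditions on $s,t,u,v$; one must then verify that the union of this condition with its mirror for $\Hom(M_{s,t},\tau M_{u,v})$ coincides exactly with the crossing condition for $\langle s,t\rangle$ and $\langle u,v\rangle$, including the boundary cases where the arcs share an endpoint and where $u=v$ gives a loop enclosing the puncture. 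This verification is where all the content of the theorem lives, and it is precisely what your final paragraph defers; until it is carried out the proposal is a correct plan rather than a proof.
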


Under the assumption that $\ell(P(i))\geq n$ for all $i \in \{0, \dots, n-1\}$, the above can be extended to all $\tau$-tilting pairs by decorating $\Dcal$ with a sign $\sigma \in \{+,-\}$. In this special case, if a $\tau$-tilting pair has a nonzero shifted projective direct summand, then it does not have any nonzero non-shifted projective direct summands by \cite[Thm. 2.6(2)]{Adachi2016} and the bijection may be extended by defining $P(i)[1] \mapsto (\langle \bullet, (i-1)_n\rangle, -)$. In other words, if the decoration of the disk is $-$, then the projective arcs correspond to a shifted projective module with index shifted by 1. 

\begin{example}
    Let $\L$ be a Nakayma algebra and let $M \cong M_{s,t}$ be indecomposable non-projective, then the Bongartz completion $\bcomp(M)$ was described in \cref{prop:Bongartzcompletion} and its corresponding triangulation can be seen on the left in \cref{fig:completions}. The co-Bongartz completion $\ccomp(M)$ was described in \cref{lem:cocompletion}. If $\ell(P(i)) \geq n$ for all $0\leq i \leq n-1$, then it corresponds with the signed triangulation on the right in \cref{fig:completions}. In this setting the two triangulations of \cref{fig:completions} also visualise the map $V_M: \ccomplement{M} \to \bcomplement{M}$ using \cref{cor:Vmapregular}. More precisely, we have $V_M(M/\rad^i M) \cong \rad^i M$ for all $i \in \{1, \dots, \ell(M)\}$. Moreover, \cref{cor:Vmapregular} implies that if $\Hom(P(j),M) =0$ and $\Hom(P(j),\tau M)=0$, then $V_M(P(j)[1])=P(j)$. In particular, by \crefnakayamalist{list:8} and \crefnakayamalist{list:9} this is the case for $j \in \{ (t+1)_n, \dots, s\}$. Now, there is one direct summand remaining, and since $V_M$ must be a bijection we obtain $V_M(P(s+1)_n[1]) \cong P(t)$.
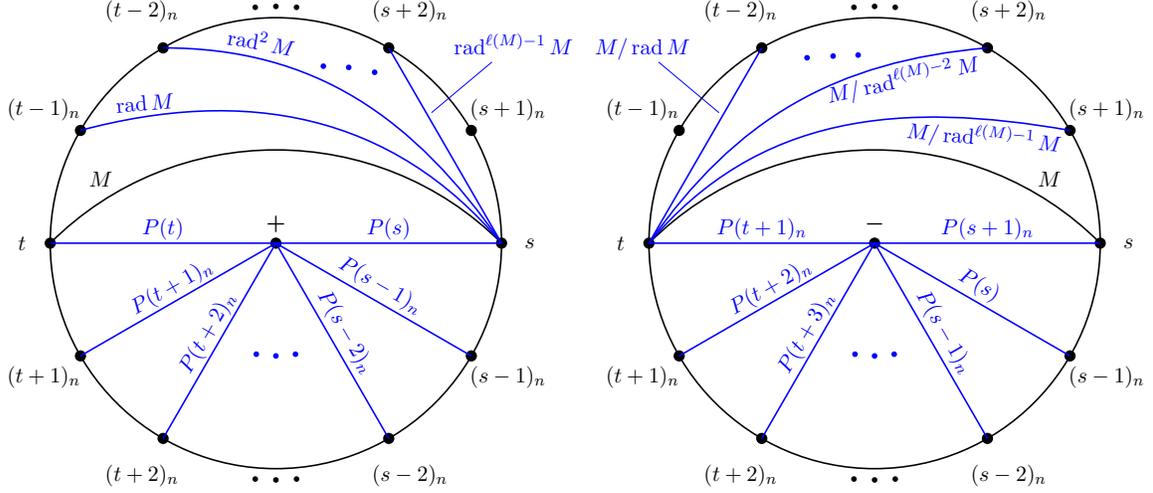
\begin{figure}[ht!]
        \centering
    \begin{subfigure}[b!]{0.45\textwidth}
    \[
    \scalebox{0.75}{
    \begin{tikzpicture}
        %circle
        \draw[fill=none, thick](0,0) circle (4);
        %sign
        \node[black] at (90:0.35) [] {$\boldsymbol{+}$};
        %vertex labels
        \node[black] at (4.5,0) [] {$s$};
        \node[black] at (-4.5,0) [] {$t$};
        \node[black] at (30:4.75) [] {$(s+1)_n$};
        \node[black] at (60:4.75) [] {$(s+2)_n$};
        \node[black] at (120:4.75) [] {$(t-2)_n$};
        \node[black] at (150:4.75) [] {$(t-1)_n$};
        \node[black] at (210:4.75) [] {$(t+1)_n$};
        \node[black] at (240:4.75) [] {$(t+2)_n$};
        \node[black] at (300:4.75) [] {$(s-2)_n$};
        \node[black] at (330:4.75) [] {$(s-1)_n$};
        %arc labels 
        \node[black] at (160:3.3) [] {$M$};
        \node[blue] at (133:3.4) [rotate=5] {$\rad M$};
        \node[blue] at (94:3.6) [rotate=-10] {$\rad^2 M$};
        \node[blue] at (40:5.5) [] {$\rad^{\ell(M)-1} M$};
        \node[blue] at (-2,0.25) [] {$P(t)$};
        \node[blue] at (2,0.25) [] {$P(s)$};
        \node[blue] at (203:2) [rotate=30] {$P(t+1)_n$};
        \node[blue] at (233:2) [rotate=60] {$P(t+2)_n$};
        \node[blue] at (308:2) [rotate=-60] {$P(s-2)_n$};
        \node[blue] at (338:2) [rotate=-30] {$P(s-1)_n$};
        %label arrow
        \draw[-, blue] (40:5) -- (40:3.6);
        %vertices
        \node at (4,0)[circle,fill,inner sep=2pt]{};
        \node at (-4,0)[circle,fill,inner sep=2pt]{};
        \node at (0,0)[circle,fill,inner sep=2pt]{};
        \node at (30:4) [circle,fill,inner sep=2pt]{};
        \node at (60:4) [circle,fill,inner sep=2pt]{};
        \node at (120:4) [circle,fill,inner sep=2pt]{};
        \node at (150:4) [circle,fill,inner sep=2pt]{};
        \node at (210:4) [circle,fill,inner sep=2pt]{};
        \node at (240:4) [circle,fill,inner sep=2pt]{};
        \node at (300:4) [circle,fill,inner sep=2pt]{};
        \node at (330:4) [circle,fill,inner sep=2pt]{};
        %dots
        \node at (265:4.2) [circle,fill,inner sep=1pt]{};
        \node at (270:4.2) [circle,fill,inner sep=1pt]{};
        \node at (275:4.2) [circle,fill,inner sep=1pt]{};
        \node at (85:4.2) [circle,fill,inner sep=1pt]{};
        \node at (90:4.2) [circle,fill,inner sep=1pt]{};
        \node at (95:4.2) [circle,fill,inner sep=1pt]{};
        \node[blue] at (260:2) [circle,fill,inner sep=1pt]{};
        \node[blue] at (270:2) [circle,fill,inner sep=1pt]{};
        \node[blue] at (280:2) [circle,fill,inner sep=1pt]{};
        \node[blue] at (60:3.5) [circle,fill,inner sep=1pt]{};
        \node[blue] at (67.5:3.375) [circle,fill,inner sep=1pt]{};
        \node[blue] at (75:3.25) [circle,fill,inner sep=1pt]{};
        %arcs
        \draw[-,black, thick] (4,0) to[out=135,in=45] (-4,0);
        \draw[-, blue, thick] (0:0) -- (180:4);
        \draw[-, blue, thick] (0:0) -- (210:4);
        \draw[-, blue, thick] (0:0) -- (240:4);
        \draw[-, blue, thick] (0:0) -- (300:4);
        \draw[-, blue, thick] (0:0) -- (330:4);
        \draw[-, blue, thick] (0:0) -- (0:4);
        \draw[-, blue, thick] (0:4) -- (60:4);
        \draw[-, blue, thick] (0:4) to[out=125,in=0] (120:4);
        \draw[-, blue, thick] (0:4) to[out=130,in=15] (150:4);
    \end{tikzpicture}
    }
    \]
    \end{subfigure}%
    \quad
    \begin{subfigure}[b!]{0.45\textwidth}
    \[
    \scalebox{0.75}{
    \begin{tikzpicture}
        %circle
        \draw[fill=none, thick](0,0) circle (4);
        %sign
        \node[black] at (90:0.35) [] {$\boldsymbol{-}$};
        %vertex labels
        \node[black] at (4.5,0) [] {$s$};
        \node[black] at (-4.5,0) [] {$t$};
        \node[black] at (30:4.75) [] {$(s+1)_n$};
        \node[black] at (60:4.75) [] {$(s+2)_n$};
        \node[black] at (120:4.75) [] {$(t-2)_n$};
        \node[black] at (150:4.75) [] {$(t-1)_n$};
        \node[black] at (210:4.75) [] {$(t+1)_n$};
        \node[black] at (240:4.75) [] {$(t+2)_n$};
        \node[black] at (300:4.75) [] {$(s-2)_n$};
        \node[black] at (330:4.75) [] {$(s-1)_n$};
        %arc labels 
        \node[black] at (20:3.3) [] {$M$};
        \node[blue] at (140:5.37) [] {$M/\rad M$};
        \node[blue] at (44:2.7) [rotate=-5] {$M/\rad^{\ell(M)-1} M$};
        \node[blue] at (80:2.95) [rotate=12] {$M/\rad^{\ell(M)-2} M$};
        \node[blue] at (-2,0.25) [] {$P(t+1)_n$};
        \node[blue] at (2,0.25) [] {$P(s+1)_n$};
        \node[blue] at (203:2) [rotate=30] {$P(t+2)_n$};
        \node[blue] at (233:2) [rotate=60] {$P(t+3)_n$};
        \node[blue] at (308:2) [rotate=-60] {$P(s-1)_n$};
        \node[blue] at (338:2) [rotate=-30] {$P(s)$};
        %label arrow
        \draw[-, blue] (140:5) -- (140:3.6);
        %vertices
        \node at (4,0)[circle,fill,inner sep=2pt]{};
        \node at (-4,0)[circle,fill,inner sep=2pt]{};
        \node at (0,0)[circle,fill,inner sep=2pt]{};
        \node at (30:4) [circle,fill,inner sep=2pt]{};
        \node at (60:4) [circle,fill,inner sep=2pt]{};
        \node at (120:4) [circle,fill,inner sep=2pt]{};
        \node at (150:4) [circle,fill,inner sep=2pt]{};
        \node at (210:4) [circle,fill,inner sep=2pt]{};
        \node at (240:4) [circle,fill,inner sep=2pt]{};
        \node at (300:4) [circle,fill,inner sep=2pt]{};
        \node at (330:4) [circle,fill,inner sep=2pt]{};
        %dots
        \node at (265:4.2) [circle,fill,inner sep=1pt]{};
        \node at (270:4.2) [circle,fill,inner sep=1pt]{};
        \node at (275:4.2) [circle,fill,inner sep=1pt]{};
        \node at (85:4.2) [circle,fill,inner sep=1pt]{};
        \node at (90:4.2) [circle,fill,inner sep=1pt]{};
        \node at (95:4.2) [circle,fill,inner sep=1pt]{};
        \node[blue] at (260:2) [circle,fill,inner sep=1pt]{};
        \node[blue] at (270:2) [circle,fill,inner sep=1pt]{};
        \node[blue] at (280:2) [circle,fill,inner sep=1pt]{};
        \node[blue] at (95:3.35) [circle,fill,inner sep=1pt]{};
        \node[blue] at (102.5:3.4) [circle,fill,inner sep=1pt]{};
        \node[blue] at (110:3.5) [circle,fill,inner sep=1pt]{};
        %arcs
        \draw[-,black, thick] (4,0) to[out=135,in=45] (-4,0);
        \draw[-, blue, thick] (0:0) -- (180:4);
        \draw[-, blue, thick] (0:0) -- (210:4);
        \draw[-, blue, thick] (0:0) -- (240:4);
        \draw[-, blue, thick] (0:0) -- (300:4);
        \draw[-, blue, thick] (0:0) -- (330:4);
        \draw[-, blue, thick] (0:0) -- (0:4);
        \draw[-, blue, thick] (120:4) -- (180:4);
        \draw[-, blue, thick] (60:4) to[out=185,in=55] (180:4);
        \draw[-, blue, thick] (30:4) to[out=170,in=50] (180:4);
    \end{tikzpicture}
    }
    \]
    \end{subfigure}%
    \caption{Bongartz completion (left) and co-Bongartz completion (right) of an indecomposable $\tau$-rigid $M \cong M_{s,t}$ on Adachi's disk model}
    \label{fig:completions}
    \end{figure}
\end{example}

In fact, we can also use the disk model to visualise the quiver of the $\tau$-tilting reduction of an indecomposable module of a Nakayama algebra. This description improves \cref{prop:Nakayamareduction} and generalises a description given in the proof of \cite[Thm. 6.5]{Msapato2022} for a special class of self-injective Nakayama algebras.

\begin{proposition}\label{prop:reductionquiver}
    Let $\L$ be a basic and connected  Nakayama algebra and $M \cong M_{s,t}$ be indecomposable. Then the underlying quiver of the $\tau$-tilting reduction $\Lambda_M$, with $\mods \Lambda_M$ equivalent to $J(M)$, can be read off the disk model. In particular, the quiver of $\Lambda_M$ is of the form $\overrightarrow{\A}_{\ell(M)-1} \times \overrightarrow{\Delta}_{(s-t)_n}$ with up to three arrows removed from $\overrightarrow{\Delta}_{(s-t)_n}$, as illustrated on the right disk in \cref{fig:seconddisks}. In particular, it is again a Nakayama algebra.
\end{proposition}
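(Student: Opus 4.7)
The strategy is to identify the indecomposable projective $\Lambda_M$-modules explicitly, determine their Loewy structure in $J(M)$, and read the quiver off this data; that $\Lambda_M$ is already Nakayama is given by \cref{prop:Nakayamareduction}, so the resulting quiver is automatically a disjoint union of $\overrightarrow{\A}$- and $\overrightarrow{\Delta}$-type components. Combining the reduction theorem \cite[Thm.~3.8]{jassoreduction} with the Morita equivalence $\mods \Lambda_M \simeq J(M)$, the indecomposable projective $\Lambda_M$-modules are exactly $E_M(X) = f_M(X)$ for $X$ ranging over the summands of the Bongartz complement $\bcomplement{M}$. By \cref{prop:Bongartzcompletion}, these summands split into an \emph{inside family} $\{\rad^i M : 1 \leq i \leq \ell(M)-1\}$ and an \emph{outside family} $\{P((t+i)_n) : 0 \leq i \leq n-\ell(M)-1\}$, which match the two regions into which the arc $\langle s,t\rangle$ divides the disk.

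For the inside family, the composition factors of each $\rad^i M$ are $S((t-i)_n),\dots,S((s+2)_n)$ and in particular do not include $S(t)$, so $\Hom(M,\rad^i M) = 0$ and hence $f_M(\rad^i M) = \rad^i M$. Using \crefnakayamalist{list:9} to verify that each simple $S((t-i)_n)$ with $1 \leq i \leq \ell(M)-1$ lies in $J(M)$, the descending chain $\rad^{\ell(M)-1}M \subset \dots \subset \rad M$ becomes a composition series inside $J(M)$, realising the inside projectives as a linear uniserial tower and yielding the $\overrightarrow{\A}$-type subquiver. For the outside family, by uniseriality the trace $t_M(P(j))$ equals $\rad^{(j-t)_n}P(j)$ when $\ell(P(j)) \leq \ell(M)+(j-t)_n$, and is $0$ otherwise; thus $f_M(P(j))$ is a uniserial module whose $J(M)$-radical filtration can be written down explicitly. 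Tracking the arrows between successive outside projectives $f_M(P((t+i)_n))$ and $f_M(P((t+i+1)_n))$ in the cyclic order $t, (t+1)_n, \dots, s$ then produces the $\overrightarrow{\Delta}$-type subquiver. The ``up to three missing arrows'' correspond to adjacencies blocked by the arc $\langle s,t\rangle$: one arrow is lost because the cycle cannot close back across the inside region from $P(s)$ to $P(t)$, and up to two further arrows can be absent at the endpoints when $\ell(P(t))$ or $\ell(P(s))$ is short enough that the composition factor required to produce the relevant $J(M)$-simple is missing.

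The principal obstacle lies in the outside analysis: $J(M)$ has $n-1$ simple objects, but only $n-2$ of the simples $S(j)$ of $\mods \Lambda$ lie in $J(M)$ (namely those with $j \notin \{t,(s+1)_n\}$), so exactly one \emph{relative} simple of $J(M)$ must be produced as a proper $J(M)$-quotient of the projective $f_M(P(t))=P(t)$. Correctly placing this extra relative simple in the cycle is what pins down the precise location of the removed arrows in the $\overrightarrow{\Delta}$-component and ensures the vertex counts balance. Once this book-keeping is in hand, the bijection of \cref{thm:diskmutation} translates the data back into the geometric picture: each projective $\Lambda_M$-module becomes an arc of the triangulation $\bcomp(M)$, and each arrow becomes an adjacency of two arcs sharing a triangle, yielding exactly the quiver depicted on the right disk of \cref{fig:seconddisks}.
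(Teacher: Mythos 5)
Your route is genuinely different from the paper's. The paper never passes to $J(M)$ at all: it computes the Gabriel quiver of $\End(\bcomp(M))$ directly from irreducible morphisms between the summands of the Bongartz completion listed in \cref{prop:Bongartzcompletion} (the chain $\rad^{\ell(M)-1}M \to \dots \to \rad M \to M$, the chain of projectives $P(t) \to P((t+1)_n) \to \dots \to P(s)$, and the absence of morphisms between the two families by \cref{prop:nakayama-collection-of-facts}), and then deletes the vertex of $M$. You instead transport everything into $J(M)$ via $f_M$, identify the relative projectives as $f_M(X)$ for $X$ a summand of $\bcomplement{M}$ (correctly, via \cref{cor:Bong-proj}), and propose to read the quiver from their $J(M)$-radical filtrations. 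Your observation that exactly one relative simple of $J(M)$ is not a simple $\Lambda$-module, and that it occurs as the $J(M)$-top of $f_M(P(t))$, is correct and is the right bookkeeping for the vertex count; the paper's approach sidesteps this entirely, which is what working with $\End(\bcomp(M))$ buys.

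However, the part of the argument that carries the actual content of the proposition --- which arrows of the cyclic component are present --- contains a genuine error and a gap. You assert that the closing arrow from $P(s)$ to $P(t)$ ``is lost because the cycle cannot close back across the inside region.'' This is false in general: for $\Lambda = K\overrightarrow{\Delta}_3/R^3$ and $M = S(0)$ (so $t=0$, $s=1$) one computes $f_M(P(0)) = P(0)$ and $f_M(P(1)) = P(1)$, and the radical of $f_M(P(1))$ in $J(M)$ has top the relative simple $P(0)/\rad^2 P(0)$, so the arrow from the vertex of $P(s)$ to the vertex of $P(t)$ \emph{is} present; the paper shows this arrow exists precisely when $\Hom(P(s),P(t)) \neq 0$. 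Relatedly, your claim $f_M(P(t)) = P(t)$ fails whenever $\ell(P(t)) > n$ and a quotient of $M$ embeds into $P(t)$ (e.g.\ $K\overrightarrow{\Delta}_2/R^3$ with $M = S(0)$, where $f_M(P(0)) = P(0)/\rad^2 P(0)$). Finally, the sentence ``tracking the arrows between successive outside projectives \dots{} then produces the $\overrightarrow{\Delta}$-type subquiver'' is exactly the step that needs to be carried out: one must show that consecutive arrows $j \to (j+1)_n$ survive except in the three situations the paper isolates (the map $P(t) \to P((t+1)_n)$ factoring through $M$ when $\ell(M) = \ell(P((t+1)_n))-1$, the linear-quiver break at $n-1$, and the vanishing of $\Hom(P(s),P(t))$). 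As written, your proposal neither establishes these criteria nor places the missing arrows correctly.
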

\begin{proof}
    The underlying quiver of $\End(\bcomp(M))$ has vertices indexed by the indecomposable direct summands of $\bcomp(M)$ as given in \cref{prop:Bongartzcompletion}. There exists an arrow between two vertices if and only if there is an irreducible morphism between the corresponding direct summands of $\bcomp(M)$. The quiver of the $\tau$-tilting reduction $\Lambda_M$ is obtained from the quiver of $\End(\bcomp(M))$ by deleting the vertex indexed by $M$. From  \cref{prop:Bongartzcompletion} we immediately get a sequence of irreducible morphisms
    \[ \rad^{\ell(M)-1} M \to \dots \to \rad M \to M.\]
    This induces the subquiver $\overrightarrow{\A}_{\ell(M)-1}$ of $\Lambda_M$. By \cref{prop:nakayama-collection-of-facts} we have $\Hom_A(P(i),\rad^j M)=0$ for all $P(i) \in \add \bcomp(B)$ and $k\geq 1$. Conversely, any morphism from $\rad^j M$ to $P(i)$ factors through $M$. Hence there are no arrows connecting the subquiver $\overrightarrow{\A}_{\ell(M)-1}$ of $\Lambda_M$ and the subquiver $\overrightarrow{\Delta}_{(s-t)_n}$ of $\Lambda_M$ induced by the sequence of irreducible morphisms
    \[ P(t) \xrightarrow{a} P(t+1)_n \xrightarrow{} \dots \to P(k) \xrightarrow{b} P(k+1)_n \to \dots \to P(s-1) \to P(s) \xrightarrow{c} P(t)\]
    where the morphism $b$ does not exist if $P(k) = P(n-1)$ and $\L$ is a linear Nakayama algebra and the morphism $c$ exists if and only if $\Hom(P(s),P(t))\neq 0$. If $\ell(M) = \ell(P(t+1)_n)-1$ then the morphism $a: P(t) \to P(t+1)_n$ factors through $M$ and hence does not give rise to an  arrow in the quiver of $\Lambda_M$. In conclusion we get the desired description and the embedding of the quiver of $\Lambda_M$ into the disk with triangulation given by $\bcomp(M)$ is given in the right diagram of \cref{fig:seconddisks}.
\end{proof}

\begin{figure}[ht!]
        \centering
        \begin{subfigure}[b!]{0.5\textwidth}
    \[
   \scalebox{0.75}{
    \begin{tikzpicture}
        %circle
        \draw[fill=none, thick](0,0) circle (4);
        %vertex labels
        \node[black] at (4.8,0) [] {$(i+1)_n$};
        \node[black] at (30:4.75) [] {$(i+2)_n$};
        \node[black] at (90:4.5) [] {$(j-k-1)_n$};
        \node[black] at (120:4.75) [] {$(j-k)_n$};
        \node[black] at (150:5.1) [] {$(j-k+1)_n$};
        \node[black] at (210:4.75) [] {$j$};
        \node[black] at (240:4.75) [] {$(j+1)_n$};
        \node[black] at (300:4.75) [] {$(i-1)_n$};
        \node[black] at (330:4.75) [] {$i$};
        
        \node[white] at (285:4.75) [] {$(y+1)_n$};
        %arc labels 
        \node[black] at (90:1) [] {$B$};
        \node[blue] at (16:5.5) [] {$\rad^{\ell(B)-1} B$};
        \node[orange] at (140:5.7) [] {$(\rad^{k-1}B)/C$};
        \node[black] at (203:2) [rotate=30] {$P(j)$};
        \node[black] at (233:2) [rotate=60] {$P(j+1)_n$};
        \node[black] at (308:2) [rotate=-60] {$P(i-1)_n$};
        \node[black] at (338:2) [rotate=-30] {$P(i)$};
        \node[blue] at (70:3) [rotate=-60] {$\rad^{k+1} B$};
        \node[orange] at (126:2.5) [rotate=60] {$(\rad^0 B)/ C$};
        %label arrow
        \draw[-, blue] (6:3.6) -- (15:4.8);
        \draw[-, red] (140:3.8) -- (140:5.3);
        %vertices
        \node at (0:4)[circle,fill,inner sep=2pt]{};
        \node at (0,0)[circle,fill,inner sep=2pt]{};
        \node at (30:4) [circle,fill,inner sep=2pt]{};
        \node at (120:4) [circle,fill,inner sep=2pt]{};
        \node at (150:4) [circle,fill,inner sep=2pt]{};
        \node at (210:4) [circle,fill,inner sep=2pt]{};
        \node at (240:4) [circle,fill,inner sep=2pt]{};
        \node at (300:4) [circle,fill,inner sep=2pt]{};
        \node at (330:4) [circle,fill,inner sep=2pt]{};
        \node at (90:4) [circle,fill,inner sep=2pt]{};
        %dots
        \node at (265:4.2) [circle,fill,inner sep=1pt]{};
        \node at (270:4.2) [circle,fill,inner sep=1pt]{};
        \node at (275:4.2) [circle,fill,inner sep=1pt]{};
        \node[black] at (260:2) [circle,fill,inner sep=1pt]{};
        \node[black] at (270:2) [circle,fill,inner sep=1pt]{};
        \node[black] at (280:2) [circle,fill,inner sep=1pt]{};
        \node at (60:4.2) [circle,fill,inner sep=1pt]{};
        \node at (65:4.2) [circle,fill,inner sep=1pt]{};
        \node at (55:4.2) [circle,fill,inner sep=1pt]{};
        \node at (180:4.2) [circle,fill,inner sep=1pt]{};
        \node at (175:4.2) [circle,fill,inner sep=1pt]{};
        \node at (185:4.2) [circle,fill,inner sep=1pt]{};
        \node[orange] at ([shift=(90:1cm)]180:3) [circle,fill,inner sep=1pt]{};
        \node[orange] at ([shift=(90:1cm)]190:3) [circle,fill,inner sep=1pt]{};
        \node[orange] at ([shift=(90:1cm)]170:3) [circle,fill,inner sep=1pt]{};
        \node[blue] at ([shift=(300:1.5cm)]45:3) [circle,fill,inner sep=1pt]{};
        \node[blue] at ([shift=(300:1.5cm)]55:3) [circle,fill,inner sep=1pt]{};
        \node[blue] at ([shift=(300:1.5cm)]65:3) [circle,fill,inner sep=1pt]{};
        %arcs
        \draw[-,black, thick] (-30:4) to[out=120,in=0] (90:0.75) to[out=180,in=60] (210:4);
        \draw[-, black, thick] (0:0) -- (210:4);
        \draw[-, black, thick] (0:0) -- (240:4);
        \draw[-, black, thick] (0:0) -- (300:4);
        \draw[-, black, thick] (0:0) -- (330:4);
        \draw[-, blue, thick] (-30:4) -- (30:4);
        \draw[-, blue, thick] (-30:4) -- (90:4);
        \draw[-, orange, thick] (90:4) -- (150:4);
        \draw[-, orange, thick] (90:4) -- (210:4);
    \end{tikzpicture}
    }
    \]
    \end{subfigure}% 
    \begin{subfigure}[b!]{0.45\textwidth}
    \[
    \scalebox{0.75}{
    \begin{tikzpicture}
        %circle
        \draw[fill=none, thick](0,0) circle (4);
        %vertex labels
        \node[black] at (4.5,0) [] {$s$};
        \node[black] at (-4.5,0) [] {$t$};
        \node[black] at (30:4.75) [] {$(s+1)_n$};
        \node[black] at (60:4.75) [] {$(s+2)_n$};
        \node[black] at (150:4.75) [] {$(t-1)_n$};
        \node[black] at (210:4.75) [] {$(t+1)_n$};
        \node[black] at (255:4.75) [] {$y$};
        \node[black] at (285:4.75) [] {$(y+1)_n$};
        \node[black] at (330:4.75) [] {$(s-1)_n$};
        
        \node[white] at (150:5.1) [] {$(j-k+1)_n$};
        %vertices
        \node at (4,0)[circle,fill,inner sep=2pt]{};
        \node at (-4,0)[circle,fill,inner sep=2pt]{};
        \node at (0,0)[circle,fill,inner sep=2pt]{};
        \node at (30:4) [circle,fill,inner sep=2pt]{};
        \node at (60:4) [circle,fill,inner sep=2pt]{};
        \node at (150:4) [circle,fill,inner sep=2pt]{};
        \node at (210:4) [circle,fill,inner sep=2pt]{};
        \node at (255:4) [circle,fill,inner sep=2pt]{};
        \node at (285:4) [circle,fill,inner sep=2pt]{};
        \node at (330:4) [circle,fill,inner sep=2pt]{};
        %dots
        \node at (232.5:4.2) [circle,fill,inner sep=1pt]{};
        \node at (227.5:4.2) [circle,fill,inner sep=1pt]{};
        \node at (237.5:4.2) [circle,fill,inner sep=1pt]{};
        \node at (312.5:4.2) [circle,fill,inner sep=1pt]{};
        \node at (302.5:4.2) [circle,fill,inner sep=1pt]{};
        \node at (307.5:4.2) [circle,fill,inner sep=1pt]{};
        \node[orange] at (230:3) [circle,fill,inner sep=1pt]{};
        \node[orange] at (232.5:3) [circle,fill,inner sep=1pt]{};
        \node[orange] at (235:3) [circle,fill,inner sep=1pt]{};
        \node[orange] at (305:3) [circle,fill,inner sep=1pt]{};
        \node[orange] at (307.5:3) [circle,fill,inner sep=1pt]{};
        \node[orange] at (310:3) [circle,fill,inner sep=1pt]{};
        \node[orange] at ([shift=(285:1.8cm)]97.5:5) [circle,fill,inner sep=1pt]{};
        \node[orange] at ([shift=(285:1.8cm)]94.5:5) [circle,fill,inner sep=1pt]{};
        \node[orange] at ([shift=(285:1.8cm)]100.5:5) [circle,fill,inner sep=1pt]{};
        %arcs
        \draw[-,black, thick] (4,0) to[out=135,in=45] (-4,0);
        \draw[-, blue, thick] (0:0) -- (180:4);
        \draw[-, blue, thick] (0:0) -- (210:4);
        \draw[-, blue, thick] (0:0) -- (255:4);
        \draw[-, blue, thick] (0:0) -- (285:4);
        \draw[-, blue, thick] (0:0) -- (330:4);
        \draw[-, blue, thick] (0:0) -- (0:4);
        \draw[-, blue, thick] (0:4) -- (60:4);
        \draw[-, blue, thick] (0:4) to[out=130,in=15] (150:4);
        %red arcs
        \draw[->, orange, thick, dashed] (182:3) arc (182:208:3);
        \draw[->, orange, thick] (212:3) arc (212:227:3);
        \draw[->, orange, thick] (238:3) arc (238:253:3);
        \draw[->, orange, thick, dashed] (257:3) arc (257:283:3);
        \draw[->, orange, thick] (287:3) arc (287:302:3);
        \draw[->, orange, thick] (313:3) arc (313:328:3);
        \draw[->, orange, thick] (332:3) arc (332:358:3);
        \draw[->, orange, thick, dashed] ([shift=(270:2.3cm)]40:3.75) arc (40:140:3.75);
        \draw[->, orange, thick] ([shift=(285:1.8cm)]70:5) arc (70:92.5:5);
        \draw[->, orange, thick] ([shift=(285:1.8cm)]102.5:5) arc (102.5:125:5);
        %red vertices
        \node[orange] at (180:3) [circle,fill,inner sep=1.5pt]{};
        \node[orange] at (210:3) [circle,fill,inner sep=1.5pt]{};
        \node[orange] at (255:3) [circle,fill,inner sep=1.5pt]{};
        \node[orange] at (285:3) [circle,fill,inner sep=1.5pt]{};
        \node[orange] at (330:3) [circle,fill,inner sep=1.5pt]{};
        \node[orange] at (0:3) [circle,fill,inner sep=1.5pt]{};
        \node[orange] at ([shift=(285:1.8cm)]128.5:5) [circle,fill,inner sep=1.5pt]{};
        \node[orange] at ([shift=(285:1.8cm)]68:5) [circle,fill,inner sep=1.5pt]{};
        %red labels
        \node[orange] at (195:3.3) [] {$a$};
        \node[orange] at (90:1.2) [] {$c$};
        \node[orange] at (270:3.3) [] {$b$};
    \end{tikzpicture}
    }
    \]
    \end{subfigure}%
    \caption{The module $\widehat{B}$ of \cref{thm:Nakayamacase4} (left) and the quiver (in orange) of $\End(\bcomp(M))/\langle M \rangle$ embedded into Adachi's disk model (right) }
    \label{fig:seconddisks}
    \end{figure}
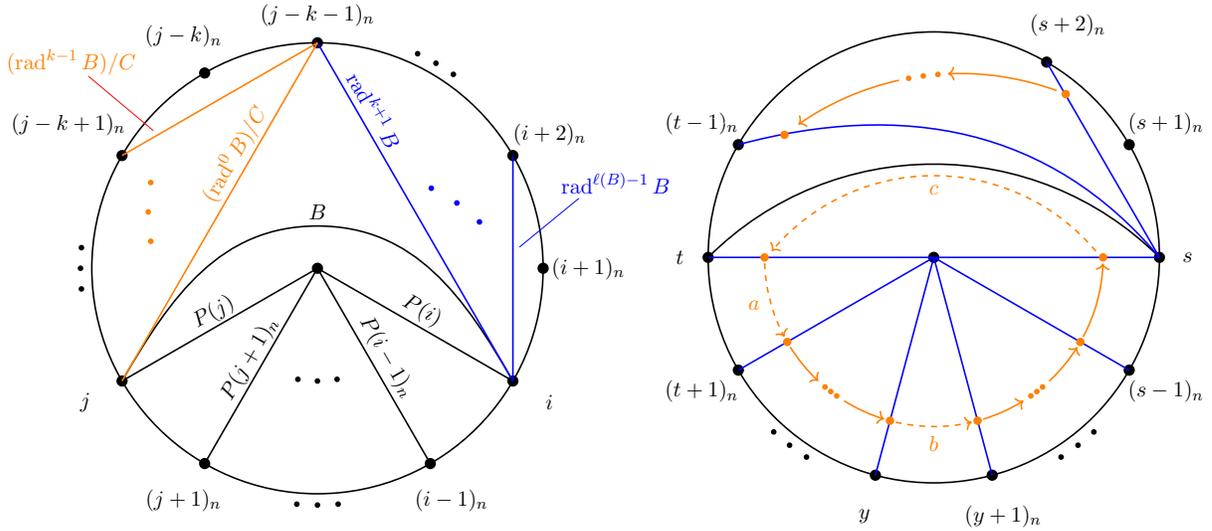

The following example demonstrates the absence of the three arrows $a,b,c$ in \cref{prop:reductionquiver}. 

\begin{example} 
    Let $\L \cong K(5 \xrightarrow{a} 4 \xrightarrow{b} 3 \xrightarrow{c} 2 \xrightarrow{d} 1) / \langle dc, cba \rangle$. Let $M \cong {}_{3}^{4}$, then one can check easily that $J(M) \cong A_1 \times A_1 \times A_1 \times A_1 \cong K^4$ is the direct product of four Nakayama algebras of rank 1. In this case none of the arrows $a,b,c$ in the right disk of \cref{fig:seconddisks} exist.
\end{example}

Similar to \cref{exmp:radqszeropreproj}, let us now describe the different cases of the mutation of a TF-ordered module $B \oplus C$ described in \cref{thm:introNakmutation}. When illustrating TF-ordered modules on the disk model, we indicate the order of the modules with small indices next to the arcs and provide a visual distinction using different colours or different line styles.
\begin{example}
    Let $\L \cong K\overrightarrow{\Delta}_4/R^3$ be the radical cube zero Nakayama algebra whose underlying quiver is cyclic with 4 vertices. Then \cref{fig:phiorbit1} and \cref{fig:phiorbit2} show two $\overline{\varphi}$-orbits of TF-ordered module $B \oplus C$ of $\L$ which cover all six cases of \cref{thm:introNakmutation}. 

    \begin{figure}[ht!]
        \centering
        \scalebox{0.95}{
        \begin{tikzpicture}
        %circles
        \draw[fill=none, thick](0,0) circle (2);
        \draw[fill=none, thick](-5.5,0) circle (2);
        \draw[fill=none, thick](5.5,0) circle (2);
        %arrows
        \draw[->, black, thick] (-3.3,0) -- (-2.2,0);
        \node[black] at (0,-2.3) [] {\small$TF-1a$};
        \draw[->, black, thick] (2.2,0) -- (3.3,0);
        \node[black] at (5.5,-2.3) [] {\small$TF-1b$};
        \draw[-, black, thick] (7.7,0) -- (8.25,0);
        \draw[->, black, thick] (-8.25,0) -- (-7.7,0);
        \node[black] at (-5.5,-2.3) [] {\small$TF-2b$};
        \node[black] at (2.75,0.2) [] {\small$\overline{\varphi}$};
        \node[black] at (-2.75,0.2) [] {\small$\overline{\varphi}$};
        \node[black] at (7.975,0.2) [] {\small$\overline{\varphi}$};
        %black dots
        \draw[white] (-5.5,0)[]{} -- ++ (45:2.1) node[right,black]{0};
        \draw[white] (-5.5,0)[]{} -- ++ (45:2) node[circle,fill,inner sep=1.5pt, black]{};
        \draw[white] (-5.5,0)[]{} -- ++ (135:2.1) node[left,black]{1};
        \draw[white] (-5.5,0)[]{} -- ++ (135:2) node[circle,fill,inner sep=1.5pt, black]{};
        \draw[white] (-5.5,0)[]{} -- ++ (225:2.1) node[left,black]{2};
        \draw[white] (-5.5,0)[]{} -- ++ (225:2) node[circle,fill,inner sep=1.5pt, black]{};
        \draw[white] (-5.5,0)[]{} -- ++ (315:2.1) node[right,black]{3};
        \draw[white] (-5.5,0)[]{} -- ++ (315:2) node[circle,fill,inner sep=1.5pt, black]{};
        \draw[white] (0,0)[]{} -- ++ (45:2.1) node[right,black]{0};
        \draw[white] (0,0)[]{} -- ++ (45:2) node[circle,fill,inner sep=1.5pt, black]{};
        \draw[white] (0,0)[]{} -- ++ (135:2.1) node[left,black]{1};
        \draw[white] (0,0)[]{} -- ++ (135:2) node[circle,fill,inner sep=1.5pt, black]{};
        \draw[white] (0,0)[]{} -- ++ (225:2.1) node[left,black]{2};
        \draw[white] (0,0)[]{} -- ++ (225:2) node[circle,fill,inner sep=1.5pt, black]{};
        \draw[white] (0,0)[]{} -- ++ (315:2.1) node[right,black]{3};
        \draw[white] (0,0)[]{} -- ++ (315:2) node[circle,fill,inner sep=1.5pt, black]{};
        \draw[white] (5.5,0)[]{} -- ++ (45:2.1) node[right,black]{0};
        \draw[white] (5.5,0)[]{} -- ++ (45:2) node[circle,fill,inner sep=1.5pt, black]{};
        \draw[white] (5.5,0)[]{} -- ++ (135:2.1) node[left,black]{1};
        \draw[white] (5.5,0)[]{} -- ++ (135:2) node[circle,fill,inner sep=1.5pt, black]{};
        \draw[white] (5.5,0)[]{} -- ++ (225:2.1) node[left,black]{2};
        \draw[white] (5.5,0)[]{} -- ++ (225:2) node[circle,fill,inner sep=1.5pt, black]{};
        \draw[white] (5.5,0)[]{} -- ++ (315:2.1) node[right,black]{3};
        \draw[white] (5.5,0)[]{} -- ++ (315:2) node[circle,fill,inner sep=1.5pt, black]{};
        \node at (-5.5,0)[circle,fill,inner sep=1.5pt]{};
        \node at (0,0)[circle,fill,inner sep=1.5pt]{};
        \node at (5.5,0)[circle,fill,inner sep=1.5pt]{};
        %arcs
        \draw[-, blue, thick] (5.5,0) [draw] ++ (225:2) to[out=0,in=270] ++ (45:4);
        \node at (6.5,-0.5)[rotate=50,blue]{$S(0)$};
        \draw[-, orange, thick] (5.5,0) -- ++ (45:2) node[pos =0.5, rotate=45,yshift=2.5mm]{$P(0)$};
        \draw[-, blue, thick] (0,0) -- ++ (45:2) node[pos =0.5, rotate=45,yshift=2.5mm]{$P(0)$};
        \draw[-, orange, thick] (0,0) -- ++ (315:2) node[pos =0.5, rotate=315,yshift=2.5mm]{$P(3)$};
        \draw[-, orange, thick] (-5.5,0) -- ++ (45:2) node[pos =0.5, rotate=45,yshift=2.5mm]{$P(0)$};
        \draw[-, blue, thick] (-5.5,0) -- ++ (315:2) node[pos =0.5, rotate=315,yshift=2.5mm]{$P(3)$};
        %labels
        \node at (0.1,-0.325)[black]{\tiny{1}};
        \node at (0.1,0.325)[black]{\tiny{2}};
        \node at (-5.4,-0.325)[black]{\tiny{2}};
        \node at (-5.4,0.325)[black]{\tiny{1}};
        \node at (5.6,0.325)[black]{\tiny{1}};
        \node at (4.35,-1.2)[black]{\tiny{2}};
        \end{tikzpicture}}
        \caption{The $\overline{\varphi}$-orbit of a TF-ordered $\tau$-rigid module $B \oplus C$ illustrating three cases of \cref{thm:introNakmutation}. The left direct summand corresponds to the orange arc and the right direct summand corresponds to the blue arc.}
        \label{fig:phiorbit1}
    \end{figure}
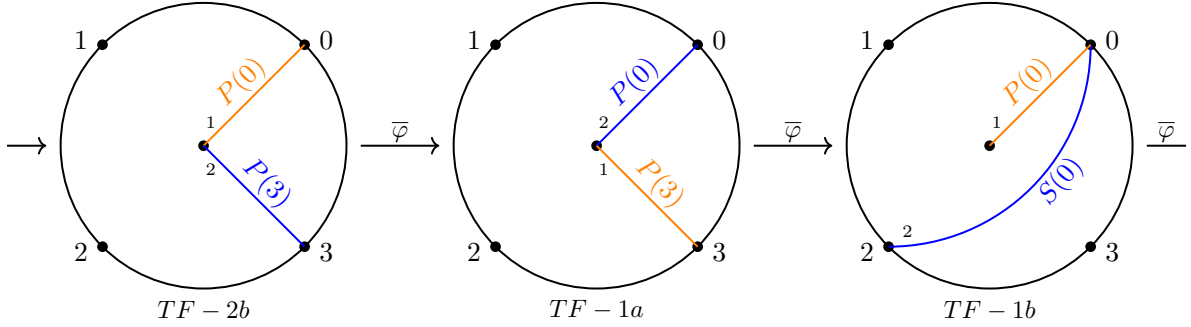
    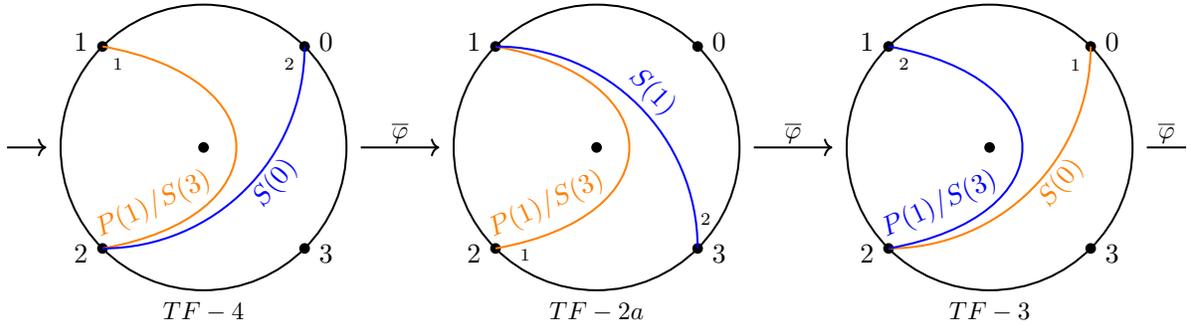
\begin{figure}[ht!]
        \centering
        \scalebox{0.95}{
        \begin{tikzpicture}
        %circles
        \draw[fill=none, thick](0,0) circle (2);
        \draw[fill=none, thick](-5.5,0) circle (2);
        \draw[fill=none, thick](5.5,0) circle (2);
        %arrows
        \draw[->, black, thick] (-3.3,0) -- (-2.2,0);
        \node[black] at (0,-2.3) [] {\small$TF-2a$};
        \draw[->, black, thick] (2.2,0) -- (3.3,0);
        \node[black] at (5.5,-2.3) [] {\small$TF-3$};
        \draw[-, black, thick] (7.7,0) -- (8.25,0);
        \draw[->, black, thick] (-8.25,0) -- (-7.7,0);
        \node[black] at (-5.5,-2.3) [] {\small$TF-4$};
        \node[black] at (2.75,0.2) [] {\small$\overline{\varphi}$};
        \node[black] at (-2.75,0.2) [] {\small$\overline{\varphi}$};
        \node[black] at (7.975,0.2) [] {\small$\overline{\varphi}$};
        %black dots
        \draw[white] (-5.5,0)[]{} -- ++ (45:2.1) node[right,black]{0};
        \draw[white] (-5.5,0)[]{} -- ++ (45:2) node[circle,fill,inner sep=1.5pt, black]{};
        \draw[white] (-5.5,0)[]{} -- ++ (135:2.1) node[left,black]{1};
        \draw[white] (-5.5,0)[]{} -- ++ (135:2) node[circle,fill,inner sep=1.5pt, black]{};
        \draw[white] (-5.5,0)[]{} -- ++ (225:2.1) node[left,black]{2};
        \draw[white] (-5.5,0)[]{} -- ++ (225:2) node[circle,fill,inner sep=1.5pt, black]{};
        \draw[white] (-5.5,0)[]{} -- ++ (315:2.1) node[right,black]{3};
        \draw[white] (-5.5,0)[]{} -- ++ (315:2) node[circle,fill,inner sep=1.5pt, black]{};
        \draw[white] (0,0)[]{} -- ++ (45:2.1) node[right,black]{0};
        \draw[white] (0,0)[]{} -- ++ (45:2) node[circle,fill,inner sep=1.5pt, black]{};
        \draw[white] (0,0)[]{} -- ++ (135:2.1) node[left,black]{1};
        \draw[white] (0,0)[]{} -- ++ (135:2) node[circle,fill,inner sep=1.5pt, black]{};
        \draw[white] (0,0)[]{} -- ++ (225:2.1) node[left,black]{2};
        \draw[white] (0,0)[]{} -- ++ (225:2) node[circle,fill,inner sep=1.5pt, black]{};
        \draw[white] (0,0)[]{} -- ++ (315:2.1) node[right,black]{3};
        \draw[white] (0,0)[]{} -- ++ (315:2) node[circle,fill,inner sep=1.5pt, black]{};
        \draw[white] (5.5,0)[]{} -- ++ (45:2.1) node[right,black]{0};
        \draw[white] (5.5,0)[]{} -- ++ (45:2) node[circle,fill,inner sep=1.5pt, black]{};
        \draw[white] (5.5,0)[]{} -- ++ (135:2.1) node[left,black]{1};
        \draw[white] (5.5,0)[]{} -- ++ (135:2) node[circle,fill,inner sep=1.5pt, black]{};
        \draw[white] (5.5,0)[]{} -- ++ (225:2.1) node[left,black]{2};
        \draw[white] (5.5,0)[]{} -- ++ (225:2) node[circle,fill,inner sep=1.5pt, black]{};
        \draw[white] (5.5,0)[]{} -- ++ (315:2.1) node[right,black]{3};
        \draw[white] (5.5,0)[]{} -- ++ (315:2) node[circle,fill,inner sep=1.5pt, black]{};
        \node at (-5.5,0)[circle,fill,inner sep=1.5pt]{};
        \node at (0,0)[circle,fill,inner sep=1.5pt]{};
        \node at (5.5,0)[circle,fill,inner sep=1.5pt]{};
        %arcs
        \draw[-, orange, thick] (5.5,0) [draw] ++ (225:2) to[out=0,in=270] ++ (45:4);
        \node at (6.5,-0.5)[rotate=50,orange]{$S(0)$};
        \draw[-, blue, thick] (5.5,0) [draw] ++ (225:2) to[out=10,in=-10,looseness=2.3] ++ (90:2.82842712474);
        \node at (4.8,-0.8)[rotate=25,blue]{$P(1)/S(3)$};
        \draw[-, orange, thick] (0.0,0) [draw] ++ (225:2) to[out=10,in=-10,looseness=2.3] ++ (90:2.82842712474);
        \draw[-, blue, thick] (315:2) to[out=90,in=0] (135:2) node[pos=0.5, rotate=-45, yshift=11mm]{$S(1)$};
        \node at (-0.7,-0.8)[rotate=25,orange]{$P(1)/S(3)$};
        \draw[-, orange, thick] (-5.5,0) [draw] ++ (225:2) to[out=10,in=-10,looseness=2.3] ++ (90:2.82842712474);
        \node at (-6.2,-0.8)[rotate=25,orange]{$P(1)/S(3)$};
        \draw[-, blue, thick] (-5.5,0) [draw] ++ (225:2) to[out=0,in=270] ++ (45:4);
        \node at (-4.5,-0.5)[rotate=50,blue]{$S(0)$};
        %labels
        \node at (-1,-1.5)[black]{\tiny{1}};
        \node at (1.525,-1)[black]{\tiny{2}};
        \node at (4.3,1.175)[black]{\tiny{2}};
        \node at (6.7,1.15)[black]{\tiny{1}};
        \node at (-6.7,1.175)[black]{\tiny{1}};
        \node at (-4.3,1.175)[black]{\tiny{2}};
        \end{tikzpicture}}
        \caption{The $\overline{\varphi}$-orbit of a TF-ordered $\tau$-rigid module $B \oplus C$ illustrating the remaining three cases of \cref{thm:introNakmutation}. The left direct summand corresponds to the orange arc and the right direct summand corresponds to the blue arc.}
        \label{fig:phiorbit2}
    \end{figure}
\end{example}

For an algebra $\L$ of rank 2, the $\varphi$-orbit admits a nice description via the poset of $\tau$-tilting pairs, as given in \cite[Thm. 7.8]{BHM2024}. We recall that if $\L$ is of rank 2, then the Hasse quiver of the poset $\stt(\L)$ has a largest element $(\L,0)$ and a least element $(0,\L)$ which are connected by two disjoint, possibily infinite, chains. The result \cite[Thm. 7.8]{BHM2024} states that Hasse quiver of the $\varphi$-orbit is obtained from $\Hasse(\stt \L)$ by removing the top and bottom vertices of $\Hasse(\stt \L)$ and introducing two new ``diagonal'' arrows, from the bottom element of each chain to the top element of the other, such that the quiver becomes a cycle.

An intuitive generalisation of this to algebras of higher rank would be to expect that the length of any $\varphi$-orbit is bounded by the sum of the sides of $\Hasse(\Wcal)$ for any functorially finite wide subcategory $\Wcal \subseteq \mods \L$ of rank 2. However, as we will exhibit in the following example, this is not the case even for a Nakayama algebra in rank 3. 

\begin{example}
    Let $\L\cong KQ/I$ be the Nakayama whose underlying quiver is
    \[ \begin{tikzcd}
        Q: 2 \arrow[r, "a", swap] & 1 \arrow[r, "b", swap] & 0 \arrow[ll, bend right, "c", swap]
    \end{tikzcd}\]
    and where $I = \langle cb, ac \rangle$. Let $\Wcal$ be a wide subcategory of $\L$ of rank 2, and let $\Hasse(\stt \Wcal)$ be the Hasse diagram of the poset of $\tau_{\Wcal}$-tilting pairs. It is easily checked via direct calculation that the sum of lengths of the distinct chains connecting the top and bottom element of $\Hasse(\stt \Wcal)$ is at most three. However, the TF-ordered $\tau$-rigid module $P(2) \oplus P(0)$ has a $\overline{\varphi}$-orbit of length 4, which is displayed in \cref{fig:phiorbitlength4}. This demonstrates that a straight-forward generalisation of \cite[Thm. 7.8]{BHM2024} as described above fails.
    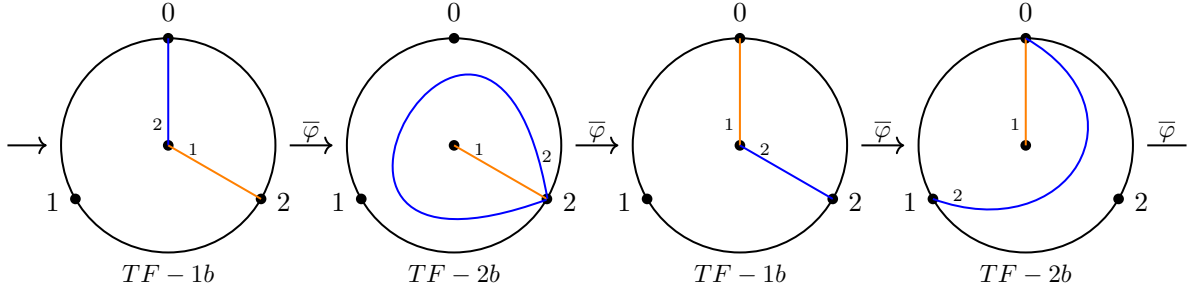
\begin{figure}[ht!]
        \centering
        \[
        \scalebox{0.95}{
        \begin{tikzpicture}
        \clip (-8.5,-2) rectangle (8.5, 2);
        %circles
        \draw[fill=none, thick](6,0) circle (1.5);
        \draw[fill=none, thick](2,0) circle (1.5);
        \draw[fill=none, thick](-2,0) circle (1.5);
        \draw[fill=none, thick](-6,0) circle (1.5);
        %arrows
        \draw[->, black, thick] (-4.3,0) -- (-3.7,0);
        \draw[->, black, thick] (-0.3,0) -- (0.3,0);
        \draw[->, black, thick] (3.7,0) -- (4.3,0);
        \draw[-, black, thick] (7.7,0) -- (8.25,0);
        \draw[->, black, thick] (-8.25,0) -- (-7.7,0);
        \node[black] at (-6,-1.8) [] {\small$TF-1b$};
        \node[black] at (-2,-1.8) [] {\small$TF-2b$};
        \node[black] at (2,-1.8) [] {\small$TF-1b$};
        \node[black] at (6,-1.8) [] {\small$TF-2b$};
        \node[black] at (0,0.2) [] {\small$\overline{\varphi}$};
        \node[black] at (-4,0.2) [] {\small$\overline{\varphi}$};
        \node[black] at (4,0.2) [] {\small$\overline{\varphi}$};
        \node[black] at (7.975,0.2) [] {\small$\overline{\varphi}$};
        %nodes
        \draw[white] (-6,0)[]{} -- ++ (90:1.6) node[above,black]{0};
        \draw[white] (-6,0)[]{} -- ++ (90:1.5) node[circle,fill,inner sep=1.5pt, black]{};
        \draw[white] (-6,0)[]{} -- ++ (210:1.6) node[left,black]{1};
        \draw[white] (-6,0)[]{} -- ++ (210:1.5) node[circle,fill,inner sep=1.5pt, black]{};
        \draw[white] (-6,0)[]{} -- ++ (330:1.6) node[right,black]{2};
        \draw[white] (-6,0)[]{} -- ++ (330:1.5) node[circle,fill,inner sep=1.5pt, black]{};
        \draw[white] (-2,0)[]{} -- ++ (90:1.6) node[above,black]{0};
        \draw[white] (-2,0)[]{} -- ++ (90:1.5) node[circle,fill,inner sep=1.5pt, black]{};
        \draw[white] (-2,0)[]{} -- ++ (210:1.6) node[left,black]{1};
        \draw[white] (-2,0)[]{} -- ++ (210:1.5) node[circle,fill,inner sep=1.5pt, black]{};
        \draw[white] (-2,0)[]{} -- ++ (330:1.6) node[right,black]{2};
        \draw[white] (-2,0)[]{} -- ++ (330:1.5) node[circle,fill,inner sep=1.5pt, black]{};
        \draw[white] (2,0)[]{} -- ++ (90:1.6) node[above,black]{0};
        \draw[white] (2,0)[]{} -- ++ (90:1.5) node[circle,fill,inner sep=1.5pt, black]{};
        \draw[white] (2,0)[]{} -- ++ (210:1.6) node[left,black]{1};
        \draw[white] (2,0)[]{} -- ++ (210:1.5) node[circle,fill,inner sep=1.5pt, black]{};
        \draw[white] (2,0)[]{} -- ++ (330:1.6) node[right,black]{2};
        \draw[white] (2,0)[]{} -- ++ (330:1.5) node[circle,fill,inner sep=1.5pt, black]{};
        \draw[white] (6,0)[]{} -- ++ (90:1.6) node[above,black]{0};
        \draw[white] (6,0)[]{} -- ++ (90:1.5) node[circle,fill,inner sep=1.5pt, black]{};
        \draw[white] (6,0)[]{} -- ++ (210:1.6) node[left,black]{1};
        \draw[white] (6,0)[]{} -- ++ (210:1.5) node[circle,fill,inner sep=1.5pt, black]{};
        \draw[white] (6,0)[]{} -- ++ (330:1.6) node[right,black]{2};
        \draw[white] (6,0)[]{} -- ++ (330:1.5) node[circle,fill,inner sep=1.5pt, black]{};
        \node at (-6,0)[circle,fill,inner sep=1.5pt]{};
        \node at (-2,0)[circle,fill,inner sep=1.5pt]{};
        \node at (2,0)[circle,fill,inner sep=1.5pt]{};
        \node at (6,0)[circle,fill,inner sep=1.5pt]{};
        %arcs
        \draw[-, orange, thick] (6,0) --++ (90:1.5);
        \draw[-, blue, thick] (6,0) [draw] ++ (90:1.5) to[out=330,in=340, looseness=2] ++ (240:2.598076211353316);
        \draw[-, orange, thick] (2,0) --++ (90:1.5);
        \draw[-, blue, thick] (2,0) --++ (330:1.5);
        \draw[-, orange, thick] (-2,0) --++ (330:1.5);
        \draw[-, blue, thick] (-2,0) [draw] ++ (329.5:1.5) to[out=200,in=100, looseness=600] ++ (45:0.02);
        \draw[-, blue, thick] (-6,0) --++ (90:1.5);
        \draw[-, orange, thick] (-6,0) --++ (330:1.5);
        %labels
        \node at (1.85,0.3)[black]{\tiny{1}};
        \node at (2.35,-0.05)[black]{\tiny{2}};
        \node at (-1.65,-0.05)[black]{\tiny{1}};
        \node at (-0.7,-0.15)[black]{\tiny{2}};
        \node at (5.85,0.3)[black]{\tiny{1}};
        \node at (5.05,-0.7)[black]{\tiny{2}};
        \node at (-5.65,-0.05)[black]{\tiny{1}};
        \node at (-6.15,0.3)[black]{\tiny{2}};
        \end{tikzpicture}
        } \]
        \caption{A $\varphi$-orbit of length 4, which cannot come from applying \cite[Thm. 7.8]{BHM2024} to any functorially finite wide subcategory of rank 2. The left direct summand corresponds to the orange arc and the right direct summands corresponds to the blue arc.}
        \label{fig:phiorbitlength4}
    \end{figure}
\end{example}

Parallel to \cite[Exmp. 8.3]{BHM2024}, where it was shown for the radical square zero quotient of the linear Nakayama algebra $K\overrightarrow{\A}_3$, the mutation of $\tau$-exceptional sequences does not satisfy the Braid group relations, we show the same for the radical square zero quotient of the cyclic Nakayama algebra with underlying quiver $\overrightarrow{\Delta}_3$.

\begin{example}\label{exmp:nonbraid}
    Let $\L \cong K\overrightarrow{\Delta}_3/R^2$ be the radical square zero cyclic Nakayama algebra on three vertices. We illustrate in  \cref{fig:Braidrelations} that the Braid relations are not satisfied by the mutation of TF-ordered $\tau$-tilting modules.
    \begin{figure}[ht!]
        \centering
        \[
    \scalebox{0.95}{
    \begin{tikzpicture}
        %circles
        \draw[fill=none, thick](0,0) circle (2);
        \draw[fill=none, thick](-6,0) circle (2);
        \draw[fill=none, thick](6,0) circle (2);
        %arrows
        \draw[->, black, thick] (-2.2,0) -- (-3.8,0);
        \draw[->, black, thick] (2.2,0) -- (3.8,0);
        \node[black] at (2.9,0.2) [] {\small$\overline{\varphi}_2\overline{\varphi}_1\overline{\varphi}_2$};
        \node[black] at (-2.9,0.2) [] {\small$\overline{\varphi}_1\overline{\varphi}_2\overline{\varphi}_1$};
        %black dots
        \draw[white] (-6,0)[]{} -- ++ (90:2.1) node[above,black]{0};
        \draw[white] (-6,0)[]{} -- ++ (90:2) node[circle,fill,inner sep=1.5pt, black]{};
        \draw[white] (-6,0)[]{} -- ++ (210:2.1) node[left,black]{1};
        \draw[white] (-6,0)[]{} -- ++ (210:2) node[circle,fill,inner sep=1.5pt, black]{};
        \draw[white] (-6,0)[]{} -- ++ (330:2.1) node[right,black]{2};
        \draw[white] (-6,0)[]{} -- ++ (330:2) node[circle,fill,inner sep=1.5pt, black]{};
        \draw[white] (0,0)[]{} -- ++ (90:2.1) node[above,black]{0};
        \draw[white] (0,0)[]{} -- ++ (90:2) node[circle,fill,inner sep=1.5pt, black]{};
        \draw[white] (0,0)[]{} -- ++ (210:2.1) node[left,black]{1};
        \draw[white] (0,0)[]{} -- ++ (210:2) node[circle,fill,inner sep=1.5pt, black]{};
        \draw[white] (0,0)[]{} -- ++ (330:2.1) node[right,black]{2};
        \draw[white] (0,0)[]{} -- ++ (330:2) node[circle,fill,inner sep=1.5pt, black]{};
        \draw[white] (6,0)[]{} -- ++ (90:2.1) node[above,black]{0};
        \draw[white] (6,0)[]{} -- ++ (90:2) node[circle,fill,inner sep=1.5pt, black]{};
        \draw[white] (6,0)[]{} -- ++ (210:2.1) node[left,black]{1};
        \draw[white] (6,0)[]{} -- ++ (210:2) node[circle,fill,inner sep=1.5pt, black]{};
        \draw[white] (6,0)[]{} -- ++ (330:2.1) node[right,black]{2};
        \draw[white] (6,0)[]{} -- ++ (330:2) node[circle,fill,inner sep=1.5pt, black]{};
        \node at (-6,0)[circle,fill,inner sep=1.5pt]{};
        \node at (0,0)[circle,fill,inner sep=1.5pt]{};
        \node at (6,0)[circle,fill,inner sep=1.5pt]{};
        %arcs right
        \draw[-, blue, thick] (6,0) -- ++ (90:2) node[pos=0.5, rotate=270, yshift=3mm]{$P(0)$};
        \draw[-, blue, thick, dashed] (6,0) -- ++ (330:2) node[pos=0.5, rotate=330, yshift=3mm]{$P(2)$};
        \draw[-, blue, thick, dotted] (6,0) [draw] ++ (90:2) to[out=225,in=195, looseness=2] ++ (300:3.4641016151377545);
        %arcs middle
        \draw[-, blue, thick, dashed] (0,0) -- ++ (210:2) node[pos=0.5, rotate=30, yshift=-3mm]{$P(1)$};
        \draw[-, blue, thick] (0,0) -- ++ (330:2) node[pos=0.5, rotate=330, yshift=-3mm]{$P(2)$};
        \draw[-,blue, thick, dotted] (330:2) to[out=105,in=75, looseness=2] (210:2) node[pos=0.5, yshift=12mm]{$S(1)$};
        %arcs left
        \draw[-, blue, thick] (-6,0) -- ++ (90:2) node[pos=0.5, rotate=270, yshift=3mm]{$P(0)$};
        \draw[-, blue, thick,dashed] (-6,0) -- ++ (210:2) node[pos=0.5, rotate=30, yshift=3mm]{$P(1)$};
        \draw[-, blue, thick, dotted] (-6,0) -- ++ (330:2) node[pos=0.5, rotate=330, yshift=3mm]{$P(2)$};
        %labels
        \node at (5.3,-1)[rotate=325,blue]{$S(2)$};
        \node at (5.85,0.3)[black]{\tiny{1}};
        \node at (6.35,-0.025)[black]{\tiny{2}};
        \node at (5.475,1.7)[black]{\tiny{3}};
        \node at (-6.15,0.3)[black]{\tiny{1}};
        \node at (-6.2,-0.325)[black]{\tiny{2}};
        \node at (-5.65,-0.025)[black]{\tiny{3}};
        \node at (0.35,-0.025)[black]{\tiny{1}};
        \node at (-0.2,-0.325)[black]{\tiny{2}};
        \node at (1.7,-0.325)[black]{\tiny{3}};
        \end{tikzpicture}
        } \]
        \caption{An example of a cyclic Nakayama algebra for which the mutation of TF-ordered $\tau$-tilting modules does not satisfy the Braid relations. The solid arc corresponds to the left-most direct summand, the dashed one to the middle one and the dotted one to the right-most one. }
        \label{fig:Braidrelations}
    \end{figure}
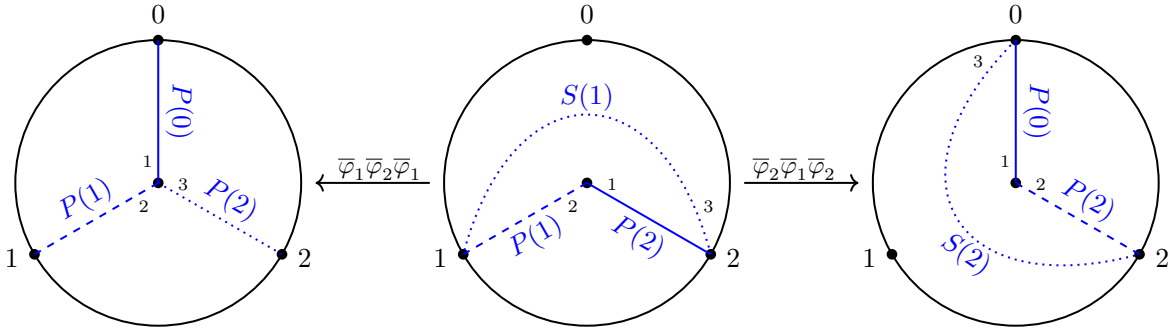
\end{example}

\printbibliography

\end{document}